\setlist[enumerate]{label=(\alph*),font=\normalshape}
\setlist[itemize]{font=\normalshape}
\let\originalitem\item
\renewcommand{\item}[1][]{%
	\if\relax\detokenize{#1}\relax%
		\originalitem%
	\else%
		\originalitem[#1]%
		\phantomsection
		\def\@currentlabel{#1}
	\fi%
}
\newcommand{\refcheckize}[1]{%
  \expandafter\let\csname @@\string#1\endcsname#1%
  \expandafter\DeclareRobustCommand\csname relax\string#1\endcsname[1]{%
    \csname @@\string#1\endcsname{##1}\wrtusdrf{##1}}%
  \expandafter\let\expandafter#1\csname relax\string#1\endcsname
}
\let\originalleft\left
\let\originalright\right
\renewcommand{\left}{\mathopen{}\mathclose\bgroup\originalleft}
\renewcommand{\right}{\aftergroup\egroup\originalright}
\definecolor{darkgreen}{RGB}{0,70,60}
\definecolor{darkpink}{RGB}{80,8,60}
\definecolor{kitgreen}{RGB}{0,150,130}
\definecolor{kitblue}{RGB}{70,100,170}
\definecolor{kitmaygreen}{RGB}{140,182,60}
\definecolor{kityellow}{RGB}{252,229,0}
\definecolor{kitorange}{RGB}{223,155,27}
\definecolor{kitbrown}{RGB}{167,130,46}
\definecolor{kitred}{RGB}{162,34,35}
\definecolor{kitpurple}{RGB}{163,16,124}
\definecolor{kitcyanblue}{RGB}{35,161,224}
\theoremstyle{plain}
\newtheorem{theorem}{Theorem}[section]
\newtheorem{definition}[theorem]{Definition}
\newtheorem{proposition}[theorem]{Proposition} 
\newtheorem{lemma}[theorem]{Lemma} 
\theoremstyle{definition}
\newtheorem{remark}[theorem]{Remark}
\newcommand{\C}{\mathbb{C}} 
\newcommand{\R}{\mathbb{R}} 
\newcommand{\Z}{\mathbb{Z}} 
\newcommand{\Zodd}{\mathbb{Z}_\mathrm{odd}}
\newcommand{\Nodd}{\mathbb{N}_\mathrm{odd}}
\newcommand{\N}{\mathbb{N}} 
\newcommand{\T}{\mathbb{T}} 
\newcommand{\F}{\mathcal{F}}
\newcommand{\der}[1][]{\,#1\mathrm{d}}
\DeclareMathOperator*{\vspan}{span}
\DeclareMathOperator{\dist}{dist}
\DeclareMathOperator*{\supp}{supp}
\DeclareMathOperator*{\sign}{sign}
\DeclareMathOperator*{\esssup}{ess~sup}
\DeclareMathOperator*{\essinf}{ess~inf}
\DeclareMathOperator{\tr}{tr}
\renewcommand{\Re}{\operatorname{Re}}
\newcommand{\calB}{\mathcal{B}}
\newcommand{\bfD}{\mathbf{D}}
\newcommand{\bfE}{\mathbf{E}}
\newcommand{\bfB}{\mathbf{B}}
\newcommand{\bfH}{\mathbf{H}}
\newcommand{\bfx}{\mathbf{x}}
\newcommand{\bfN}{\mathbf{N}}
\newcommand{\bfP}{\mathbf{P}}
\newcommand{\landauO}{\mathcal{O}}
\newcommand{\landauo}{\hbox{o}}
\newcommand{\ee}{\mathrm{e}}
\newcommand{\ii}{\mathrm{i}}
\let\eps\varepsilon
\let\wto\rightharpoonup
\let\embeds\hookrightarrow
\DeclareMathAlphabet{\othermathbb}{U}{bbold}{m}{n}
\newcommand{\bbone}{\othermathbb{1}}
\newcommand{\vb}{\vec{b}\@ifnextchar{_}{}{\@ifnextchar{^}{\,}{\hspace{0.1em}}}}
\newcommand{\vv}{\vec{v}\@ifnextchar{_}{}{\@ifnextchar{^}{\,}{\hspace{0.1em}}}}
\newcommand{\vx}{\vec{x}\@ifnextchar{_}{}{\@ifnextchar{^}{\,}{\hspace{0.1em}}}}
\newcommand{\vy}{\vec{y}\@ifnextchar{_}{}{\@ifnextchar{^}{\,}{\hspace{0.1em}}}}
\newcommand{\set}[1]{\left\{ #1 \right\}}
\newcommand{\abs}[1]{\left\lvert #1 \right\rvert}
\newcommand{\Abs}[1]{\bigl\lvert #1 \bigr\rvert}
\newcommand{\nnorm}[1]{\left\vert\kern-0.25ex\left\vert\kern-0.25ex\left\vert #1 \right\vert\kern-0.25ex\right\vert\kern-0.25ex\right\vert}
\newcommand{\seminorm}[1]{\left[ #1 \right]}
\newcommand{\impvar}{\,\cdot\,}
\newcommand{\pdv}[3][]{%
	\if\relax\detokenize{#1}\relax%
	\frac{\partial #2}{\partial #3}%
	\else%
	\frac{\partial^{#1} #2}{\partial {#3}^{#1}}%
	\fi%
}
\newcommand{\dv}[3][]{%
	\if\relax\detokenize{#1}\relax%
	\frac{\mathrm{d} #2}{\mathrm{d} #3}%
	\else%
	\frac{\mathrm{d}^{#1} #2}{\mathrm{d} {#3}^{#1}}%
	\fi%
}
\newlength{\negph@wd}
\newcommand{\negphantom}[1]{%
	\ifmmode
		\mathpalette\negph@math{#1}%
	\else
		\negph@do{#1}%
	\fi
}
\newcommand{\negph@math}[2]{\negph@do{$\m@th#1#2$}}
\newcommand{\negph@do}[1]{%
	\settowidth{\negph@wd}{#1}%
	\hspace*{-\negph@wd}%
}
\newcommand{\clonelabel}[2]{\@bsphack
	\expandafter\ifx\csname r@#2\endcsname\relax
	\else\protected@write\@auxout{}{\string\newlabel{#1}%
		{\csname r@#2\endcsname}}%
	\fi
	\expandafter\ifx\csname r@#2@cref\endcsname\relax
	\else\protected@write\@auxout{}{\string\newlabel{#1@cref}%
		{\csname r@#2@cref\endcsname}}%
	\fi
	\@esphack}
\newcommand\labelandcopy[2]{\expandafter\gdef\csname labeled:#1\endcsname{#2}\label{#1}#2}
\newcommand\paste[1]{\csname labeled:#1\endcsname\tag{\ref{#1}}}
\newcommand{\derf}[2][]{%
	\if\relax\detokenize{#1}\relax%
		\frac{\mathrm{d} #2}{\sqrt{2 \pi}}%
	\else%
		\frac{\mathrm{d} #2}{(2 \pi)^{\nicefrac{#1}{2}}}%
	\fi%
}
\newcommand{\pspace}{X}
\newcommand{\espace}{Y_N}
\newcommand{\espaceF}{Y_N^K}
\newcommand{\espacemult}{Y_{N,k_0}}
\newcommand{\espacetilde}{\widetilde Y_N}
\newcommand{\espaceins}{Y_{\Nins}}
\newcommand{\espaceret}{Y_{\Nav}}
\newcommand{\efct}{E}
\newcommand{\efctI}{E_I}
\newcommand{\efctB}{E_B}
\newcommand{\efctN}{E_N}
\newcommand{\emin}{E^\star}
\newcommand{\earg}{u^\star}
\newcommand{\efctF}{E \vert_{\espaceF}}
\newcommand{\eminF}{E^{K,\star}}
\newcommand{\eargF}{u^{K,\star}}
\newcommand{\Lrad}[2][2]{L^{#1}_{\mathrm{rad}}\if\relax\expandafter\detokenize{#2}\relax\else(#2)\fi}
\newcommand{\Lanti}[2][2]{L^{#1}_{\mathrm{anti}}\if\relax\expandafter\detokenize{#2}\relax\else(#2)\fi}
\newcommand{\Lradanti}[2][2]{L^{#1}_{\mathrm{rad},\mathrm{anti}}\if\relax\expandafter\detokenize{#2}\relax\else(#2)\fi}
\newcommand{\Hrad}[2][1]{H^{#1}_{\mathrm{rad}}\if\relax\expandafter\detokenize{#2}\relax\else(#2)\fi}
\newcommand{\Hradanti}[2][1]{H^{#1}_{\mathrm{rad},\mathrm{anti}}\if\relax\expandafter\detokenize{#2}\relax\else(#2)\fi}
\newcommand{\singularK}{\mathfrak{F}}
\newcommand{\regularK}{\mathfrak{R}}
\newcommand{\multK}{\mathfrak{K}}
\newcommand{\tildePhi}{\widetilde{\smash{\phi}\vphantom{ty}}}
\newcommand{\fracDT}[1]{\abs{\partial_t}^{#1}}
\newcommand{\halfDT}{\fracDT{\nicefrac12}}
\newcommand{\DT}{\fracDT{}}
\newcommand{\norm}[1]{\left\lVert \if\relax\expandafter\detokenize{#1}\relax\impvar\else #1\fi \right\rVert}
\newcommand{\Norm}[1]{\bigl\lVert \if\relax\expandafter\detokenize{#1}\relax\impvar\else #1\fi \bigr\rVert}
\newcommand{\Nins}{N_\mathrm{ins}}
\newcommand{\Nav}{N_\mathrm{av}}
\newcommand{\normN}[1]{\norm{#1}_N}
\newcommand{\normNins}[1]{\norm{#1}_{\Nins}}
\newcommand{\normNav}[1]{\norm{#1}_{\Nav}}
\newcommand{\normNtilde}[1]{\norm{#1}_N^\sim}
\newcommand{\qN}[1]{Q_N \if\relax\expandafter\detokenize{#1}\relax\else\left(#1\right)\fi}
\newcommand{\qNins}[1]{Q_{\Nins} \if\relax\expandafter\detokenize{#1}\relax\else\left(#1\right)\fi}
\newcommand{\qNav}[1]{Q_{\Nav} \if\relax\expandafter\detokenize{#1}\relax\else\left(#1\right)\fi}
\newcommand{\qNtilde}[1]{\widetilde Q_N \if\relax\expandafter\detokenize{#1}\relax\else\left(#1\right)\fi}
\begin{document}

	\title[Existence of traveling breather solutions]{Existence of traveling breather solutions to cubic nonlinear Maxwell equations in waveguide geometries}
	
	\author{Sebastian Ohrem}
	\address{Institute for Analysis, Karlsruhe Institute of Technology (KIT), D-76128 Karlsruhe, Germany}\email{sebastian.ohrem@kit.edu}
	
	\author{Wolfgang Reichel}
	\address{Institute for Analysis, Karlsruhe Institute of Technology (KIT), D-76128 Karlsruhe, Germany}\email{wolfgang.reichel@kit.edu}

	\date{\today} 
	
	\subjclass[2020]{Primary: 35Q61, 49J10; Secondary: 35C07, 78A50}

    \keywords{Maxwell equations, nonlinear material law, polychromatic breather solutions, variational method}
	
	\begin{abstract}
		We consider the full set of Maxwell equations in a slab or cylindrical waveguide with a cubically nonlinear material law for the polarization of the electric field. The nonlinear polarization may be instantaneous or retarded, and we assume it to be confined inside the core of the waveguide. We prove existence of infinitely many spatially localized, real-valued and time-periodic solutions (breathers) propagating inside the waveguide by applying a variational minimization method to the resulting scalar quasilinear elliptic-hyperbolic equation for the profile of the breathers. The temporal period of the breathers has to be carefully chosen depending on the linear properties of the waveguide. As an example, our results apply if a two-layered linear axisymmetric waveguide is enhanced by a third core region with low refractive index where also the nonlinearity is located. In this case we can also connect our existence result with a bifurcation result. We illustrate our results with numerical simulations. Our solutions are polychromatic functions in general, but for some special models of retarded nonlinear material laws, also monochromatic solutions can exist. In this case the numerical simulations raise an interesting open question: are the breather solutions with minimal energy monochromatic or polychromatic?  	
	\end{abstract}
    
	\maketitle

\section{Introduction and exemplary results}

Our results show the existence of spatially localized, real-valued and time-periodic solutions (called breathers) to the full set of Maxwell's equations. We consider two types of waveguide geometries: the slab waveguide and the axially symmetric waveguide. Our breathers travel inside the waveguide and are periodic in the direction of travel. In the axially symmetric waveguide they decay to zero in all directions orthogonal to the waveguide, whereas in the slab waveguide they are independent of one direction orthogonal to the waveguide and decay to zero in the remaining direction. The nonlinear properties of the material are confined to the waveguide and are built according to a Kerr-law which may be instantaneous or retarded (temporally averaged). Before we summarize the main literature contributions we first introduce the physical problem. Towards the end of this introduction we comment on the physical consequences of our main theorems.

\medskip

As our underlying physical model we consider Maxwell's equations
\begin{align} \label{eq:maxwell}
    \begin{aligned}
	&\nabla \cdot \bfD = 0, 
	&&\nabla \times \bfE = - \bfB_t, 
	\\
	&\nabla \cdot \bfB = 0, 
	&&\nabla \times \bfH = \bfD_t,
	\end{aligned}
\end{align}
in the absence of charges and currents. Constitutive relations between the electric field $\bfE$ and electric displacement $\bfD$ as well as the magnetic field $\bfH$ and the magnetic induction $\bfB$ are formulated by the following material laws 
\begin{align} \label{eq:material}
	\bfD = \eps_0 \bfE + \bfP(\bfE),
	\qquad
	\bfB = \mu_0 \bfH,
\end{align}
where $\eps_0>0$ is the vacuum permittivity, $\mu_0>0$ the vacuum permeability and $c_0=\nicefrac{1}{\sqrt{\eps_0\mu_0}}$ the vacuum speed of light. The relation $\bfB = \mu_0\bfH$ reflects that the we assume no interaction of the magnetic field with the material. The interaction of the electric field with the material, however, is described by the polarization field $\bfP(\bfE)$ which we assume to take the form 
\begin{align} \label{eq:polarization}
	\bfP(\bfE) = \eps_0 \chi_1(\bfx) \bfE
	+ \eps_0 \chi_3(\bfx) \bfN(\bfE)
\end{align}
with $\bfx = (x, y, z)$ being the spatial variable, cf. \cite{agrawal, babin_figotin, maloney_newell}. Moreover, we assume that the cubic nonlinearity $\bfN(\bfE)$ is isotropic, of Kerr-type, and retarded (temporally averaged) of the form 
\begin{align}\label{eq:nonlinear_polarization}
	\bfN(\bfE)(\bfx, t) = \int_0^\infty \tilde \kappa(\tau) \abs{\bfE(\bfx, t-\tau)}^2 \der \tau \,\bfE(\bfx, t) 	
\end{align}
which includes the case of an instantaneous nonlinearity 
\begin{align}\label{eq:nonlin:instantaneous}
	\bfN(\bfE) = \abs{\bfE}^2 \bfE
\end{align}
if we allow $\tilde\kappa=\delta_0$ to be the delta-distribution supported at time $0$. A physical discussion of these material laws is given in \cite{butcher_cotter, Fabrizio_Morro, maloney_newell} where also higher-order dependencies and anisotropy are discussed. Since we are looking for time-periodic fields $\bfE(\bfx,t+T)=\bfE(\bfx,t)$ with period $T>0$, the nonlinearity may be re-written as
\begin{equation}
\label{eq:nonlin:retarded}
\bfN(\bfE)(\bfx, t) = \frac{1}{T}\int_0^T \kappa(\tau) \abs{\bfE(\bfx, t-\tau)}^2 \der \tau \,\bfE(\bfx, t) =\left(\kappa\ast \abs{\bfE(\bfx,\cdot)}^2\right)(t) \bfE(\bfx,t)
\end{equation}
with the $T$-periodic function $\kappa(\tau)\coloneqq T \sum_{k\in\Z} \tilde\kappa(\tau+kT)$ and where we understand $\tilde\kappa\mid_{(-\infty,0)}\equiv 0$. Moreover we have used the convolution notation $(\kappa\ast v)(t) = \tfrac1T \int_0^T \kappa(\tau) v(t-\tau) \der \tau$ for the weighted temporal average of a measurable function $v$ (which still includes the instantaneous case where $\kappa=\delta_0^\text{per}$). From these equations we obtain the following second-order quasilinear equation for the electric field $\bfE$:
\begin{align}\label{eq:second_order_maxwell}
	\nabla \times \nabla \times \bfE + \eps_0 \mu_0 \bigl( \left( 1 + \chi_1(\bfx)\right) \bfE + \chi_3(\bfx) \bfN(\bfE) \bigr)_{tt}
	= 0.
\end{align}
We will show as part of our results how to recover the full set of Maxwell's equations from \eqref{eq:second_order_maxwell}. Under suitable assumptions on the convolution kernel $\kappa$, cf. \eqref{eq:ass:kappa}, we will show that \eqref{eq:second_order_maxwell} has a variational structure. Examples are given in Section~\ref{sec:examples}.

\medskip

We are interested in breather solutions of \eqref{eq:second_order_maxwell} which are moving with speed $c\in (0,c_0)$. Our results depend on the choice of the coefficients $\chi_1$, $\chi_3$, the retardation function $\kappa$, the propagation speed $c$, and the desired period $T > 0$. We denote the frequency associated to $T$ by $\omega \coloneqq \frac{2 \pi}{T}$. 

\medskip

In the literature there are several treatments of the existence of breather solutions of \eqref{eq:second_order_maxwell}. The first sequence of papers deals with monochromatic breathers, i.e., breathers of the type $\bfE(\bfx,t)=E(\bfx)\cos(k_0\omega t+t_0)$. Such breathers are not compatible with the instantaneous nonlinearity but with the retarded nonlinearity, e.g., in the case $\kappa(t)=1$ which may occur when $\tilde\kappa(t)=\sum_{n\in \N_0} \alpha_k\bbone_{[nT, (n+1)T)}$ with $\alpha_n \geq 0$, $\sum_{n=0}^\infty \alpha_n=T^{-1}$. Monochromatic breathers have the advantage that \eqref{eq:second_order_maxwell} reduces to the stationary elliptic problem 
\begin{equation} \label{curl_curl}
	\nabla \times \nabla \times E - \eps_0 \mu_0 k_0^2 \omega^2 \bigl( \left( 1 + \chi_1(\bfx)\right) E + \frac{\chi_3(\bfx)}{2} |E|^2 \bigr) E 
	= 0.
\end{equation}
Instead of a cubic nonlinearity $\frac{\chi_3(\bfx)}{2} |E|^2 E$, also saturated nonlinearities $g(\bfx,|E|^2)E$ with a bounded function $g$ naturally appear. The cases of saturated nonlinearities were first elaborated by Stuart et al. \cite{McLeod92,Stuart04,Stuart91,StuartZhou03,StuartZhou96,StuartZhou10,StuartZhou05,StuartZhou01} in the case of traveling breathers in an axisymmetric waveguide. Using divergence free, TE- or TM-polarized ansatz fields, \eqref{curl_curl} was reduced to a one-dimensional nonlinear elliptic problem which can, e.g., be solved variationally. In the follow-up result \cite{Mederski_Reichel} the assumption of strict axisymmetry is dropped and more general two-dimensional waveguide profiles are considered, also allowing pure power nonlinearities. The case of standing monochromatic breathers also originates from Stuart's work and leads to the elliptic nonlinear curl-curl problem \eqref{curl_curl} in the vector-valued case. First works \cite{azzolini_et_al, bartsch_et_al, benci_fortunato} considered axisymmetric divergence free ansatz functions, which allowed to reduce $\nabla\times\nabla\times$ to $-\Delta$. Using Helmholtz decomposition and suitable profile decompositions for Palais-Smale sequences, this restriction has been overcome by Mederski et al. \cite{MederskiENZ,MederskiSchino22,MederskiSchinoSzulkin20}, see also the survey \cite{BartschMederskiSurvey} and references therein, with the isotropic cubic Kerr-nonlinearity still being left as an open problem. A different approach using limiting absorption principles \cite{mandel_lap} or dual variational approaches was carried out by Mandel~\cite{mandel_dual}, cf. also \cite{mandel_dual_nonlocal} where a spatially nonlocal variant of the stationary curl-curl problem was solved. Still within the area of monochromatic breathers, Dohnal et al. considered in \cite{dohnal2} breathers at interfaces between (lossy) metals and dielectrics including retardation and in \cite{dohnal1} they rigorously approximated breathers in photonic crystals when the frequency parameter is near a band edge.

In the second, much smaller sequence of papers, truly polychromatic breathers are considered for instantaneous nonlinearities. The first approach which we are aware of, is \cite{PelSimWein} where spatially localized
traveling wave solutions of the 1+1-dimensional version of the quasi-linear Maxwell problem \eqref{eq:second_order_maxwell} were investigated. The authors treat the case where the linear coefficient $\chi_1$ is a periodic arrangement of delta potentials. Using local bifurcation methods the authors solve a related system
which is homotopically linked to the Maxwell problem written as an infinite coupled system arising from a multiple scale ansatz. It is analytically not clear whether the bifurcation branch ever reaches the original Maxwell system but numerical results support the existence of spatially localized traveling waves. A fully rigorous treatment for the existence of breathers on finite large time scales was given in \cite{DohnalSchnaubeltTietz} for a set-up of Kerr-nonlinear dielectrics occupying two different halfspaces. Two further rigorous treatments of exact polychromatic breather solutions occurred in \cite{bruell_idzik_reichel} and \cite{kohler_reichel} where either the linear or the nonlinear coefficients take the form of delta-distributions  and the existence of travelling breathers was accomplished by using bifurcation theory and variational methods, respectively. We are not aware of any treatment of polychromatic breathers in the presence of retarded nonlinearities.

\subsection{Examples of our results} \label{sec:examples}
We first describe our results on the level of examples. General results will be given in \cref{sec:main_results}. Breather solutions are rare phenomena, and hence the fact that our examples contain rather specific assumptions on the material coefficients and do not leave much leeway for perturbations should not be surprising. The main difference to the previous results may be summarized as follows: while we allow both instantaneous and retarded material laws, our traveling breather solutions are generally polychromatic and hence not limited to monochromatic ansatz functions. Moreover, our solutions satisfy the full set of Maxwell's equations exactly, the material coefficients $\chi_1, \chi_3$ are bounded, and our solutions can be numerically approximated with little effort. 

In the following, the speed of light is assumed to be $1=\nicefrac{1}{\sqrt{\epsilon_0\mu_0}}$.
Breather solutions will be time-periodic with period $T$ and are propagating along the $z$-axis with speed $c\in (0,1)$.  We consider two geometries for breathers: the cylindrical geometry where $\chi_1(\bfx)=\tilde\chi_1(r), \chi_3(\bfx)=\tilde\chi_3(r)$ only depend on $r=\sqrt{x^2+y^2}$, and the slab geometry where $\chi_1(\bfx)=\tilde\chi_1(x), \chi_3(\bfx)=\tilde\chi_3(x)$ only depend on $x$. In the cylindrical geometry we consider electric fields of the form
$$
\bfE(\bfx,t) = W(r, t - \tfrac{1}{c}z) \cdot (-\tfrac{y}{r},\tfrac{x}{r},0)^\top
$$
and in the slab geometry the electric field takes the form
$$
\bfE(\bfx, t) = \bigr(0, W(x, t - \tfrac{1}{c} z),0\bigl)^\top
$$
where in both settings $W$ is a real-valued profile which is localized in the first variable ($r$-direction in the cylindrical case and $x$-direction in the slab case) and $T$-periodic in the second variable. In both geometries the electric field is a divergence-free TE-mode which means that $\bfE$ is orthogonal to the direction of propagation.

\begin{definition} \label{def:weak_maxwell} The fields $\bfD, \bfE, \bfB, \bfH\in L^1_\mathrm{loc}(\R^3\times\R;\R^3)$ weakly solve Maxwell's equations provided
\begin{align*}
		&\int_{\R^4} \bfD \cdot \nabla \phi \der (\bfx, t) = 0,
		&& \int_{\R^4} \bfE \cdot \nabla \times \Phi \der (\bfx, t) = \int_{\R^4} \bfB \cdot \partial_t \Phi \der (\bfx, t),
		\\
		&\int_{\R^4} \bfB \cdot \nabla \phi \der (\bfx, t) = 0,
		&& \int_{\R^4} \bfH \cdot \nabla \times \Phi \der (\bfx, t) = - \int_{\R^4} \bfD \cdot \partial_t \Phi \der (\bfx, t)
	\end{align*}
	holds for all $\phi \in C_{\mathrm c}^\infty(\R^4; \R)$ and $\Phi \in C_{\mathrm c}^\infty(\R^4; \R^3)$. 
\end{definition}

The following theorem can be read as an explicit recipe for the construction of materials which support breathers. For the kernel $\tilde\kappa$ we generally assume \eqref{eq:ass:kappa}. Explicit examples include, e.g., $\tilde\kappa(t)=\bbone_{[0,\infty)}\left(T^4 + 4 t^4\right)^{-1} t$ or $\tilde\kappa(t)= \sum_{n \in \N_0} \alpha_n \bbone_{[n T, (n+1)T]}(t)$ where $\alpha_n\geq 0$ with $\sum_{n \in \N_0} \alpha_n = T^{-1}$, cf. \cref{rem:examples:kappa} for details. The material coefficients $\tilde\chi_1$, $\tilde\chi_3$ are assumed fixed and positive and take the form 
	\begin{align*}
		\tilde\chi_1(x) = \begin{cases}
			d, &\abs{x} < R, \\
			\tilde\chi_1^\ast(\abs{x} - R), & \abs{x} > R,
		\end{cases} \qquad
		\tilde\chi_3(x) = \begin{cases}
			- \gamma, & \abs{x} < R, \\
			0, & \abs{x} > R
		\end{cases}
	\end{align*}
where either $\tilde\chi_1^\ast=\tilde\chi_1^\mathrm{per}:\R\to\R$ is a $P$-periodic function defined on one periodicity cell by
	\begin{align*}
		\tilde\chi_1^\mathrm{per}(x) = \begin{cases}
			a &\abs{x} < \tfrac12 \theta P, \\
			b, &\tfrac12 \theta P < \abs{x} < \tfrac12 P
		\end{cases}
	\end{align*}
or $\tilde\chi_1^\ast=\tilde\chi_1^\mathrm{step}:\R\to\R$ is a step function defined by
    \begin{align*}
		\tilde\chi_1^\mathrm{step}(x) = \begin{cases}
			a, & \abs{x} < \rho, \\
			b, & \abs{x} > \rho
		\end{cases}
	\end{align*}
	with $a,b,d,P,R,\gamma,\rho>0, \theta\in (0,1)$.

	We are also using a sign-dependent distance function for a point $p\in\R$ and a set $M\subseteq \R$:
	$$
	\dist^+(p, M) = \inf\{ d^+(p,m): m\in M\} \mbox{ with } d^+(p,m)=\left\{ \begin{array}{ll} 
	|p-m| & \mbox{ if } m\geq p, \vspace{\jot} \\ 
	 \infty & \mbox{ if } m<p. 
	 \end{array} \right.
	$$

\begin{theorem} \label{thm:main_example} Suppose that the nonlinearity $N$ is given by either \eqref{eq:nonlin:instantaneous} or by \eqref{eq:nonlin:retarded} where $\kappa$ satisfies \eqref{eq:ass:kappa}. Then there exists a (nonzero) $T$-periodic real-valued weak solution of the Maxwell problem \eqref{eq:maxwell}, \eqref{eq:material}, \eqref{eq:polarization} in the sense of \cref{def:weak_maxwell} both for the slab and the cylindrical case\footnote{In the cylindrical case, write $r$ instead of $x$, and restrict $\tilde\chi_1, \tilde\chi_3$ to the half-line $[0,\infty)$.}, and for the following two choices of the polarization coefficient $\tilde\chi_1^\ast$:
\begin{itemize}
\item[(i)]
	If $\tilde\chi_1^\ast=\tilde\chi_1^\mathrm{per}$ then we assume that the propagation speed $c\in (0,1)$ is chosen such that $0<d<c^{-2}-1<\min\left\{a, b, \frac{a + d}{2}\right\}$ and  
	\begin{align}\label{eq:thm:periodic_example:coeff_ass}
		 \frac{\sqrt{a+1-c^{-2}}\cdot\theta}{\sqrt{b+1-c^{-2}}\cdot (1-\theta)} = \frac{m}{n}\in\frac{\Nodd}{\Nodd}
	\end{align}
	and define 
	$$
	T \coloneqq \frac{4\sqrt{a+1-c^{-2}}\theta P}{m} = \frac{4\sqrt{b+1-c^{-2}}(1-\theta) P}{n}. 
	$$
\item[(ii)]
	If $\tilde\chi_1^\ast=\tilde\chi_1^\mathrm{step}$ then we assume that the propagation speed $c\in (0,1)$ is chosen such that $0<\min\{b,d\}\leq\max\{b,d\} < c^{-2}-1<a$. Moreover, there are $m, n \in \N$ coprime with 
	\begin{align}\label{eq:thm:step_example:coeff_ass}
	 0 < \xi <\arctan\sqrt{\tfrac{a+1-c^{-2}}{-d-1+c^{-2}}} \text{ where } \xi \coloneqq \dist^+\left(\arctan\sqrt{\tfrac{a+1-c^{-2}}{-b-1+c^{-2}}}, \frac{m \pi}{2 n} + \frac{\pi}{n} \Z\right)
	\end{align}
	and 
	$$
    T \coloneqq 4\sqrt{a+1-c^{-2}}\rho\frac{n}{m}.
	$$
\end{itemize}	
	Additionally, the solution has at all times finite and uniformly bounded electromagnetic energy per unit square in $y,z$ (slab case), or per unit segment in $z$ (cylindrical case). 
\end{theorem}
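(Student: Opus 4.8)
My plan is to obtain \cref{thm:main_example} by specializing the general existence result of \cref{sec:main_results} to the two explicit families of waveguides above, so the work reduces to three tasks: reducing the Maxwell system to a scalar equation for the breather profile, checking the spectral hypothesis for the explicit coefficients, and translating a profile back into a weak solution. For the reduction I would insert the TE ansatz into the second-order equation \eqref{eq:second_order_maxwell} and change to the moving variable $s:=t-z/c$; with $\eps_0\mu_0=1$ a short computation gives $\nabla\times\nabla\times\bfE=(0,-W_{xx}-c^{-2}W_{ss},0)$ in the slab case (and in the cylindrical case the azimuthal component equals $-W_{rr}-\tfrac{1}{r}W_r+\tfrac{1}{r^2}W-c^{-2}W_{ss}$ on $[0,\infty)$), while the polarization contributes $\partial_s^2\big[(1+\tilde\chi_1)W+\tilde\chi_3(\kappa\ast W^2)W\big]$. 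Hence a $T$-periodic-in-$s$, $x$-decaying profile $W(x,s)$ must solve the scalar quasilinear elliptic-hyperbolic equation
\[
  -W_{xx}+V(x)W_{ss}+\tilde\chi_3(x)\,\partial_s^2\big[(\kappa\ast W^2)W\big]=0,\qquad V(x):=1+\tilde\chi_1(x)-c^{-2}
\]
(with $-W_{xx}$ replaced by the Bessel operator in the cylindrical case). The sign assumptions in (i)/(ii) make $V<0$ on the core $\{|x|<R\}$, $V>0$ on the periodic cladding (case (i)) resp.\ on the step $\{R<|x|<R+\rho\}$ (case (ii)), and $V<0$ again for $|x|>R+\rho$ --- precisely the elliptic-hyperbolic layout the general theory is built for. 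Since $\tilde\chi_1,\tilde\chi_3$ are even in $x$ and the cubic term maps functions with only odd temporal Fourier harmonics into themselves (immediate for \eqref{eq:nonlin:instantaneous}, and automatic for any $\kappa$ in \eqref{eq:nonlin:retarded} as squaring produces even harmonics and convolution creates no new frequencies), I would work --- via the principle of symmetric criticality --- on the energy space $H$ of such odd-harmonic, $x$-decaying functions, equipped with the quadratic form of $\calL W:=-W_{xx}+V(x)W_{ss}$; the zero harmonic of any element of $H$ solves $-w_0''=0$ and vanishes.

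The only substantial step is to check the standing hypothesis of the general theorem --- a non-resonance condition on $(\tilde\chi_1,c,T)$ amounting to $0\notin\sigma_{\mathrm{ess}}(\calL)$ on $H$ together with nontriviality of the positive spectral subspace --- by a transfer-matrix analysis of the fibre operators $L_k:=-\partial_x^2-k^2\omega^2V(x)$ (plus the $\tfrac{1}{r^2}$ term in the cylindrical case), $k$ odd. In case (i), $\sigma_{\mathrm{ess}}(L_k)$ is the Floquet spectrum of the $P$-periodic cladding; the definition of $T$ makes the phases accumulated by a zero-energy solution across the two layers of one period equal $km\pi/2$ and $kn\pi/2$, so for odd $k$ (with $m,n$ odd) each layer matrix is a quarter-turn, the monodromy matrix $M_k$ is diagonal, and, writing $V_a:=a+1-c^{-2}$ and $V_b:=b+1-c^{-2}$, one gets $|\tr M_k|=\sqrt{V_a/V_b}+\sqrt{V_b/V_a}>2$ because $a\neq b$, placing $0$ in an open gap of every $L_k$; the inequality $c^{-2}-1<\tfrac{a+d}{2}$ then furnishes the non-degeneracy. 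In case (ii) the far field $V<0$ already puts $\sigma_{\mathrm{ess}}(L_k)=[k^2\omega^2(c^{-2}-1-b),\infty)$ to the right of $0$, so it only remains to exclude $0$ as an eigenvalue of any $L_k$; a sign-sensitive Pr\"ufer-angle/shooting argument through the three layers --- phase $km\pi/(2n)$ accumulated across the step, and the core's decaying solution contributing a $\tanh$-factor strictly below $1$ --- converts \eqref{eq:thm:step_example:coeff_ass} into exactly the statements that $0$ is not an eigenvalue ($\xi>0$) and that the positive spectral subspace is nontrivial ($\xi<\arctan\sqrt{(a+1-c^{-2})/(c^{-2}-1-d)}$). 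The remaining hypotheses --- boundedness and positivity of $\tilde\chi_1,\tilde\chi_3$, compact support of $\tilde\chi_3$ in $x$, and the variational-structure assumption \eqref{eq:ass:kappa} on $\kappa$ --- hold by inspection (cf.\ \cref{rem:examples:kappa}).

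The general theorem then yields a nontrivial $W\in H\setminus\{0\}$, a critical point of the associated strongly indefinite functional obtained by minimization over its Nehari-Pankov manifold, the compactness of Cerami sequences coming from $\tilde\chi_3$ having compact $x$-support (local Rellich together with discreteness of the temporal frequencies) and from invertibility of $\calL$ off the core. From $W$ I would reconstruct $\bfE$ by the TE ansatz, define $\bfB$ by integrating $\bfB_t=-\nabla\times\bfE$ in $t$ (legitimate since the integrand has zero temporal mean), and set $\bfD=\eps_0\bfE+\bfP(\bfE)$, $\bfH=\mu_0^{-1}\bfB$. Since all fields then depend only on $(x,s)$ (resp.\ $(r,s)$) with the displayed polarization, the two divergence equations of \cref{def:weak_maxwell} hold, Faraday's law holds by construction, and Amp\`ere's law $\nabla\times\bfH=\bfD_t$ is, after one $\partial_s$, exactly the scalar equation above; in the cylindrical case one checks in addition that $W\in H$ forces $W(0,\cdot)=0$, so $\bfE$ is regular on the axis. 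The finite-energy assertion follows because $W\in H$ gives $\partial_xW,\partial_sW$ square-integrable near the core and exponential decay in $x$, whence $\bfE,\bfH\in L^2$ in the transverse variable, while $T$-periodicity and continuity make the electromagnetic energy per unit transverse cross-section finite and uniformly bounded in $t$.

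I expect the spectral step to be the crux: getting the Floquet/transfer-matrix bookkeeping of case (i) exactly right --- why $m,n$ must be odd, why $|\tr M_k|>2$, and why one must restrict to odd temporal harmonics --- and carrying out the sign-sensitive three-layer Pr\"ufer-angle matching of case (ii) so that the precise conditions \eqref{eq:thm:periodic_example:coeff_ass} and \eqref{eq:thm:step_example:coeff_ass} emerge. A secondary but real annoyance is verifying every clause of \cref{def:weak_maxwell} in the recovery step, in particular the two divergence constraints and the axial regularity in the cylindrical geometry.
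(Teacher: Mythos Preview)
Your high-level plan --- specialize the general theorems of \cref{sec:main_results} to the explicit potentials and then verify the hypotheses via a transfer-matrix/Floquet computation in case (i) and a shooting/Pr\"ufer computation in case (ii) --- matches the paper's strategy, and your spectral heuristics (quarter-turn layer matrices giving $|\tr M_k|=\sqrt{\alpha/\beta}+\sqrt{\beta/\alpha}>2$, the phase $km\pi/(2n)$ across the step, etc.) are close to what \cref{sec:calc_examples} actually does. But there is a real mismatch between what you think the general theorem says and what it actually says, and this affects what you would need to check.

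The paper's variational machinery is \emph{not} a Nehari--Pankov argument for a strongly indefinite functional on the full line. Instead, \cref{sec:domain_restriction} reduces the problem to the bounded core $[0,R]\times\T$ with a nonlocal Robin condition $\hat u_k'(R)=\frac{\phi_k'(R)}{\phi_k(R)}\hat u_k(R)$ coming from the decaying fundamental solutions $\phi_k$ of $L_k$ on $[R,\infty)$. On the core the signs $V\geq 0$, $\Gamma>0$ make the bulk functional $\efctI$ convex; the indefiniteness sits entirely in the boundary term $\efctB(u)=\tfrac{R}{2}\sum_k\frac{\phi_k'(R)}{\phi_k(R)}|\hat u_k(R)|^2$, which is weakly continuous by the compact trace of \cref{lem:trace}. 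The functional $\efct=\efctI-\efctB$ is then coercive and is \emph{directly minimized} (\cref{prop:energy_properties}); nontriviality comes from \ref{ass:radial:nontrivial_sol}, a pointwise inequality on the DtN ratio $\phi_{k_0}'(R)/\phi_{k_0}(R)$ against the Bessel quotient $\lambda k_0 I_1'(\lambda k_0 R)/I_1(\lambda k_0 R)$. None of this is ``positive spectral subspace of $\calL$ nontrivial''.

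Accordingly, the hypotheses you must verify are not merely $0\notin\sigma_{\mathrm{ess}}(\calL)$. Assumption~\ref{ass:radial:fundamental_solutions} is the existence of an $L^2$-decaying $\phi_k$ on $[R,\infty)$ (your trace condition gives this), but \ref{ass:radial:fundamental_solution:estimates} demands the \emph{uniform-in-$k$} quantitative bounds $|\phi_k(R)|\gtrsim\|\phi_k\|_{L^2_{\mathrm{rad}}}$ and $|\phi_k'(R)|\lesssim k\|\phi_k\|_{L^2_{\mathrm{rad}}}$, and \ref{ass:radial:nontrivial_sol:alt} requires computing $\limsup_k \phi_k'(R)/(k\phi_k(R))$ exactly. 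In the slab/periodic case this is a short Floquet argument (Part~2 of the proof in \cref{sec:calc_examples}), but in the cylindrical/periodic case the paper needs the full Bessel asymptotics and an infinite-matrix-product lemma (\cref{lem:convergence_of_product}) to control the $\landauO(1/(kr))$ corrections uniformly --- your proposal does not anticipate this, and without it the uniform bounds of \ref{ass:radial:fundamental_solution:estimates} are not established. In the step case, the quantity that appears is $\cot(\xi)$ via $\phi_k'(R)/(k\phi_k(R))\to\omega\sqrt{\alpha}\cot(\xi)$, and it is precisely the inequality $\omega\sqrt{\alpha}\cot(\xi)>\omega\sqrt{\delta}$ that encodes \eqref{eq:thm:step_example:coeff_ass}; your three-layer matching should be reorganized to produce exactly this ratio rather than a qualitative eigenvalue exclusion.
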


\subsection{Discussion of the examples} \label{sec:discussion_examples}
Let us explain the reason behind the particular choices of the coefficients in a physical context. The parameters $a, b$ are properties of a linear waveguide (without any nonlinear effect) whose profile is given either by the purely periodic profile $\tilde\chi_1^\mathrm{per}$ or the pure step profile $\tilde\chi_1^\mathrm{step}$. Then the conditions on $a, b, c$ have the nature of a nonresonance condition, i.e., there are no guided waves $\bfE(\bfx,t) = \tilde W(r) \ee^{ik\omega(t-\nicefrac{z}{c})}\cdot (-\tfrac{y}{r},\tfrac{x}{r},0)^\top$ with time period $T=\tfrac{2\pi}{\omega}$ propagating with speed $c$ along the linear waveguide. Mathematically, this is expressed by a property of the operator $(1+\chi_1(\bfx))^{-1} \cdot \nabla\times\nabla\times$ appearing in \eqref{eq:second_order_maxwell}: namely all multiples $k^2\omega^2$ with $k\in\Zodd$ are required to stay away from the spectrum of this weighted operator when restricted to suitable TE-modes propagating with speed $c$ along the waveguide. This requirement is quite restrictive and its fulfillment can be guaranteed if $\omega=\tfrac{2\pi}{T}$ is chosen in the particular way and the parameters $a, b, c$ satisfy either \eqref{eq:thm:periodic_example:coeff_ass} or \eqref{eq:thm:step_example:coeff_ass}. 

The remaining conditions on $d$ may be described as follows: by inserting a new material of width $2R$ at the center of the waveguide the purely periodic or pure step waveguide is perturbed. On the linear level the new material has a low refractive index $d$ and on the level of the nonlinear refractive index it contributes a defocusing effect. The quantitative strength of the nonlinear effect plays no role in the sense that $\gamma>0$ may be arbitrary small. The value $d$ always satisfies a two-sided condition: on one hand $0<d<c^{-2}-1$ and on the other hand 
$$
c^{-2}-1 < \frac{a+d}{2} \quad \mbox{ or } \quad 0<\xi<\arctan\sqrt{\tfrac{a+1-c^{-2}}{-d-1+c^{-2}}}.
$$
We note that these conditions are always satisfied if $d$ is below but sufficiently close to $c^{-2}-1$. On the linear level, the presence of the new (linear) material at the core of the waveguide still does not allow for guided waves of time period $T$ and wave speed $c$. However, at a different value $d_\ast<d$ such a linear guided mode exists. Moreover, for all values $\tilde d\in (d_\ast,d)$ a solution of the nonlinear equation \eqref{eq:second_order_maxwell} exists, which bifurcates from $0$ as $\tilde d\to d_\ast$. In other words, the solution of Theorem~\ref{thm:main_example} is part of a bifurcation phenomenon with $d$ as a bifurcation parameter. In a nutshell: the nonlinear equation allows for guided modes in the waveguide at parameter values for which there are no linear guided modes. We comment on this phenomenon in Section~\ref{sec:further_estimates}.

\subsection{Outline of paper}
In \cref{sec:main_results} we state the general form of our results (\cref{thm:radial:main} and \cref{thm:slab:main}) of which Theorem~\ref{thm:main_example} is a special case. For particular choices of the parameters compatible with \cref{thm:main_example}, illustrations of approximate breathers can be found at the end of \cref{sec:main_results}. Our main results are stated both for the cylindrical geometry and the slab geometry. For the proofs we discuss in detail only the cylindrical geometry, as the slab geometry can be treated similarly with less difficulties. Sections~\ref{sec:domain_restriction}--\ref{sec:main_proof} contain the proof of our main results. In \cref{sec:domain_restriction} we show how the problem \eqref{eq:second_order_maxwell} on $\R^3\times\R$ can be reduced to a problem on the bounded domain $[0,R]\times[0,T]$. We then treat this reduced problem using a simple variational minimization method. In \cref{sec:approximation} we study a regularization of the bounded domain problem and in this way obtain an improved regularity result for the solutions of both the regularized and the original problem. \cref{sec:main_proof} closes the proof of the main results. Adaptations for the slab geometry are discussed in \cref{sec:slab_modifications}. Moreover, in \cref{sec:further_estimates} we show the further regularity result that $\|\bfE\|_{L^\infty(\supp \chi_3; L^2([0,T]))}$ is finite and we explain what this has to do with the dielectric character of the waveguide. Finally, in the same section, we comment on the bifurcation phenomenon w.r.t. the parameter $d$.

The appendices contain important technical tools. In \cref{sec:fractional} we prove some auxiliary results on the fractional Laplacian as well as a version of the famous Kenig-Ponce-Vega inequality on the torus. In \cref{sec:functional_av} we show a basic convexity result for our variational approach, lower bounds on integrated versions of the nonlinearity, and two trace inequalities. Then, in \cref{sec:calc_examples} we verify that the examples given in \cref{thm:main_example} satisfy the conditions of the general existence results.
Lastly, \cref{sec:numerics} details the numerical methods used to obtain approximations to the breather solutions that appear in the following section in the images in \cref{fig:periodic:breathers,fig:step:breathers} as illustrations of \cref{thm:main_example}.

Let us finish this introduction by pointing out some observations and open questions, cf. \cref{sec:main_results} for details. In all our results we allow the breathers to be a polychromatic superposition of Fourier modes of arbitrary multiples of the basic frequency $\omega$. In case of an instantaneous nonlinearity, necessarily infinitely many Fourier modes are non-zero. For time-averaged nonlinearities there is the possibility of monochromatic breathers and indeed (under suitable assumptions on $\kappa$) such monochromatic breathers exists. As our numerical simulations suggest, they appear to be more smooth than their polychromatic counterparts, and moreover, for the slab geometry, it seems that only monochromatic ground states exist. This is not the case for the cylindrical geometry. These findings based on numerical observations are analytically still open, but they do shed new light onto the a-priori choice of a monochromatic ansatz by  Stuart et al.~\cite{McLeod92,Stuart04,Stuart91,StuartZhou03,StuartZhou96,StuartZhou10,StuartZhou05,StuartZhou01} and later by others \cite{azzolini_et_al, BartschMederskiSurvey, bartsch_et_al, benci_fortunato, MederskiENZ,MederskiSchino22,MederskiSchinoSzulkin20}.

\section{Main results and numerical illustration} \label{sec:main_results}

After having given examples we now state our main results in more general form. We divide this into two subsections: one for the cylindrical geometry and one for the slab geometry. We define $\T \coloneqq \R \slash_{T \Z}$ as the torus of length $T$ which is our time domain equipped with the measure $\!\der t = \!\der[\frac{1}{T}]\lambda$ where $\!\der\lambda$ is the Lebesgue measure on $[0, T]$.

\subsection{Cylindrical geometry}
First we consider a cylindrical material, where $\chi_1(\bfx) = \tilde\chi_1(r)$ and $\chi_3(\bfx) = \tilde\chi_3(r)$ with $r \coloneqq \sqrt{x^2 + y^2}$. For $\bfE$ we consider a wave which is radial in the $(x, y)$-directions, travels with speed $c > 0$ in $z$-direction, and which has the form 
\begin{align}\label{eq:radial:ansatz}
	\bfE(\bfx, t) = w_t(r, t - \tfrac{1}{c}z) \cdot (-\tfrac{y}{r},\tfrac{x}{r},0)^\top
\end{align}
with a real-valued profile function $w_t(r, t)$. Inserting the ansatz \eqref{eq:radial:ansatz} into \eqref{eq:second_order_maxwell} and integrating once w.r.t. $t$ yields
\begin{align}\label{eq:radial:problem}
	- w_{rr} - \tfrac{1}{r} w_r + \tfrac{1}{r^2} w + \bigl(\tilde\chi_1(r)+1-c^{-2}\bigr) w_{tt} + \tilde\chi_3(r) N(w_t)_t = 0,
	\qquad r \in [0, \infty), t \in \R
\end{align}
with 
\begin{align}\label{eq:def:N:local}
	N(w_t) = \Nins(w_t) = w_t^3
\end{align}
or
\begin{align}\label{eq:def:N:averaged}
	N(w_t) = \Nav(w_t) = (\kappa\ast w_t^2) w_t
\end{align} 
corresponding to \eqref{eq:nonlin:instantaneous} and \eqref{eq:nonlin:retarded}, respectively. If the nonlinearity is given by \eqref{eq:def:N:averaged}, we require $\kappa$ to satisfy the following assumptions:
	\begin{align}\label{eq:ass:kappa}
		\begin{cases}
			\kappa \in C^\alpha(\T) \text{ for some } \alpha > 0, \\
			\kappa(t) = \kappa(-t) > 0 \text{ for } t \in \T, \\
			L^4(\T) \to \R, v \mapsto \int_\T (\kappa \ast v^2) v^2 \der t \text{ is convex}	
		\end{cases}
	\end{align}
where the convexity assumption is satisfied if, e.g., $\max \kappa\leq 2 \min \kappa$ or if the Fourier transform of $\kappa$ is non-negative, cf. \cref{lem:convexity_conditions} and \cref{rem:examples:kappa} for further concrete examples. In the following, $N$ will always denote either $\Nins$ or $\Nav$.  Under assumptions~\eqref{eq:ass:kappa} on $\kappa$, we will show that \eqref{eq:radial:problem} has a variational structure that is crucial in our study.

\medskip

In the context of radial symmetry it is important to see the relation between a radially symmetric function $f_\sharp: \R^2\setminus B_R(0)\to \R$, $R\geq 0$, and its radial profile function $f \colon [R, \infty) \to \R$ via the map $f_\sharp \colon \R^2\setminus B_R(0) \to \R, (x, y) \mapsto f(\sqrt{x^2+y^2})$. For $1\leq p<\infty$ this gives rise to the function spaces
\begin{align*}
	\Lrad[p]{[R, \infty)}
	\coloneqq \set{f \in L^1_\mathrm{loc}((R, \infty)) \colon f_\sharp \in L^p(\R^2\setminus B_R(0))}
\end{align*}
with norm
\begin{align*}
	\norm{f}_{L^p_\mathrm{rad}([R, \infty))} 
	\coloneqq \frac{1}{\sqrt[p]{2 \pi}}\Norm{f_\sharp}_{L^p(\R^2\setminus B_R(0))}
	= \norm{f}_{L^p([R, \infty), \der[r] r)}
\end{align*}
For functions depending on radius and time we define
\begin{align*}
	\Lrad[p]{[R, \infty) \times \T}
	\coloneqq \set{f \in L^1_\mathrm{loc}((R, \infty) \times \T) \colon f_\sharp \in L^p(\R^2\setminus B_R(0) \times \R)}.
\end{align*}
Other spaces of radially symmetric functions based on $\Lrad[2]{[R, \infty) \times \T}$, such as $\Hrad[k]{[R, \infty) \times \T}$, are defined analogously.

\medskip

For time-periodic functions $w\colon[0,\infty)\times\T \to \C$ we consider the temporal Fourier transform $\F$ and denote for $k\in \Z$ the $k$-th Fourier coefficient of $w$ by $\hat w_k = \F_k[w] = \int_\T w \overline{e_k} \der t$ where $e_k(t) \coloneqq \ee^{\ii k \omega t}$. For the linear part of the differential equation \eqref{eq:radial:problem}
\begin{align*}
	L w = - w_{rr} - \tfrac{1}{r} w_r + \tfrac{1}{r^2} w + \bigl(\tilde\chi_1+1-c^{-2}\bigr) w_{tt}
\end{align*}
we can apply the Fourier transform and obtain $\F_k [L w] = L_k \hat w_k$ with
\begin{align*}
	L_k \coloneqq - \partial_r^2 - \tfrac{1}{r} \partial_r + \tfrac{1}{r^2} - k^2 \omega^2 \bigl(\tilde\chi_1+1-c^{-2}\bigr).
\end{align*}

\medskip

We make the following assumptions on the nonlinearity $N$, the potentials $\tilde\chi_1, \tilde\chi_3$ and the operators $L_k$. Denote by $\Nodd \coloneqq 2\N-1 = \set{1, 3, 5, \dots}$. 
\begin{enumerate}
	\item[(A1)]\label{ass:radial:VandGamma}\label{ass:radial:first} 
	$\tilde\chi_1, \tilde\chi_3 \in L^\infty([0,\infty), \R)$ and $\supp(\tilde\chi_3) = [0, R]$ where $R > 0$.

	\item[(A2)]\label{ass:radial:N} 
	$N$ is given either by \eqref{eq:def:N:local}, or by \eqref{eq:def:N:averaged} where $\kappa$ satisfies \eqref{eq:ass:kappa}.
	
	\item[(A3)]\label{ass:radial:elliptic} 
	$\esssup_{[0, R]} \tilde\chi_1 \leq c^{-2}-1$, $\esssup_{[0, R]} \tilde\chi_3 < 0$.

	\item[(A4)]\label{ass:radial:fundamental_solutions} 
	There exists a solution $\phi_k \in \Hrad[2]{[R, \infty)} \setminus \set{0}$ of $L_k \phi_k = 0$ for each $k \in \Nodd$.

	\item[(A5)]\label{ass:radial:fundamental_solution:estimates}\label{ass:radial:secondlast}
	The following inequalities hold for $\phi_k$, $k \in \Nodd$:
	\begin{align*}
		\liminf_{k \to \infty} \frac{\abs{\phi_k(R)}}{\norm{\phi_k}_{\Lrad{[R, \infty)}}} > 0, 
		\qquad 
		\sup_{k} \frac{\abs{\phi_k'(R)}}{k \norm{\phi_k}_{\Lrad{[R, \infty)}}} < \infty.
	\end{align*}

	\item[(A6)]\label{ass:radial:nontrivial_sol}\label{ass:radial:last} 
	With $I_\alpha$ denoting the modified Bessel function of first kind, there exists $k_0 \in \Nodd$ such that $\phi_{k_0}(R) \neq 0$ and the following inequality holds:
	\begin{align*}
		\frac{\phi_{k_0}'(R)}{\phi_{k_0}(R)} > \frac{\lambda k_0 I_1'(\lambda k_0 R)}{I_1(\lambda k_0 R)}
		\quad\text{where}\quad
		\lambda \coloneqq \omega \bigl(c^{-2}-1-\essinf_{[0, R]}\tilde\chi_1\bigr)^{\nicefrac12}.
	\end{align*}
\end{enumerate}

We call $\phi_k$ a fundamental solution for $L_k$. Since $L_k = L_{-k}$ we define $\phi_{-k} \coloneqq \phi_k$ for all $k \in \Nodd$. The reason for considering $k\in \Nodd$ instead of $k \in \N_0$ is that $\ker(L_0) = \vspan \set{r, \tfrac{1}{r}}$ does not contain nonzero $\Lrad{[R, \infty)}$-functions. The restriction to $\Nodd$ amounts to considering $\nicefrac T2$-antiperiodic functions which is compatible with the cubic nonlinearity in \eqref{eq:radial:problem}.

Assumption~\ref{ass:radial:nontrivial_sol} is in place to ensure existence of nontrivial solutions to \eqref{eq:radial:problem}. Since $\frac{I_1'(z)}{I_1(z)}\to 1$ as $z \to \infty$ (see \cite{gradshteyn}), a sufficient condition for \ref{ass:radial:nontrivial_sol} to hold is 
\begin{enumerate}
	\item[(A6')]\label{ass:radial:nontrivial_sol:alt} 
	$\displaystyle \limsup_{k \to \infty} \frac{\phi_k'(R)}{k \phi_k(R)} > \omega \bigl(c^{-2}-1-\essinf_{[0, R]}\tilde\chi_1\bigr)^{\nicefrac12}$,
\end{enumerate}
which additionally ensures that \ref{ass:radial:nontrivial_sol} holds for infinitely many $k_0$.

Next we state our main theorem for the cylindrical geometry. 
\begin{theorem}\label{thm:radial:main}
	Assume \ref{ass:radial:first}--\ref{ass:radial:last} hold for given $N, \kappa, \tilde\chi_1, \tilde\chi_3$ and $T$. Then there exists a (nonzero) $T$-periodic real-valued weak solution of the Maxwell problem \eqref{eq:maxwell}, \eqref{eq:material}, \eqref{eq:polarization} in the sense of \cref{def:weak_maxwell}. Furthermore, localization orthogonal to the direction of propagation is expressed by the fact that at all times $t_0\in\R$ the electromagnetic energy per unit segment along the $z$-direction
	\begin{align*}
		\int_{\R \times \R \times [z_0, z_0 + 1]} \bigl(\bfD \cdot \bfE + \bfB \cdot \bfH\bigr) \der(x, y, z)
	\end{align*}
	is finite for all $z_0 \in \R$ and uniformly bounded.
\end{theorem}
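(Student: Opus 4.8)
The plan is to follow the reduction–minimization–reconstruction strategy announced in the introduction. First, insert the radial TE ansatz \eqref{eq:radial:ansatz} into \eqref{eq:second_order_maxwell} and integrate once in $t$; since all functions in play are $\nicefrac{T}{2}$-antiperiodic (only Fourier modes $k\in\Nodd$), no integration constant survives and this step is an equivalence. For $r>R$ one has $\tilde\chi_3\equiv 0$, so each Fourier coefficient $\hat w_k$ solves $L_k\hat w_k=0$ there and, by \ref{ass:radial:fundamental_solutions}, is a multiple of the distinguished solution $\phi_k\in\Lrad{[R,\infty)}$; hence the cladding field is determined by the trace at $r=R$, and \eqref{eq:radial:problem} becomes equivalent to the same equation on $[0,R]\times\T$ together with the nonlocal boundary condition $\partial_r w(R,\cdot)=\calB w(R,\cdot)$, where $\calB$ multiplies the $k$-th mode by $\phi_k'(R)/\phi_k(R)$ (and annihilates modes with $\phi_k(R)=0$). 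On the Hilbert space $X$ of $\nicefrac{T}{2}$-antiperiodic radial functions on $[0,R]\times\T$ with squared norm $\int_0^R\!\int_\T\bigl(w_r^2+r^{-2}w^2+(c^{-2}-1-\tilde\chi_1)w_t^2\bigr)\,r\der r\der t$ — nonnegative by \ref{ass:radial:elliptic} — the reduced problem is the Euler–Lagrange equation of
\begin{align*}
	E(w)=\tfrac12\norm{w}_X^2-\tfrac R2\int_\T\bigl(\calB w(R,\cdot)\bigr)w(R,\cdot)\der t+\int_0^R\!\int_\T\frac{-\tilde\chi_3}{4}(\kappa\ast w_t^2)w_t^2\der t\,r\der r,
\end{align*}
with $\kappa=\delta_0^{\mathrm{per}}$, so that the quartic term reads $\int\frac{-\tilde\chi_3}{4}w_t^4$, in the instantaneous case. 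By \ref{ass:radial:secondlast} and a trace inequality from \cref{sec:functional_av} the middle term is bounded by $\norm{w}_X^2$, so $E$ is well defined and continuous on $X\cap\{w_t\in L^4\}$, and the convexity hypothesis in \eqref{eq:ass:kappa} with $\tilde\chi_3<0$ makes the quartic term convex and nonnegative.

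For the existence of a nonzero critical point I would run a Galerkin scheme over the finite-dimensional spaces $\espaceF$ (finitely many radial profiles, $\abs{k}\le K$). The quartic term supplies coercivity: absorbing the indefinite boundary term via the anisotropic trace inequality and Young's inequality makes $\efctF$ bounded below and coercive, uniformly in $N,K$, with a uniform bound on the quartic term at minimizers; hence a minimizer $\eargF$ with value $\eminF$ exists. The only negativity of the quadratic part of $E$ comes from a mode $k$ with $\phi_k'(R)/\phi_k(R)>0$, and \ref{ass:radial:last} — whose Bessel comparison records precisely that the cladding Dirichlet-to-Neumann value for the mode $k_0$ exceeds that of the regular interior solution $I_1(\lambda k_0 r)$ — provides a test function proportional to $I_1(\lambda k_0 r)\cos(k_0\omega t)$ on which, after scaling it small, $E<0$; thus $\eminF<0=E(0)$ and $\eargF\ne 0$ for $K,N$ large. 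The coercivity bound then gives uniform control of $\norm{\eargF}_X$ and of the quartic term, hence a weak limit $\earg\in X$.

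The delicate step is regularity. Because \eqref{eq:radial:problem} is quasilinear, with the top-order term $\tilde\chi_3N(w_t)_t$ competing against $w_{tt}$, the energy bound — which controls $w_t$ only in $L^2$ — is insufficient either to pass to the limit in the nonlinearity or to know that the weak formulation is meaningful. The remedy is the regularized problem of \cref{sec:approximation}, solved by the same minimization, together with estimates uniform as the regularization is switched off; the engine is the torus version of the Kenig–Ponce–Vega inequality proved in \cref{sec:fractional}, which gains half a temporal derivative for the elliptic–hyperbolic core operator and upgrades the minimizers to a space in which $w_t\in L^4$. Compact embedding at this improved regularity yields strong $L^4$-convergence of $(\eargF)_t$ along the Galerkin/regularization limit, so $\earg$ is a genuine nonzero critical point of $E$, i.e. a solution of \eqref{eq:radial:problem} with the boundary condition at $r=R$. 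I expect this matching of the variational energy space with the regularity the quasilinear term demands — carried out with no compactness of the trace at the critical exponent $H^{1/2}$ forced on us by the indefinite boundary term — to be the main obstacle; the convexity assumption on $\kappa$ settles only lower semicontinuity.

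It remains to recover Maxwell's equations and the energy bound. Reconstruct $\bfE$ from $\earg$ by \eqref{eq:radial:ansatz}, extend it to $r>R$ through the $\phi_k$-representation, and define $\bfD,\bfH$ by \eqref{eq:material}–\eqref{eq:polarization} and $\bfB$ by $\bfB_t=-\nabla\times\bfE$ — again without an integration constant, since only odd modes occur. Using that $(-\tfrac yr,\tfrac xr,0)^\top$ is divergence-free and that $\earg,\chi_1,\chi_3$ depend only on $r$ and $t-\tfrac1c z$, one checks that $\nabla\cdot\bfD=\nabla\cdot\bfB=0$ weakly, and that the two curl equations of \cref{def:weak_maxwell} reduce, after one differentiation in $t$, to \eqref{eq:radial:problem}; the $L^4$-regularity makes the nonlinear polarization locally integrable, so all pairings are defined (this is \cref{sec:main_proof}). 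Finally, at a fixed time the electromagnetic energy per unit $z$-segment splits into a core integral over $[0,R]$ and a cladding integral over $[R,\infty)$; the improved regularity obtained above, the boundedness of $\tilde\chi_1,\tilde\chi_3$, and \ref{ass:radial:secondlast} together with the uniform bounds make both finite, and by $T$-periodicity they are bounded uniformly in the base time $t_0$ and the base coordinate $z_0$.
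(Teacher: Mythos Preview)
Your overall architecture (reduce to $[0,R]\times\T$ with a Dirichlet-to-Neumann boundary term, minimize a convex-plus-indefinite functional, reconstruct the fields) matches the paper, and the test function $I_1(\lambda k_0 r)\cos(k_0\omega t)$ for \ref{ass:radial:nontrivial_sol} is exactly right. But two points are genuinely off, and they are the heart of the argument.

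First, the energy space already carries $u_t\in L^4$. The functional $E$ contains the quartic term $\int\frac{-\tilde\chi_3}{4}N(u_t)u_t\,r\der r\der t\eqsim\normN{u_t}^4$, so coercivity controls $\normN{u_t}$, not merely $\norm{u_t}_{L^2}$. Consequently the nonlinearity is well defined on the energy space, and passing to the limit in it needs nothing beyond the convexity hypothesis in \eqref{eq:ass:kappa} (weak lower semicontinuity). The regularity step in \cref{sec:approximation} is \emph{not} there to make the quasilinear term meaningful; it is there to upgrade the trace at $r=R$ from $H^{1/2}(\T)$ to $H^1(\T)$. That upgrade is what guarantees, via \ref{ass:radial:fundamental_solution:estimates}, that the cladding extension $\sum_k \hat u_k(R)\phi_k(R)^{-1}\phi_k(r)e_k(t)$ has $w_r,w_t\in\Lrad{[R,\infty)\times\T}$ and hence that the reconstructed fields have finite energy. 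The mechanism is to test the Galerkin Euler--Lagrange equation against $\DT\eargF$ (legal only on $\espaceF$) and use the pointwise/commutator inequalities $\int\Gamma N(v)\,\DT v\ge c\,\qN{v}^4-C\normN{v}^4$ (C\'ordoba-type for $\Nins$, Kenig--Ponce--Vega for $\Nav$); the gain is half a time derivative on $u_r$, $\tfrac1r u$, and on the nonlinear quantity, from which the $H^1(\T)$ trace follows by the ``regularized'' trace inequality.

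Second, your expectation that the trace is not compact is wrong, and this matters. The trace $\espace\to H^{1/2}(\T)$, $u\mapsto u(R,\cdot)$, \emph{is} compact (\cref{lem:trace}): although neither $\espace\hookrightarrow\Hrad{}$ nor $\Hrad{}\to H^{1/2}$ is compact, the extra temporal integrability in $\normN{u_t}$ makes the composition compact. This is precisely why the boundary term $\efctB$ is weakly sequentially continuous and the minimizer of $\efct$ on $\espace$ exists directly, without any need to recover it as a Galerkin limit. The paper uses the Galerkin approximation only afterwards, to import the half-derivative regularity to the already-existing minimizer; strong convergence of $\eargF$ to that minimizer then follows from ``weak convergence plus convergence of the energy implies convergence of each convex piece of the norm'', not from a compact embedding.
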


\begin{remark} Let us explain why our assumptions \ref{ass:radial:elliptic}, \ref{ass:radial:nontrivial_sol} enforce $\tilde\chi_1$ to take values both below and above $c^{-2}-1$. Suppose for contradiction that $\tilde\chi_1\leq c^{-2}-1$ everywhere on $[0,\infty)$. If $w$ is a weak $T$-periodic solution to \eqref{eq:radial:problem}, we see that $w = 0$ must hold by multiplying \eqref{eq:radial:problem} with $w$ and integrating on $[0,\infty)\times\T$ with respect to the measure $r\der r \der t$. Hence, non-trivial solutions do not exist. In fact, the assumption~\ref{ass:radial:nontrivial_sol} conflicts with $\tilde \chi_1(r)\leq c^{-2}-1$ everywhere on $[0,\infty)$. Namely, in this case $\phi_k$ satisfies $(r\phi_k')' = (\frac{1}{r}+ rk^2\omega^2\bigl(c^{-2}-1-\tilde\chi_1(r)\bigr)\phi_k$. Multiplication with $\phi_k$ and integration from $R$ to $\infty$ yields $R\phi_k'(R)\phi_k(R) = -\int_R^\infty r|\phi_k'|^2+ (\frac{1}{r}+ rk^2\omega^2\bigl(c^{-2}-1-\tilde\chi_1(r)\bigr)|\phi_k|^2 \,dr\leq 0$. Thus, $\phi_k'(R)$ and $\phi_k(R)$ have opposite sign, contradicting \ref{ass:radial:nontrivial_sol} and the fact that $I_1, I_1'$ are positive on $(0,\infty)$.
\end{remark}

We end this subsection with a multiplicity result. For this, we first explain what kind multiplicity we consider. Given a solution $w$ of \eqref{eq:radial:problem}, any time-shift $(x, t) \mapsto w(x, t + \tau)$ for $\tau \in \T$ also solves \eqref{eq:radial:problem}. Moreover, if $N = \Nav$ with $\kappa \equiv 1$ one can shift the individual frequencies separately, i.e. $(x, t) \mapsto \sum_{k \in \Z} \hat w_k(x) e_k(t + \tau_k)$ solves \eqref{eq:radial:problem} for all $\tau_k \in \T$ with $\tau_k = \tau_{-k}$. By \emph{distinct} solutions we mean solutions that are not shifts of one another.

\begin{theorem}\label{thm:radial:multiplicity}
	Assume \ref{ass:radial:first}--\ref{ass:radial:secondlast} hold for given $N, \kappa, \tilde\chi_1, \tilde\chi_3, T$. If \ref{ass:radial:nontrivial_sol} holds for infinitely many $k_0 \in \Nodd$ (e.g. if \ref{ass:radial:nontrivial_sol:alt} is true) then there exist infinitely many distinct $T$-periodic real-valued weak solutions of the Maxwell problem \eqref{eq:maxwell}, \eqref{eq:material}, \eqref{eq:polarization} in the sense of \cref{def:weak_maxwell} with finite and uniformly bounded electromagnetic energy per unit segment along the $z$-direction.
\end{theorem}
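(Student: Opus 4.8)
The plan is to run the variational argument behind \cref{thm:radial:main} separately on each of the invariant subspaces singled out by the valid indices, and then to tell the resulting minimizers apart by their lowest active temporal frequency.

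First I would fix $k_0\in\Nodd$ for which \ref{ass:radial:nontrivial_sol} holds and let $\espacemult\subseteq\espace$ be the closed subspace of those $w$ whose temporal Fourier support lies in $k_0\Zodd$, i.e.\ the $\tfrac{T}{2k_0}$-antiperiodic elements of $\espace$; equivalently, $\espacemult$ is the fixed-point set of the isometric linear $\Z_{k_0}$-action on $\espace$ generated by the time shift $w(r,t)\mapsto w(r,t+\tfrac{T}{k_0})$. Because $\tilde\chi_1,\tilde\chi_3$ do not depend on $t$, the energy functional $E$ is invariant under this action; and because $\Zodd+\Zodd+\Zodd\subseteq\Zodd$ while convolution with $\kappa$ does not enlarge temporal Fourier support, $\espacemult$ is invariant under both $\Nins$ and $\Nav$, so $E\vert_{\espacemult}$ is well defined and $C^1$. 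Coercivity and weak lower semicontinuity of $E$ pass from $\espace$ to the closed subspace $\espacemult$, so $E$ attains its infimum there at some $\eargmult$. Evaluating $E$ on the single-frequency test function at frequency $k_0$ used in the proof of \cref{thm:radial:main} to make the infimum negative — this function lies in $\espacemult$ and, by \ref{ass:radial:nontrivial_sol} for the present $k_0$, has negative energy — yields $E(\eargmult)\le\inf_{\espacemult}E<0=E(0)$, hence $\eargmult\neq 0$.

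Next I would promote $\eargmult$ to a weak Maxwell solution. By the principle of symmetric criticality — or, concretely, by splitting a test function $\varphi\in\espace$ into its $\Z_{k_0}$-average (which lies in $\espacemult$) plus a remainder with temporal Fourier support in $\Zodd\setminus k_0\Zodd$, against which $E'(\eargmult)$ pairs to zero since $E'(\eargmult)$ has Fourier support in $k_0\Zodd$ — the critical point $\eargmult$ of $E\vert_{\espacemult}$ is a critical point of $E$ on all of $\espace$. From this point on, the passage to a weak solution of \eqref{eq:radial:problem}, the regularity upgrade of \cref{sec:approximation}, the extension from $[0,R]\times\T$ to $\R^3\times\R$, the reconstruction of a full weak Maxwell solution in the sense of \cref{def:weak_maxwell} via the ansatz \eqref{eq:radial:ansatz}, and the finiteness and uniform boundedness of the electromagnetic energy per unit segment are verbatim repetitions of the corresponding steps in \cref{sec:domain_restriction,sec:approximation,sec:main_proof}, which use nothing beyond $w$ being a nonzero critical point of $E$ on $\espace$. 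This produces, for every valid $k_0$, a nonzero $T$-periodic real-valued weak Maxwell solution built from $\eargmult$.

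It remains to check distinctness. Since the Fourier projections $w\mapsto\hat w_k$ are continuous on $\espace$, the nonzero function $\eargmult$ has nonempty temporal Fourier support contained in $k_0\Zodd$; as $\eargmult$ is real-valued this support is symmetric, so its smallest positive element $f(\eargmult)$ is well defined and satisfies $f(\eargmult)\ge k_0$. Every admissible shift — the global time shift, and, in the case $N=\Nav$ with $\kappa\equiv 1$, also the frequency-wise shifts — multiplies each temporal Fourier coefficient by a unimodular constant and hence leaves the Fourier support, and in particular $f$, unchanged; thus $f$ is constant on shift classes. Running the construction over the infinitely many valid $k_0$ gives solutions with $f(\eargmult)\ge k_0\to\infty$, so $f$ takes infinitely many values on this family, which therefore contains infinitely many distinct solutions, all with finite and uniformly bounded electromagnetic energy per unit segment. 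The only genuinely delicate points are the symmetric-criticality step and the verification that the machinery of \cref{sec:domain_restriction,sec:approximation,sec:main_proof} carries over unchanged to $\espacemult$; the distinctness is then soft, being forced by the Fourier-support structure of $\espacemult$.
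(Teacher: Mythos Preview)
Your proof is correct and follows essentially the same approach as the paper: minimize $E$ on the closed invariant subspace $\espacemult$ of $\tfrac{T}{2k_0}$-antiperiodic functions, use the single-frequency test function from \ref{ass:radial:nontrivial_sol} to force a negative infimum, and then re-run the reconstruction machinery. Your explicit symmetric-criticality argument makes precise what the paper encapsulates in the sentence ``problem \eqref{eq:radial:VGproblem} is compatible with $\tfrac{T}{2k_0}$-antiperiodic functions,'' and your distinctness via smallest positive Fourier index is equivalent to the paper's distinctness via minimal time-period (which divides $\tfrac{T}{k_0}$).
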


\subsection{Slab geometry}
In our second setting, we consider slab materials that extend infinitely in the $(y,z)$-directions. Here $\chi_1(\bfx) = \tilde \chi_1(x)$, $\chi_3(\bfx) = \tilde \chi_3(x)$ and we look for traveling polarized waves moving at speed $c > 0$ in $y$-direction and being constant along the $z$-direction.
More precisely, we consider fields $\bfE$ given by the ansatz
\begin{align}\label{eq:slab:ansatz}
	\bfE(\bfx, t) = \bigr(0, 0, w_t(x, t - \tfrac{1}{c} y)\bigl)^\top.
\end{align}
Inserting into \eqref{eq:second_order_maxwell} and integrating once w.r.t. $t$ leads to the equation 
\begin{align}\label{eq:slab:problem}
	- w_{xx} + \left(\tilde \chi_1(x)+1-c^{-2}\right) w_{tt} + \tilde\chi_3(x) N(w_t)_t = 0
\end{align}
for the profile function $w_t(x, t)$. Similar to the radial setting we define the operators
\begin{align*}
	\widetilde L \coloneqq -\partial_x^2 + \left(\tilde \chi_1(x)+1-c^{-2}\right) \partial_t^2, 
	\qquad
	\widetilde L_k \coloneqq -\partial_x^2 - k^2 \omega^2 \left(\tilde \chi_1(x)+1-c^{-2}\right),
\end{align*}
so that $\F_k \widetilde L = \widetilde L_k \F_k$ holds for the temporal Fourier transform $\F$. We require the following assumptions on $\tilde\chi_1, \tilde\chi_3$ and $\widetilde L_k$: 

\begin{enumerate}
	\item[(\~A1)]\label{ass:slab:VandGamma}\label{ass:slab:first} 
	$\tilde\chi_1, \tilde\chi_3\in L^\infty(\R, \R)$ are even with $\supp(\tilde\chi_3) = [-R, R]$ where $R > 0$.

	\item[(\~A2)]\label{ass:slab:N}
	$N$ is given either by \eqref{eq:def:N:local}, or by \eqref{eq:def:N:averaged} where $\kappa$ satisfies \eqref{eq:ass:kappa}.
	
	\item[(\~A3)]\label{ass:slab:elliptic} 
	$\esssup_{[-R, R]} \tilde\chi_1 \leq c^{-2}-1$, $\esssup_{[-R, R]} \chi_3 < 0$.
	
	\item[(\~A4)]\label{ass:slab:fundamental_solutions} 
	There exists a solution $\tildePhi_k \in H^2([R, \infty)) \setminus \set{0}$ of $\widetilde L_k \tildePhi_k = 0$ for each $k \in \Nodd$.

	\item[(\~A5)]\label{ass:slab:fundamental_solutions:estimates}\label{ass:slab:secondlast}
	The following inequalities hold for $\tildePhi_k$, $k \in \Nodd$:
	\begin{align*}
		\liminf_{k \to \infty} \frac{\Abs{\tildePhi_k(R)}}{\Norm{\tildePhi_k}_{L^2([R, \infty))}} > 0, 
		\qquad 
		\sup_{k} \frac{\Abs{\tildePhi_k'(R)}}{k \Norm{\tildePhi_k}_{L^2([R, \infty))}} < \infty.
	\end{align*}

	\item[(\~A6)]\label{ass:slab:nontrivial_sol}\label{ass:slab:last}
	There exists $k_0 \in \Nodd$ such that $\tildePhi_{k_0}(R) \neq 0$ and the following inequality holds:
	\begin{align*}
		\frac{\tildePhi_{k_0}'(R)}{\tildePhi_{k_0}(R)} > \lambda k_0 \tanh\left( \lambda k_0 R \right)
		\quad\text{with}\quad
		\lambda \coloneqq \omega\bigl(c^{-2}-1-\essinf_{[-R, R]}\tilde\chi_1\bigr)^{\nicefrac12}.
	\end{align*}
\end{enumerate}
Again, a sufficient condition for \ref{ass:slab:nontrivial_sol} to hold is
\begin{enumerate}
	\item[(\~A6')] \label{ass:slab:nontrivial_sol:alt} 
	$\displaystyle \limsup_{k \to \infty} \frac{\tildePhi_k'(R)}{k \tildePhi_k(R)} > \omega \bigl(c^{-2}-1-\essinf_{[-R, R]}\tilde\chi_1\bigr)^{\nicefrac12}$.
\end{enumerate}

We can now formulate our main theorems for the slab geometry.
\begin{theorem}\label{thm:slab:main}
	Assume \ref{ass:slab:first}--\ref{ass:slab:last} hold for given $N, \tilde\chi_1, \tilde\chi_3$ and $T$. Then there exists a (nonzero) $T$-periodic real-valued weak solution of the Maxwell problem \eqref{eq:maxwell}, \eqref{eq:material}, \eqref{eq:polarization} in the sense of \cref{def:weak_maxwell}. Furthermore, localization in the $x$-direction is expressed by the fact that at all times $t_0\in\R$ the electromagnetic energy per unit square in the $y,z$-direction
	\begin{align*}
		\int_{\R \times [y_0,y_0+1]\times [z_0, z_0 + 1]} \bigl(\bfD \cdot \bfE + \bfB \cdot \bfH\bigr) \der(x, y, z)
	\end{align*}
	is finite for all $y_0, z_0 \in \R$ and uniformly bounded w.r.t. $t_0, z_0$.
\end{theorem}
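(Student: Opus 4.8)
The plan is to run the same four-step argument used for the cylindrical case \cref{thm:radial:main}, which is genuinely simpler in the slab geometry (no radial weight, no singular term $\tfrac1{r^2}w$ nor singularity at the origin, and $\cosh,\tanh$ in place of the modified Bessel functions $I_1,I_1'$). First I would insert the transverse-electric ansatz \eqref{eq:slab:ansatz} into \eqref{eq:second_order_maxwell}; since $\bfE$ is independent of $z$ we have $\nabla\cdot\bfE\equiv0$, hence $\nabla\times\nabla\times\bfE=-\Delta\bfE$, and passing to the moving frame $\xi=t-\tfrac1c y$ and integrating once in time — the integration constant vanishes on the space of $\nicefrac{T}{2}$-antiperiodic profiles, i.e.\ temporal Fourier support in $\Zodd$, which is consistent with the cubic nonlinearity and excludes the mode $k=0$ where $-\partial_x^2$ has no $L^2$ fundamental solution on a half-line — yields \eqref{eq:slab:problem} on $\R\times\T$. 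On $\abs x>R$ the coefficient $\tilde\chi_3$ vanishes, so \eqref{eq:slab:problem} decouples into $\widetilde L_k\hat w_k=0$; by \ref{ass:slab:fundamental_solutions} (and since $\widetilde L_k$ is limit-point at $\infty$, $\tilde\chi_1$ being bounded) the only $L^2$-solutions are the multiples of $\tildePhi_k$ on $[R,\infty)$ and of $x\mapsto\tildePhi_k(-x)$ on $(-\infty,-R]$, using evenness of $\tilde\chi_1$. Matching $w$ and $w_x$ across $x=\pm R$ then forces the Dirichlet-to-Neumann conditions $\hat w_k'(R)=\tfrac{\tildePhi_k'(R)}{\tildePhi_k(R)}\hat w_k(R)$, $\hat w_k'(-R)=-\tfrac{\tildePhi_k'(R)}{\tildePhi_k(R)}\hat w_k(-R)$, so that \eqref{eq:slab:problem} becomes equivalent to a problem for $w$ on $[-R,R]\times\T$ with this nonlocal boundary condition, the exterior being reconstructed afterwards (\cref{sec:domain_restriction}).

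Under \ref{ass:slab:N}, the reduced problem is the Euler--Lagrange equation of
\begin{align*}
	\efct(w)&=\tfrac12\int_{-R}^R\!\!\int_\T\bigl(\abs{w_x}^2+(c^{-2}-1-\tilde\chi_1)\abs{w_t}^2\bigr)\der t\der x-\tfrac14\int_{-R}^R\!\!\int_\T\tilde\chi_3\,\calN(w)\der t\der x\\
	&\quad-\tfrac12\sum_{k\in\Zodd}\tfrac{\tildePhi_k'(R)}{\tildePhi_k(R)}\bigl(\abs{\hat w_k(R)}^2+\abs{\hat w_k(-R)}^2\bigr)
\end{align*}
on the Hilbert space $\espace$ of $\nicefrac{T}{2}$-antiperiodic functions with $w_x,w_t\in L^2([-R,R]\times\T)$ and finite boundary part, where $\calN(w)=w_t^4$ in the instantaneous and $\calN(w)=(\kappa\ast w_t^2)w_t^2$ in the retarded case. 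By \ref{ass:slab:elliptic} the quadratic volume term is nonnegative, and by \ref{ass:slab:N}/\eqref{eq:ass:kappa} together with $\tilde\chi_3<0$ on $[-R,R]$ the quartic term $-\tfrac14\int\tilde\chi_3\calN(w)=\tfrac14\int\abs{\tilde\chi_3}\calN(w)\ge0$ is convex. On a finite-frequency subspace $\espaceF$ (Fourier support in $\{\abs k\le K\}$, $K$ large) only finitely many modes of the boundary form are indefinite, and the quartic term dominates them, so $\efct$ is coercive in a suitable mixed $L^2$--$L^4$ norm and weakly lower semicontinuous — here the compact embedding $H^1([-R,R])\embeds C([-R,R])$ handles the traces — and the direct method gives a minimizer $\eargF$. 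It is nonzero because $\inf_{\espaceF}\efct<0$: testing with $\psi(x,t)=\bigl(\cosh(\lambda k_0x)/\cosh(\lambda k_0R)\bigr)\cos(k_0\omega t)$ for a $k_0\le K$ as in \ref{ass:slab:nontrivial_sol}, and using $c^{-2}-1-\tilde\chi_1\le c^{-2}-1-\essinf_{[-R,R]}\tilde\chi_1=(\lambda/\omega)^2$, an integration by parts shows the quadratic part of $\efct(s\psi)$ is at most $s^2\bigl(2\lambda k_0\tanh(\lambda k_0R)-2\tfrac{\tildePhi_{k_0}'(R)}{\tildePhi_{k_0}(R)}\bigr)$, which is strictly negative precisely by \ref{ass:slab:nontrivial_sol}, while the remaining quartic term is $O(s^4)>0$. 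Sending $K\to\infty$, with the boundary form and the minimizers kept under control via the uniform-in-$k$ bounds \ref{ass:slab:fundamental_solutions:estimates}, produces a nonzero critical point $w\in\espace$ of $\efct$, i.e.\ a weak solution of \eqref{eq:slab:problem} (and if \ref{ass:slab:nontrivial_sol} holds for infinitely many $k_0$, a variant of this yields infinitely many distinct solutions).

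Because $\tilde\chi_3 N(w_t)_t$ is only formally defined for $w\in\espace$ and the coefficient $\tilde\chi_1+1-c^{-2}$ degenerates where $\tilde\chi_1=c^{-2}-1$, I would next regularize (adding a small coercive fractional-in-$t$ term, as in \cref{sec:approximation}), derive estimates uniform in the regularization and the cutoff, and take limits; the sharp smoothing estimate on $\T$ — a torus Kenig--Ponce--Vega inequality (\cref{sec:fractional}) — together with \eqref{eq:ass:kappa} upgrades $w$ to enough regularity that $N(w_t)_t\in L^1_\mathrm{loc}$ and \eqref{eq:slab:problem} holds a.e., while the exterior pieces $\hat w_k(x)=\tfrac{\hat w_k(\pm R)}{\tildePhi_k(\pm R)}\tildePhi_k(\pm x)$ solve the linear equation classically and decay in $x$ since $\tildePhi_k\in H^2([R,\infty))$. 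Finally (\cref{sec:main_proof}) I would take $\bfE$ as in \eqref{eq:slab:ansatz} with this profile, $\bfB\coloneqq-\int\nabla\times\bfE$ (periodic primitive in $t$), $\bfH\coloneqq\mu_0^{-1}\bfB$ and $\bfD$ via \eqref{eq:material}, \eqref{eq:polarization}, and verify \cref{def:weak_maxwell} by reversing the first step: $\nabla\times\bfE=-\bfB_t$ and $\nabla\cdot\bfB=0$ hold by construction, $\nabla\cdot\bfD=\partial_z D_3=0$ by the $z$-independence and the polarization of the ansatz, and $\nabla\times\bfH=\bfD_t$ is exactly the once-integrated equation \eqref{eq:slab:problem}. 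The finite, uniformly bounded electromagnetic energy per unit square then follows by integrating $\bfD\cdot\bfE+\bfB\cdot\bfH$ over $\R\times[y_0,y_0+1]\times[z_0,z_0+1]$, using $T$-periodicity in $t$, translation invariance in $y,z$, and the $L^2$-in-$x$ decay of $w$ and its derivatives coming from $\tildePhi_k\in H^2$ and the uniform bounds \ref{ass:slab:fundamental_solutions:estimates}.

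The main obstacle is the variational step: the quadratic form of $\efct$ is strongly indefinite, the boundary form $-\tfrac12\sum_k\tfrac{\tildePhi_k'(R)}{\tildePhi_k(R)}(\cdots)$ changing sign with $k$ and being negative precisely on the (near-)resonant modes selected by \ref{ass:slab:nontrivial_sol}, so naive minimization on $\espace$ is not available — one must exploit the defocusing sign $\tilde\chi_3<0$ for coercivity on the finite-frequency spaces $\espaceF$, verify via the $\tanh$-inequality \ref{ass:slab:nontrivial_sol} that the infimum is strictly negative (hence the minimizer nontrivial), and control the limit $K\to\infty$. The secondary, more technical difficulty is giving rigorous meaning to the quasilinear term $\tilde\chi_3 N(w_t)_t$ and to the weak lower semicontinuity of $\efct$, where the convexity hypotheses \eqref{eq:ass:kappa} and the torus Kenig--Ponce--Vega inequality are indispensable. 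Since all of this is strictly easier than in the cylindrical setting, \cref{thm:slab:main} can also simply be deduced from the corresponding adaptations of the proof of \cref{thm:radial:main}.
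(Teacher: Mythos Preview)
Your proposal follows the same overall route as the paper (reduction to a bounded-domain variational problem via the fundamental solutions $\tildePhi_k$, minimization, the $\cosh$ test function from \ref{ass:slab:nontrivial_sol} for nontriviality, finite-frequency approximation for regularity, and reconstruction of the Maxwell fields). A few points where your sketch diverges from what the paper actually does:

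\begin{itemize}
\item You claim that ``naive minimization on $\espace$ is not available'' and that coercivity only holds on the finite-frequency spaces $\espaceF$. The paper shows the opposite: $\efct$ is coercive and weakly lower semicontinuous directly on the full space (\cref{prop:energy_properties}). The key is that \ref{ass:slab:fundamental_solutions:estimates} gives $\abs{\efctB(u)}\lesssim\norm{u(R,\impvar)}_{H^{1/2}(\T)}^2$, and the trace lemma (\cref{lem:trace}) provides $\norm{u(R,\impvar)}_{H^{1/2}}^2\le\eps\norm{u_x}_{L^2}^2+C(\eps)\normN{u_t}^2$; the quartic term $\normN{u_t}^4$ then absorbs $C(\eps)\normN{u_t}^2$. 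The trace is compact into $H^{1/2}(\T)$ (not via $H^1([-R,R])\embeds C$ as you suggest, which would miss the temporal gain), making $\efctB$ weakly continuous. So existence of a minimizer is obtained on $\espace$ without truncation.
\item The finite-frequency approximation (\cref{sec:approximation}) is used only to upgrade regularity, not for existence. It is not done by ``adding a small coercive fractional-in-$t$ term'' but by restricting to $\espaceF$, where $\DT$ is bounded, and testing the Euler--Lagrange equation against $\DT\eargF$; this yields the uniform $H^1$-trace bound (\cref{prop:minimizer_estimates}\ref{i:minimizer_estimate:reg4}) needed to show $w\in\pspace$ on the exterior.
\item The paper restricts to profiles even in $x$ and works on $[0,R]$ with a single boundary term at $R$ and Neumann condition at $0$; your two-sided formulation on $[-R,R]$ is equivalent but slightly heavier.
\end{itemize}

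These are refinements rather than gaps; your proposal would go through, and the paper's version is just a bit more direct.
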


\begin{theorem}\label{thm:slab:multiplicity}
	Assume \ref{ass:slab:first}--\ref{ass:slab:secondlast} hold for given $N, \tilde\chi_1, \tilde\chi_3, T$. If \ref{ass:radial:nontrivial_sol} holds for infinitely many $k_0 \in \Nodd$ (e.g. if \ref{ass:radial:nontrivial_sol:alt} is true) then there exist infinitely many distinct $T$-periodic real-valued weak solutions of the Maxwell problem \eqref{eq:maxwell}, \eqref{eq:material}, \eqref{eq:polarization} in the sense of \cref{def:weak_maxwell} with finite and uniformly bounded electromagnetic energy per unit square along the $y,z$-direction.
\end{theorem}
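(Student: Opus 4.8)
The plan is to follow the multiplicity scheme behind \cref{thm:radial:multiplicity}, adapted to the slab geometry as in \cref{sec:slab_modifications}: instead of minimizing the energy functional $E$ from the proof of \cref{thm:slab:main} on the full energy space $\espace$, one minimizes it on a family of invariant subspaces indexed by those odd frequencies at which \ref{ass:slab:nontrivial_sol} is valid, and the resulting solutions are then told apart by their temporal Fourier support. Write $S\subseteq\Nodd$ for the set of $k_0$ at which \ref{ass:slab:nontrivial_sol} holds; by hypothesis $S$ is infinite.

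For $k_0\in\Nodd$ let $\espacemult\subseteq\espace$ be the closed subspace of profiles $w(x,t)$ that are $\nicefrac{T}{2k_0}$-antiperiodic in $t$, equivalently those whose $k$-th temporal Fourier coefficient vanishes unless $k_0\mid k$ with $k/k_0$ odd. A sum of three odd multiples of $k_0$ is again an odd multiple of $k_0$, and convolution with $\kappa$ in the $\Nav$-case does not enlarge the temporal Fourier support; hence the nonlinearity maps $\espacemult$ into itself, and — the equation being autonomous in $t$ — $\espacemult$ is exactly the fixed-point set of the finite cyclic group of isometries generated by $\sigma\colon w(x,t)\mapsto -w(x,t+\nicefrac{T}{2k_0})$, under which $E$ is invariant since $E$ is built from time-translation-invariant terms that are even in $w$. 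By the principle of symmetric criticality, any critical point of $E|_{\espacemult}$ is a critical point of $E$ on $\espace$, hence — by the arguments already carried out for \cref{thm:slab:main} — a weak solution of \eqref{eq:slab:problem} and, after passing to the Maxwell fields, a weak solution of \eqref{eq:maxwell}, \eqref{eq:material}, \eqref{eq:polarization} in the sense of \cref{def:weak_maxwell} with finite, uniformly bounded energy per unit square in the $(y,z)$-directions.

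Next I would show that for every $k_0\in S$ the restricted functional $E|_{\espacemult}$ has a nonzero critical point $w_{k_0}$. The point is that, under the identification of $\espacemult$ with the space of profiles of period $\nicefrac{T}{k_0}$ and base frequency $k_0\omega$, the restricted problem is again a slab breather problem of the form treated in \cref{thm:slab:main}: the relevant operators are $\widetilde L_{jk_0}$ with fundamental solutions $\tildePhi_{jk_0}$ for $j\in\Nodd$, and in the $\Nav$-case the kernel becomes $\kappa_{k_0}(\tau)\coloneqq\sum_{l=0}^{k_0-1}\kappa(\tau+l\nicefrac{T}{k_0})$, which still satisfies \eqref{eq:ass:kappa} — it is $C^\alpha$, even and positive, one has $\max\kappa_{k_0}/\min\kappa_{k_0}\le\max\kappa/\min\kappa$, and the $j$-th Fourier coefficient of $\kappa_{k_0}$ equals $k_0\hat\kappa_{jk_0}$, so nonnegativity of the Fourier transform is inherited, cf.\ \cref{lem:convexity_conditions}. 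Since $jk_0\in\Nodd$ whenever $j\in\Nodd$, assumptions \ref{ass:slab:first}--\ref{ass:slab:secondlast} for the rescaled problem follow from those for the original one (the $\liminf$ and $\sup$ in \ref{ass:slab:secondlast} can only improve on the sparser index set), while \ref{ass:slab:nontrivial_sol} for the rescaled problem at index $1$ is exactly \ref{ass:slab:nontrivial_sol} for the original problem at index $k_0\in S$. Thus \cref{thm:slab:main} — equivalently, the minimization argument in its proof run verbatim on $\espacemult$ — yields the desired nonzero $w_{k_0}\in\espacemult$ solving \eqref{eq:slab:problem}.

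For distinctness, observe that the Fourier coefficients of $w_{k_0}$ vanish unless $k_0\mid k$ with $k/k_0$ odd, and every admissible shift — a global time shift, or, when $\kappa\equiv 1$, an independent time shift of each Fourier mode — multiplies each Fourier coefficient by a number of modulus $1$, hence preserves this support condition. If $\{w_{k_0}:k_0\in S\}$ fell into only finitely many shift-equivalence classes, then by pigeonhole some \emph{nonzero} solution $v$ would be a shift of $w_{k_0}$ for infinitely many $k_0\in S$, forcing every nonzero frequency of $v$ to be divisible by arbitrarily large integers, which is absurd; hence $v=0$, a contradiction. Therefore $\{w_{k_0}:k_0\in S\}$ contains infinitely many pairwise distinct solutions, each producing via \cref{thm:slab:main} a distinct weak solution of Maxwell's equations with the stated energy localization, which is what is asserted. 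The reasoning is essentially that of \cref{thm:radial:multiplicity} with the slab adaptations of \cref{sec:slab_modifications}; I expect the only genuinely technical points to be the bookkeeping that \eqref{eq:ass:kappa} is stable under the averaging $\kappa\mapsto\kappa_{k_0}$ and that \ref{ass:slab:secondlast} survives restriction to the sparse set $k_0\Nodd$, together with the routine verification that $\espacemult$ really is the fixed-point set of an isometric compact group action so that symmetric criticality applies — no new analytic estimate beyond those in \cref{thm:slab:main} should be required.
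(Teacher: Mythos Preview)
Your proposal is correct and follows the same scheme as the paper: restrict to the closed subspaces $\espacemult$ of $\tfrac{T}{2k_0}$-antiperiodic profiles, minimize there, and distinguish the resulting solutions by their temporal period (equivalently, Fourier support). The paper's version is shorter because it never rescales to a new problem on $\T_{T/k_0}$ with kernel $\kappa_{k_0}$; instead it simply observes that the nonlinearity preserves $\espacemult$ and then minimizes $E\vert_{\espacemult}$ directly, so that coercivity, weak lower semicontinuity, and convexity of $\efctN$ are inherited from the ambient space for free.

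Your detour through $\kappa_{k_0}$ is where the only imprecision lies: you verify the two \emph{sufficient} conditions of \cref{lem:convexity_conditions} for $\kappa_{k_0}$, but assumption~\eqref{eq:ass:kappa} is the abstract convexity of $v\mapsto\int_\T(\kappa\ast v^2)v^2$, which need not come from either of those conditions. This is not a genuine gap, since the restriction of a convex functional to a linear subspace is convex --- so $\efctN\vert_{\espacemult}$ is convex without any computation involving $\kappa_{k_0}$ --- but it does show that the rescaling step buys you nothing and is best omitted. Your use of symmetric criticality and the pigeonhole/Fourier-support argument for distinctness are valid alternatives to the paper's more direct ``compatibility of the nonlinearity'' and ``minimal period tends to zero'' formulations; they encode the same content.
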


\subsection{Numerical illustrations, discussion, and some open questions}

In the following we apply the numerical scheme outlined in \cref{sec:numerics} and show results for the profile $w_t$ of the electric field, cf. \eqref{eq:radial:ansatz} or \eqref{eq:slab:ansatz}. The breathers we obtain analytically are ground states in the sense that they are minimizers of the energy functional $E$ discussed in \cref{sec:domain_restriction}. Here we show approximations to these ground states. We consider particular potentials $\tilde \chi_1$ and $\tilde \chi_1$ which are compatible with the parameter choices of Theorem~\ref{thm:main_example}. For the periodic case $\tilde \chi_1^\ast = \tilde \chi_1^{\mathrm{per}}$ we show in \cref{fig:periodic:breathers} four images which cover both choices of the nonlinearity (time-averaged and instantaneous) and both choices of the geometry (cylindrical and slab). For the step case $\tilde \chi_1^\ast = \tilde \chi_1^{\mathrm{step}}$ also four images covering both types of nonlinearities and both types of geometries are shown in \cref{fig:step:breathers}.

\begin{figure}
	\centering

	\includegraphics[width=.81\linewidth]{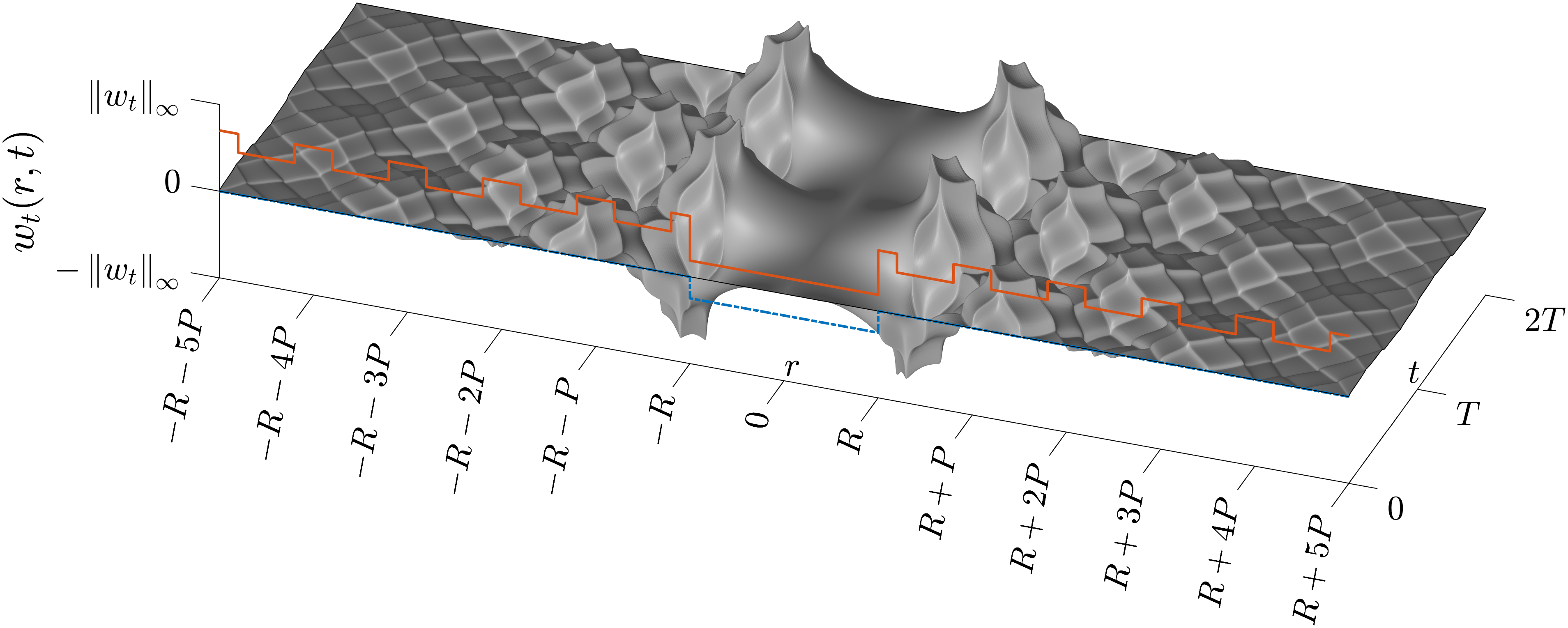}	
	\vspace{-4ex}

	\includegraphics[width=.81\linewidth]{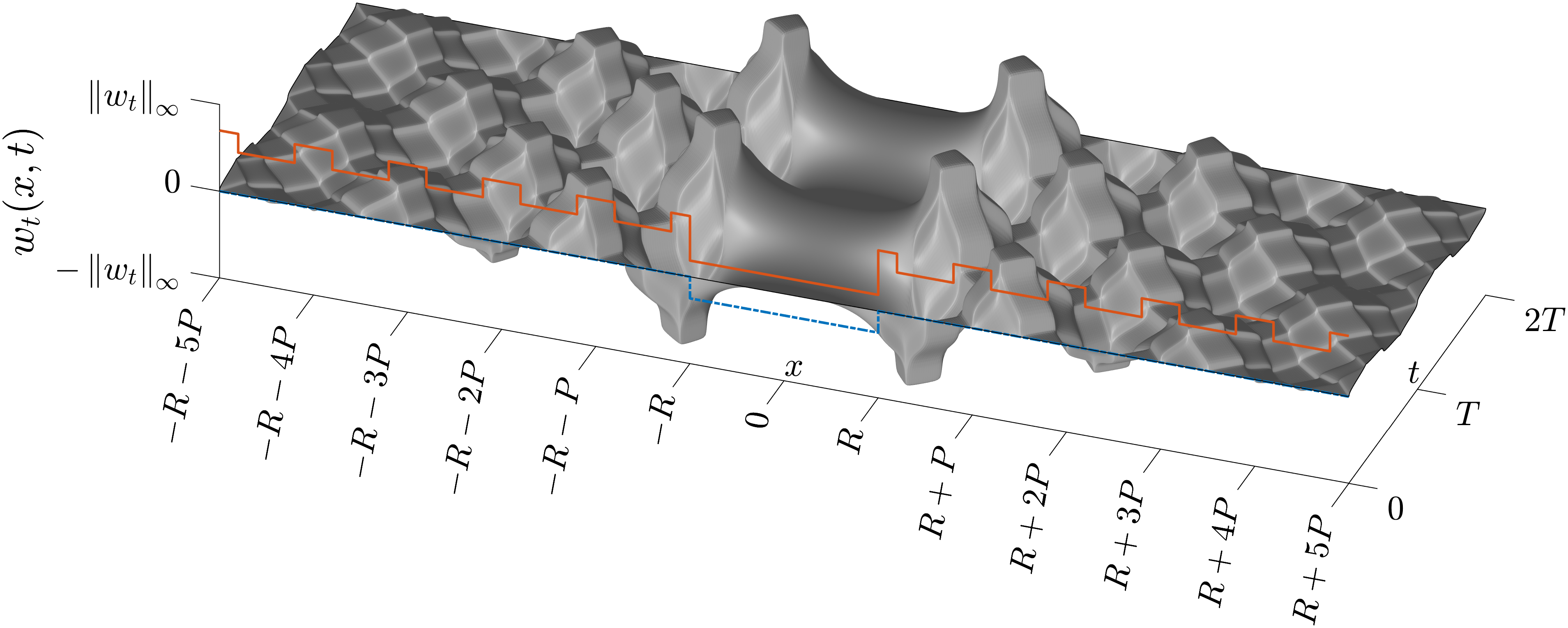}	
	\vspace{-4ex}
	
	\includegraphics[width=.81\linewidth]{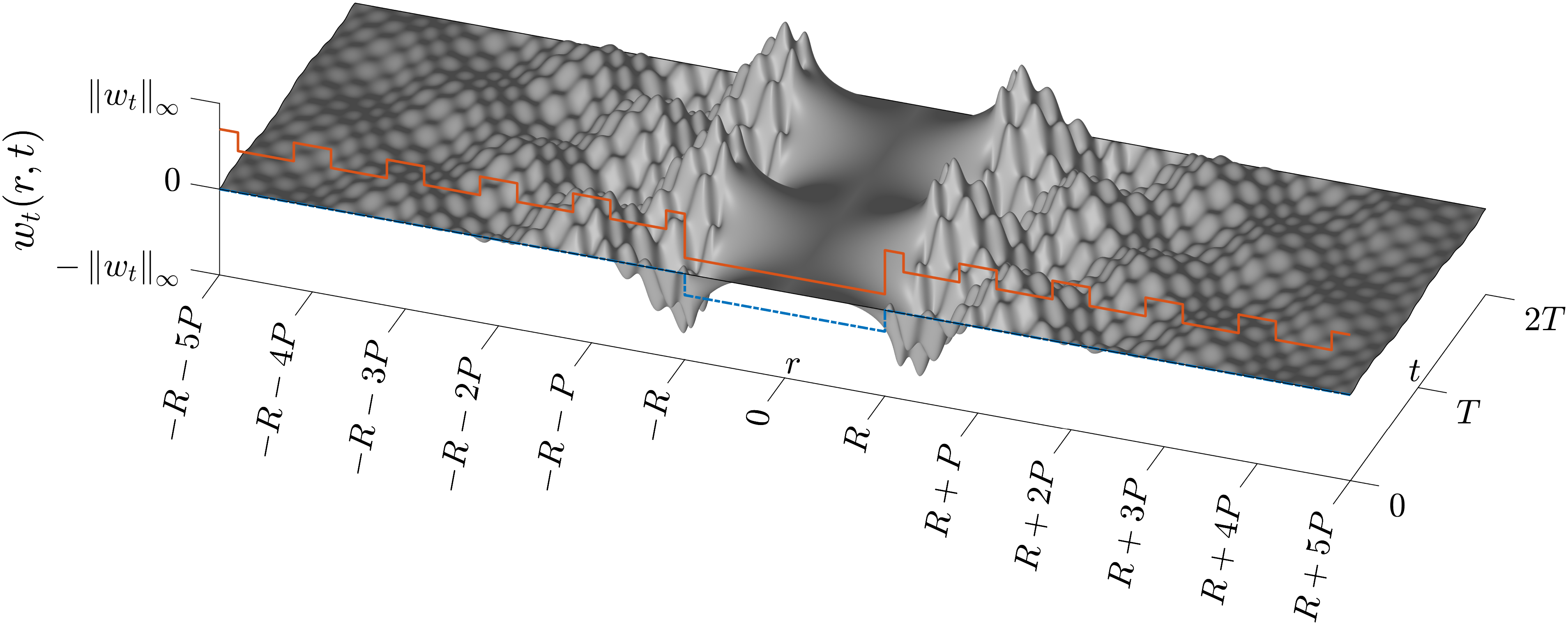}	
	\vspace{-5ex}

	\includegraphics[width=.81\linewidth]{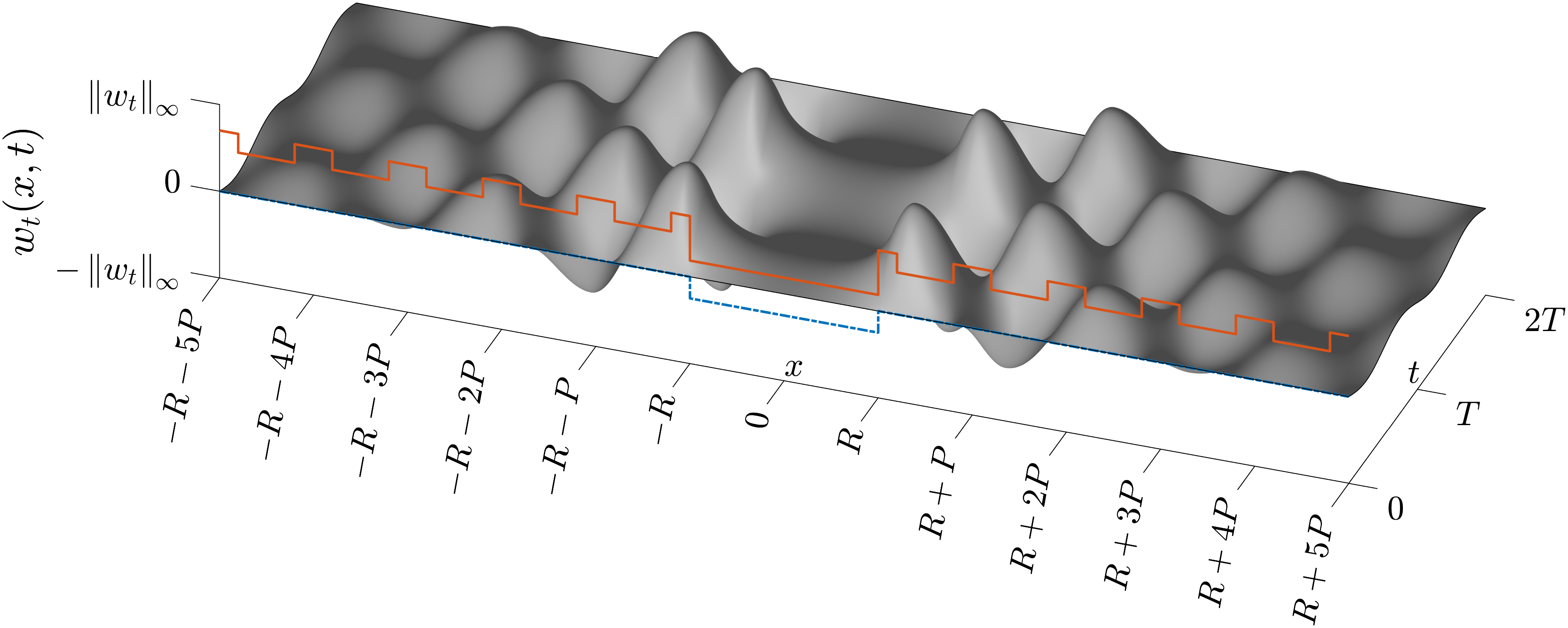}	
	\vspace{-1ex}

	\caption{Periodic potential outside $[-R,R]$: intensity (approximated) of electric field of breather solutions to \cref{thm:main_example} in reduced coordinates (cf. \eqref{eq:radial:ansatz} and \eqref{eq:slab:ansatz}) over $2$ time periods, with potentials $\tilde \chi_1$ (orange) and $\tilde \chi_3$ (blue).
	Parameters are $T = 4, \omega = \frac{\pi}{2}, c = \frac{2}{3}, a = \frac{45}{16}, b = \frac{35}{18}, d = \frac{3}{4}, R = P = 2, \theta = \frac{2}{5}, \gamma = m = n = 1$, $\kappa \equiv 1$. Top to bottom:
	$\Nins$ and cylindrical geometry; $\Nins$ and slab geometry; $\Nav$ and cylindrical geometry with $R = \frac{43}{20}$ instead; $\Nav$ and slab geometry.}
	\label{fig:periodic:breathers}
\end{figure}

\begin{figure}
	\centering
	
	\includegraphics[width=.71\linewidth]{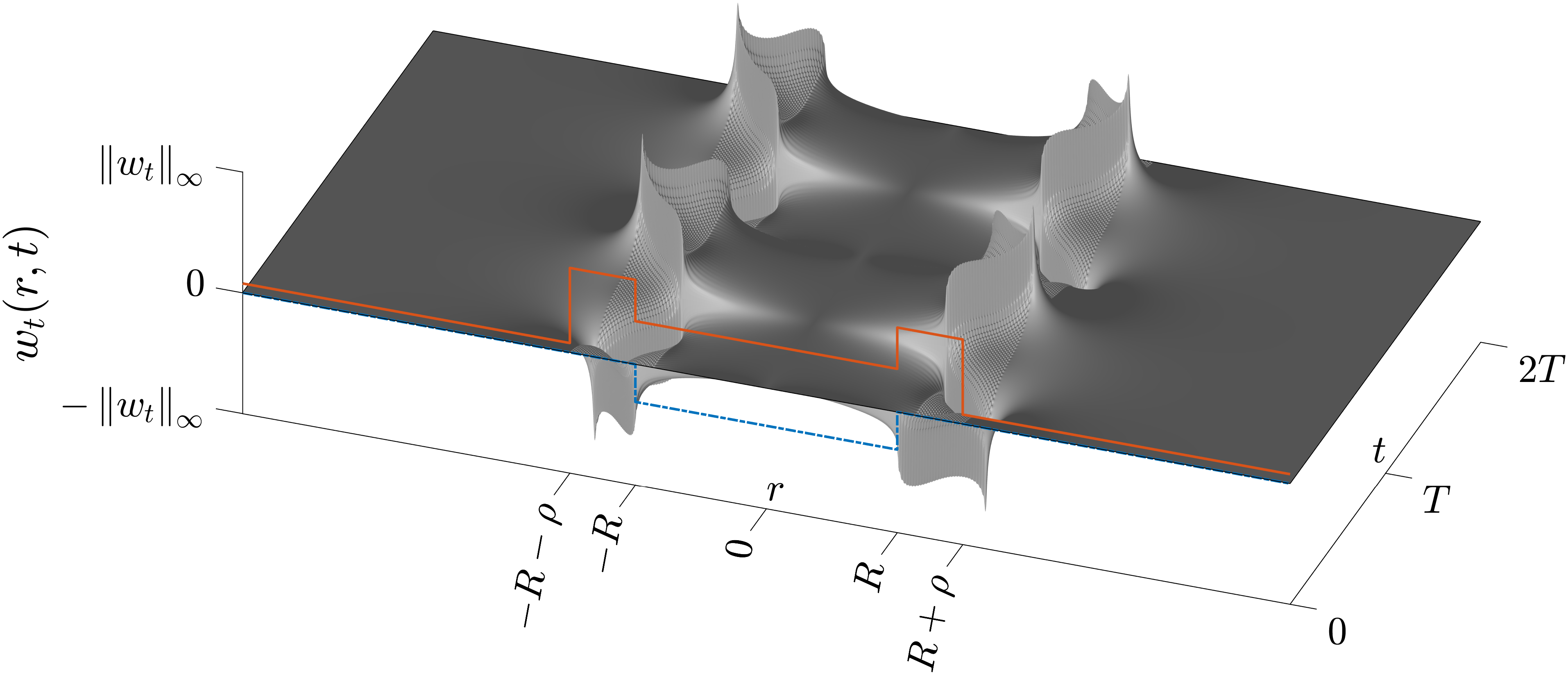}	
	\vspace{-2ex}

	\includegraphics[width=.71\linewidth]{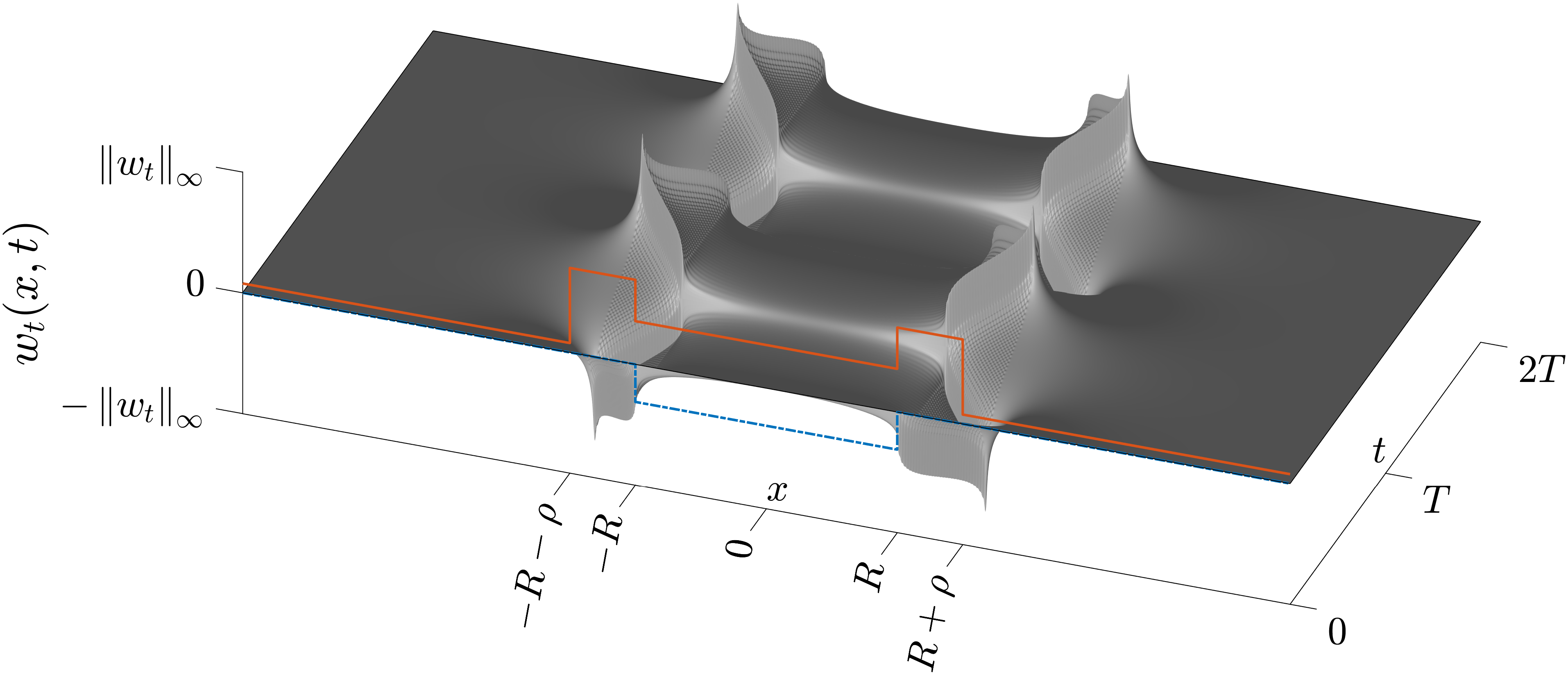}	
	\vspace{-2ex}

	\includegraphics[width=.71\linewidth]{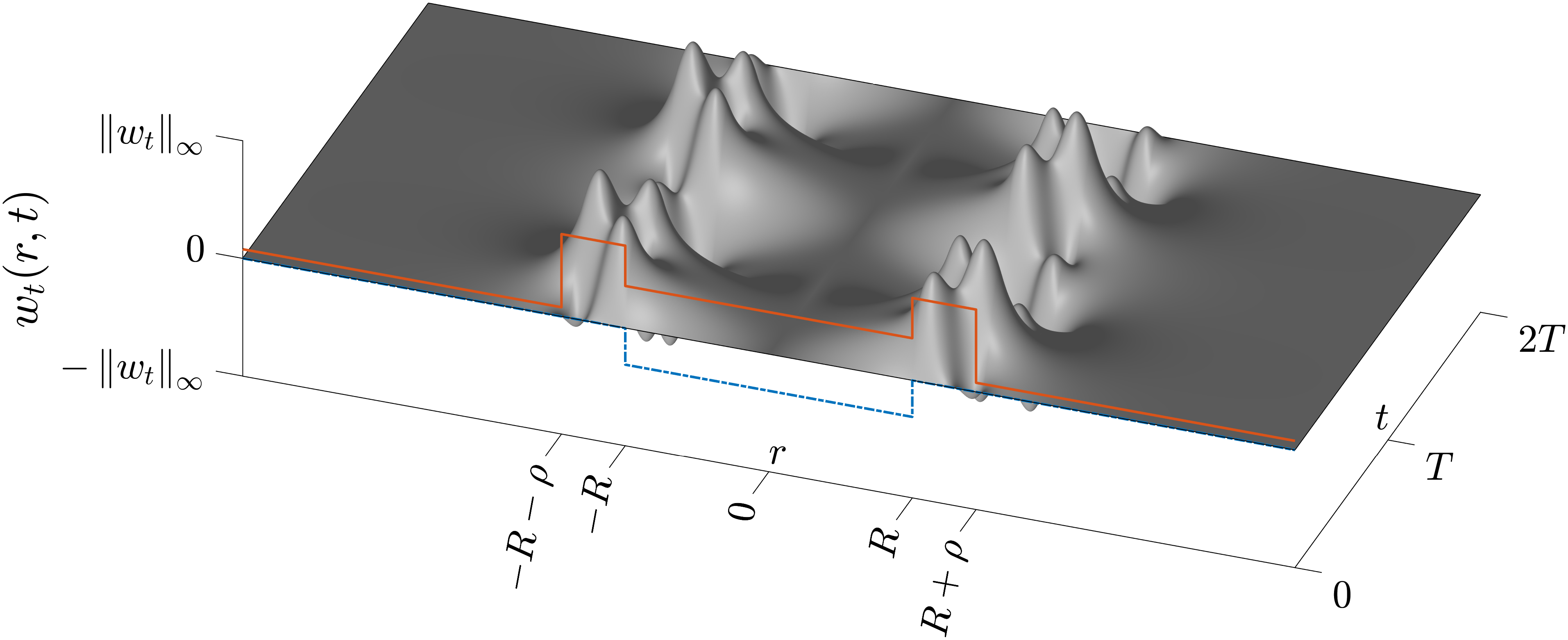}	
	\vspace{-1ex}
	
	\includegraphics[width=.71\linewidth]{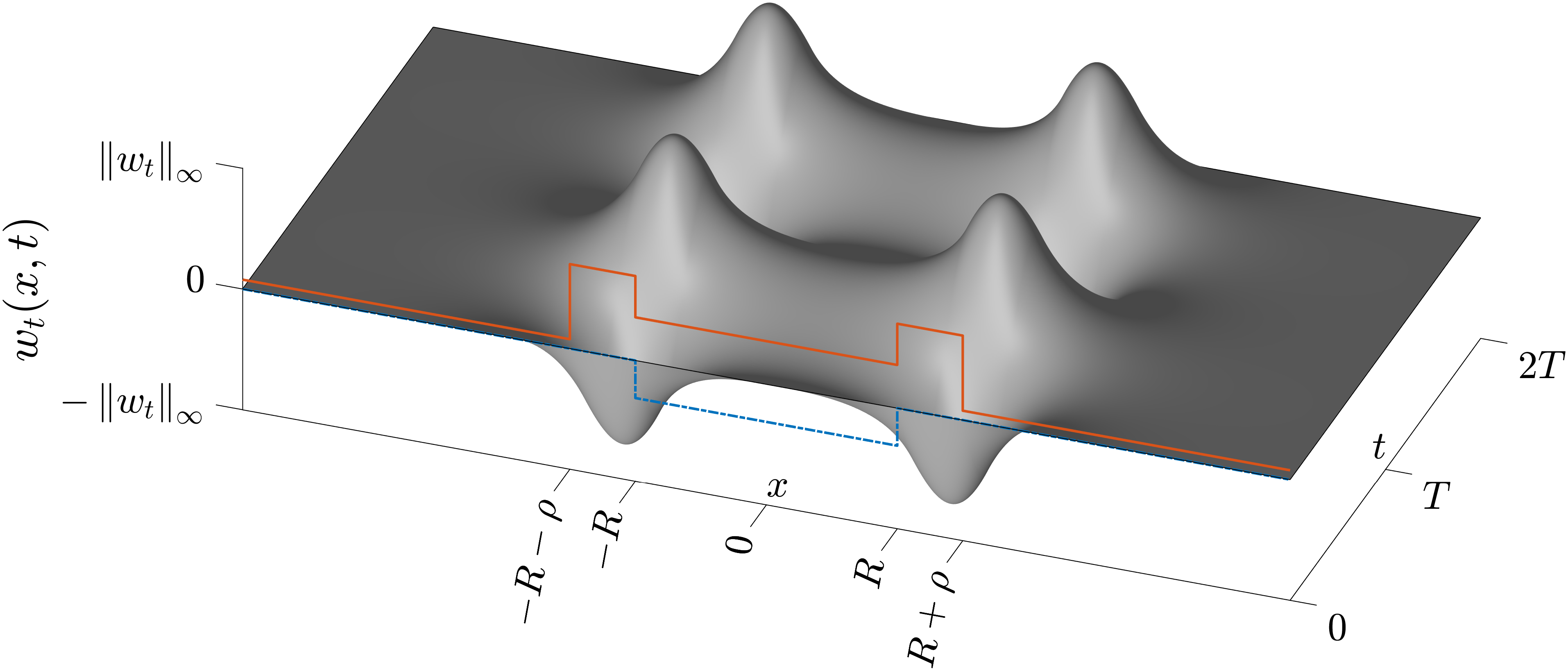}	
	\vspace{-1ex}

	\caption{Step potential outside $[-R,R]$: 
	intensity (approximated) of electric field of breather solutions to \cref{thm:main_example} in reduced coordinates (cf. \eqref{eq:radial:ansatz} and \eqref{eq:slab:ansatz}) over $2$ time periods, with potentials $\tilde \chi_1$ (orange) and $\tilde \chi_3$ (blue).
	Parameters are $T = 4, \omega = \frac{\pi}{2}, c = \frac{2}{3}, a = \frac{9}{4}, b = \frac{1}{4}, d = \frac{23}{20}, R = 2, \rho = \gamma = m = n = 1, \kappa \equiv 1$. Top to bottom: 
	$\Nins$ and cylindrical geometry; $\Nins$ and slab geometry; $\Nav$ and cylindrical geometry with $R = \frac{9}{4}$ instead; $\Nav$ and slab geometry.}
	\label{fig:step:breathers}
\end{figure}

The following observations can be made leading to open questions or conjectures:
\begin{itemize}
\item Although it is in general impossible to tell whether a computed solution is a global or just a local minimizer, the numerical minimization scheme in the instantaneous case always ends up in the same state (up to time shifts) independently of the initial state. One may therefore conjecture that ground states are unique up to shifts in time. Moreover, they seem to be even in time.
\item Ground states for time-averaged nonlinearities seem to be more smooth than for instantaneous nonlinearities. Can one show improved regularity of ground states for time-averaged nonlinearities?
\item For time-averaged nonlinearities one can consider monochromatic solutions with frequencies $k\omega$ provided $\hat\kappa_{2k}=0$ (see discussion below). In the cylindrical setting we found both monochromatic and polychromatic breathers (depending on the chosen parameters), whereas in the slab setting we only found monochromatic breathers. Can one prove that in the slab setting ground states are monochromatic? Under which parameter conditions in the cylindrical setting are ground states monochromatic/polychromatic?
\end{itemize}

A monochromatic breather has a profile $w$ of the form
\begin{align*}
	w(r, t) = \Re[v(r) e_k(t)] = \tfrac12 v(r) e_k(t) + \tfrac12 \overline{v(r)} e_{-k}(t)
\end{align*}
for some function $v$. It is compatible with the nonlinearity in the time-averaged case if $\hat \kappa_{2k} = 0$, since then the nonlinearity
\begin{align*}
	\Nav(w) 
	= \tfrac14 \Re[\hat \kappa_{2k}(v^3 e_{3k} + \abs{v}^2 v e_k) + 2 \hat \kappa_0 \abs{v}^2 v e_k]
	= \tfrac{\hat \kappa_0}{2} \Re[\abs{v}^2 v e_k]
\end{align*}
is also monochromatic along monochromatic functions. The bottom images in \cref{fig:periodic:breathers,fig:step:breathers} always depict monochromatic breathers (for the slab geometry, time-averaged nonlinearity with $\kappa \equiv 1$, and frequency index $k=1$). All other images show polychromatic breathers. Furthermore, for the time-averaged nonlinearity one can state that if there exists a nontrivial breather $w$ then there also exists a monochromatic breather with frequency index $k\in\Nodd$ provided $\hat \kappa_{2k} = 0$ and $\int_0^\infty L_k \hat w_k \cdot\hat w_k \,r\mathrm dr<0$.  

The instantaneous nonlinearity $N = \Nins$ however is not compatible with monochromatic breathers, hence all breathers for $N = \Nins$ are necessarily polychromatic, and they have infinitely many excited frequency indices $k$.

\section{Reduction to a bounded domain problem}\label{sec:domain_restriction}

From now on we assume that assumptions \ref{ass:radial:first}--\ref{ass:radial:last} are satisfied, and we set 
\begin{align}\label{eq:def:VandGamma}
	V(r) \coloneqq - (\tilde \chi_1(r) + 1 - c^{-2})
	\quad\text{and}\quad
	\Gamma(r) \coloneqq - \tilde \chi_3(r),
\end{align}
allowing us to write \eqref{eq:radial:problem} as
\begin{align}\label{eq:radial:VGproblem}
	- w_{rr} - \tfrac1r w_r + \tfrac{1}{r^2} w - V(r) w_{tt} - \Gamma(r) N(w_t)_t = 0,
	\quad r \in [0, \infty), t \in \T
\end{align}
where $V \geq 0, \Gamma \geq 0$ on $[0, R]$ due to \ref{ass:radial:elliptic}. We will show that \eqref{eq:radial:VGproblem} can be reduced to a variational problem \eqref{eq:radial:bounded_problem} below, where the conditions \eqref{eq:ass:kappa} on $\kappa$ are essential.

We consider functions $w$ which are $\nicefrac{T}{2}$--antiperiodic in time. 
This is compatible with the structure of \eqref{eq:radial:VGproblem}, in particular with the cubic nonlinearity, and we use the suffix ``anti'' to denote spaces consisting of functions which are $\nicefrac{T}{2}$--antiperiodic in time.

Using the fundamental solutions $\phi_k$   given by \ref{ass:radial:fundamental_solutions} we can further make the ansatz
\begin{align*}
	w(r, t) = \begin{cases}
		u(r, t), & 0 \leq r < R, \\
		\sum_{k \in \Zodd} \alpha_k \phi_k(r) e_k(t), & r > R.
	\end{cases}
\end{align*}
where $\alpha_k \in \C$ and $u \in \Hradanti[1]{[0, R]\times \T}$ are to be determined. Note that $\alpha_{-k} = \overline{\alpha_k}$ since $w$, $\phi_k$ being real-valued together with $\phi_{-k}=\phi_k$ imply $\alpha_{-k} \phi_{-k} = \overline{\alpha_k \phi_k}$.

We want to ensure that $w$ and $w_r$ taken from inside and outside match at $r = R$. This leads to the following conditions:
\begin{align}\label{eq:loc:deducing_boundary_data}
	u(R, t) = \sum_{k \in \Zodd} \alpha_k \phi_k(R) e_k(t), 
	\qquad
	u_r(R, t) = \sum_{k \in \Zodd} \alpha_k \phi_k'(R) e_k(t).
\end{align}
By assumption~\ref{ass:radial:fundamental_solution:estimates} we have $\phi_k(R) \neq 0$ for almost all $k \in \Zodd$. Let 
\begin{align*}
	\singularK \coloneqq \set{k \in \Zodd \colon \phi_k(R) = 0} \subseteq \Zodd	
\end{align*}
denote the finite exclusion set. The exceptional indices $k \in \singularK$ have to be treated differently than the regular indices $k \in \regularK \coloneqq \Zodd \setminus \singularK$.
Note also that due to assumption~\ref{ass:radial:fundamental_solution:estimates} there exist constants $c^\star, C^\star > 0$ such that
\begin{align}\label{eq:ass:radial:radial:estimates:reformulation}
	\abs{\phi_k(R)} \geq c^\star \norm{\phi_k}_{\Lrad{[R,\infty)}}, 
	\quad
	\abs{\phi_k'(R)} \leq C^\star \abs{k} \norm{\phi_k}_{\Lrad{[R,\infty)}}, 
	\quad
	\frac{\abs{\phi_k'(R)}}{\abs{\phi_k(R)}} \leq \frac{C^\star}{c^\star} \abs{k}
\end{align}
hold for all $k \in \regularK$.

Let us show the difference between $\singularK$ and $\regularK$:
for $k \in \singularK$ equations~\eqref{eq:loc:deducing_boundary_data} reduce to
\begin{align*}
	\hat u_k(R) = 0 \quad\text{and}\quad \alpha_k = \frac{\hat u_k'(R)}{\phi_k'(R)},
\end{align*}
whereas for $k \in \regularK$ we have 
\begin{align*}
	\alpha_k = \frac{\hat u_k(R)}{\phi_k(R)}
	\quad\text{and}\quad
	\hat u_k'(R) = \frac{\phi_k'(R)}{\phi_k(R)} \hat u_k(R).
\end{align*}

Thus we formally obtain the following boundary value problem for $u$:
\begin{align}\label{eq:radial:bounded_problem}
	\begin{cases}
		- u_{rr} - \tfrac{1}{r} u_r + \tfrac{1}{r^2} u - V(r) u_{tt} - \Gamma(r) N(u_t)_t = 0 \text{ in } [0, R] \times \T,
		\\ \hat u_k'(R) = \frac{\phi_k'(R)}{\phi_k(R)} \hat u_k(R) \text{ for } k \in \regularK,
		\\ \hat u_k(R) = 0 \text{ for } k \in \singularK.
	\end{cases}
\end{align}
The formal calculation will be justified in the proof of Theorem~\ref{thm:radial:main} when we establish the weak-solution property. Problem \eqref{eq:radial:bounded_problem} again is variational and solutions are critical points of the functional $\efct$ given by
\begin{equation}\label{eq:radial:energy}
	\begin{aligned} 
		& \efct(u) = \efctI(u) - \efctB(u) \quad \mbox{ where }
		\\ & \efctI(u) \coloneqq \int_{[0, R] \times \T} \left( \tfrac12 u_r^2 + \tfrac12 \left(\tfrac{1}{r} u\right)^2 + \tfrac12 V(r) u_t^2 + \tfrac14 \Gamma(r) N(u_t) u_t\right) \der[r] (r, t) 
		\\ & \efctB(u) \coloneqq \frac{R}{2} \sum_{k \in \regularK} \frac{\phi_k'(R)}{\phi_k(R)} \abs{\hat u_k(R)}^2
	\end{aligned}
\end{equation}
subject to the constraints $\hat u_k(R) = 0$ for $k \in \singularK$.
Indeed, for a (sufficiently regular) solution $u$ and a sufficiently smooth function $\varphi \colon [0, R] \times \T \to \R$ we have
\begin{align*}
	0 &= \int_{[0, R] \times \T} \left(- u_{rr} - \tfrac{1}{r} u_r + \tfrac{1}{r^2} u - V(r) u_{tt} - \Gamma(r) N(u_t)_t\right) \varphi \der[r](r, t)
	\\ &= \int_{[0, R] \times \T} \left(u_r \varphi_r + \tfrac{1}{r^2} u \varphi + V(r) u_t \varphi_t + \Gamma(r) N(u_t) \varphi_t \right) \der[r](r, t)
	- R \int_\T u_r(R, t) \varphi(R, t) \der t
	\\ &= \int_{[0, R] \times \T} \left(u_r \varphi_r + \tfrac{1}{r^2} u \varphi + V(r) u_t \varphi_t + \Gamma(r) N(u_t) \varphi_t \right) \der[r](r, t)
	- R \sum_{k \in \regularK} \frac{\phi_k'(R)}{\phi_k(R)} \hat u_k(R) \overline{\hat \varphi_k(R)}
	\\ &= \efct'(u)[\varphi]. 
\end{align*}
Here we used $\hat \varphi_k(R) = 0$ for $k \in \singularK$ and that by Plancherel 
\begin{align*}
	R\int_\T u_r(R, t) \varphi(R, t) \der t = R\int_\T u_r(R, t) \overline{\varphi}(R, t) \der t =R\sum_{k \in \regularK} \frac{\phi_k'(R)}{\phi_k(R)} \hat u_k(R) \overline{\hat \varphi_k(R)}
\end{align*}
so that this quantity is real and thus coincides with $\efctB'(u)[\varphi] = \Re \left[R\sum_{k \in \regularK} \frac{\phi_k'(R)}{\phi_k(R)} \hat u_k(R) \overline{\hat \varphi_k(R)} \right]$. 
We further used that $\efctN(u) \coloneqq \int_{[0, R] \times \T} \left(\tfrac14 \Gamma(r) N(u_t) u_t\right) \der[r](r, t)$ satisfies
\begin{align*}
	\efctN'(u)[\varphi] = \int_{[0, R] \times \T} \left(\Gamma(r) N(u_t) \varphi_t\right) \der[r](r, t).
\end{align*}
Indeed, for $N = \Nins$ we have
\begin{align*}
	\efctN(u) = \tfrac14 \int_{[0, R] \times \T} \left(\Gamma(r) u_t^4\right) \der[r](r, t),
	\quad\text{hence}\quad
	\efctN'(u)[\varphi] = \int_{[0, R] \times \T} \left(\Gamma(r) u_t^3 \varphi_t\right) \der[r](r,t).
\end{align*}
If $N = \Nav$, using that $\kappa$ is even by \eqref{eq:ass:kappa} one has
\begin{align*}
	\int_\T (\kappa \ast (u_t \varphi_t)) u_t^2 \der t
	&= \int_\T \int_\T \kappa(t-\tau) u_t(\tau) \varphi_t(\tau) u_t(t)^2 \der \tau \der t
	\\ &= \int_\T \int_\T \kappa(\tau - t) u_t(\tau) \varphi_t(\tau) u_t(t)^2 \der t \der \tau
	= \int_\T (\kappa \ast u_t^2) u_t \varphi_t \der \tau,
\end{align*}
and therefore $\efctN(u) = \tfrac14 \int_{[0,R]\times\T} \left(\Gamma(r) (\kappa\ast u_t^2) u_t^2 \right) \der[r](r,t)$ does satisfy
\begin{align*}
	\efctN'(u)[\varphi] 
	&= \tfrac12 \int_{[0, R] \times \T} \left( \Gamma(r) (\kappa \ast u_t\varphi_t) u_t^2 + \Gamma(r) (\kappa \ast u_t^2) u_t\varphi_t \right) \der[r](r,t)
	\\ &= \int_{[0, R] \times \T} \left( \Gamma(r) (\kappa \ast u_t^2) u_t\varphi_t \right) \der[r](r,t).
\end{align*}

As a next step we properly define the functional $\efct$ and investigate its properties.

\begin{definition}
	Define the norm $\normN{}$ depending on the nonlinearity $N$ by
	\begin{align*}
		\normNins{v} \coloneqq \left(\int_{[0, R] \times \T} v^4 \der[r](r, t)\right)^{\nicefrac14} = \norm{v}_{\Lrad[4]{[0, R] \times \T}}
	\end{align*}
	and 
	\begin{align*}
		\normNav{v} \coloneqq \left(\int_0^R \left( \int_\T v^2 \der t \right)^2 \der[r]r \right)^{\nicefrac14} = \norm{v}_{\Lrad[4]{[0, R]; L^2(\T)}}.
	\end{align*}
\end{definition}

\begin{remark}\label{rem:normN}
	We have 
	\begin{align*}
		\norm{v}_{\Lrad{[0, R] \times \T}} \leq \tfrac{\sqrt{R}}{\sqrt[4]{2}} \normNav{v},
		\quad
		\normNav{v} \leq \normNins{v},
		\quad\text{and}\quad
		\int_{[0, R] \times \T} \Gamma(r) N(v) v \der[r](r, t) \eqsim \normN{v}^4.
	\end{align*}
	The first two estimates immediately follow from H\"older's inequality. The last estimate is clear for $N = \Nins$ since $N(v)v = v^4$ and $\Gamma$ is bounded and strictly positive by assumption~\ref{ass:radial:VandGamma}. For $N = \Nav$ we have
	\begin{align*}
		\int_{\T} N(v) v \der(t) = \int_\T \int_\T \kappa(t - \tau) v^2(r, \tau) v^2(r, t) \der \tau \der t
	\end{align*}
	so that 
	\begin{align*}
		\essinf_{[0, R]} \Gamma \cdot \min \kappa \cdot \normN{v}^4 \leq \int_{[0, R] \times \T} \Gamma(r) N(v) v \der[r](r, t) \leq \esssup_{[0, R]} \Gamma \cdot \max \kappa \cdot \normN{v}^4.
	\end{align*}
\end{remark}

\begin{proposition}\label{prop:energy_properties}
	The functionals $\efct, \efctI$, $\efctB$ given by \eqref{eq:radial:energy} are well-defined and differentiable on the reflexive Banach space
	\begin{align*}
		\espace \coloneqq \set{u \in W^{1,1}_{\mathrm{loc},\mathrm{anti}}([0, R] \times \T) \middle\vert
		\begin{array}{l}
			u_r, \tfrac{1}{r} u \in \Lrad{[0, R] \times \T}, \\
			\normN{u_t} < \infty, \hat u_k(R) = 0 \text{ for } k \in \singularK
		\end{array}}
	\end{align*}
	with norm
	\begin{align*}
		\norm{u}_{\espace} \coloneqq \norm{u_r}_{\Lrad{[0, R] \times \T}} + \norm{\tfrac{1}{r} u}_{\Lrad{[0, R] \times \T}} + \normN{u_t}.
	\end{align*}
	The derivative is given by 
	\begin{align*}
		\efct'(u)[\varphi] 
		= \int_{[0, R] \times \T} \left( u_r \varphi_r + \tfrac{1}{r^2} u \varphi + V(r) u_t \varphi_t + \Gamma(r) N(u_t) \varphi_t \right) \der[r](r,t)
		- R\sum_{k \in \regularK} \frac{\phi_k'(R)}{\phi_k(R)} \hat u_k(R) \overline{\hat \varphi_k(R)}.
	\end{align*}
	Furthermore, $\efctI$ is sequentially weakly lower semicontinuous, $\efctB$ is sequentially weakly continuous, and $\efct$ is sequentially weakly lower semicontinuous as well as coercive. Therefore $\efct$ attains its minimum $\emin \coloneqq \inf \efct = \efct(\earg)$ and $\earg$ is a critical point of $\efct$. 
\end{proposition}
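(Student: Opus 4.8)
The plan is to verify each claim in Proposition~\ref{prop:energy_properties} in turn: well-definedness and reflexivity of $\espace$, differentiability of the three functionals with the stated derivative, the (semi)continuity properties, coercivity, and finally the existence of a minimizer via the direct method together with the fact that minimizers are critical points.

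\textbf{Step 1: The space $\espace$ and its norm.} First I would check that $\norm{\impvar}_{\espace}$ is indeed a norm. The only nonobvious point is definiteness: if $\norm{u}_{\espace}=0$ then $\tfrac1r u=0$ on $[0,R]\times\T$, hence $u=0$; note that this uses $r\le R<\infty$ so that $\tfrac1r u$ controls $u$ itself. For reflexivity I would exhibit $\espace$ as (isometrically isomorphic to) a closed subspace of the reflexive product space $\Lrad{[0,R]\times\T}\times\Lrad{[0,R]\times\T}\times X_N$, where $X_N=\Lrad[4]{[0,R]\times\T}$ or $\Lrad[4]{[0,R];L^2(\T)}$ (both reflexive), via $u\mapsto(u_r,\tfrac1r u,u_t)$. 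Closedness follows because $W^{1,1}_{\mathrm{loc}}$-convergence is preserved under these bounds (the three components determine $u$ up to the constraints, which are themselves closed conditions since $u\mapsto\hat u_k(R)$ is continuous on $\espace$ — this trace continuity is presumably one of the trace inequalities from \cref{sec:functional_av}, which I would invoke). Completeness of $\espace$ is then inherited.

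\textbf{Step 2: Well-definedness and differentiability.} The quadratic terms in $\efctI$ are manifestly finite and smooth (they are continuous quadratic forms in the $\espace$-norm components). The nonlinear term $\efctN(u)=\tfrac14\int\Gamma(r)N(u_t)u_t\,r\,dr\,dt$ is finite and $\eqsim\normN{u_t}^4$ by Remark~\ref{rem:normN}, and its derivative was already computed in the excerpt for both $N=\Nins$ and $N=\Nav$ (using evenness of $\kappa$); differentiability in the Fréchet sense follows from the polynomial structure together with Hölder, so I would just cite that computation. For $\efctB$, by the trace inequality $\sum_{k\in\regularK}\abs{k}\,\abs{\hat u_k(R)}^2\lesssim\norm{u}_{\espace}^2$ combined with the bound $\abs{\phi_k'(R)/\phi_k(R)}\le\tfrac{C^\star}{c^\star}\abs{k}$ from \eqref{eq:ass:radial:radial:estimates:reformulation}, the series defining $\efctB$ converges absolutely and defines a continuous quadratic form, hence is smooth with the stated (real) derivative. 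Summing gives the formula for $\efct'$.

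\textbf{Step 3: Semicontinuity, coercivity, and the minimizer.} Weak lower semicontinuity of $\efctI$: the quadratic part is a sum of squared norms, hence convex and strongly continuous, thus weakly l.s.c.; the nonlinear part is convex by assumption~\eqref{eq:ass:kappa} (for $N=\Nav$) or trivially (for $N=\Nins$, as $v\mapsto\int\Gamma v^4$ is convex) — I would invoke the convexity result of \cref{sec:functional_av} — and again strongly continuous, hence weakly l.s.c. Weak continuity of $\efctB$: on a weakly convergent sequence $u_n\wto u$ in $\espace$, each trace $\hat u_{n,k}(R)\to\hat u_k(R)$ (the map is a bounded linear functional, hence weak-to-weak continuous, and the target is finite-dimensional so this is norm convergence); the tail $\sum_{\abs{k}>K}\abs{\tfrac{\phi_k'(R)}{\phi_k(R)}}\abs{\hat u_{n,k}(R)}^2\lesssim\sum_{\abs{k}>K}\abs{k}\abs{\hat u_{n,k}(R)}^2$ is small uniformly in $n$ by the trace bound and boundedness of $\norm{u_n}_{\espace}$, so $\efctB(u_n)\to\efctB(u)$. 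Hence $\efct=\efctI-\efctB$ is weakly l.s.c. For coercivity I would estimate $\abs{\efctB(u)}\le\tfrac{R}{2}\sum_{k}\abs{k}\abs{\hat u_k(R)}^2\cdot\tfrac{C^\star}{c^\star}$ and then use the trace inequality in the sharper, interpolated form $\sum_k\abs{k}\abs{\hat u_k(R)}^2\le\eps\,\bigl(\norm{u_r}^2+\norm{\tfrac1r u}^2\bigr)+C_\eps\norm{u_t}^2$ (this $\eps$-version is exactly what the trace inequalities in \cref{sec:functional_av} should provide), absorbing the $\norm{u_r}^2+\norm{\tfrac1r u}^2$ part into $\efctI$ and controlling the $C_\eps\norm{u_t}^2$ part by $\tfrac14\essinf\Gamma\cdot\min\kappa\cdot\normN{u_t}^4\gtrsim\normN{u_t}^4\gtrsim\norm{u_t}_{L^2}^4$, which beats the quadratic term for large norms; this yields $\efct(u)\ge c_1\norm{u}_{\espace}^2+c_2\norm{u}_{\espace}^4-c_3\to\infty$. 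With $\espace$ reflexive, $\efct$ weakly l.s.c.\ and coercive, the direct method gives a minimizer $\earg$, and since $\efct$ is Fréchet-differentiable, $\efct'(\earg)=0$.

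\textbf{Main obstacle.} The delicate point is coercivity, not semicontinuity: $\efctB$ can be of either sign and is only quadratic, while a priori the quadratic part of $\efctI$ might be entirely consumed by the boundary term (indeed it would be, without the help of $\efctN$). The argument therefore hinges on having a trace inequality with an arbitrarily small constant in front of the $H^1$-type seminorm at the cost of a large multiple of the lower-order $L^2(\T)$-norm in time, and then on using the \emph{quartic} growth of $\efctN$ to dominate that lower-order term. Getting these two interpolation-type estimates to fit together (and making sure the constants only depend on $R$, $\Gamma$, $\kappa$ and not on $u$) is the crux; the relevant inequalities are precisely those collected in Appendices~\ref{sec:fractional} and~\ref{sec:functional_av}, which I would invoke rather than reprove here.
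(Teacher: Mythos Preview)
Your overall strategy matches the paper's: well-definedness and differentiability via the trace bound and \eqref{eq:ass:radial:radial:estimates:reformulation}, weak lower semicontinuity of $\efctI$ via convexity (\ref{ass:radial:elliptic} and \eqref{eq:ass:kappa}), coercivity via an $\eps$-trace inequality absorbed by the quartic term, and then the direct method. The paper's coercivity argument is almost identical to yours, except that \cref{lem:trace} already gives the lower-order term as $C(\eps)\normN{u_t}^2$ rather than an $L^2$-norm, so no extra interpolation is needed.

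There is, however, a real gap in your argument for weak continuity of $\efctB$. You claim the tail $\sum_{\abs{k}>K}\abs{k}\,\abs{\hat u_{n,k}(R)}^2$ is small \emph{uniformly in $n$} ``by the trace bound and boundedness of $\norm{u_n}_{\espace}$'', but boundedness of the full sum does not give uniform smallness of tails; a weakly (not strongly) convergent sequence could push all its $H^{\nicefrac12}$-trace mass to frequency $k\sim n$. The paper closes this gap by proving that the trace map $\espace\to H^{\nicefrac12}(\T)$ is \emph{compact} (\cref{lem:trace}): then $u_n\wto u$ in $\espace$ forces $u_n(R,\cdot)\to u(R,\cdot)$ strongly in $H^{\nicefrac12}(\T)$, and since $\abs{\efctB(u)}\le C_0\norm{u(R,\cdot)}_{H^{\nicefrac12}}^2$, weak sequential continuity of $\efctB$ follows immediately. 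The compactness is obtained precisely by the quantitative tail estimate $\norm{\tr(u-S^K u)}_{H^{\nicefrac12}}^3\lesssim (K+2)^{-\nicefrac12}\norm{u}_{\espace}^3$, which is the statement you were implicitly reaching for but did not justify.
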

\begin{proof}
	Using assumption~\ref{ass:radial:VandGamma} and \cref{rem:normN} one can show in a standard way that $\efctI$ is well-defined and differentiable. The formula for the derivative follows from the calculations above. Since $V\geq 0$ the quadratic terms of $\efctI$ are convex, and the same holds for the remaining part $\efctN$ since $\Gamma\geq 0$ together with assumption \ref{eq:ass:kappa} in the time-averaged case. Therefore $\efctI$ is (sequentially) weakly lower semicontinuous.

	With \eqref{eq:ass:radial:radial:estimates:reformulation} we obtain $\abs{\efctB(u)} \leq C_0 \norm{u(R, \impvar)}_{H^{\nicefrac12}(\T)}^2$, so from compactness of the trace (see \cref{lem:trace}) it follows that $\efctB$ is sequentially weakly continuous and in particular continuous.

	It remains to show that $\efct$ is coercive. Using \cref{rem:normN} and \cref{lem:trace} with $\eps \coloneqq \tfrac{1}{4 C_0}$ we estimate
	\begin{align*}
		\efct(u) 
		&\geq \tfrac12 \norm{u_r}_{\Lrad{}}^2 + \tfrac12 \norm{\tfrac{1}{r} u}_{\Lrad{}}^2 + \tfrac{c_1}{4} \normN{u_t}^4
		- C_0 \left( \eps \norm{u_r}_{\Lrad{}}^2 + C(\eps) \normN{u_t}^2 \right)
		\\ &= \tfrac14 \norm{u_r}_{\Lrad{}}^2 + \tfrac12 \norm{\tfrac{1}{r} u}_{\Lrad{}}^2 + \tfrac{c_1}{4} \normN{u_t}^4 - C_0 C(\eps) \normN{u_t}^2
	\end{align*}
	for some $c_1 > 0$.
	Thus $\efct(u) \to \infty$ as $\norm{u}_{\espace} \to \infty$. Using \cite[Chapter~I, Theorem~1.2]{struwe} we find that $\efct$ attains its infimum at a critical point, which completes the proof.
\end{proof}

Next we show that assumption \ref{ass:radial:nontrivial_sol} is a sufficient condition for the solution $\earg$ obtained above to be nontrivial.\footnote{One can show that \ref{ass:radial:nontrivial_sol} is also necessary for $\earg \neq 0$ in the case where $V$ is constant on $[0, R]$.} 

\begin{proposition}\label{prop:nonzero_minimizer}
	The minimal energy level of $\efct$ satisfies $\emin < 0$ and hence $\earg \neq 0$.
\end{proposition}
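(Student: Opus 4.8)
The plan is to prove the sharper quantitative claim $\emin < 0$; since $\efct(0) = 0$ and, by \cref{prop:energy_properties}, $\efct(\earg) = \emin = \inf_{\espace}\efct$, this forces $\earg \neq 0$. The idea is to exhibit an explicit single-frequency test function on which the quadratic part of $\efct$ is strictly negative, and then to observe that the quartic term $\efctN$ is negligible at small amplitude.

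First I would fix $k_0\in\Nodd$ as in assumption~\ref{ass:radial:nontrivial_sol}, keep the notation $\lambda$ from there, and let $\psi$ be the solution of $-\psi''-\tfrac1r\psi'+\tfrac1{r^2}\psi+\lambda^2 k_0^2\psi=0$ on $(0,\infty)$ that stays bounded as $r\to0$; explicitly $\psi(r)=I_1(\lambda k_0 r)$ if $\lambda>0$ and $\psi(r)=r$ if $\lambda=0$, so in both cases $\psi$ is smooth on $[0,R]$ with $\psi(R)>0$ and $\psi'(R)/\psi(R)$ equal to the right-hand side of the inequality in assumption~\ref{ass:radial:nontrivial_sol}. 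As test function I take $u(r,t):=\psi(r)\cos(k_0\omega t)$. It belongs to $\espace$: it is smooth, $\tfrac T2$-antiperiodic because $k_0$ is odd, $u_r$ and $\tfrac1r u$ lie in $\Lrad{[0,R]\times\T}$ and $\normN{u_t}<\infty$ thanks to the growth $\psi(r)=O(r)$ near the origin, and its only non-zero Fourier modes sit at $\pm k_0\in\regularK$, so the constraint $\hat u_k(R)=0$ for $k\in\singularK$ is vacuous. Because $N$ is cubic, scaling gives $\efct(su)=s^2Q(u)+s^4\efctN(u)$ for $s>0$, where $Q(u)$ denotes the quadratic part of $\efctI$ minus $\efctB$.

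The core of the argument is to show $Q(u)<0$. Using $\int_\T\cos^2=\int_\T\sin^2=\tfrac12$ and $\hat u_{\pm k_0}(R)=\tfrac12\psi(R)$ one computes
\[
	4\,Q(u)=\int_0^R(\psi')^2 r\der r+\int_0^R\frac{\psi^2}{r}\der r+k_0^2\omega^2\int_0^R V\psi^2 r\der r-R\,\frac{\phi_{k_0}'(R)}{\phi_{k_0}(R)}\,\psi(R)^2.
\]
Testing the ODE for $\psi$ with $r\psi$ and integrating over $[0,R]$ (the boundary contribution at $0$ vanishes since $r\psi'\psi\to0$) gives the identity $\int_0^R(\psi')^2 r\der r+\int_0^R\tfrac{\psi^2}{r}\der r+\lambda^2 k_0^2\int_0^R\psi^2 r\der r=R\psi'(R)\psi(R)$. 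Since $\lambda^2=\omega^2\bigl(c^{-2}-1-\essinf_{[0,R]}\tilde\chi_1\bigr)=\omega^2\esssup_{[0,R]}V\geq\omega^2 V(r)$ for a.e.\ $r\in[0,R]$, the first three terms of $4\,Q(u)$ are bounded above by $R\psi'(R)\psi(R)$, whence
\[
	4\,Q(u)\leq R\,\psi(R)^2\left(\frac{\psi'(R)}{\psi(R)}-\frac{\phi_{k_0}'(R)}{\phi_{k_0}(R)}\right)<0
\]
by assumption~\ref{ass:radial:nontrivial_sol}. Finally, $\efctN(u)\geq0$ is finite (indeed $\efctN(u)\eqsim\normN{u_t}^4$ by \cref{rem:normN}), so $\efct(su)/s^2=Q(u)+s^2\efctN(u)\to Q(u)<0$ as $s\downarrow0$; choosing $s>0$ small enough yields $\emin\leq\efct(su)<0$, and hence $\earg\neq0$.

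The main obstacle is really finding the right competitor and the bookkeeping that links it to \ref{ass:radial:nontrivial_sol}: one must match the boundary functional $\efctB$—a sum over $\regularK$ of Robin-type quotients weighted by Fourier traces—against the single active mode $k_0$ of the test function, and convert the modified-Bessel identity together with assumption~\ref{ass:radial:nontrivial_sol} into the sign of $Q(u)$. Everything else (membership $u\in\espace$, the scaling identity for $\efct$, finiteness of $\efctN(u)$) is routine given \cref{prop:energy_properties} and \cref{rem:normN}. One should still double-check the normalization of the torus measure so the factors $\tfrac12$ from the time integrals are correct, and verify that $\psi$ solves the stated ODE, which is a direct consequence of the modified Bessel equation $z^2I_1''+zI_1'-(z^2+1)I_1=0$.
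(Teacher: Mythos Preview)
Your proof is correct and follows essentially the same approach as the paper: a single-frequency test function built from $I_1(\lambda k_0 r)$, the modified Bessel identity converted via integration by parts into a boundary expression, and comparison with assumption~\ref{ass:radial:nontrivial_sol} to make the quadratic part negative, after which the quartic term is dominated by scaling. Your version is slightly more explicit (e.g., the separate treatment of $\lambda=0$ and the remark on the boundary term at $r=0$), but the argument is the same.
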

\begin{proof}
	Let $k_0 \in \regularK$ be as in \ref{ass:radial:last} and recall that $\lambda =\omega \norm{V}_{L^\infty([0, R])}^{\nicefrac12}$. Define
	\begin{align*}
		f(r) \coloneqq I_1(\lambda k_0 r), 
		\qquad
		u(r, t) \coloneqq \eps f(r) \left(e_{k_0}(t) + e_{-{k_0}}(t)\right)
	\end{align*}
	where $I_1$ is the modified Bessel functions of first kind, i.e., it satisfies
	\begin{align*}
		\left( - \partial_r^2 - \tfrac{1}{r} \partial_r + \tfrac{1}{r^2} + 1 \right) I_1 = 0, 
		\qquad
		I_1(0) = 0.
	\end{align*}
	We calculate
	\begin{align*}
		\efct(u) 
		&= \int_{(0, R) \times \T} \left( \tfrac12 u_r^2 + \tfrac12 (\tfrac{1}{r} u)^2 + \tfrac12 V(r) u_t^2 + \tfrac14 \Gamma(r) N(u_t) u_t \right) \der[r] (r, t) 
		- \frac{R}{2} \sum_{k \in \Zodd} \frac{\phi_k'(R)}{\phi_k(R)} \abs{\hat u_k(R)}^2
		\\ &= \eps^2 \left(\int_0^R \left( (f')^2 + (\tfrac{1}{r} f)^2 + \omega^2 k_0^2 V(r) f^2 \right) \der[r] r - \frac{R \phi_{k_0}'(R)}{\phi_{k_0}(R)} f(R)^2 \right) + \landauO(\eps^4)
		\\ &\leq \eps^2 \left(\int_0^R f \left( - f'' - \tfrac{1}{r} f' + \tfrac{1}{r^2} f + \lambda^2 k_0^2 f \right) \der[r] r + \left[ r f f' \right]_0^R
		- \frac{R \phi_{k_0}'(R)}{\phi_{k_0}(R)} f(R)^2 \right) + \landauO(\eps^4)
		\\ &= \eps^2 R f(R)^2 \left(\frac{f'(R)}{f(R)}  - \frac{\phi_{k_0}'(R)}{\phi_{k_0}(R)} \right) + \landauO(\eps^4).
	\end{align*}
	We have $f(R) > 0$ and by assumption \ref{ass:radial:last} also
	\begin{align*}
		\frac{f'(R)}{f(R)}  - \frac{\phi_{k_0}'(R)}{\phi_{k_0}(R)}
		= \frac{\lambda k_0 I_1'(\lambda k_0 R)}{I_1(\lambda k_0 R)} - \frac{\phi_{k_0}'(R)}{\phi_{k_0}(R)} < 0.
	\end{align*}
	Thus $\efct(u) < 0$ for $\eps > 0$ sufficiently small, which completes the proof.
\end{proof}

	\section{Approximation by finitely many harmonics}\label{sec:approximation}

In this section we discuss approximations of the minimizers of $\efct$ by finitely many harmonics
\begin{align*}
	u(r, t) \approx \sum_{\substack{k \in \Zodd \\ \abs{k} \leq K}} \hat u_k(r) e_k(t),
\end{align*}
that is we consider $\efct$ on the subspace $\espaceF$ of $\espace$ defined next.
\begin{definition}
	Let $K \in \Nodd$. Then we define 
	\begin{align*}
		\espaceF \coloneqq \set{u \in \espace \colon \hat u_k \equiv 0 \text{ for } \abs{k} > K}.
	\end{align*}
\end{definition}

First we discuss the canonical projection from $\espace$ to $\espaceF$

\begin{lemma}\label{lem:projection}
	For $K \in \Nodd$, define the operator
	\begin{align*}
		S^K \colon \espace \to \espaceF, 
		\quad
		S^K[u](r, t) = \sum_{\substack{k \in \Zodd \\ \abs{k} \leq K}} \hat u_k(r) e_k(t).
	\end{align*}
	Then the operators $S^K$ are uniformly bounded in $\calB(\espace)$ and $S^K u \to u$ in $\espace$ as $K \to \infty$ for all $u \in \espace$.
\end{lemma}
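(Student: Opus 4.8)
The plan is to exploit that $S^K$ acts only in the time variable, so that on the three quantities $u_r$, $\tfrac1r u$, $u_t$ defining $\norm{\cdot}_{\espace}$ it reduces to the $K$-th temporal Dirichlet (Fourier partial-sum) projection, whose uniform boundedness and strong convergence on $L^2(\T)$ and $L^4(\T)$ are classical. First I would establish the commutation identities: for $u\in\espace$ and $k\in\Zodd$ one may differentiate $\hat u_k(r)=\int_\T u(r,\cdot)\,\overline{e_k}\der t$ in $r$ under the integral sign (legitimate because $u\in W^{1,1}_{\mathrm{loc},\mathrm{anti}}([0,R]\times\T)$), so $\partial_r\hat u_k=\widehat{(u_r)}_k$ and hence $(S^K u)_r=S^K[u_r]$; similarly $(S^K u)_t=S^K[u_t]$, and $\tfrac1r S^K u=S^K[\tfrac1r u]$ since $\tfrac1r$ depends on $r$ alone. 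Moreover $S^K u$ is again $\tfrac T2$-antiperiodic, and since $\hat u_k(R)=0$ for $k\in\singularK$ while truncation can only send Fourier coefficients to $0$, we get $\widehat{(S^K u)}_k(R)=0$ for all $k\in\singularK$; thus $S^K u\in\espace$ and
\[
	\norm{S^K u}_{\espace}=\norm{S^K[u_r]}_{\Lrad{}}+\norm{S^K[\tfrac1r u]}_{\Lrad{}}+\normN{S^K[u_t]}.
\]

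By this identity it suffices to prove that, viewed as a fiberwise temporal projection, $S^K$ is uniformly bounded on, and converges strongly to the identity on, the Bochner spaces $\Lrad{[0, R] \times \T}=L^2([0, R],\der[r] r;L^2(\T))$ and — according to whether $N=\Nins$ or $N=\Nav$ — $\Lrad[4]{[0, R] \times \T}=L^4([0, R],\der[r] r;L^4(\T))$ or $\Lrad[4]{[0, R]; L^2(\T)}=L^4([0, R],\der[r] r;L^2(\T))$. On the fiber, write $P^K w=\sum_{|k|\le K}\hat w_k e_k$ on $L^q(\T)$ (on $\tfrac T2$-antiperiodic $w$ only odd $k$ contribute, so $P^K$ coincides there with $S^K$). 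For $q=2$, $P^K$ is the orthogonal projection onto $\vspan\{e_k:|k|\le K\}$, hence a contraction, and $P^K w\to w$ in $L^2(\T)$ by Plancherel. For $q=4$ the decisive input is M.~Riesz's theorem (equivalently, $L^p$-boundedness of the conjugate function on the circle for $1<p<\infty$): there is $C_4<\infty$ with $\norm{P^K w}_{L^4(\T)}\le C_4\norm{w}_{L^4(\T)}$ for all $K$, which together with density of trigonometric polynomials in $L^4(\T)$ also gives $P^K w\to w$ in $L^4(\T)$.

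To transfer to the Bochner spaces, for any Banach space $E$ and $P\in\calB(E)$ one has $\norm{\id\otimes P}_{\calB(L^p([0, R],\der[r] r;E))}\le\norm{P}_{\calB(E)}$, since $\norm{(\id\otimes P)f}^p=\int_0^R\norm{Pf(r)}_E^p\der[r] r\le\norm{P}_{\calB(E)}^p\norm{f}^p$. Applying this with $P=P^K$ and $(E,p)\in\{(L^2(\T),2),(L^4(\T),4),(L^2(\T),4)\}$ gives uniform fiber bounds, hence $\sup_K\norm{S^K}_{\calB(\espace)}<\infty$ through the displayed identity. For the strong convergence, fix $f$ in the relevant Bochner space: for a.e.\ $r$ (those with $f(r)\in L^q(\T)$) one has $\norm{P^K f(r)-f(r)}_E\to0$ by the fiber statement, while $\norm{P^K f(r)-f(r)}_E^p\le(1+C)^p\norm{f(r)}_E^p$ with $C=\sup_K\norm{P^K}_{\calB(L^q(\T))}$ and $r\mapsto\norm{f(r)}_E^p$ in $L^1([0, R],\der[r] r)$; dominated convergence then yields $(\id\otimes P^K)f\to f$, and feeding this into the displayed identity gives $S^K u\to u$ in $\espace$ for every $u\in\espace$. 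The only substantive point in this scheme is the case $q=4$ of the fiber statement — the uniform $L^4(\T)$-boundedness of the Dirichlet partial sums, i.e.\ M.~Riesz's theorem — which is where I expect any real difficulty to lie; the remaining assembly is just Tonelli together with dominated convergence in the radial variable.
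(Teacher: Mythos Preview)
Your proof is correct and follows essentially the same approach as the paper: reduce via the commutation identities $(S^K u)_r=S^K[u_r]$, $(S^K u)_t=S^K[u_t]$, $\tfrac1r S^K u=S^K[\tfrac1r u]$ to fiberwise temporal Fourier truncation, invoke the uniform $L^p(\T)$-boundedness and strong convergence of Dirichlet partial sums (M.~Riesz / Grafakos), and then lift to the relevant Bochner spaces. You supply more detail than the paper on the commutation step, the verification that $S^K u\in\espaceF$, and the dominated-convergence passage to Bochner spaces, but the substance is identical.
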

\begin{proof}
	For $p \in (1, \infty)$ the Fourier cutoff operators $S^K$ defined by 
	\begin{align*}
		S^K \colon \Lanti[p]{\T} \to \Lanti[p]{\T}, 
		\quad
		S^K[f](t) = \sum_{\substack{k \in \Zodd \\ \abs{k} \leq K}} \hat f_k e_k(t).
	\end{align*}
	are uniformly bounded and $S^K f \to f$ in $\Lanti[p]{\T}$ as $K \to \infty$ (see \cite[Theorem~4.1.8 and Corollary~4.1.3]{grafakos_classical}). By acting on the time variable only, $S^K$ extend to uniformly bounded operators
	\begin{align*}
		S^K \colon \Lrad[q]{[0, R]; \Lanti[p]{\T}} \to \Lrad[q]{[0, R]; \Lanti[p]{\T}} 
	\end{align*}
	with $S^K u \to u$ in $\Lrad[q]{[0, R]; \Lanti[p]{\T}}$ as $K \to \infty$.
	Then from $S^K[u_r] = (S^K u)_r, S^K[u_t] = (S^K u)_t$, and $S^K[\frac{1}{r} u] = \tfrac{1}{r} (S^K u)$ it follows that $S^K \colon \espace\to \espaceF$ are also uniformly bounded operators and $S^K u \to u$ in $\espace$ as $K \to \infty$.
\end{proof}

Next we show that the minimal energy level $\emin$ can be approximated from within $\espaceF$.

\begin{lemma}\label{lem:energy_limit}
For every $K\in \Nodd$ there exists $\eargF \in \espaceF$ such that $\eminF \coloneqq \inf \efctF = \efct(\eargF)$. Furthermore $\displaystyle \lim_{K \to \infty} \eminF = \emin$ holds.
\end{lemma}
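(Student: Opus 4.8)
The plan is to handle the two assertions in turn, in each case recycling the machinery already set up for the full functional. For the existence of $\eargF$, observe that $\espaceF = \ran S^K$ is the range of the bounded projection $S^K$ from \cref{lem:projection}, hence a closed linear subspace of $\espace$; being closed and convex it is sequentially weakly closed, and being a closed subspace of a reflexive space it is itself reflexive. The restriction $\efctF$ inherits from \cref{prop:energy_properties} sequential weak lower semicontinuity and coercivity (the latter because the $\espaceF$-norm is simply the $\espace$-norm restricted), so the direct method --- exactly as in the proof of \cref{prop:energy_properties}, via \cite[Chapter~I, Theorem~1.2]{struwe} --- produces a minimizer $\eargF\in\espaceF$ with $\eminF = \inf\efctF = \efct(\eargF)$.

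For the limit, one inequality is immediate: $\espaceF\subseteq\espace$ forces $\eminF\geq\emin$ for every $K\in\Nodd$, and since $\espace^{K_1}\subseteq\espace^{K_2}$ whenever $K_1\leq K_2$ the sequence $(\eminF)_K$ is non-increasing, so $\lim_{K\to\infty}\eminF$ exists and is $\geq\emin$. For the reverse inequality I would take the minimizer $\earg\in\espace$ of $\efct$ furnished by \cref{prop:energy_properties} and push it into $\espaceF$ by the cutoff: $S^K\earg\in\espaceF$ and, by \cref{lem:projection}, $S^K\earg\to\earg$ in $\espace$ as $K\to\infty$. Now $\efct=\efctI-\efctB$ is norm-continuous on $\espace$ --- $\efctI$ is differentiable hence continuous, and $\efctB$ is sequentially weakly continuous hence continuous, both by \cref{prop:energy_properties} --- so $\efct(S^K\earg)\to\efct(\earg)=\emin$. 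Hence $\eminF\leq\efct(S^K\earg)$ gives $\limsup_{K\to\infty}\eminF\leq\emin$, and combining the two bounds yields $\lim_{K\to\infty}\eminF=\emin$.

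I do not anticipate a serious obstacle: the whole argument is the direct method plus a continuity/approximation estimate. The one point requiring a little care is the passage $\efct(S^K\earg)\to\efct(\earg)$, which rests on genuine continuity of $\efct$ (not merely the weak lower semicontinuity used for the minimization) and on $S^K$ commuting with $\partial_r$, $\partial_t$ and multiplication by $\tfrac1r$, so that $S^K\earg$ really lies in $\espaceF$ with the stated convergence --- but this is precisely what \cref{lem:projection} provides. As a side benefit, for $K\geq k_0$ the test function from the proof of \cref{prop:nonzero_minimizer}, which excites only the frequencies $\pm k_0$, already belongs to $\espaceF$; hence $\eminF<0$ and the approximants $\eargF$ are nonzero for all such $K$. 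This last fact is not needed for the present statement but will be convenient later.
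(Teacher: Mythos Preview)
Your proposal is correct and follows essentially the same route as the paper: existence of $\eargF$ via the direct method (coercivity and weak lower semicontinuity inherited from \cref{prop:energy_properties} on the closed subspace $\espaceF$), and convergence $\eminF\to\emin$ via the sandwich $\emin\leq\eminF\leq\efct(S^K\earg)\to\efct(\earg)=\emin$ using \cref{lem:projection} and continuity of $\efct$. Your additional observations (monotonicity of $\eminF$ in $K$, and $\eminF<0$ for $K\geq k_0$) are not in the paper's proof but are correct and harmless.
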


\begin{proof}
    Arguing as in \cref{prop:energy_properties}, one can show that there exists a minimizer $\eargF \in \espaceF$ of $\efctF$. Setting $u^K \coloneqq S^K(\earg)$ we find
	\begin{align}\label{eq:loc:1}
		\efct(\earg) = \emin \leq \eminF \leq \efct(u^K).
	\end{align}
	Using $u^K \to \earg$ as $K \to \infty$ and that $\efct$ is continuous, the second claim follows from \eqref{eq:loc:1} in the limit $K \to \infty$.
\end{proof}

As a next step we establish uniform estimates on the minimizers $\eargF$. First, we introduce the fractional time derivative $\fracDT{s}$ and a quantity $\qN{}$ that behaves like a norm stronger than $\normN{}$.

\begin{definition}
	For $s \in \R$ we define the fractional time derivative $\fracDT{s}$ as the Fourier multiplier with symbol $\abs{\omega k}^s$, i.e. $\F_k \fracDT{s} = \abs{\omega k}^s \F_k$.
\end{definition}

\begin{definition}
	For $N = \Nins$ we define the quantity
	\begin{align*}
		\qNins{v} \coloneqq \left( \int_{[0, R] \times \T} \left( \halfDT (v \abs{v}) \right)^2 \der[r](r, t) \right)^{\nicefrac14}.
	\end{align*}
	For $N = \Nav$ we define the quantity
	\begin{align*}
		\qNav{v} \coloneqq \left( \int_0^R \|v(r,\cdot)\|_{L^2(\T)}^2 \| \halfDT v(r,\cdot)\|_{L^2(\T)}^2 \der[r] r \right)^{\nicefrac14}.
	\end{align*}
\end{definition}

\begin{remark}\label{rem:qN_estimate}
	For $N = \Nins$ by \cref{lem:nonlinear_derivative_inequality} we have
	\begin{align*}
		\int_{[0, R] \times \T} \Gamma(r) \Nins(v) \DT v \der[r](r, t)
		\geq c^\ast \qNins{v}^4,
	\end{align*}
	with $c^\ast = \tfrac12 \essinf_{[0, R]} \Gamma > 0$
	whereas for $N = \Nav$ using \cref{lem:nonlinear_derivative_inequality_av} (with constants $c_1, C_2$) we have
	\begin{align*}
		\int_{[0, R] \times \T} \Gamma(r) \Nav(v) \DT v \der[r](r, t)
		\geq c^\ast \qNav{v}^4 - C^\ast \normNav{v}^4
	\end{align*}
	with $c^\ast = c_1 \essinf_{[0, R]} \Gamma$ and $C^\ast = C_2 \esssup_{[0, R]} \Gamma$.
	In particular, 
	\begin{align*}
		\int_{[0, R] \times \T} N(v) \DT v \der[r](r, t)
		\geq c^\ast \qN{v}^4-C^\ast \normN{v}^4	
	\end{align*}
	holds for both choices of $N$.
\end{remark}

The minimizers $\eargF$ formally are solutions of
\begin{align*}
	\begin{cases}
		S^K[-u_{rr} - \tfrac{1}{r} u_r + \tfrac{1}{r^2} u - V(r) u_{tt} - \Gamma(r) N(u_t)_t] = 0 \text{ in } [0, R] \times \T,
		\\ \hat u_k'(R) = \frac{\phi_k'(R)}{\phi_k(R)} \hat u_k(R) \text{ for } k \in \regularK, \abs{k} \leq K,
		\\ \hat u_k(R) = 0 \text{ for } k \in \singularK, \abs{k} \leq K.
	\end{cases}
\end{align*}
Here the main part $-\partial_r^2 - \frac1r \partial_r + \frac{1}{r^2} - V(r) \partial_t^2 - \Gamma(r) \partial_t N(\partial_t \impvar)$ is elliptic by \ref{ass:radial:elliptic}, which is why we expect the solution $u$ to have increased regularity. Often this is shown by testing the problem against derivatives of the solution. In \cref{prop:minimizer_estimates}, we obtain improved regularity by testing the problem against $\DT \eargF$. However, with this method it is impossible to obtain even more regularity because when testing against $\fracDT{s} \eargF$ with $s > 1$ one can no longer control the appearing boundary terms.

\begin{proposition}\label{prop:minimizer_estimates}
	There exist constants $C_1, \dots, C_5 > 0$ independent of $K$ such that the following holds:
	\begin{enumerate}
		\item\label{i:minimizer_estimate:norm} $\displaystyle \norm{\eargF}_{\espace} \leq C_1$,
		\item\label{i:minimizer_estimate:reg1} $\displaystyle \norm{\halfDT \eargF_r}_{\Lrad{[0, R] \times \T}} \leq C_2$,
		\item\label{i:minimizer_estimate:reg2} $\displaystyle \norm{\tfrac{1}{r} \halfDT \eargF}_{\Lrad{[0, R] \times \T}} \leq C_3$,
		\item\label{i:minimizer_estimate:reg3} $\displaystyle \qN{\eargF_t} \leq C_4$,
		\item\label{i:minimizer_estimate:reg4} $\displaystyle \norm{\eargF(R, \impvar)}_{H^1(\T)} \leq C_5$.
	\end{enumerate}
\end{proposition}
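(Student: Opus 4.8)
The plan is to obtain \ref{i:minimizer_estimate:norm} directly from the (uniform) coercivity of $\efct$, and to get \ref{i:minimizer_estimate:reg1}--\ref{i:minimizer_estimate:reg4} by testing the Euler--Lagrange equation satisfied by $\eargF$ on $\espaceF$ with the function $\DT\eargF$.

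For \ref{i:minimizer_estimate:norm}: the coercivity estimate proved in \cref{prop:energy_properties},
\begin{align*}
	\efct(u) \geq \tfrac14 \norm{u_r}_{\Lrad{}}^2 + \tfrac12 \norm{\tfrac1r u}_{\Lrad{}}^2 + \tfrac{c_1}{4}\normN{u_t}^4 - C_0 C(\eps)\normN{u_t}^2,
\end{align*}
does not involve $K$ and holds for all $u \in \espace$, hence for $u=\eargF\in\espaceF\subseteq\espace$. Since $\efct(\eargF)=\eminF$ and $\sup_{K\in\Nodd}\eminF<\infty$ by \cref{lem:energy_limit} (indeed $\eminF\to\emin$), the right-hand side stays bounded as $K\to\infty$, which forces $\norm{\eargF}_{\espace}\leq C_1$ with $C_1$ independent of $K$.

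For the remaining estimates, write $u\coloneqq\eargF$. Since $\eargF$ minimizes $\efct$ over the \emph{linear} subspace $\espaceF$, we have $\efct'(u)[\varphi]=0$ for all $\varphi\in\espaceF$. The key point is that $\varphi\coloneqq\DT u=\sum_{\abs{k}\leq K}\abs{\omega k}\,\hat u_k(r)\,e_k(t)$ again belongs to $\espaceF$: it has the same finitely many active harmonics, its $r$-profiles are scalar multiples of those of $u$, and $\widehat{\DT u}_k(R)=\abs{\omega k}\hat u_k(R)=0$ for $k\in\singularK$ --- so the testing is legitimate. Plugging $\varphi=\DT u$ into the derivative formula of \cref{prop:energy_properties} and using that $\DT$ commutes with $\partial_r$ and with multiplication by $\tfrac1r$, that $\DT=\halfDT\circ\halfDT$ is a non-negative symmetric Fourier multiplier in $t$ (so $\int_\T f\,\DT f\der t=\norm{\halfDT f}_{L^2(\T)}^2$), and \cref{rem:qN_estimate} for the nonlinear contribution, we obtain
\begin{align*}
	\norm{\halfDT u_r}_{\Lrad{}}^2 + \norm{\tfrac1r\halfDT u}_{\Lrad{}}^2 + \int_{[0,R]\times\T} V(r)\abs{\halfDT u_t}^2 \der[r](r,t) + c^\ast\qN{u_t}^4
	\leq C^\ast\normN{u_t}^4 + R\sum_{k\in\regularK}\frac{\phi_k'(R)}{\phi_k(R)}\abs{\omega k}\,\abs{\hat u_k(R)}^2,
\end{align*}
where the integral term is non-negative by \ref{ass:radial:elliptic}. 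By \ref{i:minimizer_estimate:norm} we have $\normN{u_t}^4\leq C_1^4$, so everything now reduces to bounding the (sign-indefinite) boundary sum uniformly in $K$.

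\emph{Controlling the boundary sum is the main obstacle.} By \eqref{eq:ass:radial:radial:estimates:reformulation} one has $\tfrac{\abs{\phi_k'(R)}}{\abs{\phi_k(R)}}\leq\tfrac{C^\star}{c^\star}\abs{k}$ for $k\in\regularK$, hence the boundary sum is $\leq\tfrac{R\omega C^\star}{c^\star}\sum_{k\in\Zodd}k^2\abs{\hat u_k(R)}^2\eqsim\norm{u(R,\impvar)}_{H^1(\T)}^2$, i.e.\ it is controlled by precisely the quantity in \ref{i:minimizer_estimate:reg4}, which has not yet been proven --- so the argument is circular unless one closes the loop. This is done with a trace inequality on $[0,R]\times\T$ of interpolation type proved in \cref{sec:functional_av}, which bounds the $H^1(\T)$-trace by the half-order interior quantities appearing on the left together with lower-order terms; schematically, for every $\delta\in(0,1]$,
\begin{align*}
	\norm{v(R,\impvar)}_{H^1(\T)}^2 \leq \delta\Bigl(\norm{\halfDT v_r}_{\Lrad{}}^2 + \norm{\tfrac1r\halfDT v}_{\Lrad{}}^2 + \qN{v_t}^4\Bigr) + C_\delta\bigl(\norm{v}_{\espace}^2 + \norm{v}_{\espace}^4\bigr).
\end{align*}
Applying this with $v=\eargF$ and $\delta$ small enough that $\tfrac{R\omega C^\star}{c^\star}\delta\leq\tfrac12$ absorbs the boundary sum into the left-hand side of the previous display; the remainder is bounded by $C_\delta(C_1^2+C_1^4)$ using \ref{i:minimizer_estimate:norm}. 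This simultaneously yields \ref{i:minimizer_estimate:reg1}, \ref{i:minimizer_estimate:reg2}, \ref{i:minimizer_estimate:reg3} with constants independent of $K$, and then \ref{i:minimizer_estimate:reg4} follows by feeding these bounds back into the trace inequality.

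Two points deserve emphasis as the actual difficulties. First, the boundary sum is genuinely indefinite --- guided modes of the exterior linear problem can make $\phi_k'(R)/\phi_k(R)$ positive and of order $\abs{k}$ --- so it cannot be dropped or crudely majorized; it must be absorbed by a trace estimate that is \emph{sharp} in the order of differentiation, which is exactly why one tests with $\DT\eargF$ and not with a higher fractional power (against $\fracDT{s}\eargF$, $s>1$, the resulting boundary term is no longer controllable, as noted before the statement). Second, for $N=\Nins$ the lower bound $\int_{[0,R]\times\T}\Gamma(r)\Nins(u_t)\,\DT u_t\der[r](r,t)\geq c^\ast\qNins{u_t}^4$ invoked above (via \cref{rem:qN_estimate}, \cref{lem:nonlinear_derivative_inequality}) rests on the Kenig--Ponce--Vega commutator estimate on the torus established in \cref{sec:fractional}; this is the step that tames the fractional time derivative of the cubic term $u_t^3$, and it is where the whole regularity gain is ultimately bought.
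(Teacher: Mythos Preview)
Your argument is correct and follows the same route as the paper: coercivity for \ref{i:minimizer_estimate:norm} (the paper uses the slightly simpler observation $\efct(\eargF)\leq\efct(0)=0$ rather than invoking \cref{lem:energy_limit}), then testing with $\DT\eargF$ and closing via the regularized trace inequality of \cref{lem:regularized_trace}, which in the paper's form reads $\norm{u(R,\cdot)}_{H^1(\T)}^2\leq\eps\norm{\halfDT u_r}_{\Lrad{}}^2+C(\eps)\,\qN{u_t}^2$ --- your ``schematic'' version follows from this via Young's inequality, and the paper instead absorbs the $C(\eps)\,\qN{u_t}^2$ against $c^\ast\qN{u_t}^4$ using $aX^2-bX\geq X-\tfrac{(b+1)^2}{4a}$. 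One small correction to your closing commentary: the Kenig--Ponce--Vega estimate is invoked for $N=\Nav$ (\cref{lem:nonlinear_derivative_inequality_av}), whereas the lower bound for $N=\Nins$ (\cref{lem:nonlinear_derivative_inequality}) is proved directly by a C\'ordoba--C\'ordoba type pointwise argument via the singular integral representation of $\DT$.
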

\begin{proof}
	Since $\efct$ is coercive (see \cref{prop:energy_properties}), there exists $C_1 > 0$ such that $\efct(u) > 0$ holds for all $u \in \espace$ with $\norm{u}_{\espace} > C_1$. Using $\efct(\eargF) = \min \efctF \leq \efct(0) = 0$ we conclude $\norm{\eargF}_{\espace} \leq C_1$, so that \ref{i:minimizer_estimate:norm} holds.

	For \ref{i:minimizer_estimate:reg1}--\ref{i:minimizer_estimate:reg4} we first note that 
	\begin{align*}
		\nnorm{u} = \sum_{\substack{k \in \Zodd \\ \abs{k} \leq K}} \left(\norm{\hat u_k'}_{\Lrad{[0, R]}} + \norm{\tfrac{1}{r}\hat u_k}_{\Lrad{[0, R]}} + \norm{\hat u_k}_{\Lrad[4]{[0, R]}}\right)
	\end{align*}
	defines an equivalent norm on $\espaceF$. Thus the operators $\fracDT{s}$ are bounded on $\espaceF$ for all $s \in \R$. In particular, $\DT \eargF \in \espaceF$. Using $V \geq 0$ on $[0, R]$ and \eqref{eq:ass:radial:radial:estimates:reformulation} we calculate
	\begin{align*}
		0 
		&= \efct'(\eargF)[\DT \eargF] 
		\\ &= \int_{[0, R] \times \T} \left( \eargF_r \DT \eargF_r + \tfrac{1}{r^2} \eargF \DT \eargF + V(r) \eargF_t \DT \eargF_t + \Gamma(r) N(\eargF_t) \DT \eargF_t \right) \der[r](r, t)
		\\ &\qquad - R \sum_{k \in \Zodd} \frac{\phi_k'(R)}{\phi_k(R)} \omega \abs{k} \abs{\F_k[\eargF](R)}^2 
		\\ &\geq \int_{[0, R] \times \T} \left( \left(\halfDT \eargF_r\right)^2 + \left(\tfrac{1}{r} \halfDT \eargF\right)^2 + \Gamma(r) N( \eargF_t ) \DT \eargF_t \right) \der[r](r, t)
		\\ &\qquad - C_0 R \sum_{k \in \Zodd} \omega k^2 \abs{\F_k[\eargF](R)}^2.
	\end{align*}
	Using further
	\begin{align*}
		C_0 R \sum_{k \in \Zodd} \omega k^2 \abs{\F_k[\eargF](R)}^2
		\leq \tilde C_0 \norm{\eargF(R,\cdot)}_{H^1(\T)}^2,
	\end{align*}
	\cref{rem:qN_estimate}, \cref{lem:regularized_trace} with $\eps = \frac{1}{2 \tilde C_0}$ as well as $a X^2 - b X \geq X - \frac{(b+1)^2}{4 a}$, we obtain
	\begin{align}\label{eq:loc:2}
		\begin{aligned}
			0 
			&\geq \tfrac12 \norm{\halfDT \eargF_r}_{\Lrad{}}^2 
			+ \norm{\tfrac{1}{r} \halfDT \eargF}_{\Lrad{}}^2
			+ c^\ast \qN{\eargF_t}^4- C^\ast \normN{\eargF_t}^4
			 - \widetilde C_0 C(\eps) \qN{\eargF_t}^2
			\\ &\geq \tfrac12 \norm{\halfDT \eargF_r}_{\Lrad{}}^2 
			+ \norm{\tfrac{1}{r} \halfDT \eargF}_{\Lrad{}}^2
			+ \qN{\eargF_t}^2
			- \frac{(\widetilde C_0 C(\eps) + 1)^2}{4 c^*} - C^\ast C_1^4.
		\end{aligned}
	\end{align}
	With $C \coloneqq \frac{(\widetilde C_0 C(\eps) + 1)^2}{4 c^\ast}+C^\ast C_1^4$ the estimates \ref{i:minimizer_estimate:reg1}--\ref{i:minimizer_estimate:reg3} follow from \eqref{eq:loc:2} where
	\begin{align*}
		C_2 \coloneqq \sqrt{2 C}, 
		\quad
		C_3 \coloneqq C_4 \coloneqq \sqrt{C},
	\end{align*}
	and lastly \ref{i:minimizer_estimate:reg4} follows from \ref{i:minimizer_estimate:reg1} and \ref{i:minimizer_estimate:reg3} using \cref{lem:regularized_trace} again.
\end{proof}

The following result is the most important result in this section. It shows how a minimizer $u$ of $\efct$ gains additional regularity via the approximation by finitely many harmonics. This will be the key to establish regularity of the solutions of \eqref{eq:radial:VGproblem} across the boundary at $r=R$. 
\begin{proposition}\label{prop:limit_estimates}
	Up to a subsequence, the limit $u = \lim_{K \to \infty} \eargF$ exists in $\espace$. The function $u$ is a minimizer of $\efct$ and satisfies
	\begin{enumerate}
		\item
		$\displaystyle \norm{u}_{\espace} \leq C_1$,
		\item\label{i:limit_estimate:reg1} 
		$\displaystyle \norm{\halfDT u_r}_{\Lrad{[0, R] \times \T}} \leq C_2$,
		\item
		$\displaystyle \norm{\tfrac{1}{r} \halfDT u}_{\Lrad{[0, R] \times \T}} \leq C_3$,
		\item
		$\displaystyle \qN{u_t} \leq C_4$,
		\item\label{i:limit_estimate:reg4} $\displaystyle \norm{u(R, \impvar)}_{H^1(\T)} \leq C_5$,
	\end{enumerate}
	where the constants $C_1, \dots, C_5$ are the same as in \cref{prop:minimizer_estimates}.
\end{proposition}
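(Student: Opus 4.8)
The plan is to extract a convergent subsequence of the finite‑harmonic minimizers $\eargF$, to identify its limit as a minimizer of $\efct$ on all of $\espace$, and then to pass each of the five uniform bounds of \cref{prop:minimizer_estimates} to the limit. By \cref{prop:minimizer_estimates}, the family $(\eargF)_{K\in\Nodd}$ is bounded in the reflexive Banach space $\espace$, so after passing to a subsequence $\eargF\wto u$ weakly in $\espace$. Since $\efct$ is sequentially weakly lower semicontinuous (\cref{prop:energy_properties}) and $\eminF=\efct(\eargF)\to\emin=\inf_{\espace}\efct$ by \cref{lem:energy_limit}, we obtain $\efct(u)\le\liminf_{K\to\infty}\efct(\eargF)=\emin$; hence $\efct(u)=\emin$ and $u$ is a minimizer of $\efct$.

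The main step is upgrading weak convergence to strong convergence in $\espace$. Write $\efct=\efctI-\efctB$ with $\efctI$ convex and $\efctB$ sequentially weakly continuous (\cref{prop:energy_properties}). Weak convergence gives $\efctB(\eargF)\to\efctB(u)$, hence $\efctI(\eargF)=\efct(\eargF)+\efctB(\eargF)\to\emin+\efctB(u)=\efctI(u)$. Applying the same argument to $\tfrac12(\eargF+u)\wto u$ — for which $\efctB(\tfrac12(\eargF+u))\to\efctB(u)$ and $\efct(\tfrac12(\eargF+u))\ge\emin$ — together with the convexity bound $\efctI(\tfrac12(\eargF+u))\le\tfrac12\efctI(\eargF)+\tfrac12\efctI(u)$ shows that the convexity defect $\tfrac12\efctI(\eargF)+\tfrac12\efctI(u)-\efctI(\tfrac12(\eargF+u))\to0$. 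As $\efctI$ is a sum of nonnegative convex terms, each corresponding defect vanishes: the parallelogram identity for the quadratic functionals $u\mapsto\tfrac12\norm{u_r}_{\Lrad{[0,R]\times\T}}^2$ and $u\mapsto\tfrac12\norm{\tfrac1r u}_{\Lrad{[0,R]\times\T}}^2$ yields $\eargF_r\to u_r$ and $\tfrac1r\eargF\to\tfrac1r u$ in $\Lrad{[0,R]\times\T}$, while a Clarkson‑type inequality for the nonlinear term $\efctN$ yields $\normN{\eargF_t-u_t}\to0$ — in the instantaneous case this is just the uniform convexity of $L^4$, and in the averaged case it combines the comparability of \cref{rem:normN} with the convexity structure built into \eqref{eq:ass:kappa}. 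Together these give $\eargF\to u$ in $\espace$, so bound~(1) follows from weak lower semicontinuity of $\norm{\cdot}_{\espace}$. I expect the strong convergence of the nonlinear component, $\normN{\eargF_t-u_t}\to0$, to be the real obstacle, especially for the time‑averaged nonlinearity: there $\int_{[0,R]\times\T}\Gamma(r)N(v)v\der[r](r,t)$ is only comparable to $\normNav{v}^4$, not literally a power of a norm, so one needs a genuinely quantitative uniform‑convexity estimate for $\efctN$, which is precisely where the structural hypotheses on $\kappa$ in \eqref{eq:ass:kappa} enter.

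It remains to transfer the regularity bounds (2)--(5). Each of $\halfDT\eargF_r$, $\tfrac1r\halfDT\eargF$ (bounded in $\Lrad{[0,R]\times\T}$ by \cref{prop:minimizer_estimates}), $\halfDT(\eargF_t\abs{\eargF_t})$ in the instantaneous case (bounded in $\Lrad{[0,R]\times\T}$ since its norm equals $\qNins{\eargF_t}^2$), and $\eargF(R,\impvar)$ (bounded in $H^1(\T)$) converges weakly along a further subsequence in the respective Hilbert space. Because $\eargF\to u$ strongly in $\espace$ — hence in $\mathcal D'$; with $\eargF(R,\impvar)\to u(R,\impvar)$ in $L^2(\T)$ by \cref{lem:trace}; and with $\eargF_t\abs{\eargF_t}\to u_t\abs{u_t}$ in $\Lrad{[0,R]\times\T}$ because $\eargF_t\to u_t$ in $\Lrad[4]{[0,R]\times\T}$ — while the Fourier multiplier $\halfDT$ and the trace are continuous on the relevant distribution spaces, these weak limits are identified as $\halfDT u_r$, $\tfrac1r\halfDT u$, $\halfDT(u_t\abs{u_t})$ and $u(R,\impvar)$, respectively; weak lower semicontinuity of the norms then yields (2)--(5) with the same constants $C_1,\dots,C_5$. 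In the averaged case (4) is obtained instead by Fatou's lemma: from $\normNav{\eargF_t-u_t}\to0$ one has, along a subsequence, $\norm{\eargF_t(r,\impvar)}_{L^2(\T)}\to\norm{u_t(r,\impvar)}_{L^2(\T)}$ and $\eargF_t(r,\impvar)\to u_t(r,\impvar)$ in $L^2(\T)$ for a.e.\ $r$, whence $\norm{\halfDT u_t(r,\impvar)}_{L^2(\T)}\le\liminf_K\norm{\halfDT\eargF_t(r,\impvar)}_{L^2(\T)}$ for a.e.\ $r$, and applying Fatou to $\qNav{\eargF_t}^4$ gives $\qNav{u_t}\le C_4$.
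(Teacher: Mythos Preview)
Your plan is correct, and the step you flag as the real obstacle in the time-averaged case actually follows directly from the convexity hypothesis in \eqref{eq:ass:kappa}; no separate ``comparability plus structure'' argument is needed. Writing $J(v)=\int_\T(\kappa*v^2)v^2\der t$ and $m=\tfrac{a+b}{2}$, $d=\tfrac{a-b}{2}$, a direct expansion gives the defect identity
\[
\tfrac12J(a)+\tfrac12J(b)-J(m)=2\int_\T(\kappa*m^2)\,d^2\der t+4\int_\T(\kappa*md)\,md\der t+J(d),
\]
and the second-derivative bound $J''(m)[d,d]=4\int_\T(\kappa*m^2)\,d^2\der t+8\int_\T(\kappa*md)\,md\der t\ge0$ (equivalent to convexity of $J$) forces $4\int_\T(\kappa*md)\,md\der t\ge -2\int_\T(\kappa*m^2)\,d^2\der t$. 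Hence the defect is bounded below by $J(d)\ge\min\kappa\cdot\|d\|_{L^2(\T)}^4$, and after integrating against $\tfrac14\Gamma(r)\,r\der r$ you obtain
\[
\tfrac12\efctN(\eargF)+\tfrac12\efctN(u)-\efctN\bigl(\tfrac{\eargF+u}{2}\bigr)\;\gtrsim\;\normNav{\eargF_t-u_t}^4,
\]
which is precisely the Clarkson-type estimate your outline needs. With this, all of your steps (including the Fatou argument for (4)) go through.

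Your route to strong convergence differs from the paper's. The paper shows $\efctI(\eargF)\to\efctI(u)$, splits $\efctI$ into its nonnegative weakly lower semicontinuous summands, and uses a $\liminf/\limsup$ sandwich to force each individual summand to converge along the sequence; it then upgrades to strong convergence via the Radon--Riesz property (weak convergence plus norm convergence implies strong convergence) in the Hilbert pieces and in the uniformly convex space underlying $\normN{\cdot}$. Your midpoint/defect argument reaches the same conclusion with a bit more computation but has the advantage that, for the nonlinear piece, it works directly with $\efctN$ rather than needing $\efctN$ to coincide with the fourth power of a concrete norm. For bound~(4) in the averaged case the paper instead identifies the weak $\Lrad{}$-limit of $\|\eargF_t\|_{L^2(\T)}\halfDT\eargF_t$ via Fourier coefficients after establishing $\eargF\to u$ in $\espace$; your Fatou route is a legitimate alternative.
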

\begin{proof}
	We only consider the case $N = \Nav$, as for $N = \Nins$ one can argue similarly. Then due to \cref{prop:minimizer_estimates} and the definition of $\qNav{}$ the weak limits
	\begin{align*}
		\eargF &\wto u \text{ in } \espace,
		\\ \halfDT \eargF_r &\wto f \text{ in } \Lrad{[0, R] \times \T},
		\\ \tfrac{1}{r} \halfDT \eargF &\wto g \text{ in } \Lrad{[0, R] \times \T}, 
		\\  \|\eargF_t\|_{L^2(\T)} \halfDT \eargF_t &\wto h \text{ in } \Lrad{[0, R] \times \T},
		\\ \eargF(R, \impvar) &\wto b \text{ in } H^1(\T)
	\end{align*}
	exist for $K\to \infty$ up to a subsequence and satisfy $\norm{u}_{\espace} \leq C_1$, $\norm{f}_{\Lrad{}} \leq C_2$, $\norm{g}_{\Lrad{}} \leq C_3$, $\norm{h}_{\Lrad{}} \leq C_4$, $\norm{b}_{H^1} \leq C_5$.
	Using the properties of the functional $E$ from \cref{prop:energy_properties,lem:energy_limit} we further obtain
	\begin{align*}
		\emin \leq \efct(u) \leq \lim_{K \to \infty} \efct(\eargF) = \lim_{K \to \infty} \eminF = \emin,
	\end{align*}
	so that $\efct(u) = \emin = \lim_{K \to \infty} \efct(\eargF)$. In particular $u$ is a minimizer of $\efct$. 

	Also, since $\efctB(\eargF) \to \efctB(u)$ for $K \to \infty$, we obtain $\efctI(\eargF) \to \efctI(u)$ as $K \to \infty$. From this it follows that $\eargF \to u$ in $\espace$ as $K \to \infty$ as we show next. Since $\eargF \wto u$ we see that 
	\begin{align*}
		\tfrac{1}{r}\eargF \wto \tfrac{1}{r} u, ~ \eargF_r \wto u_r \mbox{ in } \Lrad[2]{[0, R] \times \T}, 
		\qquad
		\eargF_t \wto u_t \mbox{ in } \normN{}.
	\end{align*} 
	Moreover, by weak sequential lower semicontinuity we have 
    \begin{align*}
		&\efctI(u) 
		= \tfrac12 \norm{u_r}_{\Lrad{}}^2 
		+ \tfrac12 \norm{\tfrac{1}{r} u}_{\Lrad{}}^2 
		+ \tfrac12 \norm{V^{\nicefrac12} u_t}_{\Lrad{}}^2 
		+ \tfrac14 \normNav{\Gamma^{\nicefrac14} u_t}^4
		\\ &\quad\leq \tfrac12 \liminf_{K \to \infty} \norm{\eargF_r}_{\Lrad{}}^2 
		+ \tfrac12 \liminf_{K \to \infty}\norm{\tfrac{1}{r} \eargF}_{\Lrad{}}^2 
		+ \tfrac12 \liminf_{K \to \infty}\norm{V^{\nicefrac12} \eargF_t}_{\Lrad{}}^2 
		+ \tfrac14 \liminf_{K \to \infty}\normNav{\Gamma^{\nicefrac14}  \eargF_t}^4
		\\ &\quad\leq \tfrac12 \limsup_{K \to \infty} \norm{\eargF_r}_{\Lrad{}}^2 
		+ \tfrac12 \liminf_{K \to \infty}\norm{\tfrac{1}{r} \eargF}_{\Lrad{}}^2 
		+ \tfrac12 \liminf_{K \to \infty}\norm{V^{\nicefrac12} \eargF_t}_{\Lrad{}}^2 
		+ \tfrac14 \liminf_{K \to \infty}\normNav{\Gamma^{\nicefrac14}  \eargF_t}^4
		\\ &\quad\leq \limsup_{K \to \infty} \efctI(\eargF) = \efctI(u).
	\end{align*}
	Notice that in the second inequality we have replaced one $\liminf$ by a $\limsup$ and in the last inequality we used that $\limsup_{n\to\infty} a_n + \sum_{i=1}^p \liminf_{n\to\infty} b_n^i \leq \limsup_{n\to\infty} (a_n+\sum_{i=1}^p b_n^i)$ which follows from $\sup_{n\in\N} a_n + \sum_{i=1}^p \inf_{n\in\N} b_n^i \leq \sup_{n\in\N} (a_n+\sum_{i=1}^p b_n^i)$. It follows that $\norm{u_r}_{\Lrad{[0, R] \times \T}}^2 = \liminf_{K \to \infty} \norm{\eargF_r}_{\Lrad{[0, R] \times \T}}^2 = \limsup_{K \to \infty} \norm{\eargF_r}_{\Lrad{[0, R] \times \T}}^2$. Combining weak convergence $\eargF_r \wto u_r$ with convergence of the norms $\norm{\eargF_r}_{\Lrad{}} \to \norm{u_r}_{\Lrad{}}$, we find that $\eargF_r \to u_r$ in $\Lrad{[0, R] \times \T}$ as $K \to \infty$. With a similar argument we find $\tfrac1r \eargF \to \tfrac1r u$ in $\Lrad{[0, R] \times \T}$ and $\eargF_t \to u_t$ in $\normNav{}$ as $K \to \infty$. Together, this shows $\eargF \to u$ in $\espace$ as $K \to \infty$.

	It remains to show the estimates \ref{i:limit_estimate:reg1}--\ref{i:limit_estimate:reg4}. These follow from the identities
	\begin{align*}
		f = \halfDT u_r, 
		\quad
		g = \tfrac{1}{r} \halfDT u,
		\quad
		h = \norm{u_t}_{L^2(\T)} \halfDT u_t,
		\quad
		b = u(R, \impvar),
	\end{align*}
	where we only discuss $h = \norm{u_t}_{L^2(\T)} \halfDT u_t$ as an example. First, by definition of $\qN{}$ and convergence $\eargF \to u$ in $\espace$ we have $\norm{\eargF}_{L^2(\T)} \eargF \to \norm{u}_{L^2(\T)} u$ in $\Lrad{[0, R] \times \T}$. Taking the Fourier transform, for $k \in \Zodd$ we find
	\begin{align*}
		\F_k[\Norm{\eargF_t}_{L^2(\T)} \eargF_t] \to \F_k[\norm{u_t}_{L^2(\T)} u_t] 
	\end{align*}
	and also
	\begin{align*}
		\sqrt{\omega \abs{k}} \F_k[\Norm{\eargF_t}_{L^2(\T)} \eargF_t] = \F_k[\Norm{\eargF_t}_{L^2(\T)} \halfDT \eargF_t]
		\wto \F_k[h]
	\end{align*}
	in $\Lrad{[0, R]}$ as $K \to \infty$. Thus $\F_k[h] = \sqrt{\omega \abs{k}} \F_k[\norm{u_t}_{L^2(\T)} u_t]$, i.e. $h = \halfDT (\norm{u_t}_{L^2(\T)} u_t) = \norm{u_t}_{L^2(\T)} \halfDT u_t$. 
\end{proof}

\section{Proof of Theorems~\ref{thm:radial:main}~and~\ref{thm:radial:multiplicity}}
\label{sec:main_proof}

The proof of \cref{thm:radial:main} is split into two parts. First, using results from Sections~\ref{sec:domain_restriction}--\ref{sec:approximation}, we show in \cref{prop:weak_solution} that there exists a weak solution to the problem \eqref{eq:radial:VGproblem} in the sense of \cref{def:radial:weak_solution} below. In \cref{prop:maxwell_solution} we show that from the solution of \eqref{eq:radial:VGproblem}, one can reconstruct a solution of Maxwell's equations~\eqref{eq:maxwell}--\eqref{eq:polarization}, and that this solution has finite electromagnetic energy per unit segment in $z$-direction.

\begin{definition}\label{def:radial:weak_solution}
    A function $w \colon (0, \infty) \times \T \to \R$ is called a $T$-periodic weak solution to \eqref{eq:radial:VGproblem} if $w$ lies in
	\begin{align*}
		\pspace \coloneqq \set{w \in W^{1,1}_\mathrm{loc}((0, \infty) \times \T) \colon \tfrac{1}{r} w, w_r, w_t \in \Lrad{[0, \infty) \times \T}, \normN{w_t\vert_{[0,R]\times\T}}<\infty}.
	\end{align*}
	and satisfies the equation
	\begin{align*}%
		\int_{[0, \infty) \times \T} \left( w_r \varphi_r + \tfrac{1}{r^2} w \varphi + V(r) w_t \varphi_t + \Gamma(r) N(w_t) \varphi_t \right) \der[r] (r, t) = 0
	\end{align*}
	for all $\varphi \in \pspace$.
\end{definition}

\begin{proposition}\label{prop:weak_solution}
	There exists a nontrivial weak solution to \eqref{eq:radial:VGproblem} in the sense of \cref{def:radial:weak_solution}.
\end{proposition}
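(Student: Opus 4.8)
The plan is to construct the weak solution $w$ by gluing the minimizer $u = \lim_{K\to\infty}\eargF$ from \cref{prop:limit_estimates} on the core $[0,R]\times\T$ to the exterior expansion $\sum_{k\in\Zodd}\alpha_k\phi_k(r)e_k(t)$ on $[R,\infty)\times\T$, where the coefficients $\alpha_k$ are read off from the trace of $u$ as in the formal derivation: $\alpha_k = \hat u_k(R)/\phi_k(R)$ for $k\in\regularK$ and $\alpha_k = \hat u_k'(R)/\phi_k'(R)$ for $k\in\singularK$ (recall $\hat u_k(R)=0$ in the latter case). First I would verify that this $w$ indeed lies in the space $\pspace$: on the core this is immediate from $u\in\espace$, and on the exterior I would use the estimates \eqref{eq:ass:radial:radial:estimates:reformulation} together with the $H^1(\T)$-bound $\norm{u(R,\cdot)}_{H^1(\T)}\leq C_5$ from \cref{prop:limit_estimates}\ref{i:limit_estimate:reg4} to control $\sum_k |\alpha_k|^2\norm{\phi_k}_{\Lrad{[R,\infty)}}^2$ and the corresponding sums with $\phi_k'$; the point is that $\abs{\phi_k'(R)/\phi_k(R)}\lesssim|k|$ so that one power of $k$ is affordable given the $H^1$-trace. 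One must also check that the finitely many singular indices contribute harmlessly, using $\hat u_k'(R)$ which is finite since $\halfDT u_r\in\Lrad{}$ gives enough trace regularity for $\hat u_k'(R)$ to make sense.

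Next I would verify the weak formulation of \cref{def:radial:weak_solution}. Given a test function $\varphi\in\pspace$, split the integral over $[0,\infty)\times\T$ into the core part and the exterior part. On the exterior, since each $\phi_k$ solves $L_k\phi_k=0$ classically and $\Gamma\equiv 0$ there, an integration by parts in $r$ (justified by the decay of $\phi_k$ and the summability established above) shows that the exterior contribution equals $R\int_\T w_r(R,t)\varphi(R,t)\der t$ with the sign making it a boundary term at $r=R$ coming from outside, i.e. $-R\sum_{k\in\regularK}\frac{\phi_k'(R)}{\phi_k(R)}\hat u_k(R)\overline{\hat\varphi_k(R)}$ — here one uses that $w$ is $C^1$ across $r=R$ by construction of $\alpha_k$, so the exterior normal derivative matches $u_r(R,\cdot)$. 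On the core, I would like to invoke $\efct'(u)[\psi]=0$ from \cref{prop:energy_properties}, but the test function $\varphi|_{[0,R]\times\T}$ need not lie in $\espace$ (its traces on $\singularK$ need not vanish); however the constraint $\hat\varphi_k(R)=0$ for $k\in\singularK$ is not needed since for such $k$ the boundary term already vanishes because $\hat u_k(R)=0$, so one can legitimately test $\efct'(u)$ against the projection of $\varphi$ onto the constraint space and handle the finitely many excluded modes by hand. Adding the two contributions, the boundary terms at $r=R$ cancel and the remaining interior terms assemble into exactly the weak equation of \cref{def:radial:weak_solution}.

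Finally, nontriviality of $w$ follows from \cref{prop:nonzero_minimizer}: since $\emin<0$ we have $u\neq 0$, hence $w\neq 0$ on $[0,R]\times\T$. I expect the main obstacle to be the rigorous justification of the integration by parts on the unbounded exterior region and the interchange of the infinite sum over $k$ with the $r$-integration and the limit procedure — in particular making sure that $w_r$ has a well-defined $L^2(\T)$-trace at $r=R$ from the exterior side that agrees with $u_r(R,\cdot)$, and that the series $\sum_k\alpha_k\phi_k'(r)e_k(t)$ converges in the right sense near $r=R$. This is where the uniform estimate $\abs{\phi_k'(R)}\leq C^\star|k|\norm{\phi_k}_{\Lrad{[R,\infty)}}$ from \ref{ass:radial:secondlast}, combined with the extra half-derivative regularity $\halfDT u_r\in\Lrad{}$ transported to the boundary, does the essential work; without the finite-harmonic approximation of \cref{sec:approximation} providing the $H^1(\T)$-trace bound, the gluing would not close.
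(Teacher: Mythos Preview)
Your overall strategy matches the paper's proof: glue $u$ to the exterior expansion, use the $H^1(\T)$ trace bound from \cref{prop:limit_estimates}\ref{i:limit_estimate:reg4} together with \eqref{eq:ass:radial:radial:estimates:reformulation} to place $w$ in $\pspace$, split the weak formulation into core and exterior, integrate by parts on the exterior using $L_k\phi_k=0$, and match the boundary terms via $\efct'(u)=0$. Two points deserve correction.

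First, to show $w_r\in\Lrad{[R,\infty)\times\T}$ you need $\sum_k|\alpha_k|^2\norm{\phi_k'}_{\Lrad{[R,\infty)}}^2<\infty$, but assumption~\ref{ass:radial:fundamental_solution:estimates} only bounds the \emph{pointwise} value $\phi_k'(R)$, not $\norm{\phi_k'}_{\Lrad{}}$. The paper fills this with a separate interpolation lemma (\cref{lem:H1_estimate}) giving $\norm{\phi_k'}_{\Lrad{[R,\infty)}}\lesssim|k|\norm{\phi_k}_{\Lrad{[R,\infty)}}$, obtained from $L_k\phi_k=0$ and an $\eps$-interpolation of first derivatives between the function and its Laplacian.

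Second, and more substantively, your handling of the $\singularK$ modes contains an error. For $k\in\singularK$ the exterior boundary term produced by integration by parts is $-R\,\hat u_k'(R)\,\overline{\hat\varphi_k(R)}$ (since $\alpha_k=\hat u_k'(R)/\phi_k'(R)$), and this does \emph{not} vanish just because $\hat u_k(R)=0$. The constraint $\hat\varphi_k(R)=0$ is precisely what makes this term disappear, so dropping the constraint is not free. The paper's fix is to decompose a general $\varphi\in\pspace$ as an element of $\espace$ plus a finite combination of $\Re[\psi(r)e_k(t)]$ with $k\in\singularK$ and $\psi\in C_c^\infty((0,\infty))$, and for the latter it first establishes the pointwise ODE $\hat u_k''=-\tfrac1r\hat u_k'+\tfrac1{r^2}\hat u_k+k^2\omega^2 V\hat u_k-\ii k\omega\Gamma\F_k[N(u_t)]$ on $(0,R]$ (this also supplies the existence of $\hat u_k'(R)$ via $\hat u_k\in W^{2,4/3}_{\mathrm{loc}}\subset C^1$), then integrates by parts on the core so that the interior boundary term $+R\,\hat u_k'(R)\,\overline{\psi(R)}$ cancels the exterior one. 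Your invocation of $\halfDT u_r\in\Lrad{}$ to get $\hat u_k'(R)$ is plausible for each fixed $k$, but the cancellation argument still has to be carried out explicitly; the $C^1$-matching you mention is exactly the right intuition, but it is realized through the pointwise ODE, not through the vanishing of $\hat u_k(R)$.
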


We prepare the proof of \cref{prop:weak_solution} with an estimate on the fundamental solutions $\phi_k$.

\begin{lemma}\label{lem:H1_estimate}
	There exists a constant $C > 0$ such that $\displaystyle \norm{\phi_k'}_{\Lrad{[R, \infty)}} \leq C \abs{k} \norm{\phi_k}_{\Lrad{[R, \infty)}}$ holds for all $k \in \Zodd$.
\end{lemma}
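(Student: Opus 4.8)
The plan is to exploit the Sturm--Liouville form of $L_k\phi_k=0$ on $[R,\infty)$ and to let the boundary estimates from assumption~\ref{ass:radial:fundamental_solution:estimates}, in the reformulation \eqref{eq:ass:radial:radial:estimates:reformulation}, absorb the boundary contribution with the correct power of $\abs{k}$. Set $M\coloneqq\norm{V}_{L^\infty([0,\infty))}<\infty$, which is finite since $V=-(\tilde\chi_1+1-c^{-2})$ and $\tilde\chi_1\in L^\infty$ by \ref{ass:radial:VandGamma}. On $[R,\infty)$ the equation reads, in self-adjoint form,
\begin{align*}
	(r\phi_k')' = \left(\tfrac1r + r k^2\omega^2 V(r)\right)\phi_k.
\end{align*}
First I would multiply by $\phi_k$ and integrate over $[R,\infty)$. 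Since $\phi_k,\phi_k'\in\Lrad{[R,\infty)}$ (from $\phi_k\in\Hrad[2]{[R,\infty)}$), the equation itself places $\phi_k''\in\Lrad{[R,\infty)}$, and then $r\mapsto r\phi_k(r)^2$ and $r\mapsto r\phi_k'(r)^2$ both lie in $W^{1,1}([R,\infty))\cap L^1([R,\infty))$ — their $L^1$-norms are $\norm{\phi_k}_{\Lrad{[R,\infty)}}^2$ and $\norm{\phi_k'}_{\Lrad{[R,\infty)}}^2$, and their derivatives are controlled via Cauchy--Schwarz — hence they tend to $0$ at infinity. Thus integration by parts leaves no boundary term at $\infty$ and yields
\begin{align*}
	\norm{\phi_k'}_{\Lrad{[R,\infty)}}^2 = -R\,\phi_k'(R)\phi_k(R) - \int_R^\infty\tfrac1r\phi_k^2\der r - k^2\omega^2\int_R^\infty r\,V(r)\phi_k^2\der r.
\end{align*}

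Next I would bound the right-hand side from above. The middle term is nonpositive and can be dropped; the last term is $\le\omega^2 M k^2\norm{\phi_k}_{\Lrad{[R,\infty)}}^2$ since $V\ge -M$. For the boundary term, the finitely many $k\in\singularK$ are immediate because $\phi_k(R)=0$ there; for $k\in\regularK$, \eqref{eq:ass:radial:radial:estimates:reformulation} gives $\abs{\phi_k'(R)}\le C^\star\abs{k}\norm{\phi_k}_{\Lrad{[R,\infty)}}$, while a one-dimensional trace inequality on the fixed interval $[R,R+1]$ (on which $r\ge R$) gives $\abs{\phi_k(R)}^2\le C'\bigl(\norm{\phi_k'}_{\Lrad{[R,\infty)}}^2+\norm{\phi_k}_{\Lrad{[R,\infty)}}^2\bigr)$ with $C'$ independent of $k$. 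Inserting these and applying Young's inequality, the term $\tfrac12\norm{\phi_k'}_{\Lrad{[R,\infty)}}^2$ can be absorbed into the left-hand side — legitimate precisely because $\norm{\phi_k'}_{\Lrad{[R,\infty)}}$ is finite a priori — and, collapsing the remaining lower powers of $\abs{k}$ into $k^2$ via $\abs{k}\ge 1$, one reaches $\norm{\phi_k'}_{\Lrad{[R,\infty)}}^2\le C\,k^2\norm{\phi_k}_{\Lrad{[R,\infty)}}^2$ with $C$ independent of $k$. Taking square roots gives the claim.

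I expect the only mildly delicate point to be the vanishing of the boundary term at infinity, i.e.\ extracting pointwise decay of $r\phi_k(r)^2$ and $r\phi_k'(r)^2$ from membership in $\Hrad[2]{[R,\infty)}$; this is a routine ``$W^{1,1}\cap L^1\Rightarrow$ decay'' argument once the equation is used to put $\phi_k''$ in $\Lrad{[R,\infty)}$. Everything else — the Sturm--Liouville identity, the $k$-uniform trace estimate on a unit interval, and the absorption step — is standard. The role of assumption~\ref{ass:radial:fundamental_solution:estimates} here is exactly to render the only dangerous term, $-R\phi_k'(R)\phi_k(R)$, of size $\landauO(\abs{k})$ relative to $\norm{\phi_k}_{\Lrad{[R,\infty)}}$ and its trace, so that after absorption the estimate degrades only by the factor $k^2$.
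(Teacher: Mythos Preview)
Your proof is correct, but it differs substantively from the paper's argument. The paper never touches the boundary values at $r=R$: instead it uses the equation to bound the second-order quantity
\[
\norm{\phi_k'' + \tfrac{1}{r}\phi_k'}_{\Lrad{[R,\infty)}}
= \norm{\tfrac{1}{r^2}\phi_k + k^2\omega^2 V\phi_k}_{\Lrad{[R,\infty)}}
\le k^2\bigl(\tfrac{1}{R^2}+\omega^2\norm{V}_\infty\bigr)\norm{\phi_k}_{\Lrad{[R,\infty)}},
\]
and then invokes an interpolation inequality of the form $\norm{\phi_k'}_{\Lrad{}}\le C_0\bigl(\eps\norm{\phi_k''+\tfrac{1}{r}\phi_k'}_{\Lrad{}}+\eps^{-1}\norm{\phi_k}_{\Lrad{}}\bigr)$ (Adams, Lemma~5.5), choosing $\eps=\abs{k}^{-1}$. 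By contrast, your route is the energy identity plus the boundary control from \ref{ass:radial:fundamental_solution:estimates}. Your argument is more elementary (no external interpolation lemma), but it ties the estimate to the extra hypothesis on $\phi_k'(R)$; the paper's proof shows that \ref{ass:radial:fundamental_solution:estimates} is in fact not needed for this particular lemma, only $\phi_k\in\Hrad[2]{[R,\infty)}$ and boundedness of $V$.
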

\begin{proof}
	By assumption we have
	\begin{align*}
		\norm{\phi_k'' + \tfrac{1}{r} \phi_k'}_{\Lrad{[R,\infty)}}
		= \norm{\tfrac{1}{r^2} \phi_k + k^2 \omega^2 V \phi_k}_{\Lrad{[R,\infty)}}
		\leq k^2 \left( \tfrac{1}{R^2} + \omega^2 \norm{V}_\infty \right) \norm{\phi_k}_{\Lrad{[R,\infty)}}
	\end{align*}
	Due to \cite[Lemma 5.5]{adams} the inequality
	\begin{align*}
		\norm{\phi_k'}_{\Lrad{[R,\infty)}}
		\leq C_0 \left( \eps \norm{\phi_k'' + \tfrac{1}{r} \phi_k'}_{\Lrad{[R,\infty)}} + \tfrac{1}{\eps} \norm{\phi_k}_{\Lrad{[R,\infty)}} \right)
	\end{align*}
	holds for some $C_0 > 0$. Choosing $\eps = \tfrac{1}{\abs{k}}$, the claim follows with $C = C_0 (\tfrac{1}{R^2} + \omega^2 \norm{V}_\infty + 1)$.
\end{proof}

\begin{proof}[Proof~of~\cref{prop:weak_solution}]
	Let $u$ denote the minimizer of $\efct$ obtained through \cref{prop:limit_estimates}. Then $u$ is nonzero by \cref{prop:nonzero_minimizer}. As motivated in \cref{sec:domain_restriction} we define
	\begin{align}\label{eq:solution_reconstruction}
		w(r, t) \coloneqq \begin{cases}
			u(r, t), &r < R, \\
			\sum_{k \in \singularK} \frac{\hat u_k'(R)}{\phi_k'(R)} \phi_k(r) e_k(t)
			+ \sum_{k \in \regularK} \frac{\hat u_k(R)}{\phi_k(R)} \phi_k(r) e_k(t) &r > R.
		\end{cases}
	\end{align}
	First we show that $\hat u_k'(R)$ exists for all $k \in \Zodd$. To do this, let $\eps \in (0, R)$. Then for $\psi \in C_{\mathrm c}^\infty((\eps, R); \C)$ we have
	\begin{align*}
		0 &= \efct'(u)[\Re[\psi(r) e_k(t)]]
		\\ &= \Re \left[\int_0^R \left(\hat u_k'(r) \overline{\psi'(r)} + \left[ \tfrac{1}{r^2} \hat u_k(r) + k^2 \omega^2 V(r) \hat u_k(r) - \ii k \omega \Gamma(r) \F_k[N(u_t)](r) \right] \overline{\psi(r)} \right) \der[r] r \right].
	\end{align*}
	Since $\psi$ was arbitrary, this shows that $\hat u_k \in H^1([\eps, R])$ is a weak solution to
	\begin{align}\label{eq:loc:pointwise_trace:1}
		\hat u_k'' = - \tfrac{1}{r} \hat u_k' + \tfrac{1}{r^2} \hat u_k + k^2 \omega^2 V \hat u_k - \ii k \omega \Gamma \F_k[N(u_t)]
		\quad\text{on}\quad
		[\eps, R].
	\end{align}
	Note that the right-hand side of \eqref{eq:loc:pointwise_trace:1} lies in $L^{\nicefrac43}([\eps, R])$. Thus $\hat u_k \in W^{2, \nicefrac43}([\eps, R])$ and solves \eqref{eq:loc:pointwise_trace:1} pointwise. In particular, we have $\hat u_k \in C^1([\eps, R])$ and therefore $\hat u_k'(R)$ exists.

	Next we show that $w$ lies in $X$. Clearly, $w$ is real-valued, and $\frac{1}{r}w, w_r, w_t \in \Lrad{[0,R]\times\T}$ and $N(w_t)w_t\in \Lrad[1]{[0, R] \times \T}$. Since the antiperiodicity of $w$ forces the zero-th Fourier mode to vanish, we see that $\|w\|_{\Lrad{[R, \infty) \times \T}}$ and hence $\|\frac{1}{r}w\|_{\Lrad{[R, \infty) \times \T}}$ are bounded by $\|w_t\|_{\Lrad{[R, \infty) \times \T}}$. Therefore, it remains to show that $w_r, w_t \in \Lrad{[R, \infty) \times \T}$ since the function values at $r=R$ match according to the construction of $w$. 
	
	Using \eqref{eq:ass:radial:radial:estimates:reformulation}, \cref{prop:limit_estimates}, and \cref{lem:H1_estimate} we find
	\begin{align*}
		\sum_{k \in \regularK} \norm{\hat w_k'}_{\Lrad{[R, \infty)}}^2
		&= \sum_{k \in \regularK} \abs{\frac{\hat u_k(R)}{\phi_k(R)}}^2 \norm{\phi_k'}_{\Lrad{[R, \infty)}}^2
		\\ &\lesssim \sum_{k \in \regularK} k^2 \abs{\hat u_k(R)}^2 
		\lesssim \norm{u(R, \impvar)}_{H^1(\T)}^2
		< \infty,
		\\ \sum_{k \in \regularK} \omega^2 k^2 \norm{\hat w_k}_{\Lrad{[R, \infty)}}^2
		&= \sum_{k \in \regularK} \omega^2 k^2 \abs{\frac{\hat u_k(R)}{\phi_k(R)}}^2 \norm{\phi_k}_{\Lrad{[R, \infty)}}^2
		\\ &\lesssim \sum_{k \in \regularK} k^2 \abs{\hat u_k(R)}^2 
		\lesssim \norm{u(R, \impvar)}_{H^1(\T)}^2
		< \infty.
	\end{align*}
	Since the finite sum $\sum_{k \in \singularK} \frac{\hat u_k'(R)}{\phi_k'(R)} \phi_k(r) e_k(t)$ belongs to $\Hrad{[R, \infty) \times \T}$ this shows that the sum $w(r, t) = \sum_{k \in \Zodd} \hat w_k(r) e_k(t)$ converges in $\Hrad{[R, \infty) \times \T}$. It remains to show that $w$ is a weak $T$-periodic solution to \eqref{eq:radial:VGproblem} in the sense of \cref{def:radial:weak_solution}. That is, we need to verify
	\begin{align*}
		I[\varphi] \coloneqq \int_{[0, \infty) \times \T} \left( w_r \varphi_r + \tfrac{1}{r^2} w \varphi + V(r) w_t \varphi_t + \Gamma(r) N(w_t) \varphi_t \right) \der[r] (r, t) = 0
	\end{align*}
	for all $\varphi \in \pspace$.
	Since $w_r, w, w_t, N(w_t)$ are $\nicefrac{T}{2}$-antiperiodic in time, it follows that $I[\varphi] = 0$ for $\nicefrac{T}{2}$-periodic $\varphi$. So from now on let $\varphi$ be $\nicefrac{T}{2}$-antiperiodic in time.
	We calculate
	\begin{align}\label{eq:loc:3}
		\begin{aligned}
			I[\varphi]
			&= \int_{[0, R] \times \T} \left( u_r \varphi_r + \tfrac{1}{r^2} u \varphi + V(r) u_t \varphi_t + \Gamma(r) N(u_t) \varphi_t \right) \der[r] (r, t)
			\\ &\quad + \sum_{k \in \singularK} \frac{\hat u_k'(R)}{\phi_k'(R)} \int_R^\infty \left(\phi_k' \overline{\hat \varphi_k'} + \tfrac{1}{r^2} \phi_k \overline{\hat \varphi_k} + k^2 \omega^2 V(r) \phi_k \overline{\hat \varphi_k}\right) \der[r] r
			\\ &\quad + \sum_{k \in \regularK} \frac{\hat u_k(R)}{\phi_k(R)} \int_R^\infty \left(\phi_k' \overline{\hat \varphi_k'} + \tfrac{1}{r^2} \phi_k \overline{\hat \varphi_k} + k^2 \omega^2 V(r) \phi_k \overline{\hat \varphi_k}\right) \der[r] r
			\\ &= \int_{[0, R] \times \T} \left( u_r \varphi_r + \tfrac{1}{r^2} u \varphi + V(r) u_t \varphi_t + \Gamma(r) N(u_t) \varphi_t \right) \der[r] (r, t)
			\\ &\quad - \sum_{k \in \singularK} \frac{\hat u_k'(R)}{\phi_k'(R)} \cdot R \phi_k'(R) \overline{\hat \varphi_k(R)}
			- \sum_{k \in \regularK} \frac{\hat u_k(R)}{\phi_k(R)} \cdot R \phi_k'(R) \overline{\hat \varphi_k(R)}
		\end{aligned}
  	\end{align}
	If in addition $\hat \varphi_k(R) = 0$ holds for all $k \in \singularK$, then 
	\begin{align*}
		I[\varphi] &= \int_{[0, R] \times \T} \left( u_r \varphi_r + \tfrac{1}{r^2} u \varphi + V(r) u_t \varphi_t + \Gamma(r) u_t^3 \varphi_t \right) \der[r] (r, t)
		- R \sum_{k \in \regularK} \frac{\phi_k'(R)}{\phi_k(R)} \hat u_k(R) \overline{\hat \varphi_k(R)}
		\\ &= \efct'(u)[\varphi \vert_{[0, R] \times \T}] = 0
	\end{align*}
	where we have used $\varphi \vert_{[0, R] \times \T} \in \espace$.
	Now we want to conclude $I[\varphi] = 0$ in the general case where $\varphi\in X$ but $\hat\varphi_k(R)\neq 0$ for some $k\in\singularK$. Note that since $\varphi$ is real-valued we have the decomposition
	\begin{align*}
		X = \{\varphi\in X: \hat\varphi_k(R)=0 \text{ for all } k\in\singularK, k>0\} \oplus \operatorname{lin}_\R\{ \Re[\psi e_k], \Re[\ii \psi e_k] \colon k\in \singularK, k>0 \}	
	\end{align*}
	for any $\psi\in C_c^\infty((0,\infty))$ with $\psi(R)\not =0$. By linearity it suffices to show the identity $I[\Re[\psi(r) e_k(t)]] = 0 = I[\Re[\ii\psi(r) e_k(t)]]$ for all $k \in \singularK$. Using \eqref{eq:loc:3} we calculate
	\begin{align*}
		&I[\Re[\psi(r) e_k(t)]]
		\\ &\quad= \Re\left[ \int_0^R \left(\hat u_k' \overline{\psi'} + \left[ \tfrac{1}{r^2} \hat u_k + k^2 \omega^2 V(r) \hat u_k - \ii k \omega \Gamma(r) \F_k[N(u_t)](r) \right] \overline{\psi}\right) \der[r]r 
		- R \hat u_k'(R) \overline{\psi(R)} \right] 
		\\ &\quad= \Re\left[ \int_0^R \left(- \hat u_k'' - \tfrac{1}{r} \hat u_k' + \tfrac{1}{r^2} \hat u_k + k^2 \omega^2 V(r) \hat u_k - \ii k \omega \Gamma(r) \F_k[N(u_t)] \right) \overline{\psi} \der[r]r \right]
		= 0,
	\end{align*}
	where the last equality follows from \eqref{eq:loc:pointwise_trace:1} with $\eps\coloneqq\min\supp\psi$. Replacing $\psi$ by $\ii \psi$ in the above calculation, we obtain also $I[\Re[\ii \psi(r) e_k(t)]] = 0$.
\end{proof}	

\begin{proposition}\label{prop:maxwell_solution}
	Let $w$ be a $T$-periodic weak solution to \eqref{eq:radial:VGproblem} in the sense of \cref{def:radial:weak_solution}. Then the fields $\bfD, \bfE, \bfB, \bfH$ given by
	\begin{align*}
		\bfD(\bfx, t) 
		&= \eps_0 \left( (1 + \chi_1(\bfx)) w_t(r, t - \tfrac{1}{c} z) + \chi_3(\bfx) N(w_t)(r, t - \tfrac{1}{c} z) \right) \cdot (-\tfrac{y}{r}, \tfrac{x}{r}, 0)^\top,
		\\ \bfE(\bfx, t) 
		&= w_t(r, t - \tfrac{1}{c} z) \cdot (-\tfrac{y}{r}, \tfrac{x}{r}, 0)^\top,
		\\ \bfB(\bfx, t)
		&= - \left(\tfrac{1}{r} w(r, t - \tfrac{1}{c} z) + w_r(r, t - \tfrac{1}{c} z)\right) \cdot (0, 0, 1)^\top
		- \tfrac{1}{c} w_t(r, t - \tfrac{1}{c} z) \cdot (\tfrac{x}{r}, \tfrac{y}{r}, 0)^\top,
		\\ \bfH(\bfx, t)
		&= \tfrac{1}{\mu_0} \bfB(\bfx, t),
	\end{align*}
	where $\bfx = (x, y, z)$ and $r = \sqrt{x^2 + y^2}$ are weak solutions to Maxwell's equations 
		\eqref{eq:maxwell}--\eqref{eq:polarization} in the sense of \cref{def:weak_maxwell}.
	Furthermore, the electromagnetic energy is finite orthogonal to the direction of propagation, i.e.
	\begin{align*}
		\int_{\R \times \R \times [z_0, z_0 + 1]} \bigl(\bfD \cdot \bfE + \bfB \cdot \bfH\bigr) \der(x, y, z)
	\end{align*}
	is uniformly bounded w.r.t. $z_0, t_0$.
\end{proposition}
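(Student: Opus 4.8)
The strategy is to verify that the material laws \eqref{eq:material}, \eqref{eq:polarization} hold, then to check the four weak identities of \cref{def:weak_maxwell} one at a time, and finally the energy bound, throughout exploiting that every field depends on $\bfx$ and $t$ only through $r$ and the co-moving time $s := t - \tfrac1c z$. Abbreviating $\tau(\bfx) := (-\tfrac yr, \tfrac xr, 0)^\top$ and $\nu(\bfx) := (\tfrac xr, \tfrac yr, 0)^\top$ for the azimuthal and radial unit vectors, we have $\bfE = w_t(r,s)\,\tau$, $\bfD = \eps_0 g(r,s)\,\tau$ with $g := (1+\chi_1)w_t + \chi_3 N(w_t)$, $\bfB = -\tfrac1r(rw)_r(r,s)\,(0,0,1)^\top - \tfrac1c w_t(r,s)\,\nu$ and $\bfH = \tfrac1{\mu_0}\bfB$, where $\tfrac1r(rw)_r = w_r + \tfrac1r w$ and $\mu_0\eps_0 = 1$. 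First I would note that $\bfD,\bfE,\bfB,\bfH \in L^1_{\mathrm{loc}}(\R^3\times\R;\R^3)$ by $w\in\pspace$ and \cref{rem:normN}, and that \eqref{eq:material}, \eqref{eq:polarization} hold pointwise by construction (using $\abs{\bfE} = \abs{w_t}$, so $\bfN(\bfE) = N(w_t)\,\tau$). The identity $\int \bfD\cdot\nabla\phi = 0$ is then immediate: since $g$ does not depend on the polar angle, $\int_0^{2\pi} g\,\tau\cdot\nabla\phi\,\mathrm{d}\theta = \int_0^{2\pi} g\,\tfrac1r\partial_\theta\phi\,\mathrm{d}\theta = 0$, and the claim follows after passing to cylindrical coordinates. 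For the Faraday law and $\nabla\cdot\bfB = 0$ I would introduce the vector potential $\bfA := -w(r, t-\tfrac1c z)\,\tau = (\tfrac yr w, -\tfrac xr w, 0)^\top$; from $w, w_r, \tfrac1r w, w_t \in \Lrad{[0,\infty)\times\T}$ one obtains $\bfA \in W^{1,1}_{\mathrm{loc}}(\R^3\times\R;\R^3)$ — the only subtlety being integrability across the axis $\{r=0\}$, which is exactly what $\tfrac1r w \in \Lrad{}$ supplies — together with the distributional identities $\partial_t\bfA = -\bfE$ and $\nabla\times\bfA = \bfB$, the latter from the pointwise computation $\nabla\times(-w\tau) = -\tfrac1c w_t\,\nu - \tfrac1r(rw)_r\,(0,0,1)^\top$. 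Then $\int \bfB\cdot\nabla\phi = \int \bfA\cdot\nabla\times\nabla\phi = 0$ and $\int \bfE\cdot\nabla\times\Phi = \int \bfA\cdot\nabla\times\partial_t\Phi = \int \bfB\cdot\partial_t\Phi$, using $\nabla\times\nabla\phi = 0$ and $\partial_t\nabla\times\Phi = \nabla\times\partial_t\Phi$.

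The Ampère law $\eps_0 \int \bfB\cdot\nabla\times\Phi + \int \bfD\cdot\partial_t\Phi = 0$ is the only identity that uses that $w$ solves \eqref{eq:radial:VGproblem}, and it is where the real work lies. Passing to cylindrical coordinates, and using that $\bfB$ has vanishing azimuthal component while $\bfD$ is purely azimuthal, only the azimuthal component $\psi(r,z,t) := \tfrac1{2\pi}\int_0^{2\pi}\Phi\cdot\tau\,\mathrm{d}\theta$ of the test field survives the $\theta$–integration, which turns the identity into a relation between $\psi$, $\partial_r(r\psi)$, $\partial_z\psi$ and the weights $w, w_r, w_t, N(w_t)$. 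Next I would change variables $(z,t)\mapsto(z,s)$, set $\tilde\psi(r,z,s) := \psi(r,z,s+\tfrac1c z)$, integrate by parts in $z$ (which annihilates every term carrying $\partial_z\tilde\psi$, since the $w$–weights are independent of $z$) and integrate out $z$ to replace $\tilde\psi$ by $\Psi(r,s) := \int_\R \tilde\psi(r,z,s)\,\mathrm{d}z$, a smooth function, compactly supported in $r$, with $\abs{\Psi(r,\cdot)} \le C r$ near $r=0$. An integration by parts in $r$ — whose boundary term at $r=0$ vanishes because $\abs{\Psi(r,\cdot)}\le Cr$ while $\liminf_{r\to0}\abs{w(r,\cdot)} = 0$ (from $\tfrac1r w \in \Lrad{}$), and at $r=\infty$ by compact support — together with the algebraic identity $\tfrac1{c^2}w_t - g = V w_t + \Gamma N(w_t)$ reduces the Ampère identity to
\[
\int_0^\infty\!\!\int_\R \Bigl( w_r \Psi_r + \tfrac1{r^2} w \Psi + V w_t \Psi_s + \Gamma N(w_t) \Psi_s \Bigr) r\,\mathrm{d}r\,\mathrm{d}s = 0 .
\]
Since $w_r, w, w_t, N(w_t)$ are $T$–periodic in $s$, I would finally replace $\Psi$ by its (finite-sum) periodization $\Psi^{\mathrm{per}}(r,s) := \sum_{n\in\Z}\Psi(r,s+nT)$, turning the $s$–integral over $\R$ into one over a single period; the left-hand side then equals, up to the positive factor $2\pi\eps_0 T$, the weak formulation of \eqref{eq:radial:VGproblem} tested against $\Psi^{\mathrm{per}}$. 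As $\Psi^{\mathrm{per}} \in \pspace$ (smooth, compactly supported in $r$, with $\tfrac1r\Psi^{\mathrm{per}}$ bounded), this vanishes because $w$ is a weak solution in the sense of \cref{def:radial:weak_solution}.

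For the energy bound I would compute, at a fixed time $t_0$, the pointwise identity
\[
\bfD\cdot\bfE + \bfB\cdot\bfH = \eps_0(1+\chi_1) w_t^2 + \eps_0\chi_3 N(w_t) w_t + \tfrac1{\mu_0}\bigl(w_r + \tfrac1r w\bigr)^2 + \tfrac1{\mu_0 c^2} w_t^2 ,
\]
all evaluated at $(r, t_0 - \tfrac1c z)$; integrating over $\R^2\times[z_0,z_0+1]$, passing to cylindrical coordinates and substituting $s = t_0 - \tfrac1c z$ turns the $z$–integral into the integral of a $T$–periodic function of $s$ over an interval of length $\tfrac1c$, which is bounded by $(\tfrac1{cT}+1)$ times its integral over one period. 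The energy is thereby dominated by a fixed multiple of $\norm{w_t}_{\Lrad{}}^2 + \norm{w_r}_{\Lrad{}}^2 + \norm{\tfrac1r w}_{\Lrad{}}^2 + \normN{w_t}^4$, which is finite by $w\in\pspace$ and \cref{rem:normN} and manifestly independent of $z_0, t_0$.

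I expect the bookkeeping in the Ampère step to be the main obstacle: reducing the four-dimensional integral to a radial one via $\theta$–averaging, changing to the co-moving time $s$, carrying out the two integrations by parts (the one in $z$ removing the transversal derivatives of the test field, and the one in $r$ whose axis boundary term must be controlled via the $\Lrad{}$–bounds encoded in $\pspace$), and then periodizing so that $\Psi^{\mathrm{per}}$ lands in $\pspace$ and the weak-solution property of $w$ can be invoked. The axis regularity of $\bfA$ needed in the Faraday/Gauss step is a milder instance of the same issue.
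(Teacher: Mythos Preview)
Your proposal is correct and largely parallels the paper's proof, but with one genuinely different ingredient worth highlighting.

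For $\nabla\cdot\bfB=0$ and Faraday's law, the paper verifies each identity by a direct computation in cylindrical coordinates, expanding the integrands and cancelling cross-derivatives term by term. Your introduction of the vector potential $\bfA=-w(r,t-\tfrac1c z)\,\tau$ is a cleaner alternative: once $\bfA\in W^{1,1}_{\mathrm{loc}}$ with $\nabla\times\bfA=\bfB$ and $\partial_t\bfA=-\bfE$ is established (which, as you note, hinges on $\tfrac1r w\in\Lrad{}$ to control the derivatives of $\tau$ across the axis), both identities follow in one line each from the distributional integration-by-parts for curl. This buys conceptual economy and makes the structural reason---that these two equations are automatic once $\bfE$ comes from a potential---explicit, whereas the paper's coordinate computation obscures it somewhat.

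For Amp\`ere's law and the energy bound, the two approaches coincide: the paper also reduces the four-dimensional integral to the weak formulation of \eqref{eq:radial:VGproblem} tested against the periodized, $z$- and $\theta$-averaged azimuthal component of $\Phi$, arriving at $\varphi(r,t)=T\sum_{k\in\Z}\int\Phi^\theta(r,\theta,z,t+kT+\tfrac1c z)\,d(\theta,z)$, which is your $\Psi^{\mathrm{per}}$ up to normalization. Your treatment is in fact more careful on one point the paper glosses over: you justify the vanishing of the axis boundary term in the $r$-integration by parts via the observation that the $\theta$-average of $\Phi\cdot\tau$ is $O(r)$ near $r=0$, while the paper's computation absorbs this step silently.
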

\begin{proof}
	We use cylindrical coordinates $(x, y, z) = (r \cos(\theta), r \sin(\theta), z)$. We abbreviate
	\begin{align*}
		e_r = (\tfrac{x}{r}, \tfrac{y}{r}, 0)^\top, 
		\quad
		e_\theta = (- \tfrac{y}{r}, \tfrac{x}{r}, 0)^\top,
		\quad
		e_z = (0, 0, 1)^\top
	\end{align*}
	and use the representations
	\begin{align*}
		\nabla \phi 
		&= \partial_r \phi \cdot e_r 
		+ \tfrac{1}{r} \partial_\theta \phi \cdot e_\theta
		+ \partial_z \phi \cdot e_z,
		\\ 
		\nabla \times \Phi 
		&= \left(\tfrac{1}{r} \partial_\theta \Phi^z - \partial_z \Phi^\theta\right) e_r
		+ \left(\partial_z \Phi^r - \partial_r \Phi^z\right) e_\theta
		+ \tfrac{1}{r} \left( \partial_r(r \Phi^\theta) - \partial_\theta \Phi^r \right) e_z
	\end{align*}
	where $\Phi = \Phi^r e_r + \Phi^\theta e_\theta + \Phi^z e_z$.
	For better readability, we omit the domain $[0, \infty) \times [0, 2 \pi] \times \R \times \R$ when integrating with cylindrical coordinates as well as arguments, so e.g. $w = w(r, t - \tfrac{1}{c} z)$. In particular, $\partial_z w = - \tfrac{1}{c} \partial_t w = - \tfrac{1}{c} w_t$ holds, which we use below. Now let $\phi \in C_{\mathrm c}^\infty(\R^4; \R)$ and $\Phi \in \C_{\mathrm c}^\infty(\R^4; \R^3)$. Identities \eqref{eq:material} and \eqref{eq:polarization} hold by definition, so it remains to check the four integral identities of \cref{def:weak_maxwell}, beginning with
	\begin{align*}
		\int_{\R^4} \bigl(\bfD \cdot \nabla \phi\bigr) \der(x, y, z, t)
		= \int \bigl( \bfD^\theta \partial_\theta \phi\bigr) \der[r](r, \theta, z, t)
		= 0,
	\end{align*}
	where the integral above is zero because $\bfD$ is independent of $\theta$. Next,
	\begin{align*}
		&\int_{\R^4} \bigl(\bfB \cdot \nabla \phi\bigr) \der (x, y, z, t)
		= \int \bigl(-(\tfrac{1}{r} w + w_r) \partial_z \phi - \tfrac{1}{c} w_t \partial_r \phi\bigr) \der[r](r, \theta, z, t)
		\\ &\quad= \int \bigl(- \tfrac{1}{r}\partial_r (r w) \partial_z \phi + \partial_z w \partial_r \phi\bigr) \der[r](r, \theta, z, t)
		= \int \bigl( w (\partial_r \partial_z \phi - \partial_z \partial_r \phi) \bigr) \der[r](r, \theta, z, t)
		= 0.
	\end{align*}
	For the third integral we have
	\begin{align*}
		&\int_{\R^4} \bigl(\bfE \cdot \nabla \times \Phi - \bfB \cdot \partial_t \Phi \bigr) \der (x, y, z, t)
		\\ &\quad = \int \bigl( w_t (\partial_z \Phi^r - \partial_r \Phi^z) + (\tfrac{1}{r} w + w_r) \partial_t \Phi^z + \tfrac{1}{c} w_t \partial_t \Phi^r \bigr) \der[r](r, \theta, z, t)
		\\ &\quad = \int \bigl( \partial_t w (\partial_z \Phi^r - \partial_r \Phi^z) + \tfrac{1}{r} \partial_r(r w) \partial_t \Phi^z - \partial_z w \partial_t \Phi^r \bigr) \der[r](r, \theta, z, t)
		\\ &\quad = \int \bigl( w \left( - \partial_t \partial_z \Phi^r + \partial_t \partial_r \Phi^z - \partial_r \partial_t \Phi^z + \partial_z \partial_t \Phi^r \right) \bigr) \der[r] (r, \theta, z, t)
		= 0.
	\end{align*}
	For the last identity, using integration by parts, that integrals with $\partial_\theta$ vanish, and the definitions of $V, \Gamma$ in \eqref{eq:def:VandGamma}, we have
	\begin{align*}
		&\int_{\R^4} - \bfH \cdot \nabla \times \Phi - \bfD \cdot \partial_t \Phi \der(\bfx, t)
		\\ &\quad= \tfrac{1}{\mu_0} \int \bigl((\tfrac{1}{r} w + w_r) \tfrac{1}{r} (\partial_r(r \Phi^\theta) - \partial_\theta \Phi^r) + \tfrac{1}{c} w_t (\tfrac{1}{r} \partial_\theta \Phi^z - \partial_z \Phi^\theta) \der[r](r, \theta, z, t)
		\\ &\qquad - \int \eps_0 \left((1 + \chi_1) w_t + \chi_3 N(w_t)\right) \partial_t \Phi^\theta \bigr)\der[r](r, \theta, z, t)
		\\ &\quad = \tfrac{1}{\mu_0} \int \bigl( \partial_r w \partial_r \Phi^\theta + \tfrac{1}{r}(\partial_r w \Phi^\theta + w \partial_r \Phi^\theta) + \tfrac{1}{r^2} w \Phi^\theta - \tfrac{1}{c} \partial_z w \partial_t \Phi^\theta \bigr) \der[r](r, \theta, z, t)
		\\ &\qquad - \tfrac{1}{\mu_0} \int \bigl( \eps_0 \mu_0 (1 + \chi_1) \partial_t w \partial_t \Phi^\theta + \eps_0 \mu_0 \chi_3 N(\partial_t w) \partial_t \Phi^\theta \bigr) \der[r](r, \theta, z, t)
		\\ &\quad = \tfrac{1}{\mu_0} \int \bigl(\partial_r w \partial_r \Phi^\theta + \tfrac{1}{r^2} w \Phi^\theta + V(r) \partial_t w \partial_t \Phi^\theta + \Gamma(r) N(w_t) \partial_t \Phi^\theta \bigr)\der[r](r, \theta, z, t)
		\\ &\quad = \tfrac{1}{\mu_0} \int_{[0, \infty) \times \T} \bigl(w_r \varphi_r + \tfrac{1}{r^2} w \varphi + V(r) w_t \varphi_t + \Gamma(r) N(w_t) \varphi_t\bigr)\der[r](r, t) = 0.
	\end{align*}
	where in the last line $w = w(r, t)$ is no longer in traveling coordinates, $\varphi$ is given by
	\begin{align*}
		\varphi(r, t) \coloneqq T \sum_{k \in \Z} \int_{[0, 2 \pi] \times \R} \Phi^\theta(r, \theta, z, t + k T + \tfrac{1}{c} z) \der(\theta, z),
	\end{align*}
	and the last equality holds due to \cref{def:radial:weak_solution}.
	To show finiteness of the energy, using
	\begin{align*}
		\bfD \cdot \bfE + \bfB \cdot \bfH
		= \eps_0 (1 + \chi_1) w_t^2 + \eps \chi_3 N(w_t) w_t + \tfrac{1}{\mu_0} (\tfrac{1}{r} w + w_r)^2 + \tfrac{1}{c^2 \mu_0} w_t^2
	\end{align*}
	we calculate
	\begin{align*}
		&\int_{\R \times \R \times [z_0, z_0 + 1]} \bigl( \bfD \cdot \bfE + \bfB \cdot \bfH \bigr) \der(x, y, z)
		\\ &\quad= \tfrac{2 \pi c}{\mu_0} \int_{[0, \infty) \times [t_0 - \nicefrac{(z_0 + 1)}{c}, t_0 - \nicefrac{z_0}{c}]}
		\Bigl((-V(r) + \tfrac{2}{c^2}) w_t^2 - \Gamma(r) N(w_t) w_t + (\tfrac{1}{r} w + w_r)^2 \Bigr) \der[r](r, t),
	\end{align*}
	which is uniformly bounded w.r.t. $t_0$ and $z_0$ because $V, \Gamma$ are bounded and $w$ lies in $\pspace$.
\end{proof}

Now that we have completed the proof of \cref{thm:radial:main} it remains to show the multiplicity result of \cref{thm:radial:multiplicity}.

\begin{proof}[Proof of \cref{thm:radial:multiplicity}]
	Let $\multK$ denote the (infinite) set of numbers $k_0 \in \Nodd$ for which \ref{ass:radial:nontrivial_sol} holds. For $k_0 \in \multK$ we consider the subspace
	\begin{align*}
		\espacemult \coloneqq \bigl\{u \in \espace \bigm\vert u \text{ is } \tfrac{T}{2 k_0} \text{-antiperiodic in time}\bigr\} \subseteq \espace.
	\end{align*}
	Similarly to the proof of \cref{prop:weak_solution} one can show that $\efct$ attains a minimum value on $\espacemult$ and that from the minimizer, one can construct a weak solution of \eqref{eq:radial:VGproblem} using \eqref{eq:solution_reconstruction}. Here we use that problem \eqref{eq:radial:VGproblem} is compatible with considering $\frac{T}{2 k_0}$-antiperiodic in time functions, i.e., $N(w_t)$ is $\frac{T}{2 k_0}$-antiperiodic in time if $w_t$ has this property. The solution of \eqref{eq:radial:VGproblem} gives rise to a solution of Maxwell's equations by \cref{prop:maxwell_solution}. 
	
	Repeating this for all $k_0 \in \multK$, we obtain a family $\set{(\bfD_{k_0}, \bfE_{k_0},\bfB_{k_0},\bfH_{k_0},) \colon k_0 \in \multK}$ of solutions to Maxwell's equations. 
	Each solution has a minimal nonzero time-period that is a divisor of $\frac{T}{k_0}$. 
	Thus, this family has minimal periods becoming arbitrarily small and therefore infinitely many among the solutions must be mutually distinct.
\end{proof}

\section{Modifications in the slab setting}\label{sec:slab_modifications}

Here we sketch modifications that have to be done in \cref{sec:domain_restriction,sec:approximation,sec:main_proof} in order to prove \cref{thm:slab:main,thm:slab:multiplicity}. First our solution ansatz becomes
\begin{align*}%
	w(x, t) = \begin{cases}
		u(x, t), & \abs{x} < R, \\
		\sum_{k \in \Zodd} \alpha_k \tildePhi_k(\abs{x}) e_k(t), & \abs{x} > R
	\end{cases}
\end{align*}
where $u \in H^1_{\mathrm{anti},\mathrm{even}}([-R, R] \times \T)$ is to be determined and 
\begin{alignat*}{3}
	&\alpha_k = \frac{\hat u_k'(R)}{\tildePhi_k'(R)}, \quad
	&&\hat u_k(R) = 0 \quad
	&&\text{for } k \in \singularK,
	\\
	&\alpha_k = \frac{\hat u_k(R)}{\tildePhi_k(R)}, \quad
	&&\hat u_k'(R) = \frac{\tildePhi_k'(R)}{\tildePhi_k(R)} \hat u_k(R) \quad
	&&\text{for } k \in \regularK.
\end{alignat*}
We use the subscript ``even'' to denote functions that are even in space. 

The restriction to even functions is done in order to shorten this chapter, but it is not necessary. 
For example, one could instead look for functions $u$ that are odd in space, or not impose any spatial symmetry. In the latter case one need not make any symmetry assumptions on $V, \Gamma$ (see assumption \ref{ass:slab:first}) if instead one requires fundamental solutions to exist both on $[R, \infty)$ and $(-\infty, -R]$.

Going back to the problem, we (formally) obtain the boundary value problem 
\begin{align*}
	\begin{cases}
		- u_{xx} - V(x) u_{tt} - \Gamma(x) N(u_t)_t = 0 \text{ in } [0, R] \times \T, \\
		\hat u_k'(R) = \frac{\tildePhi_k'(R)}{\tildePhi_k(R)} \hat u_k(R) \text{ for } k \in \regularK, \\
		\hat u_k(R) = 0 \text{ for } k \in \singularK \\
		u_x(0, \impvar) = 0
	\end{cases}
\end{align*}
for $u$, where the last condition comes from $u$ being even in space. This problem has variational structure as solutions are critical points of 
\begin{align*}
	\widetilde \efct(u) = \int_{[0, R] \times \T} \tfrac12 u_x^2 + \tfrac12 V(x) u_t^2 + \tfrac14 \Gamma(x) N(u_t)u_t \der (x, t) - \tfrac12 \sum_{k \in \regularK} \frac{\tildePhi_k'(R)}{\tildePhi_k(R)} \abs{\hat u_k(R)}^2
\end{align*}
subject to the constraints $\hat u_k(R) = 0$ for $k \in \singularK$.
We can proceed like in \cref{sec:domain_restriction,sec:approximation,sec:main_proof} in order to prove existence, regularity, and multiplicity of some minimizers of $\widetilde \efct$. The main differences to the radial setting are the following: 
First, we do not work in radially weighted Sobolev spaces, so $\der[r]r$ is replaced by $\der x$ and $\Lrad[p]{}$ by $L^p$. Further, the radial Laplacian $\partial_r^2 + \tfrac{1}{r} \partial_r$ is replaced by the $1$d Laplacian $\partial_x^2$. In addition, the term $\tfrac{1}{r^2} w$ is absent in problem~\eqref{eq:slab:problem}, so that this term (and related terms, e.g. $\tfrac{1}{r} u$ in $\efct$ and part~\ref{i:minimizer_estimate:reg2} of \cref{prop:minimizer_estimates}) do not appear in the slab setting.

So we define $\normNtilde{}$ and $\qNtilde{}$ like $\normN{}$ and $\qN{}$ but without the radial weight. Notice that $\widetilde \efct$ is well-defined on the reflexive Banach space
\begin{align*}
	\espacetilde \coloneqq \set{u \in H^1_{\mathrm{anti},\mathrm{even}}([-R, R] \times \T) \colon \normN{u_t}^{\sim} < \infty, \hat u_k(R) = 0 \text{ for } k \in \singularK}.
\end{align*}
More noticeable changes have to be made in the proof of \cref{prop:nonzero_minimizer}. There we made the ansatz 
\begin{align*}
	u(r, t) = \eps I_1(\lambda k_0 r) \left(e_{k_0}(t) + e_{-k_0}(t)\right)
\end{align*}
in order to show that $\inf \efct < 0$, and $I_1$ was a solution of 
\begin{align*}
	(-\partial_r^2 - \tfrac{1}{r}\partial_r + \tfrac{1}{r^2} + 1) I_1 = 0.
\end{align*}
For the slab setting the natural ansatz is
\begin{align*}
	u(x, t) = \eps \cosh(\lambda k_0 x) \left(e_{k_0}(t) + e_{-k_0}(t)\right)
\end{align*}
since $(-\partial_x^2 + 1) \cosh = 0$, which also explains the way we formulated assumption~\ref{ass:slab:last}.

We note that the trace embeddings can be adapted to the slab setting, i.e., the trace map $\tr \colon \espacetilde \to H^{\nicefrac12}(\T), v \mapsto v(R, \impvar)$ is compact and the estimates appearing in \cref{lem:trace,lem:regularized_trace} also hold with $\Lrad[p]{},  \normN{}, \qN{}$ replaced by $L^p, \normNtilde{}, \qNtilde{}$. 
This is because the trace of $v$ only depends on the function $v$ in a small neighborhood of $x = R$, and the radial weight is not singular at $x = R$.

Lastly, the electromagnetic waves reconstructed from the profile $w$ for the slab geometry are given by
	\begin{align*}
		\bfD(\bfx, t) 
		&= \eps_0 \left( (1 + \chi_1(\bfx)) w_t(x, t - \tfrac{1}{c} y) + \chi_3(\bfx) N(w_t)(x, t - \tfrac{1}{c} y) \right) \cdot (0, 0, 1)^\top,
		\\ \bfE(\bfx, t) 
		&= w_t(r, t - \tfrac{1}{c} y) \cdot (0, 0, 1)^\top,
		\\ \bfB(\bfx, t)
		&= (\tfrac{1}{c} w_t(x, t - \tfrac{1}{c} y), w_x(x, t - \tfrac{1}{c} y), 0)^\top,
		\\ \bfH(\bfx, t)
		&= \tfrac{1}{\mu_0} \bfB(\bfx, t),
	\end{align*}
	which can be shown similar to \cref{prop:maxwell_solution} for the cylindrical geometry.

\section{Further regularity estimate and bifurcation phenomenon}\label{sec:further_estimates}

Checking the assumptions \ref{ass:radial:first}--\ref{ass:radial:last} and \ref{ass:slab:first}--\ref{ass:slab:last} one sees that they depend not directly on $\chi_1$ but on $\chi_1^c:= \chi_1- c^{-2}$.
As we show next, for every solution of \cref{thm:radial:main} or \cref{thm:radial:multiplicity} the $L^\infty([0,R]; L^2(\T))$-norm of the $\bfE$-field is finite and can be bounded by a constant depending only on $\chi_1^c$ (as well as on $\chi_3$ and $\kappa$). A possible physical interpretation of this result is described below.

\begin{proposition}%
	Let $\bfD, \bfE, \bfB, \bfH$ be a solution of Maxwell's equation as in \cref{thm:radial:main} or \cref{thm:radial:multiplicity}. Then $\norm{\bfE}_{L^\infty([0,R]; L^2(\T))}$ is finite. The same holds true in the slab setting for solutions from \cref{thm:slab:main} or \cref{thm:slab:multiplicity}.
\end{proposition}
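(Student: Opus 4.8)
The plan is to reduce the statement to a pointwise-in-$r$ bound for the time-$L^2$ norm of the profile derivative $u_t$, where $u$ is the minimizer of $\efct$ from \cref{prop:limit_estimates} (so $w=u$ on $[0,R]\times\T$). Unwinding the ansatz \eqref{eq:radial:ansatz}, using $\abs{(-\tfrac yr,\tfrac xr,0)}\equiv 1$, that $t\mapsto t-\tfrac1c z$ is a measure-preserving bijection of $\T$, and $\supp\tilde\chi_3=[0,R]$, one gets $\norm{\bfE}_{L^\infty([0,R];L^2(\T))}=\esssup_{r\in[0,R]}\norm{u_t(r,\impvar)}_{L^2(\T)}$ (and similarly with $[0,R]\to[-R,R]$, $r\to x$ in the slab case). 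So it suffices to bound $G(r):=\norm{u_t(r,\impvar)}_{L^2(\T)}^2=\sum_{k\in\Zodd}\omega^2k^2\abs{\hat u_k(r)}^2$ for a.e.\ $r$. I would record five facts about $u$: (i) $\hat u_k(0)=0$ for every $k$ (forced by $\tfrac1r u\in\Lrad{[0,R]\times\T}$) and $G(R)\le\omega^2C_5^2$; (ii) $\hat u_k\in W^{2,\nicefrac43}([\eps,R])\subseteq C^1((0,R])$ for each $k$, by the argument in the proof of \cref{prop:weak_solution}; (iii) for the truncation $G_M(r):=\sum_{\abs k\le M}\omega^2k^2\abs{\hat u_k(r)}^2$ one has $G_M'(r)=2\sum_{\abs k\le M}\omega^2k^2\Re[\hat u_k'(r)\overline{\hat u_k(r)}]$, and splitting $\omega^2k^2=(\omega\abs k)^{\nicefrac12}(\omega\abs k)^{\nicefrac32}$ and using Cauchy--Schwarz in $k$ gives $\abs{G_M'(r)}\le2\norm{\halfDT u_r(r,\impvar)}_{L^2(\T)}\norm{\halfDT u_t(r,\impvar)}_{L^2(\T)}$; (iv) $\int_0^R\norm{\halfDT u_r(r,\impvar)}_{L^2(\T)}^2\der[r]r=\norm{\halfDT u_r}_{\Lrad{}}^2\le C_2^2$; (v) $\int_0^R G(r)\norm{\halfDT u_t(r,\impvar)}_{L^2(\T)}^2\der[r]r<\infty$, which for $N=\Nav$ is literally $\qNav{u_t}^4\le C_4^4$ and for $N=\Nins$ follows from the Kenig--Ponce--Vega-type estimate of \cref{sec:fractional} behind \cref{lem:nonlinear_derivative_inequality} together with $\qNins{u_t}\le C_4$.

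I would then run a continuity argument in $r$ on the truncations $G_M$, which by (ii) are continuous on $(0,R]$, vanish at $r=0$ by (i), satisfy $G_M(R)\le\omega^2C_5^2$, and inherit (iii)--(v) with constants no larger (Parseval, dropping modes), in particular $\int_0^RG_M\norm{\halfDT u_t}_{L^2(\T)}^2\der[r]r\le C_\ast:=\int_0^RG\norm{\halfDT u_t}_{L^2(\T)}^2\der[r]r<\infty$. Put $\Lambda:=\max\{1,\omega^2C_5^2\}+1$. If $\{G_M>\Lambda\}=\emptyset$ then $\sup G_M\le\Lambda$; otherwise write this open set as $\bigsqcup_i(a_i,b_i)$ with $G_M(a_i)=G_M(b_i)=\Lambda$ and, by continuity and $G_M(0)=0$, with $\delta_M:=\dist(\{G_M>\Lambda\},0)>0$. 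On each $(a_i,b_i)$ one has $G_M\ge1$, hence $\norm{\halfDT u_t}_{L^2(\T)}^2\le G_M\norm{\halfDT u_t}_{L^2(\T)}^2$ there, so $\sum_i\int_{a_i}^{b_i}\norm{\halfDT u_t(r,\impvar)}_{L^2(\T)}^2\der[r]r\le C_\ast$; combining with (iv), Cauchy--Schwarz over the intervals, and (iii) yields $G_M(r)\le\Lambda+2\sqrt{C_\ast}\,C_2/\delta_M$ for all $r$. Since $G_M\uparrow G$, the excursion sets increase and $\delta_M\downarrow\delta_\infty:=\dist(\{G>\Lambda\},0)$; provided $\delta_\infty>0$ this gives $\esssup_{[0,R]}G\le\Lambda+2\sqrt{C_\ast}\,C_2/\delta_\infty<\infty$, the desired bound. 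In the slab case the measure is $\der x$ instead of $\der[r]r$, so (iv)--(v) are weight-free and no $\delta$ enters: the same computation produces directly $\sup_{[-R,R]}G\le\Lambda+2\sqrt{C_\ast}\,C_2$, and since $\tilde\chi_1,c$ enter all constants only through $\tilde\chi_1^c=\tilde\chi_1-c^{-2}$, this is a bound in terms of the data $\tilde\chi_1^c,\tilde\chi_3,\kappa$ alone.

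The step I expect to be the real obstacle is the cooperation of (iii) with (v), and, in the cylindrical case, the exclusion of $\delta_\infty=0$. The a priori regularity from \cref{prop:limit_estimates} controls only $\halfDT u_r$, i.e.\ half a time-derivative of $u_r$, whereas a naive bound on $G(r)$ would call for a full time-derivative (equivalently $u\in L^\infty_rH^2_t$), which is genuinely unavailable; the missing half-derivative is recovered only through the cubic structure of the nonlinearity via the fractional Leibniz / Kenig--Ponce--Vega inequality of \cref{sec:fractional}, which is precisely what makes $\qN{}$ available and lets (v) absorb the deficit on the level sets where $G$ is large --- so the extra smoothing from the nonlinear term is essential, not incidental. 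For the second point one must supplement the above with a short quantitative estimate near $r=0$ --- exploiting that the $\tfrac1{r^2}$-term in \eqref{eq:radial:VGproblem} is confining, together with $\hat u_k(0)=0$ and $\tfrac1r\halfDT u\in\Lrad{}$ --- to show that $\{G>\Lambda\}$ cannot accumulate at the origin and that $\delta_\infty$, hence the final bound, is controlled by the data; this is the only place where the cylindrical case needs more than the slab case, reflecting the degeneracy of the radial measure $\der[r]r$ at $r=0$.
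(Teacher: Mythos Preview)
Your strategy is genuinely different from the paper's and carries a gap in the cylindrical case that you identify but do not close.

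The paper does not work from the a~priori estimates of \cref{prop:limit_estimates} alone. It tests \eqref{eq:radial:VGproblem} against $-u_{tt}$ and integrates in $t$: writing $f(r)=\tfrac12\norm{u_t(r,\impvar)}_{L^2(\T)}^2$, one obtains
\[
0=-f''-\tfrac1r f'+\Bigl[\int_\T \Bigl(u_{tr}^2+\tfrac1{r^2}u_t^2+V(r)\,u_{tt}^2\Bigr)\der t+\Gamma(r)\,J''(u_t)[u_{tt},u_{tt}]\Bigr],
\]
where $J(v)=\tfrac14\int_\T(\kappa\ast v^2)v^2\,\der t$ (taking $\kappa=\delta_0^{\mathrm{per}}$ in the instantaneous case). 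The convexity assumption in \eqref{eq:ass:kappa} and $V,\Gamma\ge0$ on $[0,R]$ make the bracket nonnegative, so $(rf')'\ge0$. Together with $f\ge0$ (indeed $f(0)=0$) this forces $f$ to be nondecreasing on $(0,R]$, hence $f\le f(R)$, which is finite by \cref{prop:limit_estimates}\ref{i:limit_estimate:reg4}. The formal step is justified by running the computation on the truncated minimizers $\eargF\in\espaceF$, where all time derivatives exist, and passing to the limit. In the slab case the same identity gives $f''\ge0$, and the Neumann condition at $x=0$ supplies $f'(0)=0$.

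By contrast, your argument never uses the PDE and relies only on the half-derivative bounds from \cref{prop:limit_estimates}. That is precisely why you lose control at $r=0$: your estimate $G_M\le\Lambda+2C_2\sqrt{C_\ast}/\delta_M$ degenerates if excursions above $\Lambda$ approach the origin, and the quantities you propose as a remedy --- $\tfrac1r\halfDT u\in\Lrad{}$ and the $\tfrac1{r^2}$ potential --- control only \emph{half} a time derivative of $u$ near $0$, which does not dominate $G=\norm{\DT u}_{L^2(\T)}^2$. I do not see how to close this without returning to the equation, which is exactly what the paper's monotonicity argument does in one stroke. A second, smaller issue: claim~(v) for $N=\Nins$, namely $\int_0^R G\,\norm{\halfDT u_t}_{L^2(\T)}^2\,r\,\der r<\infty$, is not immediate from $\qNins{u_t}<\infty$; \cref{lem:nonlinear_derivative_inequality} provides an \emph{upper} bound $\norm{\halfDT(v|v|)}_{L^2}^2\le 2\int v^3\DT v$, not the pointwise lower bound $\norm{v}_{L^2}\norm{\halfDT v}_{L^2}\lesssim\norm{\halfDT(v|v|)}_{L^2}$ you implicitly invoke.
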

\begin{proof}
	We focus on the radial setting and time-averaged nonlinearity. As in \cref{sec:main_proof} let $\bfD, \bfE, \bfB, \bfH$ be a solution of Maxwell's equations such that $\abs{\bfE}^2 = w_t^2$ where $w$ is a weak solution of \eqref{eq:radial:problem} in the sense of \cref{def:radial:weak_solution}.
	We begin by formally multiplying \eqref{eq:radial:VGproblem} with $-w_{tt}$ and integrating w.r.t. $t$ to obtain
	\begin{align*}
		0 &= \int_\T \left(-w_{rr} - \tfrac{1}{r} w_r + \tfrac{1}{r^2} w - V(r) w_{tt} - \Gamma(r) ((\kappa \ast w_t^2) w_t)_t\right) (- w_{tt}) \der t
		\\ &= \int_\T - w_{trr} w_t - \tfrac{1}{r} w_{tr} w_t + \frac{1}{r^2} w_t^2 + V(r) w_{tt}^2 + 2 \Gamma(r) (\kappa \ast w_t w_{tt}) w_t w_{tt} + \Gamma(r) (\kappa \ast w_t^2) w_{tt}^2 \der t.
	\end{align*}
	Writing $f(r) \coloneqq \tfrac12 \int_\T w_t^2 \der t$, we have
	\begin{align*}
		0 &= -f'' - \tfrac{1}{r} f' + \left[\int_\T \tfrac{1}{r^2} w_t^2+ w_{tr}^2 + V(r) w_{tt}^2 \der t+ \Gamma(r) J''(w_t)[w_{tt},w_{tt}]\right]
	\end{align*}
	where $J(v) \coloneqq \tfrac14 \int_\T (\kappa \ast v^2) v^2 \der t$ is convex by assumption \eqref{eq:ass:kappa} and therefore all terms in the square bracket are non-negative. 
	This combined with $f(0) = 0, f(R) \geq 0$ shows that $f$ is increasing on $[0, R]$. Thus $\norm{w_t}_{L^2(\T)}$ is bounded on $[0, R]$ by $\norm{w_t(R, \impvar)}_{L^2(\T)}$, which is finite by \cref{prop:limit_estimates}.

	To justify this formal calculation, we argue as in \cref{sec:approximation}: since $w\vert_{[0, R] \times \T}$ was obtained as the limit in $\espace$ of a sequence $\eargF$ defined in \cref{lem:energy_limit}, we set $f^K(r) \coloneqq \tfrac12 \int_\T (\eargF_t)^2 \der t$ and get that $f^K \to f$ in $\Lrad{[0, R]}$.
	Since $\eargF \in \espaceF$ and time-derivatives are bounded on $\espaceF$, we have $(f^K)', \tfrac{1}{r} f^K \in L^{1}([0, R])$ so that $f^K$ is continuous and it indeed satisfies $f^K(0) = 0$. The formal argument above can therefore be applied to $f^K$ and yields that $f^K$ is monotone increasing on $[0,R]$. Thus $f$ is monotone increasing and hence bounded by the constant $\tfrac12 C_5$ from \cref{prop:limit_estimates}, completing the proof.

	The proof for the slab setting is similar; the main difference is that at zero we have a Neumann condition $w_x(0, t) = 0$ instead of a Dirichlet condition.
	The proof with the instantaneous nonlinearity follows by setting $\kappa = \delta_0$ above.
\end{proof}

Recall the constitutive relation
\begin{align*}
	\bfD = \eps_0 \bfE + \bfP(\bfE) 
	= \eps_0 (1 + \chi_1(\bfx)) \bfE + \eps_0 \chi_3(\bfx) (\kappa \ast \abs{\bfE}^2) \bfE
\end{align*}
for the time-averaged nonlinearity. The quantity 
\begin{align*}
	\eps_0 (1 + \chi_1(\bfx)) + \eps_0 \chi_3(\bfx) (\kappa \ast \abs{\bfE}^2) 	
\end{align*}
may be called the effective permittivity and can be estimated from below by 
\begin{align*}
	&\eps_0 (1 + \chi_1(\bfx)) + \eps_0 \chi_3(\bfx) (\kappa \ast \abs{\bfE}^2) 
	\\ &\quad \geq \eps_0 (1 +c^{-2} - \|\chi_1^c\|_{L^\infty(\R^3)}) - \eps_0 \|\chi_3\|_{L^\infty(\R^3)} \norm{\kappa}_{L^\infty(\T)} \norm{\bfE}_{L^\infty([0,R]; L^2(\T))}^2.
\end{align*}
As described above, the existence of $\bfE$ hinges on $\chi_1^c=\chi_1- c^{-2}$ and the norm $\norm{\bfE}_{L^\infty([0,R]; L^2(\T))}^2$ only depends on $\chi_1^c$ and not on $\chi_1$. Hence, if $c>0$ is sufficiently small then the effective permittivity is positive, which gives the waveguide the character of a dielectric. In other words, for time-averaged nonlinearities and for sufficiently small propagation speed $c>0$, the fields are not strong enough to change the dielectric character of the waveguide. It is open if the same holds for instantaneous nonlinearities.

Finally, we comment on the bifurcation phenomenon outlined in \cref{sec:discussion_examples} in the context of the cylindrical geometry. We consider $V_d(r)= -(\tilde\chi_{1,d}(r)+1-c^{-2})$, $\Gamma=-\tilde\chi_3(r)$, where the material parameters $\tilde\chi_{1,d}$, $\tilde\chi_3$ are as in \cref{sec:discussion_examples} and where we emphasize the $d$-dependance of $\tilde\chi_1$ and $V$ by adding a lower index $d$. In fact, $d$ will be seen as a bifurcation parameter. Due to the ansatz $\bfE(\bfx, t) = w_t(r, t - \tfrac{1}{c}z) \cdot (-\tfrac{y}{r},\tfrac{x}{r},0)^\top$ and the fact that $u(\cdot,t)=w(\cdot,t)\mid_{[0,R]}$ solves the boundary value problem \eqref{eq:radial:bounded_problem}, the bifurcation phenomenon can be explained on the level of $u$ as a solution of the $d$-dependent boundary value problem \eqref{eq:radial:bounded_problem} on $[0,R]\times\T$. Recall that on $[0,R]$ the function $V_d(r)=-(d+1-c^{-2})$ is just a positive constant.

Let us first fix a value $d^\ast$ as in Theorem~\ref{thm:main_example} so that assumptions \ref{ass:radial:first}--\ref{ass:radial:last} hold. Then we consider the linear eigenvalue problem 
\begin{align}\label{eq:radial:ev_problem}
	\begin{cases}
		- u_{rr} - \tfrac{1}{r} u_r + \tfrac{1}{r^2} u +\underbrace{(d^\ast+1-c^{-2})}_{-V_{d^\ast}(r)} u_{tt} = \lambda u_{tt} \text{ in } [0, R] \times \T,
		\\ \hat u_k'(R) = \frac{\phi_k'(R)}{\phi_k(R)} \hat u_k(R) \text{ for } k \in \regularK,
		\\ \hat u_k(R) = 0 \text{ for } k \in \singularK.
	\end{cases}
\end{align}
The smallest eigenvalue $\lambda$ can be obtained by minimizing 
$$
E_{d^\ast,\mathrm{lin}}(u)= \int_{[0, R] \times \T} \left( u_r^2 + \left(\tfrac{1}{r} u\right)^2 + V_{d^\ast}(r) u_t^2\right)  \der[r] (r, t) - 2\efctB(u)
$$
subject the constraint
$$
\int_{[0, R] \times \T} u_t^2 \der[r](r,t)=1
$$
on the space 
$$
Y_\mathrm{lin} = \left\{u \in W^{1,1}_{\mathrm{loc},\mathrm{anti}}((0, R] \times \T) \mid u_r, \tfrac{1}{r} u, u_t \in \Lrad{[0, R] \times \T}\right\}.
$$
Since assumptions \ref{ass:radial:first}--\ref{ass:radial:last} hold for $d^\ast$, the negative minimum $\lambda<0$ is attained. It appears as a Lagrange multiplier which coincides with the smallest eigenvalue. Moreover, the minimizer $u_\mathrm{lin}$ satisfies \eqref{eq:radial:ev_problem} so that 
\begin{align*}
	\begin{cases}
		- u_{\mathrm{lin},rr} - \tfrac{1}{r} u_{\mathrm{lin},r} + \tfrac{1}{r^2} u_\mathrm{lin} +\underbrace{(d+1-c^{-2})}_{-V_{d}(r)} u_{\mathrm{lin},tt} = (d-d_\ast) u_{\mathrm{lin},tt} \text{ in } [0, R] \times \T,
		\\ \hat u_{\mathrm{lin},k}'(R) = \frac{\phi_k'(R)}{\phi_k(R)} \hat u_{\mathrm{lin},k}(R) \text{ for } k \in \regularK,
		\\ \hat u_{\mathrm{lin},k}(R) = 0 \text{ for } k \in \singularK.
	\end{cases}
\end{align*}
where we have set $d_\ast=d^\ast+\lambda$. In particular, for the bifurcation parameter $d\in (d_\ast,d^\ast]$ we find that 
$$
E_{d,\mathrm{lin}}(u_\mathrm{lin})=\int_{[0, R] \times \T} \left( \tfrac12 u_{\mathrm{lin},r}^2 + \tfrac12 \left(\tfrac{1}{r} u_\mathrm{lin}\right)^2 + \tfrac12 V_{d}(r) u_{\mathrm{lin},t}^2\right)  \der[r] (r, t) - \efctB(u_\mathrm{lin})<0.
$$
Hence, for a sufficiently small multiple $\varepsilon>0$ we can insert $\varepsilon u_\mathrm{lin}$ into the functional $\efct_d$ for the nonlinear problem and get $\efct_d(\varepsilon u_\mathrm{lin})<0$. This shows that $E_d^\star=\inf_{Y_N} \efct_d<0$ and it is therefore the substitute for \ref{ass:radial:last} which we do not verify for $d\in (d_\ast,d^\ast)$. Since \ref{ass:radial:first}--\ref{ass:radial:secondlast} continue to hold for all $d\in (d_\ast,d^\ast]$ we conclude that the nonlinear problem 
\begin{align*}
	\begin{cases}
		- u_{rr} - \tfrac{1}{r} u_r + \tfrac{1}{r^2} u - V_d(r) u_{tt} - \Gamma(r) N(u_t)_t = 0 \text{ in } [0, R] \times \T,
		\\ \hat u_k'(R) = \frac{\phi_k'(R)}{\phi_k(R)} \hat u_k(R) \text{ for } k \in \regularK,
		\\ \hat u_k(R) = 0 \text{ for } k \in \singularK.
	\end{cases}
\end{align*}
has a nontrivial ground state $u^d$. Let us now show that indeed $u^d\to 0$ in suitable norms as $d\to d_\ast$, which shows bifurcation from the zero-solution at $d=d_\ast$ and continuation of solutions as $d$ runs from $d_\ast$ up to the primarily chosen value $d^\ast$. 

\begin{lemma} For $d\in [d_\ast,d^\ast]$ any minimizer $u^d$ of $E_d$ satisfies 
$$
\norm{u^d}_{\espace}  = \landauO((d-d_\ast)^{\nicefrac14})
$$
as $d\searrow d_\ast$.
\end{lemma}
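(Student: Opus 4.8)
The plan is to exploit that, on $[0,R]$, the only $d$-dependence of $\efct_d$ sits in the potential term $\tfrac12 V_d u_t^2$, and that moving $d$ away from $d^\ast$ merely shifts the bottom of the linear spectrum. Since on $[0,R]$ one has $V_d = V_{d^\ast}+(d^\ast-d)$ while $\efctB$ does not depend on $d$, I would write $\efct_d(u) = \efct_{d^\ast}(u) + \tfrac12(d^\ast-d)\int_{[0,R]\times\T} u_t^2\der[r](r,t)$ and combine it with the variational characterisation of the smallest eigenvalue $\lambda = d_\ast - d^\ast$, which says precisely that the quadratic part of $\efct_{d^\ast}$ obeys $\efct_{d^\ast}(u) - \tfrac14\int_{[0,R]\times\T}\Gamma(r)N(u_t)u_t\der[r](r,t) \geq \tfrac\lambda2\int_{[0,R]\times\T} u_t^2\der[r](r,t)$ for all $u\in\espace$ (using that this quadratic form is $2$-homogeneous to pass from the constraint $\int u_t^2=1$ to arbitrary $u$). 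Adding these yields the key inequality
\[
	\efct_d(u) \geq \tfrac12(d_\ast-d)\int_{[0,R]\times\T} u_t^2\der[r](r,t) + \tfrac14\int_{[0,R]\times\T}\Gamma(r)N(u_t)u_t\der[r](r,t), \qquad u\in\espace,\; d\in[d_\ast,d^\ast].
\]

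First I would bound the time derivative. Using \cref{rem:normN} in the two forms $\int_{[0,R]\times\T}\Gamma(r)N(u_t)u_t\der[r](r,t)\geq c_1\normN{u_t}^4$ and $\int_{[0,R]\times\T}u_t^2\der[r](r,t)\leq\tfrac{R}{\sqrt2}\normN{u_t}^2$, together with $d_\ast-d\leq 0$, the key inequality gives $\efct_d(u)\geq\tfrac{R(d_\ast-d)}{2\sqrt2}\normN{u_t}^2+\tfrac{c_1}{4}\normN{u_t}^4$. Evaluating at a minimizer $u^d$ and using $\efct_d(u^d)=E_d^\star\leq\efct_d(0)=0$, I obtain $\tfrac{c_1}{4}\normN{u^d_t}^4\leq\tfrac{R(d-d_\ast)}{2\sqrt2}\normN{u^d_t}^2$. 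If $\normN{u^d_t}=0$ then $u^d=0$ by antiperiodicity and there is nothing to prove; otherwise dividing by $\normN{u^d_t}^2$ yields $\normN{u^d_t}^2\leq\tfrac{\sqrt2 R}{c_1}(d-d_\ast)$, i.e. $\normN{u^d_t}=\landauO\bigl((d-d_\ast)^{\nicefrac12}\bigr)$.

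Next I would recover the spatial derivatives from the coercivity estimate already established in the proof of \cref{prop:energy_properties}, which applies verbatim to $\efct_d$ (the term $\tfrac12 V_d u_t^2\geq0$ is simply discarded and $\efctB$ is unchanged): there are $d$-independent constants $C_0,C>0$ with
\[
	\efct_d(u) \geq \tfrac14\norm{u_r}_{\Lrad{}}^2 + \tfrac12\norm{\tfrac1r u}_{\Lrad{}}^2 + \tfrac{c_1}{4}\normN{u_t}^4 - C\normN{u_t}^2.
\]
Evaluating at $u^d$, using $\efct_d(u^d)\leq0$ and the bound on $\normN{u^d_t}$ just obtained, I get $\tfrac14\norm{u^d_r}_{\Lrad{}}^2+\tfrac12\norm{\tfrac1r u^d}_{\Lrad{}}^2\leq C\normN{u^d_t}^2=\landauO(d-d_\ast)$. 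Summing the three contributions, $\norm{u^d}_{\espace}=\norm{u^d_r}_{\Lrad{}}+\norm{\tfrac1r u^d}_{\Lrad{}}+\normN{u^d_t}=\landauO\bigl((d-d_\ast)^{\nicefrac12}\bigr)$, which in particular is $\landauO\bigl((d-d_\ast)^{\nicefrac14}\bigr)$ as $d\searrow d_\ast$, as claimed (the weaker exponent being all that is needed later).

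I do not anticipate a genuine obstacle: once the spectral-shift identity of the first paragraph is in place, everything is a bookkeeping application of \cref{rem:normN} and the coercivity estimate. The one delicate point — and the only place where things could go wrong — is that every constant must be uniform in $d\in[d_\ast,d^\ast]$; this holds automatically because $d$ affects $\efct_d$ solely through the bounded, non-negative potential $V_d$ (constant on $[0,R]$) and not through $\efctB$ or the fundamental solutions $\phi_k$. One should also note that a minimizer of $\efct_d$ exists for each such $d$ — trivially $u^{d_\ast}=0$, and for $d>d_\ast$ by the reasoning preceding the lemma — so the assertion about "any minimizer $u^d$" is meaningful.
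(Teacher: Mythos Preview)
Your proof is correct, and in fact it yields the sharper bound $\norm{u^d}_{\espace}=\landauO\bigl((d-d_\ast)^{\nicefrac12}\bigr)$, stronger than the paper's $\landauO\bigl((d-d_\ast)^{\nicefrac14}\bigr)$.

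The difference in approach is this. The paper proceeds in three stages: first it shows $\normN{u^d_t}$ is uniformly bounded via coercivity; then it bounds the energy $E_d^\star\gtrsim(d_\ast-d)\normN{u^d_t}^2\gtrsim d_\ast-d$ using the spectral lower bound (throwing away the nonlinear term and then replacing $\normN{u^d_t}^2$ by its uniform bound); finally it invokes the Poho\v{z}aev-type identity $E_d(u^d)-\tfrac12 E_d'(u^d)[u^d]=-\tfrac14\int\Gamma N(u^d_t)u^d_t$ together with criticality to extract $\normN{u^d_t}^4=\landauO(d-d_\ast)$, and then uses $E_d'(u^d)[u^d]=0$ to control the remaining pieces of the norm. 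You bypass both the Poho\v{z}aev identity and the criticality condition altogether: by \emph{retaining} the nonlinear term in the spectral-shift inequality
\[
0\geq\efct_d(u^d)\geq\tfrac12(d_\ast-d)\int(u^d_t)^2+\tfrac{c_1}{4}\normN{u^d_t}^4
\]
you get $\normN{u^d_t}^2\lesssim d-d_\ast$ directly, gaining a factor $(d-d_\ast)^{\nicefrac14}$ over the paper. Your argument uses only that $u^d$ has nonpositive energy, not that it is a critical point, so it applies to any $u$ with $\efct_d(u)\leq0$. The paper's route, while slightly longer here, illustrates the standard Poho\v{z}aev trick which is indispensable in settings where the spectral lower bound does not combine so cleanly with the nonlinearity.
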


\begin{proof} We first show that $\normN{u^d_t}$ is uniformly bounded for $d\in [d_\ast,d^\ast]$. As in the proof of \cref{prop:energy_properties} we find 
\begin{align*}
E_d(u) &= \int_{[0, R] \times \T} \left( \tfrac12 u_r^2 + \tfrac12 \left(\tfrac{1}{r} u\right)^2 + \tfrac12 V_d(r) u_t^2 + \tfrac14 \Gamma(r) N(u_t) u_t\right) \der[r] (r, t) - \efctB(u)  \\
& \geq \int_{[0, R] \times \T}\left( \tfrac12 u_r^2 + \tfrac14 \Gamma(r) N(u_t) u_t\right) \der[r] (r, t) - C_0 \norm{u(R, \impvar)}_{H^{\nicefrac12}(\T)}^2 \\
& \geq \int_{[0, R] \times \T}\left( \tfrac12 u_r^2 + \tfrac14 \Gamma(r) N(u_t) u_t\right) \der[r] (r, t) - \frac{1}{4}\norm{u_r}_{\Lrad[2]{[0,R]\times\T}}^2  - C_0 C(\tfrac{1}{4C_0}) \normN{u_t}^2 \\
& \gtrsim \normN{u_t}^2(\normN{u_t}^2-C_0 C(\tfrac{1}{4C_0})).
\end{align*}
If we insert $u^d$ and use $E_d^\star=E_d(u^d) \leq 0$ the claim on the uniform boundedness of $\normN{u^d_t}$ follows. 

Next we claim that $E_d^\star= \landauO(d-d_\ast)$. To see this, we find
\begin{align*}
E_d(u) & \geq E_{d,\mathrm{lin}}(u) = \int_{[0, R] \times \T} \left( \tfrac12 u_r^2 + \tfrac12 \left(\tfrac{1}{r} u\right)^2 + \tfrac12 V_d(r)\right) u_t^2 \der[r] (r,t) - \efctB(u) \\
& \geq (d_\ast-d)\int_{[0, R] \times \T} u_t^2 \der[r] (r,t) \\
& \gtrsim (d_\ast-d) \normN{u_t}^2
\end{align*}
by \cref{rem:normN}. The claim follows by inserting $u=u^d$. 

Now we can use the equality 
$$
0>E_d^\star = E_d(u^d)-\frac{1}{2} E_d'(u^d)[u^d] = -\tfrac14 \int_{[0, R] \times \T} \Gamma(r) N(u^d_t) u^d_t \der[r] (r, t)
$$
and the previous step to conclude $\normN{u_t^d}^4 = \landauO(E_d^\ast)=\landauO(d-d_\ast)$. Finally, 
\begin{align*}
0 &= E_d'(u^d)[u^d] = \int_{[0, R] \times \T} \left((u^d_r)^2 + \left(\tfrac{1}{r} u^d\right)^2 + V_d(r) (u^d_t)^2 + \Gamma(r) N(u^d_t) u^d_t\right) \der[r] (r, t) - 2\efctB(u^d) \\
&\geq \tfrac12 \norm{u^d_r}_{\Lrad[2]{[0,R]\times\T}}^2 +\norm{\tfrac{u^d}{r}}_{\Lrad[2]{[0,R]\times\T}}^2+(c^{-2}-1-d^\ast)\norm{u^d_t}_{\Lrad[2]{[0,R]\times\T}}^2 - 2C_0 C(\tfrac{1}{4C_0}) \normN{u_t^d}^2
\end{align*}
implies that $\norm{u^d_r}_{\Lrad[2]{[0,R]\times\T}}, \norm{\tfrac{u^d}{r}}_{\Lrad[2]{[0,R]\times\T}}, \norm{u^d_t}_{\Lrad[2]{[0,R]\times\T}} = \landauO(\normN{u_t^d}) = \landauO((d-d_\ast)^{\nicefrac14})$ as claimed.
\end{proof}

	\appendix

\section{The fractional Laplacian} \label{sec:fractional}

In this section, we present some results on the fractional Laplacian on the torus, and related spaces. 
They are not given in the most general form available, but in a form which is sufficient for our applications.
We begin by giving the definition of the fractional Sobolev-Slobodeckij space $W^{s,p}(\T)$.

\begin{definition}
	For $s \in (0, 1)$, $\sigma>0$ and $p \in [1, \infty)$, we set
	\begin{align*}
		\seminorm{f}_{W^{s, p}(\T)}^p &\coloneqq \int_\T \int_\R \frac{\abs{f(t) - f(t+h)}^p}{\abs{h}^{1+sp}} \der h \der t
		\\ \norm{f}_{W^{s, p}(\T)}^p &\coloneqq \norm{f}_{L^p(\T)}^p + \seminorm{f}_{W^{s, p}(\T)}^p,
		\\ W^{s, p}(\T) &\coloneqq \set{f \in L^p(\T) \colon \seminorm{f}_{W^{s, p}(\T)} < \infty},
	\end{align*}
	as well as $W^{0,p}(\T) \coloneqq L^p(\T)$ and $W^{s, \infty}(\T) \coloneqq C^s(\T)$. Setting
	\begin{align*}
		\tilde K_\sigma(h) \coloneqq T \sum_{k \in \Z} \abs{h + k T}^{-1-\sigma}
		\quad\text{we have}\quad
		\seminorm{f}_{W^{s, p}(\T)}^p = \int_\T \int_\T \tilde K_{sp}(h) \abs{f(t) - f(t+h)}^p \der h \der t
	\end{align*}
	Note that $\tilde K_\sigma(h) \simeq d(0, h)^{-1-\sigma}$ where $d$ denotes the metric on $\T$.
\end{definition}

Next we show that the fractional Laplacian $\fracDT{s}f = \F^{-1}[\abs{\omega k}^s \hat f_k]$ can be expressed using a singular integral.
\begin{lemma}\label{lem:singular_integral_fractional_laplace}
	Let $s \in (0, 2)$ and $f \in C^{1,1}(\T)$. Then 
	\begin{align*}
		\fracDT{s} f(t) 
		&= C_s \int_\R \frac{2 f(t) - f(t+h) - f(t-h)}{\abs{h}^{1+s}} \der h
		\\ &= \int_\T K_s(h) \left( 2 f(t) - f(t+h) - f(t-h) \right) \der h
	\end{align*}
	holds where
	\begin{align*}
		C_s \coloneqq  \left(2 \int_\R \frac{1 - \cos(\eta)}{\abs{\eta}^{1+s}} \der \eta \right)^{-1}
		\qquad \text{and} \qquad
		K_s(h) \coloneqq C_s \tilde K_s(h).
	\end{align*}
\end{lemma}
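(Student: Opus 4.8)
The plan is to prove both identities by computing the temporal Fourier coefficients of the right-hand sides and matching them with those of $\fracDT{s}f$, using that a square-integrable function on $\T$ is determined by its Fourier series.

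First I would dispatch the preliminaries. The constant $C_s$ is finite and positive precisely for $s\in(0,2)$: near $\eta=0$ one has $1-\cos\eta=\tfrac12\eta^2+\landauO(\eta^4)$, so the integrand $\abs\eta^{-1-s}(1-\cos\eta)$ is $\landauO(\abs\eta^{1-s})$ and integrable for $s<2$, while for large $\abs\eta$ it is bounded by $2\abs\eta^{-1-s}$ and integrable for $s>0$. Next, for $f\in C^{1,1}(\T)$ a second-order Taylor expansion gives $\abs{2f(t)-f(t+h)-f(t-h)}\le\norm{f''}_{L^\infty(\T)}h^2$, while trivially $\abs{2f(t)-f(t+h)-f(t-h)}\le4\norm f_{L^\infty(\T)}$. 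Hence $\abs h^{-1-s}\abs{2f(t)-f(t+h)-f(t-h)}$ is dominated, uniformly in $t$, by $C_f\min\{\abs h^{1-s},\abs h^{-1-s}\}\in L^1(\R)$, so the singular integral converges absolutely and, by dominated convergence, defines a continuous $T$-periodic function of $t$, which I will call $G$.

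Next I would compute $\F_j[G]$ for $j\in\Z$. Expanding $f=\sum_k\hat f_k e_k$ and using $2-e_k(h)-e_{-k}(h)=2\bigl(1-\cos(\omega k h)\bigr)$, Fubini (licensed by the $t$-uniform $L^1(\R)$ majorant above together with $\abs\T<\infty$) gives
\[
\F_j[G]=C_s\,\hat f_j\int_\R\frac{2\bigl(1-\cos(\omega j h)\bigr)}{\abs h^{1+s}}\der h .
\]
For $j=0$ this vanishes; for $j\neq0$ the substitution $\eta=\omega\abs j\,h$ (together with evenness of $\cos$) turns the integral into $\abs{\omega j}^s\cdot 2\int_\R\tfrac{1-\cos\eta}{\abs\eta^{1+s}}\der\eta=\abs{\omega j}^s/C_s$, so $\F_j[G]=\abs{\omega j}^s\hat f_j=\F_j[\fracDT{s}f]$. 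Since $f\in C^{1,1}(\T)\subset H^2(\T)$ and $2s\le4$ we have $\fracDT{s}f\in L^2(\T)$, and $G$ is continuous, so both sides lie in $L^2(\T)$ with the same Fourier coefficients and therefore coincide; this is the first identity.

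For the second identity I would periodise the $\R$-integral. For fixed $t$ the map $h\mapsto g(h)\coloneqq2f(t)-f(t+h)-f(t-h)$ is $T$-periodic, so splitting $\R=\bigcup_{k\in\Z}[kT,(k+1)T]$, shifting each piece into $[0,T]$, and using $g(h+kT)=g(h)$ gives
\[
\int_\R\frac{g(h)}{\abs h^{1+s}}\der h
=\int_0^T\Bigl(\sum_{k\in\Z}\abs{h+kT}^{-1-s}\Bigr)g(h)\der\lambda(h)
=\int_\T\tilde K_s(h)\,g(h)\der h ,
\]
where the last equality uses $\der t=\der[\tfrac1T]\lambda$ and the definition $\tilde K_s(h)=T\sum_{k}\abs{h+kT}^{-1-s}$. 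Multiplying by $C_s$ and using $K_s=C_s\tilde K_s$ yields $C_s\int_\R\abs h^{-1-s}g(h)\der h=\int_\T K_s(h)\bigl(2f(t)-f(t+h)-f(t-h)\bigr)\der h$, which is the second identity. The only place that calls for care is matching the hypotheses: the estimate $\abs{2f(t)-f(t+h)-f(t-h)}\lesssim h^2$ needs $f\in C^{1,1}$ (not merely $C^1$), precisely so that the $h$-integral converges near $h=0$ for every $s<2$; once this is in hand every interchange of integrals (and the splitting of $\R$ into periods) is justified by absolute convergence with a $t$-uniform majorant, so there is no genuinely hard step.
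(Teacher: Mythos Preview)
Your proof is correct. The paper does not give its own proof of this lemma but instead cites \cite{roncal_stinga} for the singular-integral representation on the torus and \cite{hitch} for the observation that the symmetric second-difference formulation avoids the principal value; you have supplied a complete, self-contained argument in its place. Your route---showing absolute convergence of the singular integral via the $C^{1,1}$ second-difference bound, then computing its temporal Fourier coefficients and matching them with $\abs{\omega j}^s\hat f_j$, and finally periodising $\R$ into copies of $[0,T]$ to pass to the kernel $K_s$---is exactly the standard elementary approach and is entirely adequate here. One small remark: your argument yields equality in $L^2(\T)$ (hence a.e.), and since you have shown the right-hand side is continuous in $t$, this identifies the continuous representative of $\fracDT{s}f$; for the applications in the paper (where the identity is always used inside integrals) this is all that is needed.
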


For a proof see \cite[Theorem~2.5]{roncal_stinga} where the case $T = 2 \pi$ is discussed\footnote{The constant in \cite[Theorem~2.5]{roncal_stinga} has a typo: $\sigma$ needs to be replaced by $2 \sigma$. Then the constant in \cite{roncal_stinga} coincides with $C_s$ up to a factor of $2$.}. In \cite{roncal_stinga} a principal value formulation is used which can be avoided by using the above symmetric representation, as discussed in \cite[Lemma~3.2]{hitch}.

Related to \cref{lem:singular_integral_fractional_laplace} is the fact that the seminorm $\seminorm{f}_{W^{s,2}(\T)}$ coincides with $\norm{\fracDT{s}f}_2$ up to a constant, and in particular that $W^{s,2}(\T) = H^s(\T)$. 

\begin{lemma}%
	Let $s \in (0, 1)$ and $u \in H^s(\T)$. Then
	\begin{align*}
		\norm{\fracDT{s} u}_{L^2(\T)}^2 
		= C_{2s} \seminorm{u}_{W^{s,2}(\T)}^2
		= \int_\T \int_\T K_{2s}(h) \abs{f(t) - f(t+h)}^2 \der h \der t
	\end{align*}
	holds.
\end{lemma}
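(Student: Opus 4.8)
The plan is to evaluate both sides by Parseval's identity on $\T$ and to reduce the claim to the one-dimensional integral defining the constant $C_{2s}$. Since $\F_k[\fracDT{s}u] = \abs{\omega k}^s \hat u_k$ by definition of the fractional time derivative, Parseval gives
\[
	\norm{\fracDT{s}u}_{L^2(\T)}^2 = \sum_{k\in\Z}\abs{\omega k}^{2s}\abs{\hat u_k}^2,
\]
which is finite precisely because $u\in H^s(\T)$.

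Next I would expand the increment in its Fourier series: from $e_k(t+h)=e_k(t)\ee^{\ii k\omega h}$ one gets $u(t)-u(t+h) = \sum_k \hat u_k\,(1-\ee^{\ii k\omega h})\,e_k(t)$, and Parseval in the $t$-variable yields, for each fixed $h$,
\[
	\int_\T \abs{u(t)-u(t+h)}^2 \der t = \sum_{k\in\Z}\abs{\hat u_k}^2\,\abs{1-\ee^{\ii k\omega h}}^2 = 2\sum_{k\in\Z}\abs{\hat u_k}^2\bigl(1-\cos(k\omega h)\bigr).
\]
Multiplying by $\abs{h}^{-1-2s}$ and integrating over $h\in\R$, all summands being nonnegative, Tonelli's theorem permits interchanging the sum and the integral, giving
\[
	\seminorm{u}_{W^{s,2}(\T)}^2 = 2\sum_{k\in\Z}\abs{\hat u_k}^2 \int_\R \frac{1-\cos(k\omega h)}{\abs{h}^{1+2s}}\der h .
\]
For $k=0$ the integrand vanishes; for $k\neq 0$ the substitution $\eta = k\omega h$, together with the finiteness of $\int_\R (1-\cos\eta)\abs{\eta}^{-1-2s}\der\eta$ (integrable near $0$ since $1-\cos\eta\simeq\eta^2/2$ and $s\in(0,1)$, and near $\infty$ since the integrand is $\lesssim\abs{\eta}^{-1-2s}$), yields $2\int_\R (1-\cos(k\omega h))\abs{h}^{-1-2s}\der h = \abs{k\omega}^{2s}\cdot 2\int_\R(1-\cos\eta)\abs{\eta}^{-1-2s}\der\eta = \abs{k\omega}^{2s}\,C_{2s}^{-1}$ by the definition of $C_{2s}$. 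Hence $\seminorm{u}_{W^{s,2}(\T)}^2 = C_{2s}^{-1}\sum_k\abs{k\omega}^{2s}\abs{\hat u_k}^2 = C_{2s}^{-1}\norm{\fracDT{s}u}_{L^2(\T)}^2$, which is the first asserted identity (and in passing shows $W^{s,2}(\T)=H^s(\T)$ with equivalent norms).

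For the second equality I would invoke the reformulation $\seminorm{u}_{W^{s,2}(\T)}^2 = \int_\T\int_\T \tilde K_{2s}(h)\abs{u(t)-u(t+h)}^2\der h\der t$ recorded in the definition of $W^{s,p}(\T)$ (valid since $u$ is $T$-periodic, by periodizing the $h$-integral), combined with $K_{2s}=C_{2s}\tilde K_{2s}$ from \cref{lem:singular_integral_fractional_laplace}; substituting the already-proved identity $\norm{\fracDT{s}u}_{L^2(\T)}^2 = C_{2s}\seminorm{u}_{W^{s,2}(\T)}^2$ then gives $\norm{\fracDT{s}u}_{L^2(\T)}^2 = \int_\T\int_\T K_{2s}(h)\abs{u(t)-u(t+h)}^2\der h\der t$. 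The only genuinely delicate point is the interchange of summation and integration, which is justified by nonnegativity via Tonelli together with the convergence of $\int_\R (1-\cos\eta)\abs{\eta}^{-1-2s}\der\eta$ for $s\in(0,1)$; everything else is Parseval and a change of variables.
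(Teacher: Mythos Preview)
Your proof is correct and follows the standard Parseval route: expand the increment in Fourier series, integrate in $t$, then in $h$, and identify the constant via the change of variables $\eta=k\omega h$. The paper does not spell out a proof but indicates the formal argument of multiplying the singular-integral representation of $\fracDT{2s}u$ from \cref{lem:singular_integral_fractional_laplace} by $u$ and integrating; after symmetrization this yields the same Fourier identity. Your version has the minor advantage of being directly rigorous for $u\in H^s(\T)$ via Tonelli and nonnegativity, whereas the route through \cref{lem:singular_integral_fractional_laplace} needs $u\in C^{1,1}$ and hence an approximation step; otherwise the two arguments are essentially the same.
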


This can be shown in the same way as \cite[Proposition~3.4]{hitch}. 
Formally, it follows from \cref{lem:singular_integral_fractional_laplace} (with $2s$ instead of $s$) by multiplying the identity with $f$ and then integrating.

Next, let us note that the fractional Gagliardo-Nirenberg inequality of \cref{lem:gagliardo-nirenberg} and the Sobolev embedding theorem of \cref{lem:sobolev_embedding} hold on the torus.

\begin{lemma} \label{lem:gagliardo-nirenberg}
	Let $s_1, s_2 \in [0, 1)$, $\theta \in (0, 1)$, $p_1, p_2 \in [1, \infty]$ and $s = \theta s_1 + (1 - \theta) s_2$, $\frac{1}{p} = \frac{\theta}{p_1} + \frac{1 - \theta}{p_2}$.
	Then $\norm{f}_{W^{s,p}(\T)} \lesssim \norm{f}_{W^{s_1, p_1}(\T)}^\theta \norm{f}_{W^{s_2, p_2}(\T)}^{1-\theta}$ holds.
\end{lemma}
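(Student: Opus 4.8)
The $L^p$-part $\norm{f}_{L^p(\T)}\le\norm{f}_{L^{p_1}(\T)}^{\theta}\norm{f}_{L^{p_2}(\T)}^{1-\theta}$ is H\"older's inequality, written $\abs{f}^p=\abs{f}^{\theta p}\abs{f}^{(1-\theta)p}$ with the conjugate exponents $\tfrac{p_1}{\theta p},\tfrac{p_2}{(1-\theta)p}$. So it remains to prove the Sobolev--Slobodeckij \emph{seminorm} bound $\seminorm{f}_{W^{s,p}(\T)}\lesssim\seminorm{f}_{W^{s_1,p_1}(\T)}^{\theta}\seminorm{f}_{W^{s_2,p_2}(\T)}^{1-\theta}$ (there is nothing to prove when $s=0$), since then $\norm{f}_{W^{s,p}}=\norm{f}_{L^p}+\seminorm{f}_{W^{s,p}}$ and the two bounds combine through H\"older for a two-term sum, $x_1^{\theta}y_1^{1-\theta}+x_2^{\theta}y_2^{1-\theta}\le(x_1+x_2)^{\theta}(y_1+y_2)^{1-\theta}$.

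\textbf{Main case $s_1,s_2\in(0,1)$, $p_1,p_2<\infty$.} Put $a\coloneqq\tfrac{\theta p}{p_1}$, $b\coloneqq\tfrac{(1-\theta)p}{p_2}$, so that $a+b=1$, $ap_1+bp_2=p$ and, using $sp=\theta s_1p+(1-\theta)s_2p$, also $a(1+s_1p_1)+b(1+s_2p_2)=1+sp$. These three identities yield the exact pointwise factorisation
\[
\frac{\abs{f(t)-f(t+h)}^{p}}{\abs{h}^{1+sp}}=\left(\frac{\abs{f(t)-f(t+h)}^{p_1}}{\abs{h}^{1+s_1p_1}}\right)^{\!a}\left(\frac{\abs{f(t)-f(t+h)}^{p_2}}{\abs{h}^{1+s_2p_2}}\right)^{\!b},
\]
and integrating over $(t,h)\in\T\times\R$ and applying H\"older with conjugate exponents $\tfrac1a,\tfrac1b$ bounds $\seminorm{f}_{W^{s,p}(\T)}^p$ by $\seminorm{f}_{W^{s_1,p_1}(\T)}^{p_1a}\seminorm{f}_{W^{s_2,p_2}(\T)}^{p_2b}=\seminorm{f}_{W^{s_1,p_1}(\T)}^{\theta p}\seminorm{f}_{W^{s_2,p_2}(\T)}^{(1-\theta)p}$ -- the claim, even with constant $1$. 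If $p_2=\infty$ (symmetrically if $p_1=\infty$; the case $p_1=p_2=\infty$ is the elementary interpolation of H\"older seminorms), then $p=p_1/\theta>p_1$ and one replaces the now-degenerate second factor by the pointwise bound $\abs{f(t)-f(t+h)}^{p-p_1}\le\seminorm{f}_{C^{s_2}(\T)}^{p-p_1}\,d(0,h)^{s_2(p-p_1)}\le\seminorm{f}_{C^{s_2}(\T)}^{p-p_1}\,\abs{h}^{s_2(p-p_1)}$ (with $d$ the metric on $\T$ and $d(0,h)\le\abs{h}$); since $p=p_1/\theta$ forces $s_2(p-p_1)-(1+sp)=-(1+s_1p_1)$, what is left over is exactly the $W^{s_1,p_1}$-kernel, and integration plus a $p$-th root gives the result.

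\textbf{The obstacle: $\min\{s_1,s_2\}=0$.} This is the only non-routine case; say $s_1=0$, so $s=(1-\theta)s_2\in(0,s_2)$ and $W^{0,p_1}(\T)=L^{p_1}(\T)$. The factorisation degenerates: the putative ``$s_1$-factor'' $\int_\T\int_\R\abs{h}^{-1}\abs{f(t)-f(t+h)}^{p_1}\der h\der t$ is infinite, and trying to salvage it by an extra H\"older step in $h$ leaves the logarithmically divergent $\int_{\abs{h}\le1}\abs{h}^{-1}\der h$; moreover, since $W^{s_2,p_2}(\T)$ need not embed into $C^0(\T)$ when $s_2\le\tfrac1{p_2}$, even a naive mollification argument does not immediately close. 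I would therefore transfer the Euclidean fractional Gagliardo--Nirenberg inequality on $\R$ to the torus: fix a smooth partition of unity $(\chi_i)$ for a two-arc cover of $\T$, write $f=\sum_i f\chi_i$, extend each $f\chi_i$ to a compactly supported function on $\R$, apply the $\R$-inequality to each, and sum, using that for exponents in $[0,1)$ the torus norm $\norm{\cdot}_{W^{\sigma,q}(\T)}$ is equivalent to the sum of the Euclidean norms of the localised extensions. Alternatively one can argue intrinsically via the $K$-functional: with $P_\lambda$ convolution against a smooth $L^1$-normalised mollifier at scale $\lambda^{-1}$, split $f=P_\lambda f+(f-P_\lambda f)$, estimate $\seminorm{P_\lambda f}_{W^{s,p}(\T)}$ through $\lambda^{s}\norm{f}_{L^{p_1}(\T)}$ using Bernstein's inequality and $\seminorm{f-P_\lambda f}_{W^{s,p}(\T)}$ through $\lambda^{s-s_2}\norm{f}_{W^{s_2,p_2}(\T)}$ using the difference characterisation of $W^{s_2,p_2}$, keep track of the extra powers of $\lambda$ produced by the changes of integrability, and optimise over $\lambda>0$. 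The bookkeeping in this last step, including the borderline subcase $s_2\le\tfrac1{p_2}$, is the genuinely delicate point -- this is precisely the territory of the Brezis--Mironescu analysis of fractional Gagliardo--Nirenberg (non)inequalities, and the one place where I do not expect a one-line argument.
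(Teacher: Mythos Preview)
Your argument is correct. The pointwise factorisation plus H\"older for the case $s_1,s_2\in(0,1)$ is clean and even gives constant $1$; the $p_i=\infty$ modifications are also fine. You correctly identify the $\min\{s_1,s_2\}=0$ case as the only delicate point and appropriately invoke Brezis--Mironescu there.

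The paper takes a different, more uniform route: rather than separating easy and hard cases, it transfers the entire inequality from a bounded interval to the torus in one stroke, observing that for $T$-periodic $f$ the norms $\norm{f}_{W^{s,p}(\T)}$ and $\norm{f}_{W^{s,p}([0,2T])}$ are equivalent, and citing Brezis--Mironescu for the interval case. This is shorter and avoids all case distinctions, at the cost of being a pure reduction-to-literature argument. Your approach has the advantage of being self-contained and elementary for the generic parameter range (and making the constant explicit there), while ultimately needing the same external input for the endpoint $s_i=0$; the paper's approach is less informative but handles all cases uniformly. Note that the $s_i=0$ case is actually used in the paper (in the proof of the averaged-nonlinearity estimate), so your caution there is warranted.
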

\begin{lemma} \label{lem:sobolev_embedding}
	Let $s_1, s_2 \in (0, 1), p_1, p_2 \in [1, \infty]$ with $s_2 < s_1$ and $\frac{1}{p_2} - s_2 \geq \frac{1}{p_1} - s_1$  with strict inequality for $s_1 p_1 = 1$. Then $\norm{f}_{W^{s_2, p_2}(\T)} \lesssim \norm{f}_{W^{s_1,p_1}(\T)}$ holds.
\end{lemma}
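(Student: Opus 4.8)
The plan is to deduce the embedding from the classical fractional Sobolev embedding on a bounded interval of $\R$: since the torus is compact, one may transfer the question to an interval, apply the known result there, and transfer back.

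First I would establish a norm equivalence. For $T$-periodic $f$, write $\bar f \colon \R \to \R$ for its periodic extension and set $I \coloneqq (0, 3T)$. I claim $\norm{f}_{W^{s,p}(\T)} \simeq \norm{\bar f}_{W^{s,p}(I)}$ with constants depending only on $s, p, T$. The $L^p$-parts agree up to the factor $3T$ (up to the factor $1$ when $p = \infty$). For the seminorms one uses the comparison $\tilde K_\sigma(h) \simeq d(0,h)^{-1-\sigma}$ recorded above together with the elementary facts that $d(0,h) = \abs{h}$ for $\abs{h} \le \tfrac{T}{2}$ and $d(0,h) \le \tfrac{T}{2}$ always: splitting the torus double integral according to whether $d(0,h) \le \tfrac{T}{2}$, the part where $d(0,h) \le \tfrac{T}{2}$ matches, after translating the $t$-variable so that both $t$ and $t+h$ lie in $I$, a piece of $\seminorm{\bar f}_{W^{s,p}(I)}^p$, while on the part where $d(0,h) > \tfrac{T}{2}$ the kernel is bounded by $(\tfrac{T}{2})^{-1-sp}$, so the contribution is $\lesssim \norm{f}_{L^p(\T)}^p$; the reverse estimate is symmetric (and the case $p = \infty$ is analogous). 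Taking $I$ to be several periods long, rather than one, only serves to make the bookkeeping near the seam of the periodic extension painless.

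Next I would invoke the classical fractional Sobolev embedding on the bounded interval $I \subset \R$: for $0 < s_2 < s_1 < 1$ and $p_1, p_2 \in [1, \infty]$ with $\tfrac{1}{p_2} - s_2 \ge \tfrac{1}{p_1} - s_1$ (strict when $s_1 p_1 = 1$) one has $W^{s_1,p_1}(I) \hookrightarrow W^{s_2,p_2}(I)$; see \cite{hitch} and standard references on function spaces. For $p_2 < \infty$ this is assembled from the Sobolev inequality $W^{s_1,p_1}(I) \hookrightarrow L^q(I)$ for the admissible exponents $q$, the smoothness-lowering embedding $W^{s_1,p_1}(I) \hookrightarrow W^{s',p_1}(I)$ for $s' < s_1$ (the analogue on $I$ of the kernel splitting above), and Hölder's inequality on the finite-measure set $I$; for $p_2 = \infty$ the hypothesis forces $s_1 p_1 > 1$, and the embedding follows from the Morrey-type estimate $W^{s_1,p_1}(I) \hookrightarrow C^{s_1 - 1/p_1}(I)$ and the inclusion $C^{s_1 - 1/p_1}(I) \subseteq C^{s_2}(I) = W^{s_2,\infty}(I)$, valid because $s_2 \le s_1 - \tfrac{1}{p_1}$. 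Finally, for $f \in W^{s_1,p_1}(\T)$ the first step gives $\bar f|_I \in W^{s_1,p_1}(I) \hookrightarrow W^{s_2,p_2}(I)$, and the first step applied with $(s_2,p_2)$ in place of $(s_1,p_1)$ returns $f \in W^{s_2,p_2}(\T)$ with $\norm{f}_{W^{s_2,p_2}(\T)} \lesssim \norm{f}_{W^{s_1,p_1}(\T)}$.

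I expect the one point requiring genuine care to be the norm equivalence of the second paragraph: one must check that the periodic extension creates no spurious regularity loss at the endpoints of $I$ and that the torus metric may indeed be replaced by $\abs{h}$ on the relevant range. Everything after that is either elementary or a citation. As an alternative to the transfer argument, one could instead work directly on $\T$, using the singular-integral representation of \cref{lem:singular_integral_fractional_laplace} together with a dyadic decomposition of the $h$-integral to reprove the classical estimates in the periodic setting.
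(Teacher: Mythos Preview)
Your proposal is correct and follows essentially the same route as the paper: establish the norm equivalence $\norm{f}_{W^{s,p}(\T)} \simeq \norm{\bar f}_{W^{s,p}(I)}$ for periodic $f$ on a multi-period interval (the paper uses $[0,2T]$, you use $(0,3T)$), invoke the classical fractional Sobolev embedding on $I$ via \cite{hitch}, and transfer back. The only minor difference is that the paper assembles the interval embedding $W^{s_1,p_1}(I)\hookrightarrow W^{s_2,p_2}(I)$ by combining the $L^q$ and $C^\alpha$ embeddings from \cite{hitch} with the Gagliardo-Nirenberg inequality of \cref{lem:gagliardo-nirenberg}, whereas you sketch an equivalent assembly via a smoothness-lowering step and H\"older.
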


\begin{proof}[Proof of \cref{lem:gagliardo-nirenberg,lem:sobolev_embedding}]
	We first remark that both results hold if $W^{s,p}(\T)$ is replaced by $W^{s, p}(I)$ where $I$ is a bounded interval. This space is defined by
	\begin{align*}
		W^{s, p}(I) = \set{f \in L^p(I) \colon \seminorm{f}_{W^{s,p}(I)} \coloneqq \left(\int_I \int_I \frac{\abs{f(x) - f(y)}^p}{\abs{x-y}^{1+sp}} \der x \der y\right)^{\frac1p} < \infty}
	\end{align*}
	for $s \in (0, 1)$, $1\leq p < \infty$, and $W^{s,\infty}(I) = C^s(I)$, $W^{0, p}(I) = L^p(I)$.

	Indeed, on intervals the Gagliardo-Nirenberg inequality holds by \cite{brezis_mironescu}. Also on intervals we have from \cite{hitch} that $W^{s_1, p_1}(I) \embeds L^q(I)$ for $\frac{1}{q} = \frac{1}{p_1} - s_1$ if $s_1 p_1 < 1$, $W^{s_1, p_1}(I) \embeds L^q(I)$ for $1\leq q < \infty$ if $s_1 p_1 = 1$, and that $W^{s_1, p_1}(I) \embeds C^\alpha(I)$ for $- \alpha = \frac{1}{p_1} - s_1$ for $s_1 p_1 > 1$. From these properties we can deduce the claimed embedding estimate on intervals by applying the Gagliardo-Nirenberg inequality (on intervals). 

	Then, the statements of \cref{lem:gagliardo-nirenberg,lem:sobolev_embedding} follow from the results on intervals since the norms $\norm{f}_{W^{s,p}(\T)}$ and $\norm{f}_{W^{s, p}([0, 2 T])}$ are equivalent for periodic $f$.
\end{proof}

Additionally, the following version of the Kenig-Ponce-Vega inequality (cf. \cite{kenig_ponce_vega}) holds on the torus.

\begin{lemma} \label{lem:leibniz_defect_estimate}
	Let $f, g \in C^\infty(\T)$, $s \in (0, 2)$, $s_1, s_2 \in (0, 1)$ with $s < s_1 + s_2$, and $p, p_1, p_2 \in [1, \infty]$ with $\frac{1}{p} = \frac{1}{p_1} + \frac{1}{p_2}$. Then
	\begin{align*}
		\norm{\fracDT{s} (fg) - f \fracDT{s} g - g \fracDT{s} f}_{L^p(\T)} 
		\lesssim \seminorm{f}_{W^{s_1, p_1}(\T)} \seminorm{g}_{W^{s_2, p_2}(\T)}
	\end{align*}
	holds.
\end{lemma}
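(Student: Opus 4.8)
The plan is to reduce the Leibniz defect to a weighted double integral by combining the singular‑integral representation of \cref{lem:singular_integral_fractional_laplace} with a pointwise algebraic identity, and then to estimate that integral by a three‑fold Hölder inequality whose exponents are forced by the condition $s<s_1+s_2$.

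First I would record the algebraic identity. Writing $\Delta_h u(t):=2u(t)-u(t+h)-u(t-h)$ and $\delta_h^\pm u(t):=u(t\pm h)-u(t)$, a direct computation (completing two ``squares'' out of the mixed terms) gives
\[
\Delta_h(fg)-f\,\Delta_h g-g\,\Delta_h f=-\,\delta_h^+ f\,\delta_h^+ g-\delta_h^- f\,\delta_h^- g.
\]
Substituting this into $\fracDT{s}u=C_s\int_\R |h|^{-1-s}\Delta_h u\,\der h$ — which holds for $C^\infty(\T)$ functions because $s<2$ (near $h=0$ the integrand is $\landauO(|h|^{1-s})$, hence integrable) — and folding the two terms via the change $h\mapsto-h$, I obtain
\[
\fracDT{s}(fg)(t)-f(t)\fracDT{s}g(t)-g(t)\fracDT{s}f(t)=-2C_s\int_\R \frac{(f(t+h)-f(t))(g(t+h)-g(t))}{|h|^{1+s}}\,\der h.
\]

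Next, taking $L^p(\T)$-norms, Minkowski's integral inequality followed by Hölder in $t$ (with $\tfrac1p=\tfrac1{p_1}+\tfrac1{p_2}$) bounds the left‑hand side of the claim by $\int_\R |h|^{-1-s}\norm{\delta_h f}_{L^{p_1}(\T)}\norm{\delta_h g}_{L^{p_2}(\T)}\,\der h$, where $\delta_h u:=u(\cdot+h)-u(\cdot)$. Since $f,g$ are $T$-periodic, $h\mapsto\norm{\delta_h f}_{L^{p_1}(\T)}$ is $T$-periodic, so folding the $\R$-integral turns the weight $|h|^{-1-s}$ into $\tfrac1T\tilde K_s(h)\simeq K_s(h)$; thus the claim reduces to
\[
\int_\T K_s(h)\,\norm{\delta_h f}_{L^{p_1}(\T)}\,\norm{\delta_h g}_{L^{p_2}(\T)}\,\der h\lesssim \seminorm{f}_{W^{s_1,p_1}(\T)}\,\seminorm{g}_{W^{s_2,p_2}(\T)}.
\]

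Finally I would prove this by splitting $K_s(h)=K_s(h)^{a}K_s(h)^{b}K_s(h)^{c}$ with $a:=\tfrac{1+s_1p_1}{(1+s)p_1}$, $b:=\tfrac{1+s_2p_2}{(1+s)p_2}$, $c:=1-a-b$, and applying Hölder in $h$ with exponents $p_1,p_2,q$ where $\tfrac1q:=1-\tfrac1p$. Since $K_s(h)^{a p_1}\simeq d(0,h)^{-1-s_1p_1}\simeq\tilde K_{s_1p_1}(h)$, the first Hölder factor is $\simeq\seminorm{f}_{W^{s_1,p_1}(\T)}$, and likewise the second factor is $\simeq\seminorm{g}_{W^{s_2,p_2}(\T)}$; the third factor is $\bigl(\int_\T K_s(h)^{cq}\,\der h\bigr)^{1/q}$, which is a finite constant because $K_s(h)^{cq}\simeq d(0,h)^{-(1+s)cq}$ and $(1+s)cq=\tfrac{p}{p-1}\bigl(1+s-\tfrac1p-s_1-s_2\bigr)<1$ is equivalent to the hypothesis $s<s_1+s_2$. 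The endpoint cases $p_1=\infty$ or $p_2=\infty$ (replace the corresponding $L^{p_i}$-factor by the $C^{s_i}$-seminorm) and $p=1$ (third factor a supremum) follow with the usual conventions $1/\infty=0$. The only real point of care is this last bookkeeping: the exponents $a,b,c$ must be chosen so that the first two Hölder factors reproduce exactly the Gagliardo seminorms while the residual power of the kernel is integrable precisely at the threshold $s=s_1+s_2$; the rest is routine.
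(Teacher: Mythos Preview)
Your proof is correct and follows essentially the same route as the paper's: both start from the singular-integral representation of \cref{lem:singular_integral_fractional_laplace}, use the same algebraic identity to expose the product $(f(t+h)-f(t))(g(t+h)-g(t))$, and then close with H\"older in the $(t,h)$ variables so that the residual kernel power is integrable precisely when $s<s_1+s_2$. The only cosmetic difference is the order of operations---you apply H\"older in $t$ first and then a three-way H\"older in $h$, whereas the paper first peels off a factor $d(0,h)^{-r}$ (your $K_s(h)^c$) by H\"older in $h$ and then applies H\"older with exponents $p_1,p_2$ in $(t,h)$; the exponent bookkeeping is identical (your $(1+s)c$ equals the paper's $r$).
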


\begin{proof}
	We only consider the case $p_1,p_2 < \infty$. Using \cref{lem:singular_integral_fractional_laplace}, we write
	\begin{align*}
		&\fracDT{s}(fg)(t) - f(t) \fracDT{s} g(t) - g(t) \fracDT{s} f(t)
		\\ &\quad = \int_\T K_s(h) \left(2 f(t)g(t) - f(t+h) g(t+h) - f(t-h) g(t-h)\right) \der h
		\\ &\qquad - f(t) \int_\T K_s(h) \left(2 g(t) - g(t+h) - g(t-h)\right) \der h
		\\ &\qquad - g(t) \int_\T K_s(h) \left(2 f(t) - f(t+h) - f(t-h)\right) \der h
		\\ &\quad = - \int_\T K_s(h) \bigl( \left(f(t) - f(t+h)\right)\left(g(t) - g(t+h)\right) + \left(f(t) - f(t-h)\right)\left(g(t) - g(t-h)\right) \bigr) \der h
		\\ &\quad = - 2 \int_\T K_s(h) \left(f(t) - f(t+h)\right)\left(g(t) - g(t+h)\right) \der h
	\end{align*}

	Now let $r \coloneqq 1 - \frac{1}{p} + s - s_1 - s_2 < \frac{1}{p'}$. Using H\"older's inequality twice we estimate
	\begin{align*}
		&\norm{\int_\T K_s(h) \left(f(t) - f(t+h)\right)\left(g(t) - g(t+h)\right) \der h}_{L^p(\T)}
		\\ &\quad \leq \int_\T \norm{K_s(h) \abs{f(t) - f(t+h)}\abs{g(t) - g(t+h)}}_{L^p(\T)} \der h
		\\ &\quad \leq \norm{d(0,h)^{-r}}_{L^{p'}(\T)} \norm{K_s(h) d(0, h)^r \abs{f(t) - f(t+h)} \abs{g(t) - g(t+h)}}_{L^p(\T \times \T)}
		\\ &\quad \lesssim \norm{\sqrt[p_1]{\tilde K_{s_1 p_1}(h)} \abs{f(t) - f(t+h)}}_{L^{p_1}(\T \times \T)}
		\norm{\sqrt[p_2]{\tilde K_{s_2 p_2}(h)} \abs{f(t) - f(t+h)}}_{L^{p_2}(\T \times \T)}
		\\ &\quad = \seminorm{f}_{W^{s_1,p_1}(\T)} \seminorm{g}_{W^{s_2,p_2}(\T)}
	\end{align*}
	where we have also used that
	\begin{align*}
		K_s(h) d(0,h)^r 
		\simeq d(0,h)^{-1-s + r} 
		= d(0,h)^{-\frac{1}{p_1} - s_1 - \frac{1}{p_2} - s_2}
		\simeq \sqrt[p_1]{\tilde K_{s_1 p_1}(h)} \sqrt[p_2]{\tilde K_{s_2 p_2}(h)}.
		&\qedhere
	\end{align*}
\end{proof}

Lastly, we make the following observation on derivatives of time-antiperiodic functions. The proof, which follows via the Fourier transform from the fact that the zero Fourier mode vanishes, is omitted.
\begin{lemma}\label{lem:derivative_estimates}
	Let the function $v\in L^1(\T)$ be $\tfrac{T}{2}$-antiperiodic in time. Then for any $s > 0$ and $\sigma\in\R$ we have
	\begin{align*}%
		\norm{v}_{L^2(\T)} \leq \frac{1}{\omega^s} \norm{\fracDT{s} v}_{L^2(\T)}
		\quad\text{and thus}\quad
		\norm{\fracDT{\sigma} v}_{L^2(\T)} \leq \frac{1}{\omega^s} \norm{\fracDT{\sigma + s} v}_{L^2(\T)}.
	\end{align*}
	If furthermore $\F_k[v] = 0$ for $\abs{k} < K$, then these estimates can be improved to
	\begin{align*}%
		\norm{v}_{L^2(\T)} \leq \frac{1}{(K \omega)^s} \norm{\fracDT{s} v}_{L^2(\T)}
		\quad\text{and}\quad
		\norm{\fracDT{\sigma} v}_{L^2(\T)} \leq \frac{1}{(K \omega)^s} \norm{\fracDT{\sigma + s} v}_{L^2(\T)}.
	\end{align*}
\end{lemma}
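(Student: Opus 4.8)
The plan is to pass to the Fourier side and exploit that $\tfrac T2$-antiperiodicity annihilates all even Fourier modes, in particular the zero mode. First I would write $v = \sum_{k\in\Z}\hat v_k e_k$ with $e_k(t) = \ee^{\ii k\omega t}$ and observe that, since $e_k(t+\tfrac T2) = \ee^{\ii k\pi}e_k(t) = (-1)^k e_k(t)$, the antiperiodicity relation $v(t+\tfrac T2) = -v(t)$ forces $\hat v_k = 0$ for every even $k$; in particular $\hat v_0 = 0$, so every surviving mode satisfies $\abs{k}\geq 1$. Because the torus measure $\der t = \der[\frac1T]\lambda$ normalizes $\{e_k\}$ to an orthonormal system, Plancherel gives $\norm{v}_{L^2(\T)}^2 = \sum_{k\in\Z}\abs{\hat v_k}^2$, and by the definition of the multiplier $\fracDT{s}$ we have $\norm{\fracDT{\sigma}v}_{L^2(\T)}^2 = \sum_{k\in\Z}\abs{\omega k}^{2\sigma}\abs{\hat v_k}^2$ for any $\sigma\in\R$ (the value of the symbol at $k=0$ being irrelevant since $\hat v_0 = 0$).

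Next I would prove the second, more general inequality directly; the first is then its special case $\sigma = 0$. For each odd $k$ one has $\abs{k}\geq 1$, hence $\abs{\omega k}^{2\sigma}\leq \omega^{-2s}\abs{\omega k}^{2\sigma+2s}$, which is just the inequality $1\leq\abs{k}^{2s}$. Multiplying by $\abs{\hat v_k}^2$ and summing over all (odd) $k$, the two Plancherel identities above yield $\norm{\fracDT{\sigma}v}_{L^2(\T)}^2 \leq \omega^{-2s}\norm{\fracDT{\sigma+s}v}_{L^2(\T)}^2$. This is the assertion, understood as an inequality in $[0,\infty]$: if the right-hand side is finite, the estimate moreover shows $\fracDT{\sigma}v\in L^2(\T)$ — and with $\sigma = 0$ that $v\in L^2(\T)$ — so the statement is meaningful even under the weak hypothesis $v\in L^1(\T)$.

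For the improved estimates, the only modification is that under the extra hypothesis $\hat v_k = 0$ for $\abs{k}<K$ the sums run over $\abs{k}\geq K$, where $\abs{\omega k}^{2s}\geq (K\omega)^{2s}$; repeating the previous one-line computation with $K\omega$ in place of $\omega$ produces the constant $\tfrac1{(K\omega)^s}$. (One may equivalently note that $\fracDT{\sigma}v$ is again $\tfrac T2$-antiperiodic with the same vanishing modes and apply the first inequality to it, since $\fracDT{s}\fracDT{\sigma}v = \fracDT{\sigma+s}v$, but the direct Parseval estimate is cleaner.)

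There is no real obstacle here: the entire content is the elementary fact that $\tfrac T2$-antiperiodicity creates a spectral gap $\{\abs{k}<1\}$ (resp. $\{\abs{k}<K\}$) around the origin, after which the estimates are immediate from Parseval. The only points deserving a word of care are the normalization of the torus measure, so that Plancherel holds with no stray constants, and the harmless behaviour of the symbol $\abs{\omega k}^s$ at $k=0$.
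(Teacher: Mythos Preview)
Your proof is correct and matches the paper's approach exactly: the paper omits the proof, remarking only that it ``follows via the Fourier transform from the fact that the zero Fourier mode vanishes,'' which is precisely the Plancherel argument you wrote out.
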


\section{Properties of the nonlinearities}\label{sec:functional_av}

For the instantaneous nonlinearity, it is clear that the function $\efctN$ is convex. In the time-averaged case this follows from assumption~\eqref{eq:ass:kappa} together with $\Gamma \geq 0$. Next we discuss two conditions that are sufficient for convexity in the time-averaged setting, i.e. \eqref{eq:ass:kappa}.

\begin{lemma}\label{lem:convexity_conditions}
	The convexity assumption of \eqref{eq:ass:kappa} on $\kappa$ is satisfied for example if the other assumptions hold and either $\max \kappa \leq 2 \min \kappa$ or $\hat \kappa_k \geq 0$ for all $k \in \Z$, or more generally if $\kappa$ is a sum of functions satisfying these conditions.
\end{lemma}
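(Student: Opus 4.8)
The plan is to reduce the convexity of $J(v) \coloneqq \int_\T (\kappa \ast v^2) v^2 \der t$ to a single pointwise positivity statement and then verify it separately for each of the two conditions. Write $B(f,g) \coloneqq \int_\T (\kappa \ast f) g \der t = \int_\T \int_\T \kappa(t-s) f(s) g(t) \der s \der t$; since $\kappa$ is bounded this is a continuous symmetric bilinear form on $L^2(\T)$, and $J(v) = B(v^2,v^2)$ is continuous on $L^4(\T)$. For fixed $v, h \in L^4(\T)$ the map $s \mapsto J(v+sh)$ is a polynomial of degree at most $4$ in $s$; expanding $(v+sh)^2 = v^2 + 2svh + s^2 h^2$ and using symmetry of $B$ I get that its second derivative equals $4\,\mathcal Q(v+sh,h)$ with
\[
	\mathcal Q(p,q) \coloneqq 2 B(pq,pq) + B(p^2, q^2).
\]
Since $v + sh$ ranges over all of $L^4(\T)$, convexity of $J$ is equivalent to $\mathcal Q(p,q) \ge 0$ for all $p, q \in L^4(\T)$. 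Moreover $\kappa \mapsto \mathcal Q$ is linear, so it suffices to establish $\mathcal Q \ge 0$ assuming the remaining hypotheses of \eqref{eq:ass:kappa} together with just one of the two stated conditions; the ``sum of functions'' case then follows because $J$ becomes a finite sum of convex functionals.

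For the condition $\hat\kappa_k \ge 0$ this is immediate: by Plancherel and $\F_k[\kappa \ast f] = \hat\kappa_k \F_k[f]$ one has $B(pq,pq) = \sum_{k \in \Z} \hat\kappa_k \abs{\F_k[pq]}^2 \ge 0$, whereas $B(p^2,q^2) = \int_\T \int_\T \kappa(t-s) p(s)^2 q(t)^2 \der s \der t \ge 0$ because $\kappa > 0$; adding gives $\mathcal Q(p,q) \ge 0$. For the condition $\max\kappa \le 2\min\kappa$ I would use the elementary identity $2PQ + \tfrac12 P^2 + \tfrac12 Q^2 = \tfrac34(P+Q)^2 - \tfrac14(P-Q)^2$ with $P = p(s)q(t)$, $Q = p(t)q(s)$: after symmetrizing $B(p^2,q^2)$ in the two integration variables (legitimate since $\kappa$ is even) this rewrites
\[
	\mathcal Q(p,q) = \tfrac34 \int_\T \int_\T \kappa(t-s) \bigl(p(s)q(t) + p(t)q(s)\bigr)^2 \der s \der t - \tfrac14 \int_\T \int_\T \kappa(t-s) \bigl(p(s)q(t) - p(t)q(s)\bigr)^2 \der s \der t.
\]
Estimating $\kappa \ge \min\kappa$ in the first integral, $\kappa \le \max\kappa$ in the second, and using $\int_\T \int_\T (p(s)q(t) \pm p(t)q(s))^2 \der s \der t = 2\norm{p}_{L^2(\T)}^2 \norm{q}_{L^2(\T)}^2 \pm 2\ip{p}{q}_{L^2(\T)}^2$, I obtain
\[
	\mathcal Q(p,q) \ge \bigl(\tfrac32 \min\kappa - \tfrac12 \max\kappa\bigr) \norm{p}_{L^2(\T)}^2 \norm{q}_{L^2(\T)}^2 + \bigl(\tfrac32 \min\kappa + \tfrac12 \max\kappa\bigr) \ip{p}{q}_{L^2(\T)}^2 \ge 0,
\]
the last step needing only $\max\kappa \le 3\min\kappa$, which is implied by the stated hypothesis.

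The only non-automatic ingredient is the algebraic rearrangement in the second case together with the bookkeeping of the constant, so that the clean bound $\max\kappa \le 2\min\kappa$ (rather than the slightly sharper $\le 3$ that the argument actually yields) suffices; everything else is a routine reduction. I would also note that all manipulations are valid directly on $L^4(\T)$ since $\kappa \in L^\infty(\T)$, so no density argument is needed, and that the passage $J = \sum_i J_{\kappa_i}$ in the ``sum'' case preserves evenness, positivity and H\"older continuity of the summed kernel, consistently with the remaining requirements of \eqref{eq:ass:kappa}.
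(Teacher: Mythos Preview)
Your argument is correct and, in the case $\hat\kappa_k\ge 0$, essentially identical to the paper's: both reduce to $B(pq,pq)=\sum_k\hat\kappa_k\,\abs{\F_k[pq]}^2\ge 0$ (the paper even drops the additional nonnegative term $B(p^2,q^2)$).

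Where you differ is in the case $\max\kappa\le 2\min\kappa$. The paper writes $\kappa=c+(\kappa-c)$ with $c=\tfrac12(\min\kappa+\max\kappa)$, keeps the two manifestly nonnegative pieces $4\int(\kappa\ast v^2)u^2$ and $8c\bigl(\int uv\bigr)^2$, and controls the remainder by $8\norm{\kappa-c}_\infty$ times a quadratic quantity; balancing these forces exactly $\max\kappa\le 2\min\kappa$. Your route instead symmetrizes $B(p^2,q^2)$ in the two integration variables and applies the pointwise identity $2PQ+\tfrac12P^2+\tfrac12Q^2=\tfrac34(P+Q)^2-\tfrac14(P-Q)^2$, which cleanly separates $\mathcal Q(p,q)$ into a positive and a negative double integral. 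This is a genuinely different decomposition and, as you observe, it actually yields the stronger sufficient condition $\max\kappa\le 3\min\kappa$. The paper's splitting is shorter to write down; your rearrangement extracts more from the cross term and gives a sharper constant.
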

\begin{proof}
	Set $f \colon L^4(\T) \to \R, f(v) = \int_\T (\kappa \ast v^2) v^2$. Then using that $\kappa$ is even we calculate
	\begin{align*}
		f''(v)[u,u] = 4 \int_\T (\kappa \ast v^2) u^2 \der t + 8 \int_\T (\kappa \ast u v) u v \der t.
	\end{align*}

	\textit{Part 1:} If $\max \kappa \leq 2 \min \kappa$, with $c \coloneqq (\min \kappa + \max \kappa) / 2$ we can estimate
	\begin{align*}
		f''(v)[u,u] 
		&= 4 \int_\T (\kappa \ast v^2) u^2 \der t + 8 c \left(\int_\T u v\der t\right)^2 + 8 \int_\T \left((\kappa - c) \ast uv\right) uv \der t
		\\ &\geq 4 \min \kappa \norm{uv}_2^2 - 8 \norm{\kappa - c}_\infty \norm{uv}_2^2 \geq 0.
	\end{align*}

	\textit{Part 2:} If instead $\hat \kappa_k \geq 0$, we can estimate
	\begin{align*}
		f''(v)[u,u] \geq 8 \int_\T (\kappa \ast u v) uv \der t
		= 8 \sum_{k \in \Z} \hat \kappa_k \abs{\F_k(uv)}^2 \geq 0.
		&\qedhere
	\end{align*}
\end{proof}

Next we aim at lower bounds for $E'(u)[\DT u]$. Using integration by parts, one sees that the quadratic terms appearing in $E'(u)[\DT u]$ are $L^2$-norms of suitable fractional derivatives of $u$. 
In the next two lemmas, we investigate the remaining non-quadratic term $\int N(u_t) \DT u_t$. We begin with the instantaneous nonlinearity.

\begin{remark}  \label{rem:examples:kappa} 
Let us give a few examples of kernels $\tilde \kappa$ describing the nonlinear polarization (cf. \eqref{eq:nonlinear_polarization}) that lead via $\kappa(t) = T \sum_{k \in \Z} \tilde \kappa(t + k T)$ to admissible potentials $\kappa$ for \eqref{eq:ass:kappa}.
\begin{itemize} 
\item[(a)] First, we consider 
	\begin{align*}
		\tilde \kappa(t) = \begin{cases}
			0, & t < 0, \\
			\left(T^4 + 4 t^4\right)^{-1} t, & t \geq 0
		\end{cases}
	\end{align*}
	where $c > 0$. Let us show that the resulting $\kappa$ is admissible. To do this, we write
	\begin{align*}
		\tilde \kappa(t) = \frac{1}{2 T} \left( \frac{1}{T^2 + (2 t - T)^2} - \frac{1}{T^2 + (2 t + T)^2}\right)
		\quad\text{for } t \geq 0,
	\end{align*}  
	so $\kappa(t)$ is a telescoping series with value
	\begin{align*}
		\kappa(t) = \frac{1}{2 T \left(T^2 + (2 t - T)^2\right)}
		\quad\text{for } t \in [0, T).
	\end{align*}
	We see that $\kappa$ is even about $\frac{T}{2}$, and by periodicity also even about $0$, and that $\min\kappa = \kappa(0) = \frac{1}{4 T^3}, \max\kappa = \kappa(\frac{T}{2}) = \frac{1}{2 T^3}$ hold. 
	Since $\kappa$ is Lipschitz continuous, this combined with \cref{lem:convexity_conditions} show that $\kappa$ satisfies \eqref{eq:ass:kappa}. 
\item[(b)] More generally, $\tilde\kappa(t) = \bbone_{t \geq 0} \left[ g(2 t - T) - g(2 t + T)\right]$ with even, H\"older-continuous $g\colon \R \to \R$ is an admissible example if $\max_{[0, T]} g \leq 2 \min_{[0, T]} g$ holds.
\item[(c)] Let us give another example: Consider 
	\begin{align*}
		\tilde \kappa(t) = \sum_{n \in \N_0} \alpha_n \bbone_{[n T, (n+1)T)}(t)
	\end{align*}
	where $(\alpha_n) \in \ell^1$ with $\sum_{n \in \N_0} \alpha_n = \frac{1}{T}$. Then $\kappa \equiv 1$ and therefore it satisfies \eqref{eq:ass:kappa}.
\item[(d)] Finally, using a Debye-type exponential decay in the kernel function cf. \cite{debye}, let us consider $\tilde\kappa(t) = \alpha\ee^{-\beta t}\bbone_{t \geq 0}$ and its discretized version $\tilde\kappa_d(t) = \alpha\sum_{n=0}^\infty \ee^{-\beta nT} \bbone_{[nT,(n+1)T)}$ with $\alpha, \beta>0$. Subject to the choice $\alpha=(1-\ee^{-\beta T})/T$ the discretized version clearly falls into the category (c) whereas for the continuous version we get $\kappa(t) =\ee^{-\beta t}$ for $t\in [0,T)$ so that $\kappa$ is neither even nor continuous on $\T$ and hence does not satisfy \eqref{eq:ass:kappa}. Therefore our results do not apply to $\tilde\kappa$, but can be used for $\tilde\kappa_d$. Clearly, the smaller $T>0$ the better $\tilde\kappa_d$ approximates $\tilde\kappa$, and our results provide existence of breathers with frequencies tending to infinity as $T\searrow 0$. This, however, does not allow for any conclusion about breathers for nonlinear Maxwell equations with Debye-type exponential decay kernel.
\end{itemize}
\end{remark}

\begin{lemma}\label{lem:nonlinear_derivative_inequality}
	The inequality 
	\begin{align}\label{eq:loc:nonlinear_derivative_inequality}
		2 \int_\T v^3 \cdot \DT v \der t \geq \int_{\T} \left(\halfDT \left( v \abs{v} \right)\right)^2 \der t
	\end{align}
	holds for all $v \in C^\infty(\T)$.
\end{lemma}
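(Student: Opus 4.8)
The strategy is to rewrite both sides of \eqref{eq:loc:nonlinear_derivative_inequality} as double integrals against the nonnegative kernel $K_1$ from \cref{lem:singular_integral_fractional_laplace}, thereby reducing the claim to an elementary pointwise inequality for real numbers. Throughout we use that $v\in C^\infty(\T)\subseteq C^{1,1}(\T)$ and that $g\coloneqq v\abs{v}\in C^{1,1}(\T)\subseteq H^{1/2}(\T)$, since $x\mapsto x\abs{x}$ is $C^{1,1}$ with derivative $2\abs{x}$.

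First I would record the two integral representations. By \cref{lem:singular_integral_fractional_laplace} with $s=1$,
\[
	\DT v(t)=\int_\T K_1(h)\bigl(2v(t)-v(t+h)-v(t-h)\bigr)\der h,
\]
and by the lemma following \cref{lem:singular_integral_fractional_laplace} applied with $s=\nicefrac12$ (so that $2s=1$),
\[
	\int_\T\bigl(\halfDT(v\abs{v})\bigr)^2\der t=\int_\T\int_\T K_1(h)\bigl(g(t)-g(t+h)\bigr)^2\der h\der t .
\]

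Next I would symmetrize the left-hand side. Inserting the formula for $\DT v$ into $2\int_\T v^3\,\DT v\der t$ and using Fubini's theorem --- legitimate because $\abs{2v(t)-v(t+h)-v(t-h)}=\landauO(h^2)$ as $h\to0$ by the $C^{1,1}$ bound while $K_1(h)\simeq\abs{h}^{-2}$ --- together with the evenness of $K_1$ and the translation invariance of $\int_\T\cdot\,\der t$, the usual symmetrization yields
\[
	2\int_\T v^3\,\DT v\der t=2\int_\T\int_\T K_1(h)\bigl(v(t)^3-v(t+h)^3\bigr)\bigl(v(t)-v(t+h)\bigr)\der h\der t .
\]
Since $K_1\geq0$, inequality \eqref{eq:loc:nonlinear_derivative_inequality} now follows once we establish the pointwise bound
\[
	\bigl(a\abs{a}-b\abs{b}\bigr)^2\leq2(a^3-b^3)(a-b)\qquad\text{for all }a,b\in\R .
\]

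Finally I would prove this pointwise inequality. Writing $f(x)=x\abs{x}$, so $f'(x)=2\abs{x}$, it suffices (both sides being unchanged under swapping $a$ and $b$, and the case $a=b$ being trivial) to treat $a>b$: by the fundamental theorem of calculus and the Cauchy--Schwarz inequality,
\[
	\bigl(f(a)-f(b)\bigr)^2=\Bigl(\int_b^a 2\abs{x}\der x\Bigr)^2\leq(a-b)\int_b^a 4x^2\der x=\tfrac43(a-b)(a^3-b^3)\leq2(a-b)(a^3-b^3),
\]
the last step using $(a-b)(a^3-b^3)\geq0$. (Alternatively, one can verify the pointwise inequality by a brief case distinction on the signs of $a$ and $b$.) The only mildly delicate point is matching the kernels in the two representations above and carrying out the symmetrization correctly; once that bookkeeping is done, the analytic content reduces to the one-line Cauchy--Schwarz estimate.
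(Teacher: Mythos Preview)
Your proof is correct and follows essentially the same route as the paper: both arguments use the singular integral representation from \cref{lem:singular_integral_fractional_laplace}, symmetrize to reduce to the pointwise inequality $(a|a|-b|b|)^2 \leq 2(a^3-b^3)(a-b)$, and then verify the latter. The only difference is cosmetic: the paper checks the pointwise bound by a sign case distinction, whereas your Cauchy--Schwarz argument is slightly slicker and even yields the sharper constant $\tfrac{4}{3}$ in place of $2$.
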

\begin{proof}
	We first encountered an estimate similar to \eqref{eq:loc:nonlinear_derivative_inequality} in \cite[Proposition 2.3]{cordoba}, and we prove \eqref{eq:loc:nonlinear_derivative_inequality} in a similar fashion.
	Note that $v \abs{v} \in C^{1,1}(\T) \subseteq H^{\nicefrac12}(\T)$. Thus both sides of \eqref{eq:loc:nonlinear_derivative_inequality} are well-defined and we may use symmetry to obtain
	\begin{align*}
		\int_\T \left( \halfDT \left(v \abs{v}\right) \right)^2 \der t
		= \int_\T v \abs{v} \cdot \DT \left(v \abs{v}\right) \der t.
	\end{align*}
	Using the representation of \cref{lem:singular_integral_fractional_laplace}, we calculate
	\begin{align*}
		&2 \int_\T v^3 \cdot \DT v \der t - \int_\T v \abs{v} \cdot \DT(v \abs{v}) \der t
		\\ &\quad= C \int_\T \int_\R \frac{2}{h^2} v(t)^3 \bigl( 2 v(t) - v(t + h) - v(t - h) \bigr)
		\\ &\quad\phantom{= C \int_\T \int_\R} - \frac{1}{h^2} v(t) \abs{v(t)} \bigl( 2 v(t) \abs{v(t)} - v(t+h) \abs{v(t+h)} - v(t-h)\abs{v(t-h)} \bigr) \der h \der t
		\\ &\quad= C\int_\R \frac{1}{h^2} \int_\T 2 v(t)^3 \bigl( v(t) - v(t + h) \bigr) - v(t) \abs{v(t)} \bigl( v(t) \abs{v(t)} - v(t + h) \abs{v(t + h)} \bigr)
		\\ &\quad\phantom{= C\int_\R \frac{1}{h^2} \int_\T} +2 v(t)^3 \bigl( v(t) - v(t - h) \bigr) - v(t) \abs{v(t)} \bigl( v(t) \abs{v(t)} - v(t - h) \abs{v(t - h)} \bigr) \der t \der h
		\\ &\quad= C\int_\R \frac{1}{h^2} \int_\T 2 v(t)^3 \bigl( v(t) - v(t + h) \bigr) - v(t) \abs{v(t)} \bigl( v(t) \abs{v(t)} - v(t + h) \abs{v(t + h)} \bigr)
		\\ &\quad\phantom{= C\int_\R \frac{1}{h^2} \int_\T} +2 v(t + h)^3 \bigl( v(t + h) - v(t) \bigr) - v(t + h) \abs{v(t + h)} \bigl( v(t + h) \abs{v(t + h)} - v(t) \abs{v(t)} \bigr) \der t \der h
		\\ &\quad = C \int_\R \frac{1}{h^2} \int_\T v(t)^4 + v(t + h)^4 - 2 v(t)^3 v(t + h) - 2 v(t) v(t+h)^3 + 2 v(t) \abs{v(t)} v(t + h) \abs{v(t + h)} \der t \der h
	\end{align*}
	Next we claim that the last integrand is everywhere non-negative. To see this, abbreviate $a \coloneqq v(t)$, $b \coloneqq v(t + h)$. If $a$ and $b$ have the same sign, we find
	\begin{align*}
		&a^4 + b^4 - 2 a^3 b - 2 a b^3 + 2 a \abs{a} b \abs{b} = \left( a^2 + b^2 \right) \left(a - b\right)^2 \geq 0.
	\intertext{If $a$ and $b$ have opposite signs, we instead calculate}
		&a^4 + b^4 - 2 a^3 b - 2 a b^3 + 2 a \abs{a} b \abs{b} = \left(a^2 - b^2\right)^2 - 2 a b (a^2 + b^2) \geq 0.
	\end{align*}
	This completes the proof.
\end{proof}

The counterpart for the temporally averaged nonlinearity reads as follows. Its proof is very different from the proof of the previous lemma. 

\begin{lemma}\label{lem:nonlinear_derivative_inequality_av} There exists constants $c_1, C_2>0$ such that
\begin{align*}
	\int_\T (\kappa\ast v^2) v \DT v\der t
	\geq c_1 \norm{v}_{L^2(\T)}^2 \norm{\DT v}_{L^2(\T)}^2 - C_2 \norm{v}_{L^2(\T)}^4
\end{align*}
holds for all $v\in C^\infty(\T)$.  
\end{lemma}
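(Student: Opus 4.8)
The plan is to pass to the Fourier side and exploit the fact that $\kappa$ is even, positive, and H\"older continuous. Write $v = \sum_{k} \hat v_k e_k$ with $\hat v_{-k} = \overline{\hat v_k}$ since $v$ is real-valued. First I would expand
$(\kappa \ast v^2)(t) = \sum_{m} \hat\kappa_m \widehat{(v^2)}_m e_m(t)$ and $v \DT v$, and compute the time integral $\int_\T (\kappa\ast v^2) v \DT v\der t$ as a quadruple sum over frequencies $k_1+k_2+k_3+k_4 = 0$ with coefficient $\hat\kappa_{k_1+k_2}\,\omega|k_4|$ (after symmetrizing in the two ``$v$'' slots that are not differentiated, exactly as in the convexity computation for $\efctN'$ in \cref{sec:domain_restriction}). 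The key structural point is that $\int_\T (\kappa\ast v^2) v \DT v\der t$ is \emph{real} and, by the symmetrization already used to show $\efctN'(u)[\varphi]=\int \Gamma(\kappa\ast u_t^2)u_t\varphi_t$, it equals $\tfrac14 \DT$-applied version of $J(v) = \tfrac14\int_\T(\kappa\ast v^2)v^2\der t$; more precisely $\int_\T (\kappa\ast v^2)v\,\DT v\der t = \tfrac14\int_\T \DT(\,\cdot\,)$-type expression. The cleanest route is: write $\int_\T (\kappa\ast v^2) v\DT v \der t = \tfrac12 \int_\T (\kappa \ast v^2)\,\DT(v^2)\der t + \tfrac12\int_\T(\kappa\ast v^2)\big(2v\DT v - \DT(v^2)\big)\der t$, handle the first term by Plancherel and $\hat\kappa_m \ge \min\kappa > 0$, and absorb the second (Leibniz-defect) term using the Kenig--Ponce--Vega inequality \cref{lem:leibniz_defect_estimate}.

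Concretely, the first term: by Plancherel and since $\DT$ is self-adjoint,
$\tfrac12\int_\T(\kappa\ast v^2)\DT(v^2)\der t = \tfrac12\sum_m \hat\kappa_m\,\omega|m|\,|\widehat{v^2}_m|^2 \ge \tfrac{\min\kappa}{2}\sum_m \omega|m|\,|\widehat{v^2}_m|^2 = \tfrac{\min\kappa}{2}\|\halfDT(v^2)\|_{L^2(\T)}^2$. Now I would lower-bound $\|\halfDT(v^2)\|_{L^2(\T)}^2$ in terms of $\|v\|_{L^2}^2\|\DT v\|_{L^2}^2$. By the Leibniz rule $\halfDT(v^2) = 2v\,\halfDT v + \big(\halfDT(v^2) - 2v\halfDT v\big)$, and \cref{lem:leibniz_defect_estimate} (with $s=\tfrac12$, $s_1,s_2$ slightly above $\tfrac14$, $p=2$, $p_1=p_2=4$) gives $\|\halfDT(v^2)-2v\halfDT v\|_{L^2} \lesssim [v]_{W^{s_1,4}}[v]_{W^{s_2,4}}$, which by the fractional Gagliardo--Nirenberg and Sobolev embeddings (\cref{lem:gagliardo-nirenberg,lem:sobolev_embedding}) is controlled by an interpolation of $\|v\|_{L^2}$ and $\|\DT v\|_{L^2}$ with a gap to the top order. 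Meanwhile $\|2v\,\halfDT v\|_{L^2}^2$ is \emph{not} directly $\gtrsim \|v\|_{L^2}^2\|\DT v\|_{L^2}^2$, so instead I would work with the full $\int_\T(\kappa\ast v^2)v\DT v\der t$ and integrate by parts once more to land on $\|v\,\halfDT v\|^2$-type quantities, then use the elementary pointwise-in-Fourier estimate that $\int_\T v^2(\DT v)^2\der t$ and $\int_\T (v\DT v)(v\DT v)$-type corrections differ by a lower-order Leibniz defect again bounded via \cref{lem:leibniz_defect_estimate}. The upshot is an inequality of the shape $\int_\T(\kappa\ast v^2)v\DT v\der t \ge c_1\|v\|_{L^2}^2\|\DT v\|_{L^2}^2 - (\text{interpolation terms})$, and every interpolation term, being of strictly lower order in $\|\DT v\|_{L^2}$, is absorbed by Young's inequality into $\tfrac{c_1}{2}\|v\|_{L^2}^2\|\DT v\|_{L^2}^2 + C_2\|v\|_{L^2}^4$ (using $\tfrac{T}{2}$-antiperiodicity, i.e.\ \cref{lem:derivative_estimates}, so that $\|v\|_{L^2}\lesssim\|\DT v\|_{L^2}$ and the bookkeeping of powers closes with $\|v\|_{L^2}^4$ on the right).

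The main obstacle I anticipate is the bookkeeping that turns the raw lower bound into exactly $c_1\|v\|_{L^2}^2\|\DT v\|_{L^2}^2 - C_2\|v\|_{L^2}^4$: one must verify that all error terms from the Leibniz defects carry a total homogeneity of degree $4$ in $v$ and strictly fewer than two factors of $\DT v$ (so that Young's inequality produces only the stated right-hand side, with no stray $\|\DT v\|_{L^2}^4$ or fractional-norm remainders). This forces a careful choice of the exponents $s_1,s_2\in(\tfrac14,\tfrac12)$ with $s_1+s_2>\tfrac12$ in \cref{lem:leibniz_defect_estimate} and of the interpolation parameters in \cref{lem:gagliardo-nirenberg}; once the exponent arithmetic is pinned down, the absorption is routine. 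A secondary subtlety is that the quadratic form $v\mapsto \int_\T(\kappa\ast v^2)v\DT v\der t$ can in principle be negative on individual Fourier modes where $\hat\kappa_m$ is small relative to the mismatch in frequencies — this is precisely why the $-C_2\|v\|_{L^2}^4$ slack is necessary and why one cannot hope for a clean positivity statement without it.
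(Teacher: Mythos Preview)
Your concrete route fails at the step ``handle the first term by Plancherel and $\hat\kappa_m \ge \min\kappa > 0$'': the hypotheses \eqref{eq:ass:kappa} give only \emph{pointwise} positivity $\kappa(t)>0$; for $m\neq 0$ the Fourier coefficients $\hat\kappa_m$ can be negative (they tend to $0$ by H\"older continuity and certainly do not stay above $\min\kappa$). Hence $\tfrac12\sum_m \hat\kappa_m\,\omega|m|\,|\widehat{v^2}_m|^2$ has no sign in general and your decomposition produces no usable main term. Your own observation that $\|2v\,\halfDT v\|_{L^2}^2$ does not dominate $\|v\|_{L^2}^2\|\halfDT v\|_{L^2}^2$ already signals that grouping the two undifferentiated $v$'s together is the wrong splitting; the vague plan to ``integrate by parts once more'' does not repair it. Also, you cannot invoke \cref{lem:derivative_estimates}: the lemma is stated for \emph{all} $v\in C^\infty(\T)$, not only antiperiodic ones.

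The paper distributes the derivative differently. By self-adjointness of $\halfDT$ one writes $\int_\T (\kappa\ast v^2)\,v\,\DT v\,\der t=\int_\T \halfDT\bigl((\kappa\ast v^2)\,v\bigr)\,\halfDT v\,\der t$ and then Leibniz-expands the product $(\kappa\ast v^2)\cdot v$ (not $v\cdot v$) under $\halfDT$ via \cref{lem:leibniz_defect_estimate}. The principal term is $\int_\T (\kappa\ast v^2)(\halfDT v)^2\,\der t$, and \emph{now} pointwise positivity is exactly what is needed: $(\kappa\ast v^2)(t)\geq(\min\kappa)\,\|v\|_{L^2}^2$ for every $t$, giving the lower bound $(\min\kappa)\,\|v\|_{L^2}^2\|\halfDT v\|_{L^2}^2$ directly. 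The two error terms involve $\halfDT(\kappa\ast v^2)$ and the Leibniz defect; both are strictly lower order because $\kappa\in C^\alpha$ absorbs part of the half-derivative via $\halfDT(\kappa\ast v^2)=(\fracDT{\nicefrac{\alpha}{2}}\kappa)\ast(\fracDT{\nicefrac12-\nicefrac{\alpha}{2}}v^2)$, after which Young's inequality closes the estimate. (Incidentally, what is actually proved---and what \cref{rem:qN_estimate} uses---has $\|\halfDT v\|_{L^2}^2$ on the right rather than $\|\DT v\|_{L^2}^2$; the printed right-hand side is too strong, as $v=\cos(k\omega t)$ with $k\to\infty$ shows.)
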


\begin{proof} 
By \ref{eq:ass:kappa}, $\kappa \in C^{\alpha}(\T)$. Inspired by the famous Kenig-Ponce-Vega inequality~\cite{kenig_ponce_vega}, we define the Leibniz-defect for the fractional half-derivative as
\begin{align*}
	\delta = \halfDT ((\kappa \ast v^2) v) - v \halfDT(\kappa \ast v^2) - (\kappa \ast v^2) \halfDT v.
\end{align*}
Using \cref{lem:gagliardo-nirenberg} and \cref{lem:leibniz_defect_estimate} we estimate
\begin{align*}
	\norm{\delta}_{2} 
	\lesssim \seminorm{\kappa \ast v^2}_{C^{\alpha}} \seminorm{v}_{H^{\nicefrac12-\nicefrac\alpha2}}
	\leq \seminorm{\kappa}_{C^{\alpha}} \norm{v}_{2}^2 \seminorm{v}_{H^{\nicefrac12-\nicefrac\alpha2}}
	\lesssim \seminorm{\kappa}_{C^{\alpha}} \norm{v}_{2}^{2 + \alpha} \norm{v}_{H^{\nicefrac12}}^{1-\alpha}.
\end{align*}
We further have
\begin{align*}
	\halfDT (\kappa \ast v^2) 
	= (\fracDT{\nicefrac\alpha2} \kappa) \ast (\fracDT{\nicefrac12-\nicefrac\alpha2} v^2)
	= \fracDT{\nicefrac\alpha2} \kappa \ast (2 v \fracDT{\nicefrac12-\nicefrac\alpha2} v + \tilde \delta)
\end{align*}
with Leibniz-defect $\tilde \delta$ given by
\begin{align*}
	\tilde \delta = \fracDT{\nicefrac12-\nicefrac\alpha2}(v^2) - 2 v \fracDT{\nicefrac12-\nicefrac\alpha2} v.
\end{align*}
By applying \cref{lem:leibniz_defect_estimate}, \cref{lem:sobolev_embedding}, and \cref{lem:gagliardo-nirenberg} for $p$ close to $1$ we obtain the estimate
\begin{align*}
	\Norm{\tilde \delta}_{p} 
	\lesssim \seminorm{v}_{W^{2p,\nicefrac14 - \nicefrac\alpha6}}^2
	\lesssim \norm{v}_{H^{\nicefrac14 - \nicefrac\alpha8}}^2
	\lesssim \norm{v}_{2}^{1 + \nicefrac\alpha2} \norm{v}_{H^{\nicefrac12}}^{1 - \nicefrac\alpha2},
\end{align*}
so that
\begin{align*}
	\Norm{\halfDT (\kappa \ast v^2)}_\infty
	&\leq 2 \Norm{\fracDT{\nicefrac\alpha2} \kappa}_\infty \Norm{v}_2 \Norm{\fracDT{\nicefrac12 - \nicefrac\alpha2} v}_2
	+ \Norm{\fracDT{\nicefrac\alpha2} \kappa}_{p'} \Norm{\tilde \delta}_p
	\\ &\lesssim \norm{\kappa}_{C^\alpha} \left( \norm{v}_{2}^{1 + \alpha} \norm{v}_{H^{\nicefrac12}}^{1 - \alpha} + \norm{v}_{2}^{1 + \nicefrac\alpha2} \norm{v}_{H^{\nicefrac12}}^{1 - \nicefrac\alpha2} \right)
\end{align*}
where we have used \cite[Theorem~2.6]{roncal_stinga} for the estimates on $\fracDT{\nicefrac\alpha2} \kappa$. 

Next we estimate the quantity appearing in the claim: 
\begin{align*}
	\int_\T (\kappa \ast v^2) v \cdot \DT v \der t
	&= \int_\T \halfDT ((\kappa \ast v^2) v) \cdot \halfDT v \der t
	\\ &= \int_\T \left( (\kappa \ast v^2) \halfDT v + v \halfDT (\kappa \ast v^2) + \delta \right) \cdot \halfDT v \der t
	\\ &\geq \int_\T (\kappa \ast v^2) (\halfDT v)^2 \der t
	\\ &\quad - C \norm{v}_2 \norm{\kappa}_{C^\alpha} \left( \norm{v}_{2}^{1 + \alpha} \norm{v}_{H^{\nicefrac12}}^{1 - \alpha} + \norm{v}_{2}^{1 + \nicefrac\alpha2} \norm{v}_{H^{\nicefrac12}}^{1 - \nicefrac\alpha2} \right) \norm{\halfDT v}_2
	\\ &\quad - C \seminorm{\kappa}_{C^\alpha} \norm{v}_{2}^{2 + \alpha} \norm{v}_{H^{\nicefrac12}}^{1-\alpha} \norm{\halfDT v}_2.
\end{align*}
The claim now follows using
\begin{align*}
	\int_\T (\kappa \ast v^2) (\halfDT v)^2 \der t \geq \min \kappa \cdot \Norm{v}_2^2 \Norm{\halfDT v}_2^2	
\end{align*}
and Young's inequality for products.
\end{proof}

Next we prove two important trace inequalities that are adapted to the terms appearing in our functional. In \cref{lem:trace} we estimate the trace in $H^{\nicefrac12}(\T)$ against $\normN{}$, and the ``regularized'' embedding \cref{lem:regularized_trace} estimates the trace in $H^1(\T)$ against $\qN{}$.

\begin{lemma}\label{lem:trace}
	The trace map
	\begin{align*}%
		\tr \colon \espace \to H^{\nicefrac12}(\T), u \mapsto u(R, \impvar)
	\end{align*}
	is well-defined and compact. Furthermore, for all $\eps > 0$ there exists $C(\eps) > 0$ such that 
	\begin{align}\label{eq:loc:trace:bound}
		\norm{\tr u}_{H^{\nicefrac12}(\T)}^2 
		\leq \eps \norm{u_r}_{\Lrad{[0, R] \times \T}}^2 + C(\eps) \normN{u_t}^2.
	\end{align}
	holds for all $u \in \espace$.
\end{lemma}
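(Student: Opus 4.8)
The plan is to establish the quantitative bound \eqref{eq:loc:trace:bound} first, since it simultaneously shows that $\tr$ is well defined and bounded into $H^{\nicefrac12}(\T)$, and then to upgrade boundedness to compactness by exploiting the extra time-integrability hidden in the assumption $\normN{u_t}<\infty$.

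For the estimate I would fix an $R_0\in(0,R)$ and work on $I\coloneqq[R_0,R]$, where the radial weight $r$ is comparable to $1$, and decompose in the time Fourier series. For each $k\in\Zodd$ the coefficient $\hat u_k$ belongs to $L^2(I)$ (because $\tfrac1r u\in\Lrad[2]{[0,R]\times\T}$) and $\hat u_k'$ to $L^2(I)$ (because $u_r\in\Lrad[2]{[0,R]\times\T}$), so $\hat u_k\in H^1(I)\embeds C(\bar I)$ and $\hat u_k(R)$ is meaningful; integrating $\tfrac{\mathrm d}{\mathrm dr}\abs{\hat u_k}^2=2\Re(\overline{\hat u_k}\,\hat u_k')$ from $r\in I$ up to $R$, applying Young's inequality and averaging in $r$ gives the one-dimensional trace inequality $\abs{\hat u_k(R)}^2\leq\delta\norm{\hat u_k'}_{L^2(I)}^2+C(\delta)\norm{\hat u_k}_{L^2(I)}^2$ for every $\delta>0$. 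Using that $u$ is $\tfrac T2$-antiperiodic (so the zero mode vanishes and $\norm{\tr u}_{H^{\nicefrac12}(\T)}^2\eqsim\sum_{k\in\Zodd}(1+\abs k)\abs{\hat u_k(R)}^2$), I would choose the mode-dependent parameter $\delta_k\coloneqq\eps(1+\abs k)^{-1}$ and sum, obtaining
\begin{align*}
	\norm{\tr u}_{H^{\nicefrac12}(\T)}^2\lesssim\eps\sum_{k\in\Zodd}\norm{\hat u_k'}_{L^2(I)}^2+\frac1\eps\sum_{k\in\Zodd}(1+\abs k)^2\norm{\hat u_k}_{L^2(I)}^2.
\end{align*}
Since $r\geq R_0$ on $I$, the first sum is $\lesssim\norm{u_r}_{\Lrad[2]{[0,R]\times\T}}^2$; for the second, antiperiodicity via \cref{lem:derivative_estimates} gives $\norm u_{\Lrad[2]{[0,R]\times\T}}\leq\omega^{-1}\norm{u_t}_{\Lrad[2]{[0,R]\times\T}}$, and \cref{rem:normN} bounds $\norm{u_t}_{\Lrad[2]{[0,R]\times\T}}$ by $\normN{u_t}$, so the whole second term is $\lesssim\eps^{-1}\normN{u_t}^2$. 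Relabelling $\eps$ yields \eqref{eq:loc:trace:bound}; taking $\eps$ fixed gives boundedness of $\tr\colon\espace\to H^{\nicefrac12}(\T)$.

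For compactness the key point --- and what I expect to be the main obstacle --- is that merely knowing $u\in H^1$ near $r=R$ only gives boundedness into $H^{\nicefrac12}(\T)$, not compactness (the boundary trace $H^1\to H^{\nicefrac12}$ is not compact), so one has to use the genuine $L^4$-structure of $\normN{}$, not just the $L^2$-bound it implies. Staying on $I=[R_0,R]$, I would argue that for a.e. $r\in I$ the slice $u(r,\cdot)$ is $\tfrac T2$-antiperiodic with $u_t(r,\cdot)\in L^2(\T)$, hence in $H^1(\T)$ with $\norm{u(r,\cdot)}_{H^1(\T)}\lesssim\norm{u_t(r,\cdot)}_{L^2(\T)}$; raising to the fourth power, integrating over $I$, and invoking $\normNav{v}\leq\normN{v}$ from \cref{rem:normN} shows $u\in L^4(I;H^1(\T))$ with norm $\lesssim\normN{u_t}$, while $u_r,\tfrac1r u\in\Lrad[2]{[0,R]\times\T}$ give $u\in H^1(I;L^2(\T))$ with norm $\lesssim\norm u_{\espace}$. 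Interpolating these two Bochner--Sobolev spaces places $u$ in $W^{\theta,p_\theta}(I;H^{1-\theta}(\T))$ with $p_\theta^{-1}=\tfrac{1-\theta}4+\tfrac\theta2$, and for $\theta\in(\tfrac13,\tfrac12)$ one has $\theta>p_\theta^{-1}$, so $W^{\theta,p_\theta}(I)\embeds C(\bar I)$ and the trace $u(R,\cdot)$ lies in $H^{s}(\T)$ with $s\coloneqq 1-\theta\in(\tfrac12,\tfrac23)$ and $\norm{u(R,\cdot)}_{H^s(\T)}\lesssim\norm u_{\espace}$. Thus $\tr$ factors as $\espace\to H^s(\T)\embeds\embeds H^{\nicefrac12}(\T)$, the last embedding being compact by Rellich on the torus, which finishes the argument. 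The $\eps$-estimate is routine; the delicate point is landing in $H^{\nicefrac12}(\T)$ itself rather than in a strictly weaker $H^\sigma(\T)$, which is exactly why the $L^4$-in-time control (and not just its $L^2$ consequence) must be used.
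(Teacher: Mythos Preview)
Your mode-by-mode derivation of the $\eps$-bound \eqref{eq:loc:trace:bound} is correct and in fact more direct than the paper's route. The paper instead localises with a cutoff $v=\psi u$ and proves the \emph{cubic} bound
\[
\norm{\tr u}_{H^{\nicefrac12}(\T)}^3 \;\lesssim\; \norm{u_r}_{\Lrad{[0,R]\times\T}}\,\normN{u_t}^2
\]
by differentiating $r\mapsto\norm{\halfDT v(r,\cdot)}_{L^2(\T)}^3$; this cubic form is not needed for \eqref{eq:loc:trace:bound} itself, but it is what drives the paper's compactness argument.

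For compactness, however, your argument has a genuine gap. The step ``interpolating these two Bochner--Sobolev spaces places $u$ in $W^{\theta,p_\theta}(I;H^{1-\theta}(\T))$'' is not a standard interpolation theorem: identifying $[L^4(I;H^1(\T)),H^1(I;L^2(\T))]_\theta$ when smoothness, integrability, \emph{and} the target space all vary simultaneously is delicate and does not follow from the usual vector-valued results (which require either the same target or the same integrability). Your conclusion that $\tr u\in H^s(\T)$ for some $s\in(\tfrac12,\tfrac23)$ is in fact true, but establishing it needs a more hands-on argument --- for instance a Littlewood--Paley decomposition in $k$ together with the sharp one-dimensional Gagliardo--Nirenberg inequality $\norm{f}_{L^\infty(I)}\lesssim\norm{f'}_{L^2(I)}^{1/3}\norm{f}_{L^4(I)}^{2/3}$ applied to each dyadic block, followed by H\"older in the block index --- rather than an off-the-shelf interpolation identity.

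The paper avoids this issue entirely by a different mechanism: it never claims the trace lies in any $H^s$ with $s>\tfrac12$. Instead it uses the cubic bound above together with the Fourier projections $S^K$ of \cref{lem:projection}. The point is that the cubic estimate contains the factor $\norm{\halfDT v}_{L^2(\T)}$, which, for functions supported on frequencies $\abs{k}\geq K$, is smaller than $\norm{v_t}_{L^2(\T)}$ by a factor $(K\omega)^{-1/2}$ (this is the ``improved'' \cref{lem:derivative_estimates}). Tracking this through gives $\norm{\tr(u-S^Ku)}_{H^{1/2}}^3\lesssim K^{-1/2}\norm{u}_{\espace}^3$, so $\tr\circ S^K\to\tr$ in operator norm. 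Since each $\tr\circ S^K$ has finite rank, $\tr$ is compact. Note that your quadratic bound \eqref{eq:loc:trace:bound} does \emph{not} see this gain on high frequencies, which is precisely why the paper works with the cubic estimate instead.
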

\begin{remark}
	By \cref{rem:normN} we have the continuous embedding $\iota \colon \espace \embeds \Hrad{[0, R] \times \T}$, and it is well known that the trace maps $H^1([0, R] \times \T)$ into $H^{\nicefrac12}(\T)$, and the same holds for $\Hrad{[0, R] \times \T}$. 
	However, both the embedding $\iota$ and the trace map $\tr \colon \Hrad[]{[0, R] \times \T} \to H^{\nicefrac12}(\T)$ are noncompact maps. Their composition $\tr \circ \iota$ however is compact, as we show below. This is true because of the temporal decay in the embedding $\iota$.
\end{remark}
\begin{proof}[Proof of \cref{lem:trace}]
	Since $\normNav{} \lesssim \normNins{}$ by \cref{rem:normN} and thus $\espaceins \embeds \espaceret$, if suffices to consider the case $N = \Nav$.
	
	Let $u \in H^2_\mathrm{anti}([0, R] \times \T)$. Fix some $\psi \in C^\infty([0, R])$ with $\psi = 0$ on $[0, \tfrac12 R]$ and $\psi(R) = 1$. With $v(r, t) \coloneqq \psi(r) u(r, t)$ we calculate 
	\begin{align*}
		\norm{\tr u}_{H^{\nicefrac12}(\T)}^3
		&= \norm{\tr v}_{H^{\nicefrac12}(\T)}^3
		\leq C_0 \|\halfDT \tr v\|_{L^2(\T)}^3
		\\ &= 3 C_0 \int_0^R \left( \|\halfDT v\|_{L^2(\T)} \int_\T \halfDT v \cdot \halfDT v_r \der t\right) \der r
		\\ &= 3 C_0\int_0^R \left(\|\halfDT v\|_{L^2(\T)} \int_\T \DT v \cdot v_r \der t\right) \der r
		\\ &\leq 3 C_0 \left( \int_{[0, R] \times \T} v_r^2 \der[r] (r, t) \cdot \int_0^R \|\halfDT v\|_{L^2(\T)}^2 \|\DT v\|_{L^2(\T)}^2 \der[r] r \right)^{\nicefrac12}
    \end{align*}
    where the factor $r$ can be introduced since $v$ is supported on $[\frac12 R,R]\times \T$. Using \cref{lem:derivative_estimates}, \cref{rem:normN} and $\inf_\T\kappa>0$, $\inf_{[0,R]} \Gamma>0$ we continue the estimate
	\begin{align*}	
		\norm{\tr u}_{H^{\nicefrac12}(\T)}^3
        &\leq C_1 \norm{v_r}_{\Lrad{[0, R] \times \T}} \normNav{v_t}^2
		\\ &\leq C_2 \left( \norm{u_t}_{\Lrad{[0, R] \times \T}} + \norm{u_r}_{\Lrad{[0, R] \times \T}} \right) \normNav{u_t}^2
		\\ &\leq C_3 \left( \normNav{u_t} + \norm{u_r}_{\Lrad{[0, R] \times \T}} \right) \normNav{u_t}^2
		\\ &\leq C_3 \norm{u}_{\espace}^3.
	\end{align*}
	By approximation, the inequality
	\begin{align*}
		\norm{\tr u}_{H^{\nicefrac12}(\T)}^3 \leq C_3 \left( \normNav{u_t} + \norm{u_r}_{\Lrad{[0, R] \times \T}} \right) \normNav{u_t}^2
	\end{align*}
	can be shown to hold for all $u \in \espace$, so that $\tr$ is a well-defined and bounded operator on $\espace$. The inequality \eqref{eq:loc:trace:bound} now follows immediately using Young's inequality for products. It remains to show compactness of the trace operator. To do this, we consider the operator 
	\begin{align*}
		\tr^K \coloneqq \tr \circ S^K
	\end{align*}
	for $K \in \Nodd$, cf. \cref{lem:projection} for a definition of the projection operators $S^K$. Then $\tr^K$ is a compact operator since it is bounded and has finite-dimensional range. Since $\F_k[u - S^K u] = 0$ for $\abs{k} < K + 2$, using the improved estimate from \cref{lem:derivative_estimates} in our calculation above, we find
	\begin{align*}
		\norm{\tr (u - S^K u)}_{H^{\nicefrac12}(\T)}^3
		\leq \frac{C_3}{\sqrt{K+2}} \norm{u - S^K u}_{\espace}^3,
	\end{align*}
	so that in particular
	\begin{align*}
		\norm{\tr u - \tr^K u}_{H^{\nicefrac12}(\T)}^3
		= \norm{\tr (u - S^K u)}_{H^{\nicefrac12}(\T)}^3
		\leq \frac{C_2 \left(1 + \norm{S^K}\right)^3}{\sqrt{K+2}} \norm{u}_{\espace}^3
	\end{align*}
	holds. Using \cref{lem:projection} it follows that $\tr^K \to \tr$ in $\calB(\espace; H^{\nicefrac12}(\T))$, which shows that $\tr$ is compact.
\end{proof}

Next we show in \cref{lem:regularized_trace} the ``regularized'' trace inequality, which is the main tool used to obtain improved regularity in \cref{sec:approximation}.

\begin{lemma}\label{lem:regularized_trace}
	For all $\eps > 0$ there exists a constant $C(\eps) > 0$ such that 
	\begin{align}\label{eq:loc:regularized_trace}
		\norm{\tr u}_{H^1(\T)}^2
		\leq \eps \norm{\halfDT u_r}_{\Lrad{[0, R] \times \T}}^2
		+ C(\eps) \qN{u_t}^2
	\end{align}
	holds for all $u \in \espaceF$ and $K \in \Nodd$ and where $C(\eps)$ does not depend on $K$.
\end{lemma}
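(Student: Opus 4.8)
The plan is to mimic the structure of the proof of \cref{lem:trace}, but one regularity level higher: there we estimated $\norm{\tr u}_{H^{\nicefrac12}}$ against $\norm{u_r}_{\Lrad{}}$ and $\normN{u_t}$, and here we want $\norm{\tr u}_{H^1}$ against $\norm{\halfDT u_r}_{\Lrad{}}$ and $\qN{u_t}$. Since by \cref{lem:derivative_estimates} the zero Fourier mode vanishes we have $\norm{\tr u}_{H^1(\T)} \eqsim \norm{\DT \tr u}_{L^2(\T)}$. As in the proof of \cref{lem:trace}, fix a cutoff $\psi\in C^\infty([0,R])$ with $\psi\equiv 0$ on $[0,\tfrac12 R]$ and $\psi(R)=1$, set $v(r,t)\coloneqq\psi(r)u(r,t)$, so that $\tr v=\tr u$, $v$ is supported in $[\tfrac12 R,R]\times\T$ (which lets us freely insert or drop the radial weight $r$ on this set), and $v_r=\psi u_r+\psi' u$. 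I would first apply the fundamental-theorem-of-calculus identity in the radial variable to the quantity $\norm{\DT\tr v}_{L^2(\T)}^2$ — more precisely to a suitable power of it, just as the cube was used in \cref{lem:trace}. Writing $\norm{\halfDT v(r,\cdot)}_{L^2(\T)}^2 = \int_0^r \partial_\rho\bigl(\int_\T (\halfDT v)^2\der t\bigr)\der\rho = 2\int_0^r \int_\T \halfDT v\cdot \halfDT v_\rho\der t\der\rho = 2\int_0^r \int_\T \DT v\cdot v_\rho\der t\der\rho$, evaluated at $r=R$, and controlling $\norm{\DT\tr v}_{L^2(\T)}^2$ by $C_0\norm{\halfDT \DT \tr v}_{L^2(\T)}^2$ (using antiperiodicity and \cref{lem:derivative_estimates}) — here one needs the $\fracDT{3/2}$-version of the same identity: $\norm{\fracDT{3/2} v(R,\cdot)}_{L^2}^2 = 2\int_0^R\int_\T \fracDT{3/2}v\cdot\fracDT{3/2}v_r\,\der t\der r = 2\int_0^R\int_\T \fracDT{1/2}v_r\cdot\fracDT{5/2}v\,\der t\der r$ after moving derivatives. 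Then Cauchy–Schwarz in $r$ separates $\int_{[0,R]\times\T}(\halfDT v_r)^2\der[r](r,t)$ from a term of the form $\int_0^R \|\fracDT{5/2}v(r,\cdot)\|^2\cdot(\text{lower order})\,\der[r]r$, and one bootstraps the $5/2$-derivative down to a $3/2$-derivative at the cost of another application of the same splitting. Iterating once more reduces everything to $\norm{\halfDT v_r}_{\Lrad{}}$ and to quantities estimated by $\qN{v_t}$.

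Concretely, after this FTC-plus-Cauchy–Schwarz step I expect to arrive at an inequality of the shape
\begin{align*}
\norm{\tr u}_{H^1(\T)}^{2m} \leq C\,\norm{\halfDT v_r}_{\Lrad{[0,R]\times\T}}^{m}\cdot\Bigl(\int_0^R \|\fracDT{1}v(r,\cdot)\|_{L^2(\T)}^{a}\,\|\halfDT v(r,\cdot)\|_{L^2(\T)}^{b}\,\der[r]r\Bigr)^{\text{something}}
\end{align*}
for appropriate exponents, where the last factor is recognised (via fractional Gagliardo–Nirenberg interpolation on the torus, \cref{lem:gagliardo-nirenberg}, between $L^2(\T)$ and $H^{1/2}(\T)$, exactly as $\qNins{}$ or $\qNav{}$ are built) as being bounded by a power of $\qN{v_t}$. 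For $N=\Nins$ the relevant object is $\halfDT(v\abs v)$, so one should interpolate using the identity that controls $\norm{\halfDT(v_t\abs{v_t})}_{L^2}$; for $N=\Nav$ it is the product $\|v_t\|_{L^2(\T)}\|\halfDT v_t\|_{L^2(\T)}$ appearing directly in $\qNav{}$. Throughout one uses $v_r=\psi u_r+\psi' u$ together with $\norm{\tfrac1r u}_{\Lrad{}}\lesssim\norm{u_r}_{\Lrad{}}$-type bounds — actually better to use $\psi'$ supported away from $r=0$ so no singular weight appears — to replace $v$-norms by $u$-norms, and one uses the equivalent norm $\nnorm{\cdot}$ on $\espaceF$ from \cref{prop:minimizer_estimates} to know all fractional time derivatives are finite (so the manipulations are legitimate on $\espaceF$, and the constant is $K$-independent because all constants come from $\psi$, \cref{lem:derivative_estimates}, \cref{lem:gagliardo-nirenberg} and the trace constant $C_0$, none of which see $K$). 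Finally Young's inequality for products converts the multiplicative estimate into the additive form \eqref{eq:loc:regularized_trace} with an $\eps$ in front of $\norm{\halfDT u_r}_{\Lrad{}}^2$.

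The main obstacle, as in \cref{lem:trace}, is bookkeeping the fractional derivatives so that exactly a half-derivative lands on $u_r$ and the remaining derivatives are absorbed into the $\qN{}$-quantity via Gagliardo–Nirenberg — and doing so \emph{without} ever needing more than $\halfDT u_r$, since (as the remark before \cref{prop:minimizer_estimates} warns) testing against $\fracDT{s}$ with $s>1$ would generate uncontrollable boundary terms, and the analogous statement here is that pushing past $H^1(\T)$ on the trace is impossible with this method. A secondary technical point is that in the $N=\Nav$ case the relevant "norm" $\qNav{}$ mixes an $L^2(\T)$-factor with an $H^{1/2}(\T)$-factor pointwise in $r$, so the Cauchy–Schwarz split in $r$ has to be arranged to produce precisely that pairing; getting the Hölder exponents in $r$ to match requires a little care but no new idea. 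I expect the instantaneous case $N=\Nins$ to be handled first and the time-averaged case to follow by the same scheme with $v\abs v$ replaced appropriately, paralleling how \cref{rem:qN_estimate} treats the two nonlinearities in tandem.
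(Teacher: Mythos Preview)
Your framework is right --- cutoff $\psi$, FTC in $r$, shift a half time-derivative onto $v_r$, Cauchy--Schwarz, then Young --- and you even write ``a suitable power of it, just as the cube was used in \cref{lem:trace}''. But you do not follow your own hint, and the concrete steps you propose instead do not close. Passing from $\norm{\DT\tr v}_{L^2}^2$ up to $\norm{\fracDT{3/2}\tr v}_{L^2}^2$ via \cref{lem:derivative_estimates} and then applying FTC leaves you with the bulk factor $\norm{\fracDT{5/2}v}_{\Lrad{}}=\norm{\fracDT{3/2}v_t}_{\Lrad{}}$, and this is \emph{not} controlled by $\qN{v_t}$: for $N=\Nav$, say, that would require $\norm{\halfDT v_t}_{L^2(\T)}\lesssim\norm{v_t}_{L^2(\T)}$ pointwise in $r$, which is false. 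There is no ``bootstrap'' that reduces the order: each further FTC step introduces another $r$-derivative, and insisting that only $\halfDT$ land on $v_r$ forces the time-derivative count on the remaining factor to go \emph{up}, never down.

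The paper's proof does exactly what you hinted at: it cubes, and the whole argument is a single step. For $N=\Nins$ one uses $\norm{\tr u}_{H^1(\T)}^3\lesssim\int_\T\abs{v_t(R,t)}^3\der t$, whose $r$-primitive is $3\int v_{tr}\,v_t\abs{v_t}\der(r,t)$; writing $v_{tr}=H\DT v_r$ with the Hilbert transform $H$ (so $\partial_t=H\DT$) and moving $\halfDT$ across gives $3\int H\halfDT v_r\cdot\halfDT(v_t\abs{v_t})\der(r,t)$, and one Cauchy--Schwarz produces $\norm{\halfDT v_r}_{\Lrad{}}\cdot\qNins{v_t}^2$ directly. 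For $N=\Nav$ one instead cubes the $L^2(\T)$-norm: $\partial_r\norm{v_t}_{L^2(\T)}^3=3\norm{v_t}_{L^2(\T)}\int_\T v_t v_{tr}\der t$, and after the same half-derivative shift Cauchy--Schwarz in $(r,t)$ pairs $\norm{v_t}_{L^2(\T)}\halfDT v_t$ with $H\halfDT v_r$ to give $\norm{\halfDT v_r}_{\Lrad{}}\cdot\qNav{v_t}^2$ exactly. The extra factor of $v_t$ (resp.\ $\norm{v_t}_{L^2(\T)}$) supplied by the cube is precisely what is needed to assemble $\qN{}$; no Gagliardo--Nirenberg interpolation and no iteration enter.
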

\begin{proof}
	\textit{Part 1:} Let $N = \Nins$. Fix $\psi \in C^\infty([0, R])$ with $\psi = 0$ on $[0, \tfrac12 R]$, $\psi(R) = 1$ and set $v(r, t) \coloneqq \psi(r) u(r, t)$. Further let $H$ denote the Hilbert transform in time, which is given by $\F_k H = \ii \sign(k) \F_k$. Using $\partial_t = H \DT$ we calculate
	\begin{align*}
		\norm{\tr u}_{H^1(\T)}^3 
		&= \norm{\tr v}_{H^1(\T)}^3 
		\\ &\lesssim \norm{\tr v_t}_{L^2(\T)}^3
		\\ &\lesssim \int_\T \abs{v_t(R, \impvar)}^3 \der t
		\\ &= 3 \int_{[0, R] \times \T} v_{tr} v_t \abs{v_t} \der (r, t)
		\\ &= 3 \int_{[0, R] \times \T} H \halfDT v_{r} \cdot \halfDT (v_t \abs{v_t}) \der (r, t)
		\\ &\leq 3 \left(\int_{[0, R] \times \T} \left(H \halfDT v_{r}\right)^2 \der (r, t) \cdot \int_{[0, R] \times \T} \left(\halfDT (v_t \abs{v_t})\right)^2 \der (r, t)\right)^{\nicefrac12}
		\\ &= 3 \left(\int_{[0, R] \times \T} \left(\psi \halfDT u_r + \psi' \halfDT u\right)^2 \der (r, t) \cdot \int_{[0, R] \times \T} \psi^4 \left(\halfDT (u_t \abs{u_t})\right)^2 \der (r, t) \right)^{\nicefrac12}
		\\ & \lesssim \left(\norm{\halfDT u_r}_{\Lrad{}} + \norm{\halfDT u}_{\Lrad{}}\right) \qNins{u_t}^2
	\end{align*}
	where in the last inequality we have estimated $\psi^2, {\psi'}^2, \psi^4 \lesssim r$. From \cref{lem:derivative_estimates} we further have
	\begin{align*}
		\norm{\halfDT u}_{\Lrad{}}
		\lesssim \norm{u_t}_{\Lrad{}}
		\lesssim \norm{u_t}_{\Lrad[4]{}}
		= \norm{u_t \abs{u_t}}_{\Lrad[2]{}}^{\nicefrac12}
		\lesssim \norm{\halfDT (u_t \abs{u_t})}_{\Lrad[2]{}}^{\nicefrac12}
		= \qNins{u_t}.
	\end{align*}
	Combining both inequalities with Young's inequality for products, the estimate \eqref{eq:loc:regularized_trace} follows.

	\textit{Part 2:} Here we consider $N = \Nav$. We define $v$ as above, but now we estimate
	\begin{align*}
		\norm{\tr u}_{H^1(\T)}^3
		&\lesssim \norm{\tr u_t}_{L^2(\T)}^3
		\\ &= \norm{\tr v_t}_{L^2(\T)}^3
		\\ &= 3 \int_0^R \left( \norm{v_t}_{L^2(\T)} \int_\T v_t v_{t r} \der t \right) \der r
		\\ &= 3 \int_0^R \left( \norm{v_t}_{L^2(\T)} \int_\T \halfDT v_t \cdot H \halfDT v_{r} \der t \right) \der r
		\\ &\leq 3 \left( \int_{[0, R] \times \T} \left(H \halfDT v_r\right)^2 \der (r, t) \cdot \int_0^R  \norm{v_t}_{L^2(\T)}^2 \norm{\halfDT v_t}_{L^2(\T)}^2 \der r \right)^{\nicefrac12}
		\\ &\lesssim \left( \norm{\halfDT u_r}_{\Lrad{}} + \norm{\halfDT u}_{\Lrad{}} \right) \qNav{u_t}^2
	\end{align*}
	where again $\supp \psi \subseteq [\frac{1}{2}R,R]$ has been used.
	Using \cref{lem:derivative_estimates} we further obtain
	\begin{align*}
		\norm{\halfDT u}_{\Lrad{}} 
		&\lesssim \norm{u_t}_{\Lrad{}} 
		\lesssim \norm{u_t}_{\Lrad[4]{[0, R]; L^2(\T)}} 
		= \left(\int_0^R \norm{u_t(r,\cdot)}_{L^2(\T)}^4 \der[r] r\right)^{\nicefrac14}
		\\ & \leq \left(\int_0^R \norm{u_t(r,\cdot)}_{L^2(\T)}^2 \norm{\halfDT u_t(r,\cdot)}_{L^2(\T)}^2 \der[r] r\right)^{\nicefrac14}
		= \qNav{u_t}.
	\end{align*}
	Combining both estimates above with Young's inequality for products, the estimate \eqref{eq:loc:regularized_trace} follows.
\end{proof}

\section{Examples}\label{sec:calc_examples}

In this section we prove \cref{thm:main_example} by verifying the assumptions of \cref{thm:radial:main} or \cref{thm:slab:main}. We prepare the proof with a lemma on convergence of infinite matrix products.

\begin{lemma}\label{lem:convergence_of_product}
	Let 
	\begin{align*}
		A_n = \begin{pmatrix}
			1 + \alpha_n & \beta_n \\
			\gamma_n & \lambda + \delta_n
		\end{pmatrix} \in \R^{2 \times 2}
	\end{align*}
	where $\abs{\lambda} < 1$, and $\abs{\alpha_n} \leq \frac{C}{n^2}$ as well as $\abs{\beta_n}, \abs{\gamma_n}, \abs{\delta_n} \leq \frac{C}{n}$ hold for all $n \in \N$. Then the product
	\begin{align*}
		\prod_{n=1}^{\infty} A_n \coloneqq \lim_{m \to \infty} \left(A_1 \cdot A_2 \cdot \ldots \cdot A_m\right)
	\end{align*}
	converges against a matrix of the form $\begin{pmatrix}
		* & 0 \\
		* & 0
	\end{pmatrix}$. If all $A_n$ are invertible, then $\prod_{n=1}^{\infty} A_n \neq 0$. 
	Further, there exists a function $f \colon (0, \infty) \to (0, \infty)$ with $f(0+) = 0$ such that
	\begin{align*}
		\norm{
			\prod_{n=1}^{\infty} A_n 
			- \begin{pmatrix}
				1 & 0 \\ 0 & 0
			\end{pmatrix}
		} \leq f(C).
	\end{align*}
\end{lemma}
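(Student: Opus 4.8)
The plan is to track the two columns of the partial products $P_m := A_1 \cdots A_m$ separately and to use the contraction $|\lambda| < 1$ in an essential way. This is forced on us: the entries $\beta_n,\gamma_n,\delta_n$ are only $\landauO(1/n)$, hence not summable, so $A_n$ cannot be treated as a summable perturbation of $D := \operatorname{diag}(1,\lambda)$, and the convergence must come from the decay of the ``$\lambda$-column''. Writing $u_m,v_m$ for the columns of $P_m$, the identity $P_m = P_{m-1}A_m$ yields the coupled recursion $u_m = (1+\alpha_m)u_{m-1} + \gamma_m v_{m-1}$, $v_m = \beta_m u_{m-1} + (\lambda+\delta_m)v_{m-1}$, with $u_0 = (1,0)^\top$, $v_0 = (0,1)^\top$. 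I will in fact run the argument for every tail product $P^{(N)}_m := A_N \cdots A_m$ (with $P^{(N)}_{N-1} := I$, so $P_m = P^{(1)}_m$), which obeys the same recursion; this extra generality is what delivers the non-vanishing statement.

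The technical heart is a coupled induction. Fix $\rho := \tfrac12(1+|\lambda|) \in (|\lambda|,1)$, an absolute constant $c$ with $n^2|\alpha_n|,\,n|\beta_n|,\,n|\gamma_n|,\,n|\delta_n| \le cC$, and a threshold $m_0 = m_0(C)$ (an explicit multiple of $C/(1-|\lambda|)$, plus lower-order terms) so large that $|\lambda| + cC/n \le \rho$ for $n \ge m_0$ and that the constants below close. For $N \ge m_0$ I would show, by induction on $m \ge N-1$, that the columns of $P^{(N)}_m$ satisfy $\|u^{(N)}_m\| \le 2$ and $\|v^{(N)}_m\| \le \Theta_m$, where $\Theta_{N-1} := 1$, $\Theta_m := \rho\Theta_{m-1} + 2cC/m$; since this recursion contracts with rate $\rho<1$ against an $\landauO(1/m)$ source, one gets $\Theta_m \le 1$ throughout and $\Theta_m = \landauO(1/m)$. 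The $v$-step is then immediate, and the $u$-step follows because $\|u^{(N)}_m - u^{(N)}_{m-1}\| \le 2|\alpha_m| + |\gamma_m|\Theta_{m-1} = \landauO(1/m^2)$ is summable, with tail sum $\le c'cC/N \le 1$ (the last inequality by the choice of $m_0$). Consequently $u^{(N)}_m$ converges, $v^{(N)}_m \to 0$, the tail product $Q_N := \lim_m P^{(N)}_m$ exists, has vanishing second column, and $\|Q_N - \operatorname{diag}(1,0)\| \le \|u^{(N)}_\infty - (1,0)^\top\| \le h(N,C)$ where $h(N,C) \to 0$ as $N \to \infty$ for fixed $C$, and $h(1,C) \to 0$ as $C \to 0$.

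The three assertions then drop out. If $C$ is small enough that $m_0(C)=1$, convergence and the block form are the statement for $Q_1$; in general $P_\infty = (A_1 \cdots A_{m_0-1})\,Q_{m_0}$, which exists and, being a fixed matrix applied to something with zero second column, again has zero second column. For the quantitative bound I would take $f(C) := h(1,C)$ when $C$ is small, and for general $C$ let $f(C)$ be the supremum of $\|P_\infty - \operatorname{diag}(1,0)\|$ over admissible configurations: this is finite because $\|P_\infty\| \le \|A_1 \cdots A_{m_0-1}\|\,(1 + h(m_0,C))$ with a bound depending only on $C$, it is positive, and $f(0+)=0$ by the small-$C$ estimate. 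Finally, if all $A_n$ are invertible, pick $N \ge m_0(C)$ with $h(N+1,C) < 1$; then $Q_{N+1} \ne 0$, and since $P_\infty = (A_1 \cdots A_N)\,Q_{N+1}$ with $A_1 \cdots A_N$ invertible, $P_\infty \ne 0$. The one genuinely delicate point --- and where I expect to spend the effort --- is closing the coupled induction, i.e.\ fixing $m_0(C)$ and the implied constants so that $\|u^{(N)}_m\|$ stays bounded while $\|v^{(N)}_m\|$ is pushed down to $\landauO(1/m)$; everything else is bookkeeping.
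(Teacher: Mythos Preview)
Your proposal is correct and follows essentially the same approach as the paper: both arguments run a coupled induction on the entries of the tail product $\prod_{n \ge N} A_n$, exploiting that the second column contracts geometrically (via $|\lambda|<1$) down to size $O(1/m)$, which in turn makes the increments of the first column summable of order $O(1/m^2)$. The paper tracks the four scalar entries $a_m,b_m,c_m,d_m$ with explicit constants $C_a,\dots,C_d$, whereas you package the same information into the column norms $\|u_m\|,\|v_m\|$ and the auxiliary sequence $\Theta_m$; the non-vanishing and the $f(C)$ estimate are then extracted in the same way in both versions.
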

\begin{proof}
	First we consider the product
	\begin{align*}
		\begin{pmatrix}
			a_m & b_m \\
			c_m & d_m
		\end{pmatrix} \coloneqq \prod_{n = N}^{m - 1} A_n
	\end{align*}
	where we choose $N \in \N$ so large that denominators appearing in the following four constants $C_a, C_b, C_c, C_d$ with 
	\begin{align*}
		C_a &\coloneqq 2 C C_b,
		\\ C_b &\coloneqq \max\set{
			\frac{1}{\tfrac{2 N}{N + 1} - 1 - \tfrac{2 C}{N}},
			\frac{C}{\tfrac{N}{N+1} - \abs{\lambda} - \tfrac{C + 2 C^2}{N}}
		}, 
		\\ C_c &\coloneqq 2 C C_d, 
		\\ C_d &\coloneqq \max\set{
			\frac{\tfrac{C}{N (1 - \abs{\lambda})}}{\tfrac{2 N}{N + 1} - 1 - \tfrac{2 C}{N}},
			\frac{C + \tfrac{C^2}{N (1 - \abs{\lambda})}}{\tfrac{N}{N + 1} - \abs{\lambda} - \frac{C + 2 C^2}{N}}
		},
	\end{align*}
	are positive and $\frac{C_a}{N} < 1$ holds. We show by induction that the following estimates hold for $m \geq N$: 
	\begin{align}\label{eq:loc:coefficient_estimates}
		\begin{aligned}
			\abs{a_m - 1} &\leq C_a (\tfrac1N - \tfrac1m),
			& \abs{b_m} &\leq \tfrac{C_b}{m},
			\\ \abs{c_m} &\leq C_c \left( \tfrac{1}{N} - \tfrac{1}{m}\right) + \tfrac{C}{N} \tfrac{1 - \abs{\lambda}^{m - N}}{1 - \abs{\lambda}}, \qquad
			& \abs{d_m - \lambda^{m - N}} &\leq \tfrac{C_d}{m}.
		\end{aligned}
	\end{align}
	We will moreover show for the differences that
	\begin{align}\label{eq:loc:coefficient_estimates:diff}
		\abs{a_{m+1} - a_{m}} &\leq \tfrac{C_a}{m(m+1)}
		& \abs{c_{m+1} - c_{m}} &\leq \tfrac{C_c}{m(m+1)} + \tfrac{C}{N} \abs{\lambda}^{m - N}
	\end{align}
	holds.
    First, for $m = N$ the estimates \eqref{eq:loc:coefficient_estimates} hold since $a_N = 1, b_N = 0, c_N = 0, d_N = 1$. For the induction step, let us assume that \eqref{eq:loc:coefficient_estimates} holds for fixed $m \geq N$. Using $a_{m + 1} = (1 + \alpha_{m}) a_{m} + \gamma_{m} b_{m}$ as well as $\abs{a_{m}} \leq C_a^+ \coloneqq 1 + \frac{C_a}{N}$ we find
	\begin{align*} %
		&\abs{a_{m+1} - a_{m}} 
		\leq \frac{C}{m^2} C_a^+ + \frac{C}{m}\frac{C_b}{m} 
		\leq \frac{(N+1) C}{N} \left(C_a^+ + C_b\right) \frac{1}{m (m+1)} \leq \frac{C_a}{m(m+1)}.
	\end{align*}
	This in turn implies that $\abs{a_{m+1} - 1} \leq \abs{a_{m+1} - a_{m}} + \abs{a_{m} - 1} \leq C_a \left(\tfrac{1}{N} - \tfrac{1}{m + 1}\right)$. Next, from $b_{m+1} = \beta_{m} a_{m} + (\lambda + \delta_{m}) b_{m}$ we obtain
	\begin{align*}
		\abs{b_{m+1}} 
		\leq \frac{C}{m} C_a^+ + \abs{\lambda} \frac{C_b}{m} + \frac{C}{m} \frac{C_b}{m}
		\leq \frac{N+1}{N} \left( C C_a^+ + \abs{\lambda} C_b + \frac{C C_b}{N} \right) \frac{1}{m + 1} \leq \frac{C_b}{m + 1}.
	\end{align*}
	Then we use $c_{m+1} = (1 + \alpha_{m}) c_{m} + \gamma_{m} d_{m}$ as well as $\abs{c_m} \leq C_c^+ \coloneqq \frac{C_c}{N} + \frac{C}{N (1 - \abs{\lambda})}$ to obtain
	\begin{align*} %
		\begin{aligned}
			\abs{c_{m+1} - c_{m}} 
			&\leq \frac{C}{m^2} C_c^+ + \frac{C}{m} \left( \abs{\lambda}^{m - N} + \tfrac{C_d}{m} \right)
			\\ &\leq \frac{(N+1) C}{N} \left( C_c^+ + C_d\right) \frac{1}{m (m + 1)} + \frac{C}{N} \abs{\lambda}^{m - N},
			\\ &\leq \frac{C_c}{m(m+1)} + \frac{C}{N} \abs{\lambda}^{m - N},
		\end{aligned}
	\end{align*}
	from which the desired estimate on $\abs{c_{m+1}}$ follows as before. From $d_{m+1} = \beta_m c_{m} + (\lambda + \delta_m) d_{m}$ we obtain
	\begin{align*}
		\abs{d_{m+1} - \lambda^{m + 1 - N}} 
		&\leq \frac{C}{m} C_c^+ +  \abs{\lambda} \frac{C_d}{m} + \frac{C}{m} \left(\abs{\lambda}^{m - N} + \frac{C_d}{m}\right)
		\\ &\leq \frac{N+1}{N} \left( C C_c^+ + \abs{\lambda} C_d + C + \frac{C C_d}{N}\right) \frac{1}{m + 1}
		\leq \frac{C_d}{m+1}.
	\end{align*}
	This shows the estimates \eqref{eq:loc:coefficient_estimates}, \eqref{eq:loc:coefficient_estimates:diff}. It follows that $b_m, d_m \to 0$ and that $a_m$, $c_m$ converge as $m \to \infty$. Thus we have shown that the product $\prod_{n=N}^{\infty} A_n$ converges against a matrix of the form $\begin{pmatrix}
		* & 0 \\
		* & 0
	\end{pmatrix}$. This implies convergence of the product $\prod_{n=1}^{\infty} A_n$ with the limit being given by 
	\begin{align*}
		\prod_{n=1}^{\infty} A_n = \left[A_1 \cdot \ldots \cdot A_{N-1}\right] \cdot \begin{pmatrix}
			\lim_{m \to \infty} a_m & 0 \\
			\lim_{m \to \infty} c_m & 0
		\end{pmatrix},
	\end{align*}
	which has the specified form of vanishing second column. From \eqref{eq:loc:coefficient_estimates} we get $|a_m-1|< \tfrac{C_a}{N} < 1$ so that $\lim_{m \to \infty} a_m \neq 0$. Thus $\prod_{n=1}^{\infty} A_n \neq 0$ if we assume that $A_1, \dots, A_{N-1}$ are invertible.
	
	\medskip
	
	It remains to show the estimate
	\begin{align*}
		\norm{
			\prod_{n=1}^{\infty} A_n 
			- \begin{pmatrix}
				1 & 0 \\
				0 & 0
			\end{pmatrix}
		} \leq f(C).
	\end{align*}
	We choose $\norm{}$ to be the column sum norm.
	To emphasize the dependence of $C_a, \dots, C_d$ on the constant $C$, in the following we write $C_a(C), \dots, C_d(C)$. Let $C^\star > 0$ and observe that there exists a sufficiently large $N \in \N$ such that the denominators appearing in $C_a(C), \dots, C_d(C)$ are positive for all $C \in (0, C^\star]$. Then \eqref{eq:loc:coefficient_estimates} shows that
	\begin{align*}
		\norm{
			\prod_{n=N}^{\infty} A_n 
			- \begin{pmatrix}
				1 & 0 \\
				0 & 0
			\end{pmatrix}
		} \leq 
			\frac{C_a(C) + C_c(C)}{N} + \frac{C}{N(1 - \abs{\lambda})}
		\eqqcolon f_N(C)
	\end{align*}
	holds, where $f_N \colon (0, C^\star] \to (0, \infty)$ with $f_N(0+) = 0$. Then
	\begin{align*}
		\norm{
			\prod_{n=N-1}^{\infty} A_n 
			- \begin{pmatrix}
				1 & 0 \\
				0 & 0
			\end{pmatrix}
		}
		&\quad\leq \norm{A_{N-1}} \norm{
			\prod_{n=N}^{\infty} A_n 
			- \begin{pmatrix}
				1 & 0 \\
				0 & 0
			\end{pmatrix}
		} + \norm{A_{N-1} \begin{pmatrix}
			1 & 0 \\
			0 & 0
		\end{pmatrix} - \begin{pmatrix}
			1 & 0 \\
			0 & 0
		\end{pmatrix}}
		\\ &\quad\leq \max\set{1 + \tfrac{N C}{(N-1)^2}, \tfrac{2 C}{N-1} + \abs{\lambda}} f_N(C) + \tfrac{N C}{(N-1)^2}
		\eqqcolon f_{N-1}(C)
	\end{align*}
	where $f_{N-1}(0+) = 0$. Repeating this $N-2$ times, we find a function $f_1$ such that $f_1(0+) = 0$ and
	\begin{align*}
		\norm{
			\prod_{n=1}^{\infty} A_n 
			- \begin{pmatrix}
				1 & 0 \\
				0 & 0
			\end{pmatrix}
		} \leq f_1(C). 
		&\qedhere
	\end{align*}
\end{proof}

The proof of \cref{thm:main_example} is split into four parts, because the fundamental solutions $\phi_k$ strongly depend on both the chosen geometry (radial problem or slab problem) and on the linear potential $\tilde \chi_1^*$ (step potential $\tilde \chi_1^\mathrm{step}$ or periodic step potential $\tilde\chi_1^\mathrm{per}$).

For the proof, we introduce the following (non-negative) variables to denote the values of the piecewise constant potential $V = -(\tilde \chi_1 + 1 - c^{-2})$:
\begin{enumerate}[(i)]
	\item If $\tilde \chi_1^\ast = \tilde \chi_1^{\mathrm{per}}$, we set
	\begin{align*}
		\alpha \coloneqq a + 1 - c^{-2}, 
		\quad
		\beta \coloneqq b + 1 - c^{-2}, 
		\quad
		\delta \coloneqq -(d + 1 - c^{-2}), 
	\end{align*}
	where by the assumptions of \cref{thm:main_example} we have $\alpha, \beta, \delta > 0$ and $\delta < \alpha$.

	\item If $\tilde \chi_1^\ast = \tilde \chi_1^{\mathrm{step}}$, let
	\begin{align*}
		\alpha \coloneqq a + 1 - c^{-2}, 
		\quad
		\beta \coloneqq -(b + 1 - c^{-2}), 
		\quad
		\delta \coloneqq -(d + 1 - c^{-2}), 
	\end{align*}
	where again $\alpha, \beta, \delta > 0$ by assumption.
\end{enumerate}

\begin{proof}[Proof of \cref{thm:main_example}, Part 1]
	First, we consider the periodic step potential $\tilde\chi_1^\mathrm{per}$ with cylindrical geometry, i.e. \eqref{eq:radial:problem}. We verify the assumptions \ref{ass:radial:first}--\ref{ass:radial:fundamental_solution:estimates} and \ref{ass:radial:nontrivial_sol:alt} in order to apply \cref{thm:radial:multiplicity}. Firstly, assumptions \ref{ass:radial:VandGamma}, \ref{ass:radial:N}, and \ref{ass:radial:elliptic} hold by definition.
	
	\textit{Step 1.} Here we construct the fundamental solutions $\phi_k$ based on the following idea: we define propagation matrices $M_{L_k}(r,r')$ with the property: $\binom{\phi_k(r)}{\phi_k'(r)} \coloneqq M_{L_k}(r,r') \binom{a}{b}$ provides the solution of $L_k \phi_k=0$ at $r$ with initial values $\binom{a}{b}$ at $r'$. On subintervals where the potential $V$ takes constant values, $M_{L_k}$ can be explicitly computed. 
	Iterating the propagation from $r_n \coloneqq R+nP+\tfrac{1}{2}\theta P$ back to $r$ with prescribed decay $r_n^{-\nicefrac12} \tau^n$ at $r_n$, $\tau=\min\{\sqrt{\tfrac{\alpha}{\beta}}, \sqrt{\tfrac{\beta}{\alpha}}\}<1$ and sending $n\to \infty$ will provide the fundamental solution.
	
	Now we start with the propagation matrices on intervals where $V$ is constant. The general solution of 
	\begin{align}\label{eq:loc:radial_constant_linear}
		\left(-\partial_r^2 - \tfrac{1}{r} \partial_r + \tfrac{1}{r^2} - k^2 \omega^2 \alpha \right) f = 0
	\end{align}
	is given by 
	\begin{align*}
		f(r) = A J_1(\alpha_k r) + B Y_1(\alpha_k r)
	\end{align*}
	where $J_\nu, Y_\nu$ are the Bessel functions of first (resp. second) kind and $\alpha_k \coloneqq k \omega \sqrt{\alpha}$. Thus the propagation matrix for \eqref{eq:loc:radial_constant_linear} is given by
	\begin{align*}
		\begin{pmatrix}
			f(r) \\ f'(r) 
		\end{pmatrix}
		= M_{\alpha, k}(r, r') \cdot \begin{pmatrix}
			f(r') \\ f'(r') 
		\end{pmatrix}
	\end{align*}
	with
	\begin{align*}
		&M_{\alpha, k}(r, r') 
		= \begin{pmatrix}
			J_1(\alpha_k r) & Y_1(\alpha_k r) \\
			\alpha_k J_1'(\alpha_k r) & \alpha_k Y_1'(\alpha_k r)
		\end{pmatrix} \cdot \begin{pmatrix}
			J_1(\alpha_k r') & Y_1(\alpha_k r') \\
			\alpha_k J_1'(\alpha_k r') & \alpha_k Y_1'(\alpha_k r')
		\end{pmatrix}^{-1}.
	\end{align*}
	Next we calculate the asymptotic expansion of $M_{\alpha, k}(r, r')$. With the asymptotics (cf. \cite{gradshteyn})
	\begin{align*}
		&J_1(z) = \sqrt{\frac{2}{\pi z}} \left( \sin(z - \tfrac{\pi}{4}) + \tfrac{3}{8 z} \cos(z - \tfrac{\pi}{4}) + \landauO\left(\tfrac{1}{z^2}\right) \right), 
		\\ &Y_1(z) = \sqrt{\frac{2}{\pi z}} \left( - \cos(z - \tfrac{\pi}{4}) + \tfrac{3}{8 z} \sin(z - \tfrac{\pi}{4}) + \landauO\left(\tfrac{1}{z^2}\right) \right), 
		\\ &J_1'(z) = \sqrt{\frac{2}{\pi z}} \left( \cos(z - \tfrac{\pi}{4}) - \tfrac{7}{8 z} \sin(z - \tfrac{\pi}{4}) + \landauO\left(\tfrac{1}{z^2}\right) \right), 
		\\ &Y_1'(z) = \sqrt{\frac{2}{\pi z}} \left( \sin(z - \tfrac{\pi}{4}) + \tfrac{7}{8 z} \cos(z - \tfrac{\pi}{4}) + \landauO\left(\tfrac{1}{z^2}\right) \right)
	\end{align*}
	as $z \to \infty$, we find
	\begin{align}\label{eq:loc:constant_propatagion_asymptotics}
		\begin{aligned}
			M_{\alpha, k}(r, r') 
			&= \sqrt{\frac{r'}{r}} 
			\begin{pmatrix}
				1 & 0 \\
				0 & \alpha_k
			\end{pmatrix} 
			\Bigl[ \begin{pmatrix}
				\frac{3}{8 z} & -1 \\
				1  & \frac{7}{8 z}
			\end{pmatrix} + \landauO\left(\tfrac{1}{z^2}\right) \Bigr]
			\begin{pmatrix}
				\cos(z - z') & \sin(z - z') \\
				- \sin(z - z') & \cos(z - z') \\
			\end{pmatrix} 
			\\ &\qquad \cdot \Bigl[ \begin{pmatrix}
				\frac{7}{8 z'} & 1 \\
				-1 & \frac{3}{8 z'}
			\end{pmatrix} + \landauO\left(\tfrac{1}{{z'}^2}\right) \Bigr]
			\begin{pmatrix}
				1 & 0 \\
				0 & \frac{1}{\alpha_k}
			\end{pmatrix}
		\end{aligned}
	\end{align}
	as $z, z' \to \infty$, where $z = \alpha_k r, z' = \alpha_k r'$. If, in particular, $r' - r = \theta P$, then since
	\begin{align*}
		z - z' = - k \omega \theta P \sqrt{\alpha} \in \tfrac{\pi}{2} \Zodd, 
	\end{align*}
	we have $\cos(z - z') = 0$ and $\sin(z - z') = \pm 1$, so we can further simplify
	\begin{align*}
		M_{\alpha, k}(r, r') 
		&= \pm \sqrt{\frac{r'}{r}} 
		\begin{pmatrix}
			1 & 0 \\
			0 & \alpha_k
		\end{pmatrix}
		\Bigl[ \begin{pmatrix}
			\frac{7}{8 z'} - \frac{3}{8 z} & 1 \\
			-1 & \frac{3}{8 z'} - \frac{7}{8 z}
		\end{pmatrix} + \landauO\left(\tfrac{1}{z^2} + \tfrac{1}{{z'}^2}\right) \Bigr]
		\begin{pmatrix}
		   1 & 0 \\
		   0 & \frac{1}{\alpha_k}
	   	\end{pmatrix}
		\\ &= \pm \sqrt{\frac{r'}{r}} 
		\begin{pmatrix}
			1 & 0 \\
			0 & k \omega
		\end{pmatrix}
		\Bigl[ \begin{pmatrix}
			\frac{1}{2 k \omega \sqrt{\alpha} r} & \frac{1}{\sqrt{\alpha}} \\
			-\sqrt{\alpha} & - \frac{1}{2 k \omega \sqrt{\alpha} r}
		\end{pmatrix} + \landauO\left(\tfrac{1}{k r^2}\right) \Bigr]
		\begin{pmatrix}
		   1 & 0 \\
		   0 & \frac{1}{k \omega}
	   	\end{pmatrix}
	\end{align*}
	as $k r \to \infty$. If we denote by $r_n \coloneqq R + n P + \tfrac12 \theta P$, $r_n' \coloneqq R + n P + \left(1 - \tfrac12 \theta\right) P$ for $n \in \N_0$ the points where $V$ changes from one constant value to the other, then we have
	\begin{align*}
		M_{L_k}(r_n, r_{n+1}) &= M_{\beta, k}(r_n, r_n') \cdot M_{\alpha, k}(r_n', r_{n+1})
		\\ &= \sigma \sqrt{\frac{r_{n+1}}{r_n}} 
		\begin{pmatrix}
			1 & 0 \\
			0 & k \omega
		\end{pmatrix}
		\left[ \begin{pmatrix}
			\sqrt{\frac{\alpha}{\beta}} & 0 \\
			\bigl( \sqrt{\frac{\beta}{\alpha}} - \sqrt{\frac{\alpha}{\beta}} \bigr) \frac{1}{2 k \omega r_n} & \sqrt{\frac{\beta}{\alpha}}
		\end{pmatrix} + \landauO\left(\tfrac{1}{k n^2}\right) \right]
		\begin{pmatrix}
		   1 & 0 \\
		   0 & \frac{1}{k \omega}
	   	\end{pmatrix}
	\end{align*}
	as $k n \to \infty$ where $\sigma = - \sin(k \omega \theta P \sqrt{\alpha}) \sin(k \omega (1 - \theta) P \sqrt{\beta}) \in \set{\pm 1}$ does not depend on $k$. 
	To simplify the asymptotics, we introduce the rescaling
	\begin{align*}
		S_k(r) \coloneqq \sqrt{r} \begin{pmatrix}
			1 & 0 \\
			0 & \tfrac{1}{\omega k}
		\end{pmatrix}
	\end{align*}
	and define the rescaled propagation matrices
	\begin{align*}
		&M_{L_k}^S(r, r') \coloneqq S_k(r) M_{L_k}(r, r') S_k(r')^{-1},
		\\ &M_{\alpha, k}^S(r, r') \coloneqq S_k(r) M_{\alpha, k}(r, r') S_k(r')^{-1},
		\\ &M_{\beta, k}^S(r, r') \coloneqq S_k(r) M_{\beta, k}(r, r') S_k(r')^{-1}.
	\end{align*}
	In the following we assume $\alpha > \beta$, the case $\beta > \alpha$ can be treated similarly. We then define
	\begin{align}\label{eq:loc:fundamental_sol_by_limit}
        \Psi_k(r) \coloneqq \lim_{m \to \infty} \left(\sigma^{-1} \sqrt{\tfrac{\beta}{\alpha}}\right)^{m} M_{L_k}^S(r, r_m).
	\end{align}
	This definition is according to the idea introduced at the beginning of the proof: $\Psi_k(r)$ contains decaying fundamental solutions for a rescaled version of the operator $L_k$, where the geometric decay factor $\sqrt{\tfrac{\beta}{\alpha}}$ has been built into the solution. As we shall see, the second column of $\Psi_k$ vanishes, which reflects the fact that there can only be one solution which decays at infinity. Next we note that 
	\begin{align*}
		\Psi_k(r) = \lim_{m \to \infty} M_{L_k}^S(r, r_0) \prod_{n=0}^{m-1} \left( \sigma^{-1} \sqrt{\tfrac{\beta}{\alpha}} M_{L_k}^S(r_n, r_{n+1}) \right)
		= M_{L_k}^S(r, r_0) \prod_{n=0}^{\infty} \left( \sigma^{-1} \sqrt{\tfrac{\beta}{\alpha}} M_{L_k}^S(r_n, r_{n+1}) \right).
	\end{align*}
	Using the asymptotics
	\begin{align*}
		\sigma^{-1} \sqrt{\frac{\beta}{\alpha}}
		M_{L_k}^S(r_n, r_{n+1}) = \begin{pmatrix}
			1 & 0 \\
			\landauO\left(\tfrac{1}{kn}\right) & \frac{\beta}{\alpha}
		\end{pmatrix} + \landauO\left(\tfrac{1}{k n^2}\right)
	\end{align*}
	as $k n \to \infty$ and \cref{lem:convergence_of_product}, the limit in \eqref{eq:loc:fundamental_sol_by_limit} exists and is nonzero. In particular, the limit matrix $\Psi_k$ has a vanishing second column so that we can define 
	\begin{align*}
		\begin{pmatrix}
			\psi_k^{(1)}(r) & 0 \\
			\psi_k^{(2)}(r) & 0
		\end{pmatrix} \coloneqq \Psi_k(r).
	\end{align*}
	If we also undo the rescaling and define
	\begin{align*}
		\begin{pmatrix}
			\phi_k^{(1)}(r) \\ \phi_k^{(2)}(r) 
		\end{pmatrix} 
		\coloneqq S_k(r)^{-1} 
		\begin{pmatrix}
			\psi_k^{(1)}(r) \\ \psi_k^{(2)}(r) 
		\end{pmatrix}
	\end{align*}
    then
	\begin{align*}
		\begin{pmatrix}
			\phi_k^{(1)}(r) \\ \phi_k^{(2)}(r) 
		\end{pmatrix} 
		= M_{L_k}(r, r') 
		\begin{pmatrix}
			\phi_k^{(1)}(r') \\ \phi_k^{(2)}(r') 
		\end{pmatrix}.
	\end{align*}
	since $\psi_k^{(1)}, \psi_k^{(2)}$ satisfy the identity
	\begin{align*}
		\begin{pmatrix}
			\psi_k^{(1)}(r) \\ \psi_k^{(2)}(r) 
		\end{pmatrix} 
		= M_{L_k}^S(r, r') 
		\begin{pmatrix}
			\psi_k^{(1)}(r') \\ \psi_k^{(2)}(r') 
		\end{pmatrix}.
	\end{align*}
	This shows that $\phi_k(r) \coloneqq \phi_k^{(1)}(r)$ satisfies $L_k \phi_k = 0$ and $\phi_k' = \phi_k^{(2)}$ so that $\phi_k$ is the sought fundamental solution. Its properties will be verified in the next step.
	
	\medskip
	\textit{Step 2.} We are left with verifying assumptions \ref{ass:radial:fundamental_solutions}, \ref{ass:radial:fundamental_solution:estimates}, and \ref{ass:radial:nontrivial_sol:alt} for the functions $\phi_k$ obtained in Step 1. We have already seen as a result of \cref{lem:convergence_of_product} that $\phi_k \neq 0$. Next we show that $\phi_k \in \Lrad{[R, \infty) \times \T}$.
	From the asymptotics \eqref{eq:loc:constant_propatagion_asymptotics} we see that there exists a constant $C_1 > 0$ such that
	\begin{align*}
		\norm{ M_{\alpha, k}^S(r, r')}, \norm{ M_{\beta, k}^S(r, r')} \leq C_1
	\end{align*}
	holds for all $k\in \Nodd$ and all $r, r' \geq R$. 
	By \cref{lem:convergence_of_product} there further exists a constant $C_2 > 0$ such that
	\begin{align*}
		\norm{\prod_{n=n_0}^{\infty} \left( \sigma^{-1} \sqrt{\tfrac{\beta}{\alpha}} M_{L_k}^S(r_n, r_{n+1}) \right)}
		\leq C_2
	\end{align*}
	holds for all $n_0 \in \N_0$ and all $k\in\Nodd$.

	For every $r \in [R, \infty)$ there exists a unique $n_0 \in \N_0$ such that $r \in (r_{n_0-1}', r_{n_0}']$. From
	\begin{align} \label{eq:psi_k_1}
		\begin{aligned}
			\sqrt{r} \phi_k(r)
			&= \psi_k^{(1)}(r)
			= \left( \lim_{m \to \infty} \left(\sigma^{-1} \sqrt{\tfrac{\beta}{\alpha}}\right)^{m} M_{L_k}^S(r, r_{n_0}) M_{L_k}^S(r_{n_0}, r_m)  \right)_{1,1}
			\\ &= \left(\sigma^{-1} \sqrt{\tfrac{\beta}{\alpha}}\right)^{n_0} \left( M_{L_k}^S(r, r_{n_0}) \prod_{n = n_0}^{\infty} \left( \sigma^{-1} \sqrt{\tfrac{\beta}{\alpha}} M_{L_k}^S(r_n, r_{n+1}) \right) \right)_{1,1},
		\end{aligned}
	\end{align}
	and
	\begin{align*}
		M_{L_k}^S(r, r_{n_0}) = \begin{cases}
			M_{\alpha, k}^S(r, r_{n_0}), & r \in (r_{n_0 - 1}', r_{n_0}] \\
			M_{\beta, k}^S(r, r_{n_0}), & r \in (r_{n_0}, r_{n_0}']
		\end{cases} 
	\end{align*}
	we obtain
	\begin{align*}
		r \abs{\phi_k(r)}^2 \leq \left(\tfrac{\beta}{\alpha}\right)^{n_0} C_1^2 C_2^2
	\end{align*}
	and therefore also
	\begin{align*}
		\norm{\phi_k}_{\Lrad{[R, \infty)}}^2
		= \int_{R}^\infty \abs{\phi_k(r)}^2 \der[r] r
		\leq \sum_{n_0 = 0}^{\infty} \left(\tfrac{\beta}{\alpha}\right)^{n_0} C_1^2 C_2^2 P 
		< \infty,
	\end{align*}
	where we used $r_{n_0}' - r_{n_0-1}' = P$. While we obtained an $L^2$-bound on $\phi_k$ which is uniform in $k$, with the help of the equation $L_k \phi_k = 0$ one can easily show that $\phi_k \in \Hrad[2]{[R, \infty)}$, but the $H^2$-bound will be $k$-dependent. Thus assumption \ref{ass:radial:fundamental_solutions} holds.

	Next we discuss the asymptotics of $\phi_k$. We use
	\begin{align*}
		M_{\alpha, k}^S(r, r') = \begin{pmatrix}
			\cos(k \omega \sqrt{\alpha} (r - r')) & \tfrac{1}{\sqrt{\alpha}} \sin(k \omega \sqrt{\alpha} (r - r')) \\
			- \sqrt{\alpha} \sin(k \omega \sqrt{\alpha} (r - r')) & \cos(k \omega \sqrt{\alpha} (r - r'))
		\end{pmatrix} + \landauO \left( \tfrac{1}{k} \right)
	\end{align*}
	and likewise for $M_{\beta, k}^S(r, r')$ as well as
	\begin{align*}
		\prod_{n = n_0}^{\infty} \left( \sigma^{-1} \sqrt{\tfrac{\beta}{\alpha}} M_{L_k}^S(r_n, r_{n+1}) \right)
		\to \begin{pmatrix}
			1 & 0 \\
			0 & 0
		\end{pmatrix}
	\end{align*}
	as $k \to \infty$, cf. \cref{lem:convergence_of_product}. Thus, together with \eqref{eq:psi_k_1} we obtain
	\begin{eqnarray*}
		\lefteqn{\int_{r_{n_0-1}'}^{r_{n_0}'} \abs{\phi_k(r)}^2 \der[r] r} \\
		&=& \left( \tfrac{\beta}{\alpha} \right)^{n_0} 
		\left( \int_{r_{n_0-1}'}^{r_{n_0}} \cos(k \omega \sqrt{\alpha} (r - r_{n_0}))^2 \der r 
		+ \int_{r_{n_0}}^{r_{n_0}'} \cos(k \omega \sqrt{\beta} (r - r_{n_0}))^2 \der r 
		+ \landauo(1) \right)
		\\ &=& \left( \tfrac{\beta}{\alpha} \right)^{n_0} \left(\tfrac{P}{2} + \landauo(1) \right)
	\end{eqnarray*}
	as $k \to \infty$. In particular, we have
	\begin{align*}
		\liminf_{k \to \infty} \norm{\phi_k}_{\Lrad{[R, \infty)}}^2
		\geq \sum_{n_0 = 1}^{\infty} \left( \tfrac{\beta}{\alpha} \right)^{n_0} \tfrac{P}{2} > 0.
	\end{align*}
	In order to verify assumptions \ref{ass:radial:fundamental_solution:estimates} and \ref{ass:radial:nontrivial_sol:alt} we calculate the asymptotics of $\phi_k(R)$ and $\phi_k'(R)$. Setting
	\begin{align*}
		m_\alpha \coloneqq 4 \omega \sqrt{\alpha} \theta \tfrac{P}{2 \pi} \in \Nodd,
	\end{align*}
	we have 
	\begin{align*}
		\sqrt{R} \phi_k(R) 
		&= \psi_k^{(1)}(R)
		= \left( M_{\alpha, k}^S(R, r_0) \prod_{n = 0}^{\infty} \left( \sigma^{-1} \sqrt{\tfrac{\beta}{\alpha}} M_{L_k}^S(r_n, r_{n+1}) \right) \right)_{1, 1} 
		\\ &= \cos(k \omega \sqrt{\alpha} (R - r_0)) + \landauo(1)
		= \cos(k m_\alpha \tfrac{\pi}{4}) + \landauo(1)
		= \pm \tfrac{1}{\sqrt{2}} + \landauo(1)
	\end{align*}
	as $k \to \infty$. Combined with the estimates on $\norm{\phi_k}_{\Lrad{}}$ this shows the first part of assumption \ref{ass:radial:fundamental_solution:estimates}. Next we have
	\begin{align*}
		\frac{\sqrt{R}}{k \omega} \phi_k'(R) 
		&= \psi_k^{(2)}(R) 
		= \left( M_{\alpha, k}^S(R, r_0) \prod_{n = 0}^{\infty} \left( \sigma^{-1} \sqrt{\tfrac{\beta}{\alpha}} M_{L_k}^S(r_n, r_{n+1}) \right) \right)_{2, 1} 
		\\ &= -\sqrt{\alpha} \sin(k \omega \sqrt{\alpha} (R - r_0)) + \landauo(1)
		= \sqrt{\alpha}\sin(k m_\alpha \tfrac{\pi}{4}) + \landauo(1)
		= \pm \tfrac{\sqrt{\alpha}}{\sqrt{2}} + \landauo(1),
	\end{align*}
	which shows the second part of assumption \ref{ass:radial:fundamental_solution:estimates}. Finally, we have
	\begin{align*}
		\frac{\phi_k'(R)}{k \phi_k(R)} 
		= \omega \sqrt{\alpha} \tan(k m_\alpha \tfrac{\pi}{4}) + \landauo(1)
		= -\omega \sqrt{\alpha} (-1)^{\tfrac{m_\alpha + k}{2}} + \landauo(1)
	\end{align*}
	as $k \to \infty$, so
	\begin{align*}
		\limsup_{k \to \infty} \frac{\phi_k'(R)}{k \phi_k(R)} 
		= \omega \sqrt{\alpha} > \omega \sqrt{\delta} = \omega \norm{V}_{L^\infty([0, R])}^{\nicefrac12}
	\end{align*}
	since $\alpha > \delta$. Thus assumption \ref{ass:radial:nontrivial_sol:alt} holds, and \cref{thm:radial:multiplicity} yields the result.
\end{proof}

\begin{proof}[Proof of \cref{thm:main_example}, Part 2]
	Now we consider the periodic step potential with the slab geometry, i.e. \eqref{eq:slab:problem}. We verify the assumptions of \cref{thm:slab:multiplicity} in order to apply it.

    By the set-up we have that \ref{ass:slab:VandGamma}, \ref{ass:slab:N}, and \ref{ass:slab:elliptic} hold. 
    The determination of the fundamental solutions $\tildePhi_k$ follows the Floquet-Bloch theory for second-order periodic differential operators. Details can be found in \cite[Appendix 6.2]{kohler_reichel}. The main outcome is the following: there are two Floquet-multipliers 
    $$\rho_k \in \left\{ -\sqrt{\frac{\alpha}{\beta}} \sin(km' l\pi)\sin(km'\pi), -\sqrt{\frac{\beta}{\alpha}} \sin(km' l\pi)\sin(km'\pi)\right\}
    $$
    where 
    $$
    l = \sqrt{\frac{\beta}{\alpha}}\frac{1-\theta}{\theta} 
	\quad \mbox{ and } \quad
	\left\{2m' = 4\sqrt{\alpha}\theta \omega \frac{P}{2\pi}, 2m'l = 4\sqrt{\beta}(1-\theta) \omega \frac{P}{2\pi}\right\} \subseteq \Nodd
    $$
    and in our setting $m=2m'$, $n=2m'l$. This implies that $|\rho_k|\in \bigl\{\sqrt{\frac{\alpha}{\beta}}, \sqrt{\frac{\beta}{\alpha}}\bigr\}$, so that in modulus one of them is smaller than $1$ and one of them larger than $1$. For each Floquet exponent there is a solution of $L_k \tildePhi_k=0$ with $\tildePhi_k(x+P) = \rho_k \tildePhi_k(x)$ for all $x\in\R$. If we choose the fundamental solution corresponding to the Floquet multiplier with modulus less than $1$ then this leads to 
    $$
    \norm{\tildePhi_k}_{L^2([R, \infty))}^2 = \frac{1}{1-\rho_k}\norm{\tildePhi_k}_{L^2([R,R+P))}^2.
    $$ 
    Using the normalization $\tildePhi_k(R)=1$ we have 
	\begin{align*}
		\tildePhi_k'(R)= -k\omega \sqrt{\alpha}\tan(k\omega \sqrt{\alpha}\theta \tfrac{P}{2})= - k \omega \sqrt{\alpha} \tan(m k \tfrac{\pi}{4}) = k \omega\sqrt{\alpha} (-1)^\frac{k + m}{2}	
	\end{align*}
	and 
	\begin{align*}
		0 < \inf_{k\in\Nodd} \norm{\tildePhi_k}_{L^2([R,R+P))}^2 \leq \sup_{k\in\Nodd} \norm{\tildePhi_k}_{L^2([R,R+P))}^2<\infty.	
	\end{align*}
    From these estimates it follows that also \ref{ass:slab:fundamental_solutions}--\ref{ass:slab:fundamental_solutions:estimates} hold. Moreover, since $\frac{\tildePhi_k'(R)}{k \tildePhi_k(R)} = (-1)^\frac{k + m}{2} \omega\sqrt{\alpha}$ and $\alpha>\delta=\norm{V}_{L^\infty([-R, R])}$ the final condition \ref{ass:slab:nontrivial_sol:alt} is true and so \cref{thm:slab:multiplicity} yields the result.
\end{proof}

\begin{proof}[Proof of \cref{thm:main_example}, Part 3]
	Next we consider the step potential $\tilde\chi_1^\mathrm{step}$ with cylindrical geometry, i.e. problem \eqref{eq:radial:problem}. We verify the assumptions \ref{ass:radial:first}--\ref{ass:radial:secondlast}, \ref{ass:radial:nontrivial_sol:alt} in order to apply \cref{thm:radial:multiplicity}. First, \ref{ass:radial:VandGamma}, \ref{ass:radial:N}, and \ref{ass:radial:elliptic} hold by definition. 
	Let $J_\nu, Y_\nu$ denote the Bessel functions of first (resp. second) kind and $K_\nu$ denote the modified Bessel function of second kind.
	For $k \in \Nodd$ and with $\alpha_k \coloneqq k \omega \sqrt{\alpha}$, $\beta_k \coloneqq k \omega \sqrt{\beta}$, the fundamental solution $\phi_k$ is (up to a constant) then given by
	\begin{align*}
		\phi_k(r) = \begin{cases}
			A_k J_1(\alpha_k r) + B_k Y_1(\alpha_k r), & R < r < R+\rho, \\
			K_1(\beta_k r), & r > R+\rho.
		\end{cases}
	\end{align*}
	with
	\begin{align*}
		\begin{pmatrix}
			A_k \\
			B_k
		\end{pmatrix}
		= \begin{pmatrix}
			J_1(\alpha_k (R+\rho)) & Y_1(\alpha_k (R+\rho)) \\
			\alpha_k J_1'(\alpha_k (R+\rho)) & \alpha_k Y_1'(\alpha_k (R+\rho))
		\end{pmatrix}^{-1} \begin{pmatrix}
			K_1(\beta_k (R+\rho)) \\
			\beta_k K_1'(\beta_k (R+\rho))
		\end{pmatrix}.
	\end{align*}
	We begin by estimating the functions $\phi_k$. Using the asymptotics (cf. \cite{grafakos_classical})
	\begin{align*}
		&J_1(z) = \sqrt{\tfrac{2}{\pi z}} \left( \sin(z - \tfrac{\pi}{4}) + \landauO\left(\tfrac{1}{z}\right) \right),
		&&J_1'(z) = \sqrt{\tfrac{2}{\pi z}} \left( \cos(z - \tfrac{\pi}{4}) + \landauO\left(\tfrac{1}{z}\right) \right),
		\\ &Y_1(z) = \sqrt{\tfrac{2}{\pi z}} \left( - \cos(z - \tfrac{\pi}{4}) + \landauO\left(\tfrac{1}{z}\right) \right),
		&&Y_1'(z) = \sqrt{\tfrac{2}{\pi z}} \left( \sin(z - \tfrac{\pi}{4}) + \landauO\left(\tfrac{1}{z}\right) \right),
		\\ &K_1(z) = \sqrt{\tfrac{\pi}{2 z}} \ee^{-z} \left( 1 + \landauO\left(\tfrac{1}{z}\right) \right),
		&&K_1'(z) = \sqrt{\tfrac{\pi}{2 z}} \ee^{-z} \left( -1 + \landauO\left(\tfrac{1}{z}\right) \right)
	\end{align*}
	as $z \to \infty$, we find
	\begin{align*}
		&A_k = - \tfrac{\pi}{2} \ee^{-\beta_k (R+\rho)} \left( \sqrt{\tfrac{\beta_k}{\alpha_k}} \cos(\alpha_k (R+\rho) - \tfrac{\pi}{4}) - \sqrt{\tfrac{\alpha_k}{\beta_k}} \sin(\alpha_k (R+\rho) - \tfrac{\pi}{4}) + \landauO\left(\tfrac{1}{k}\right) \right),
		\\ &B_k = - \tfrac{\pi}{2} \ee^{-\beta_k (R+\rho)} \left( \sqrt{\tfrac{\beta_k}{\alpha_k}} \sin(\alpha_k (R+\rho) - \tfrac{\pi}{4}) + \sqrt{\tfrac{\alpha_k}{\beta_k}} \cos(\alpha_k (R+\rho) - \tfrac{\pi}{4}) + \landauO\left(\tfrac{1}{k}\right) \right),
	\end{align*}
	and thus
	\begin{align*}
		&\norm{\phi_k}_{\Lrad{[R+\rho, \infty)}}^2 = \tfrac{\pi}{4 \beta_k^2} \ee^{-2 \beta_k (R+\rho)} \left( 1 + \landauO(\tfrac{1}{k}) \right),
		\\ &\norm{\phi_k}_{\Lrad{[R, R+\rho]}}^2 =\tfrac{\pi}{4 \alpha_k} \ee^{-2 \beta_k (R+\rho)} \left(\tfrac{\alpha_k}{\beta_k} + \tfrac{\beta_k}{\alpha_k}\right) \rho \left(1 + \landauO(\tfrac{1}{k}) \right)
	\end{align*}
	as $k \to \infty$. In particular we have
	\begin{align*}
		\norm{\phi_k}_{\Lrad{[R, \infty)}} = \frac{\ee^{- \beta_k (R+\rho)}}{\sqrt{k}} \left(C + \landauO\left(\tfrac{1}{k}\right)\right)
	\end{align*}
	for some $C > 0$. We further have 
	\begin{align*}
		&\phi_k(R) = \sqrt{\tfrac{\pi}{2 \alpha_k R}} \ee^{- \beta_k (R+\rho)} \left(\sqrt{\tfrac{\alpha_k}{\beta_k}} \cos(\alpha_k \rho) + \sqrt{\tfrac{\beta_k}{\alpha_k}} \sin(\alpha_k\rho) + \landauO\left(\tfrac{1}{k}\right) \right)
		\\ & \phi_k'(R) = \sqrt{\tfrac{\alpha_k \pi}{2 R}} \ee^{- \beta_k R+\rho} \left( \sqrt{\tfrac{\alpha_k}{\beta_k}} \sin(\alpha_k\rho) - \sqrt{\tfrac{\beta_k}{\alpha_k}} \cos(\alpha_k\rho) + \landauO\left(\tfrac{1}{k}\right) \right).
	\end{align*}
	Note that $\tfrac{\alpha_k}{\beta_k} = \sqrt{\tfrac{\alpha}{\beta}}$ is constant. 

	As $\phi_k$ above is the fundamental solution, assumption~\ref{ass:radial:fundamental_solutions} holds, and the second part of assumption~\ref{ass:radial:fundamental_solution:estimates} follows directly from the asymptotics. Let $\vartheta = \arctan\left(\tfrac{\alpha_k}{\beta_k}\right)$. Then 
	\begin{align*}
		&\phi_k(R) = \sqrt{\tfrac{\pi}{2 \alpha_k R} \left(\tfrac{\alpha_k}{\beta_k} + \tfrac{\beta_k}{\alpha_k}\right)} \ee^{- \beta_k (R+\rho)} \left( \sin\left( \alpha_k\rho + \vartheta \right) + \landauO\left(\tfrac{1}{k}\right) \right).
		\\ &\phi_k'(R) = - \sqrt{\tfrac{\alpha_k \pi}{2 R} \left(\tfrac{\alpha_k}{\beta_k} + \tfrac{\beta_k}{\alpha_k}\right)} \ee^{- \beta_k (R+\rho)} \left( \cos\left( \alpha_k\rho + \vartheta \right) + \landauO\left(\tfrac{1}{k}\right) \right).
	\end{align*}
	By assumption of the theorem on the values $T$ and $\vartheta$ we can write
	\begin{align*}
		\alpha_k \rho + \vartheta = 
		\frac{k m \pi}{2 n} + \frac{m \pi}{2 n} + \frac{l \pi}{n} - \xi
	\end{align*} 
	for some $l \in \Z$. Since $m, n$ are co-prime, the expression $\Zodd \ni k\mapsto \alpha_k \rho + \vartheta$ mod $\pi$ is $2n$-periodic and attains the $n$ values
	\begin{align*}
            \frac{\pi}{n} - \xi, \frac{2 \pi}{n} - \xi, \dots, \pi - \xi
	\end{align*}
	and no others. Further, none of these values are zeros of sine. This shows that also the first part of assumption~\ref{ass:radial:fundamental_solution:estimates} holds. In addition, we have 
	\begin{align*}
		\frac{\phi_k'(R)}{\phi_k(R)} = - \alpha_k \left( \cot\left( \tfrac{((k+1)m + 2l)\pi}{2n}-\xi \right) + \landauO\left(\tfrac{1}{k}\right)\right).
	\end{align*}
	Therefore for $\eps > 0$ sufficiently small we find infinitely many $k \in \Nodd$ such that
	\begin{align*}
	\alpha_k \rho + \vartheta = \pi - \xi \mod \pi \quad \mbox{ and } \quad 
		\frac{\phi_k'(R)}{k \phi_k(R)}
		= \omega \sqrt{\alpha} \cot(\xi) + \landauO\left(\tfrac{1}{k}\right)
		\geq \omega \sqrt{\delta} + \eps
	\end{align*}
	hold. This verifies \ref{ass:radial:nontrivial_sol:alt}. Finally, we have checked all assumptions of \cref{thm:radial:multiplicity} which provides existence of $T$-periodic solutions.
\end{proof}

\begin{proof}[Proof of \cref{thm:main_example}, Part 4]
	Lastly we discuss the step potential with slab geometry, i.e. \eqref{eq:slab:problem}.
	Like in part 3, we set $\alpha_k \coloneqq \omega k \sqrt{\alpha}$, $\beta_k \coloneqq \omega k \sqrt{\beta}$. Then the fundamental solutions $\tildePhi_k$ are (up to a constant) given by
	\begin{align*}
		\tildePhi_k(x) = \begin{cases}
			\widetilde A_k \sin(\alpha_k x) + \widetilde B_k \cos(\alpha_k x), & R < x < R+\rho, \\
			\ee^{- \beta_k x}, & x > R+\rho
		\end{cases}
	\end{align*}
	with 
	\begin{align*}
		\begin{pmatrix}
			\widetilde A_k \\ 
			\widetilde B_k
		\end{pmatrix}
		& = \begin{pmatrix}
			\sin(\alpha_k (R+\rho)) & \cos(\alpha_k (R+\rho)) \\
			\alpha_k \cos(\alpha_k (R+\rho)) & - \alpha_k \sin(\alpha_k (R+\rho))
		\end{pmatrix}^{-1} \begin{pmatrix}
			\ee^{- \beta_k (R+\rho)} \\
			- \beta_k \ee^{- \beta_k (R+\rho)} 
		\end{pmatrix} \\
		& = \ee^{- \beta_k (R+\rho)} \begin{pmatrix}
			\sin(\alpha_k (R+\rho)) - \tfrac{\beta_k}{\alpha_k} \cos(\alpha_k (R+\rho)) \\
			\cos(\alpha_k (R+\rho)) + \tfrac{\beta_k}{\alpha_k} \sin(\alpha_k (R+\rho))
		\end{pmatrix}.
	\end{align*}
	Therefore \ref{ass:slab:fundamental_solutions} holds and 
	\begin{align*}
		&\norm{\tildePhi_k}_{L^2([R+\rho, \infty))}^2 = \tfrac{1}{2\beta_k} \ee^{-2 \beta_k (R+\rho)},
		\\ &\norm{\tildePhi_k}_{L^2([R, R+\rho])}^2 =\tfrac{\rho}{2} \ee^{-2 \beta_k (R+\rho)} \left(1+\tfrac{\beta_k^2}{\alpha_k^2}\right) \left(1 + \landauO(\tfrac{1}{k}) \right)
	\intertext{so that}
		&\norm{\tildePhi_k}_{L^2([R, \infty))} = \ee^{- \beta_k(R+\rho)}\left(C + \landauO\left(\tfrac{1}{k}\right)\right)
	\end{align*}
	holds for some $C > 0$ as $k \to \infty$. 
	In particular, 
	\begin{align*}
		\tildePhi_k(R) 
		= \ee^{-\beta_k (R+\rho)} \left( \cos(\alpha_k \rho) + \tfrac{\beta_k}{\alpha_k} \sin(\alpha_k\rho) \right)
		= \ee^{-\beta_k (R+\rho)} \sqrt{1 + \tfrac{\beta}{\alpha}} \sin(\alpha_k \rho + \vartheta)
	\end{align*}
	and 
	\begin{align*}
		\tildePhi_k'(R) = - \alpha_k \ee^{-\beta_k (R+\rho)} \sqrt{1 + \tfrac{\beta}{\alpha}} \cos(\alpha_k \rho + \vartheta)
	\end{align*}
	with $\vartheta = \arctan\left(\sqrt{\tfrac{\alpha}{\beta}}\right)$. From here on we can argue almost identically as in the proof of part~3 for the verification of the conditions \ref{ass:slab:fundamental_solutions:estimates} and \ref{ass:slab:nontrivial_sol:alt}.
\end{proof}

\section{Numerical method}\label{sec:numerics}

In this section we provide details on the generation of \cref{fig:periodic:breathers,fig:step:breathers}. For simplicity, we only consider the radial geometry setting.

As discussed in \cref{sec:domain_restriction}, solutions $w$ to \eqref{eq:radial:problem} can be obtained from critical points $u$ of the functional $\efct$, see \eqref{eq:radial:energy}, and in particular from the minimizer of $\efct$. 
We numerically minimize $\efct \vert_Z$ over a finite dimensional space $Z$: $\efct(u) \approx \min \efct\vert_Z$. Then from $u$ we reconstruct an approximate breather $w$ using the formula \eqref{eq:solution_reconstruction}. 

Motivated by \cref{sec:approximation} we choose the ansatz space
\begin{align*}
	Z = \Bigl\{u \colon u(x, t) = \sum_{\substack{k \in \Zodd \\ \abs{k} \leq K}} f_k(x) e_k(t) \Bigm\vert f_k \in F, f_{-k} = \overline{f_k} \Bigr\},
\end{align*}
where $F$ is a (complex-valued) $1$d finite element space, which we have chosen to be the space of piecewise linear elements with equidistant nodes $0, \frac{R}{N}, \dots, \frac{(N-1)R}{N}, R$. 

The illustrations from \cref{fig:periodic:breathers,fig:step:breathers} are then obtained by choosing $K = 64, N = 128$ and using a MATLAB built-in function to solve the minimization problem. The code to generate them can be found in \cite{gitlab}.

	\section*{Acknowledgment}
	Funded by the Deutsche Forschungsgemeinschaft (DFG, German Research Foundation) – Project-ID 258734477 – SFB 1173. We thank Willy Dörfler (KIT) for providing his discontinuous Galerkin MATLAB-code on which our numerical simulations are based.

	\printbibliography
\end{document}